\title{Ergodic and Mixing Properties of the Boussinesq Equations with a Degenerate Random Forcing\footnote{\today}}
\author{Juraj F\"oldes$^\varheart$, Nathan Glatt-Holtz$^\vardiamond$, Geordie Richards$^{\clubsuit}$, Enrique Thomann$^{\spadesuit}$}
\date{}
\definecolor{Red}{rgb}{0.7,0,0.1}
\definecolor{Green}{rgb}{0,0.6,0}
\definecolor{labelkey}{rgb}{0,0,1}
\numberwithin{equation}{section}
\numberwithin{figure}{section}
\newtheorem{Thm}{Theorem}[section]
\newtheorem{Lem}[Thm]{Lemma}
\newtheorem{Prop}[Thm]{Proposition}
\newtheorem{Def}[Thm]{Definition}
\newtheorem{Rmk}[Thm]{Remark}
\newtheorem*{Thm*}{Theorem}
\DeclareSymbolFont{extraup}{U}{zavm}{m}{n}
\DeclareMathSymbol{\varheart}{\mathalpha}{extraup}{86}
\DeclareMathSymbol{\vardiamond}{\mathalpha}{extraup}{87}
\newcommand{\pd}{\partial}
\newcommand{\indFn}[1]{1 \! \! 1_{#1}}
\newcommand{\E}{\mathbb{E}}
\newcommand{\Prb}{\mathbb{P}}
\newcommand{\RR}{\mathbb{R}}
\newcommand{\ZZ}{\mathbb{Z}}
\newcommand{\NN}{\mathbb{N}}
\newcommand{\om}{\omega}
\newcommand{\de}{\theta}
\newcommand{\bfU}{\mathbf{u}}
\newcommand{\pres}{p}
\newcommand{\gDir}{\mathbf{g}}
\newcommand{\TT}{\mathbb{T}^2}
\newcommand{\gphi}{g_{\phi}}
\newcommand{\Obs}{\Phi}
\newcommand{\ObsGC}{\varsigma}
\newcommand{\ObsSet}{\mathcal{O}}
\newcommand{\JJ}{\mathcal{J}}
\newcommand{\AAA}{\mathcal{A}}
\newcommand{\KK}{\mathcal{K}}
\newcommand{\MM}{\mathcal{M}}
\newcommand{\FF}{\mathcal{F}}
\newcommand{\ZZZ}{\mathcal{Z}}
\newcommand{\MTI}{\mathcal{R}^\beta}
\newcommand{\dStep}{\delta}
\newcommand{\aproxB}{\tilde{b}}
\newcommand{\MD}{\mathfrak{D}}
\newcommand{\Mspc}{\mathbb{D}}
\newcommand{\DM}{\mathfrak{D}}
\newcommand{\Ushft}{\bar{U}}
\definecolor{purple}{rgb}{0.5,0,0.5}
\newcommand{\bl}[1]{\textcolor{blue}{#1}}
\newcommand{\rd}[1]{\textcolor{red}{#1}}
\newcommand{\pp}[1]{\textcolor{purple}{#1}}
\begin{document}
\markboth{J. F\" oldes, N. Glatt-Holtz, G. Richards, E. Thomann}
{Ergodic and Mixing Properties of the Boussinesq Equations with a Degenerate Random Forcing}

\maketitle

\vskip-4mm

\centerline{\footnotesize{\it $^{\varheart}$Institute for Mathematics and its Applications, University of Minnesota}}
\vskip-1mm
\centerline{\footnotesize{\it Minneapolis, MN 55455}}

\vskip2mm

\centerline{\footnotesize{\it $^\vardiamond$Department of Mathematics, Virginia Polytechnic Institute and State University} }
\vskip-1mm
\centerline{\footnotesize{\it Blacksburg, VA 24061}}

\vskip2mm

\centerline{\footnotesize{\it $^{\clubsuit}$Department of Mathematics, University of Rochester}}
\vskip-1mm
\centerline{\footnotesize{\it Rochester, NY 14627}}

\vskip2mm

\centerline{\footnotesize{\it $^{\spadesuit}$Department of Mathematics, Oregon State University}}
\vskip-1mm
\centerline{\footnotesize{\it Corvallis, OR 97331}}

\begin{abstract}
We establish the existence, uniqueness and attraction properties of an ergodic invariant measure for the Boussinesq Equations
in the presence of a degenerate stochastic forcing acting only in the temperature equation and only at the largest spatial scales.
The central challenge is to establish time asymptotic smoothing properties of the Markovian dynamics corresponding
to this system.  Towards this aim we encounter a Lie bracket structure in the associated vector fields with a complicated 
dependence on solutions.  This leads us to develop a novel
H\"{o}rmander-type condition for infinite-dimensional systems.  Demonstrating the sufficiency of this condition requires
 new techniques for the spectral analysis of the Malliavin covariance matrix.
\end{abstract}

\setcounter{tocdepth}{1}
\tableofcontents

\newpage

\section{Introduction}
\label{sec:intro}

In this work we analyze the stochastically forced Boussinesq equations for the velocity field $\bfU=(u_1,u_2)$, (density-normalized) pressure $\pres$, and temperature $\theta$ of
a viscous incompressible fluid.  These equations take the form
\begin{align}
  &d \bfU + (\bfU \cdot \nabla \bfU) dt = (-\nabla \pres + \nu_1 \Delta \bfU + \gDir \de) dt, \quad \nabla \cdot \bfU = 0,
  \label{eq:B1}\\
  &d \de + (\bfU \cdot \nabla \de ) dt = \nu_2 \Delta \de dt + \sigma_\de d W,
  \label{eq:B2}
\end{align}
where the parameters $\nu_1, \nu_2 > 0$ are respectively the kinematic viscosity and thermal diffusivity of the fluid
and $\gDir = (0,g)^T$ with $g \neq 0$ is the product of the gravitational
constant and the thermal expansion coefficient.
The spatial variable $x = (x_1, x_2)$ belongs to a two-dimensional torus $\mathbb{T}^2$. That is, we impose periodic
boundary conditions in space.  We consider a degenerate stochastic forcing $\sigma_\de d W$, which acts only on a few Fourier modes and
exclusively through the temperature equation.

We prove that there exists a unique statistically invariant state of the system \eqref{eq:B1}--\eqref{eq:B2}.  More precisely, we establish:

\begin{Thm}
\label{thm:Main:res}
With white noise acting only on the two largest standard modes of the temperature equation \eqref{eq:B2},
\begin{align*}
	\sigma_\de dW = \alpha_1 \cos x_1 dW^{1} + \alpha_2 \sin x_1 dW^{2} +
   \alpha_3 \cos x_2 dW^{3} + \alpha_4 \sin x_2 dW^{4},
\end{align*}
the Markov semigroup corresponding to \eqref{eq:B1}--\eqref{eq:B2} possesses a unique ergodic invariant
measure.  Moreover this measure is mixing, and it obeys a law of large numbers and a central limit theorem.
\end{Thm}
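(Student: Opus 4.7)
The plan is to follow the by-now standard Hairer--Mattingly framework for establishing unique ergodicity of hypoelliptic SPDEs, pioneered for the 2D Navier--Stokes equations with degenerate noise, but we must overcome the substantial additional difficulty that the noise here reaches the velocity field $\bfU$ only \emph{indirectly}, through the buoyancy coupling $\gDir\de$. First I would set up the problem as a Markov process on a suitable phase space $H\subset H^s(\TT)^2\times H^s(\TT)$ for some $s>0$, establishing global well-posedness and the Feller property of the semigroup $P_t$ by pathwise energy estimates on \eqref{eq:B1}--\eqref{eq:B2}. Existence of at least one invariant measure $\mu$ then follows from the standard Krylov--Bogoliubov argument, using the dissipation provided by $\nu_1\Delta\bfU$ and $\nu_2\Delta\de$ together with the bound $\E\|\bfU(t)\|^2+\E\|\de(t)\|^2\lesssim 1$ to produce tight time averages.

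The core of the argument is reducing uniqueness, mixing, the LLN and the CLT to two ingredients: (i) a \emph{gradient/asymptotic strong Feller} estimate on $P_t$ and (ii) a \emph{topological irreducibility} statement for the support of the laws of $(\bfU,\de)(t)$. Once both are in hand, a Doob--Khas'minskii-type argument combined with the exponential tightness of the dynamics yields uniqueness of $\mu$ and weak-star convergence $P_t^*\nu\to\mu$ for every initial law $\nu$, and the LLN and CLT for H\"older observables then follow from the general results for Markov processes satisfying a spectral-gap-type contraction, e.g.\ via the Kipnis--Varadhan martingale approximation.

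The central and hardest step is (i). I would derive it by showing that the Malliavin covariance matrix $\mathcal{M}_t$ of the solution projected onto the first $N$ Fourier modes is, with high probability and uniformly in initial data drawn from sets of reasonable size, invertible with quantitative bounds on its smallest eigenvalue. This is where the Lie bracket/H\"ormander-type analysis enters. The noise injects energy only into $\cos x_1$, $\sin x_1$, $\cos x_2$, $\sin x_2$ in the temperature equation; brackets of the constant noise vector fields with the drift of \eqref{eq:B1}--\eqref{eq:B2} first produce velocity directions via the buoyancy term $\gDir\de$, and then second-generation brackets against the transport nonlinearities $\bfU\cdot\nabla\bfU$ and $\bfU\cdot\nabla\de$ generate new temperature and velocity modes. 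Iterating, I would show that every Fourier mode is reached after finitely many brackets. The obstacle flagged in the abstract is that these higher brackets have coefficients that depend nontrivially on the solution $(\bfU,\de)$, so one cannot simply invoke a constant-rank H\"ormander condition: the spectral lower bound on $\mathcal{M}_t$ must be extracted via a contradiction/smallness argument in the spirit of Hairer--Mattingly, but adapted to this solution-dependent algebraic structure, which I expect to be the main technical obstacle and to require the new spectral-analysis techniques alluded to in the abstract.

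For (ii), I would establish approximate controllability by constructing a deterministic control $h(t)$ driving $(\bfU,\de)$ from a given state to a neighborhood of any target; here one exploits the same bracket-generated reachability, now at the level of the control system rather than the Malliavin matrix, together with the dissipative structure which makes the low-mode controlled trajectories well-posed and the high-modes negligible. Combining this with a suitable Bismut--Elworthy--Li or Hairer--Mattingly-type gradient bound obtained from (i) delivers all four conclusions of Theorem~\ref{thm:Main:res}.
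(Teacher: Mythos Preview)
Your overall framework is essentially the paper's: Krylov--Bogoliubov for existence, then the Hairer--Mattingly asymptotic strong Feller machinery via Malliavin spectral bounds plus an irreducibility ingredient, with mixing/LLN/CLT deduced from the resulting Wasserstein contraction (the paper uses \cite{HairerMattingly2008,KomorowskiWalczuk2012} rather than Kipnis--Varadhan, but the role is the same). Two points deserve correction, one a simplification you are missing and one a genuine structural gap.

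\emph{Irreducibility.} You propose full approximate controllability to arbitrary targets via the bracket-generated reachability. The paper does something far simpler: it only needs \emph{weak} irreducibility, namely that $0$ lies in the support of every invariant measure. This follows immediately from dissipativity---without noise the system decays to zero, and the Brownian path stays small on $[0,T]$ with positive probability---with no controllability analysis at all. Your route would work in principle but is a substantial detour (Agrachev--Sarychev-type arguments with four scalar controls) for a conclusion obtainable in a paragraph.

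\emph{The bracket structure.} Your description of the H\"ormander step is too optimistic and misses the central obstruction the paper is built around. You write that first-order brackets with the drift produce velocity directions via buoyancy, and then ``second-generation brackets against the transport nonlinearities'' generate new modes, iterating to reach every Fourier mode. But in this system the naive second bracket \emph{vanishes}: for any two constant $\theta$-directions, $[[F(U),\sigma_{k}],\sigma_{k'}]=0$, because the only quadratic term that could survive is $B(\sigma_k,\sigma_{k'})+B(\sigma_{k'},\sigma_k)$ and $B$ annihilates inputs with zero vorticity component. The paper must therefore use longer chains $[[[F,\sigma_k],F],\sigma_{k'}]$, and even these do \emph{not} yield an exact basis: new $\theta$-directions come out cleanly, but the $\omega$-directions appear only as $\psi_j^m + J_{j,m}(U)$ with a $U$-dependent error term concentrated in the $\theta$-component. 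One cannot ``reach every Fourier mode after finitely many brackets'' in the usual sense. This is precisely why the paper formulates a \emph{generalized} H\"ormander condition (Definition~\ref{def:our:cond}, inequality~\eqref{eq:our:new:Hormander}) allowing $\epsilon(1+\|U\|_{\tilde H}^p)$ slack in the quadratic-form lower bound, and why the spectral analysis of $\mathcal{M}_{0,T}$ (Theorem~\ref{thm:mind:blowing:conclusion}) requires balancing a randomly growing error against a projection parameter $\tilde N(\epsilon)$ chosen via a triple logarithm. Your proposal acknowledges solution-dependence in the coefficients but not that the spanning itself is only approximate; without that observation the argument as sketched does not close.
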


The interaction between the nonlinear and stochastic terms in \eqref{eq:B1}--\eqref{eq:B2} is delicate, and leads us to develop a novel infinite-dimensional form of the
H\"{o}rmander bracket condition. Our analysis generalizes techniques developed in the
recent works \cite{MattinglyPardoux1,HairerMattingly06, HairerMattingly2008, HairerMattingly2011}, and we believe it has broader interest for systems of SPDEs.

\subsection{Historical Background and Motivations}

Going back to the early 1900's Rayleigh \cite{Rayleigh1916} proposed the study of buoyancy driven fluid convection problems
using the equations of Boussinesq \cite{Boussinesq1897} in order to explain the experimental work of B\'enard \cite{Benard1900}.
Today this system of equations plays a fundamental role in a wide variety of physical settings including climate and weather,
the study of plate tectonics, and the internal dynamical structure of stars,
see e.g. \cite{Pedlosky, Busse1989, Getling1998, DoeringGibbon1995, BodenschatzPeschAhlers2000} and references therein
for further background.

Physically speaking, the system \eqref{eq:B1}--\eqref{eq:B2} (with $\sigma_\theta=0$) arises as follows.
Consider a fluid with velocity $\bfU$ confined between two horizontal plates, where one fixes the temperature $\theta$ of the fluid on the top
$\theta_t$, and bottom $\theta_b$, with $\theta_t  \leq \theta_b$ (i.e. heating from below). It is typical to assume a linear relationship between density and
temperature, and to impose the Boussinesq approximation, which posits that the only significant role played by density variations in the
fluid arise through the gravitational terms, so that the fluid velocity $\bfU$ and temperature $\theta$ evolve according to \eqref{eq:B1}--\eqref{eq:B2}.
Due to the presence of viscosity, the fluid is not moving at the plates and one assumes no-slip boundary conditions $\bfU = 0$.\footnote{One can consider
consider the equations posed on an infinite channel, or assume periodic boundary conditions in the horizontal direction for both the velocity field and temperature.}

The form of the Boussinesq equations we consider in this work, that is \eqref{eq:B1}--\eqref{eq:B2} supplemented with periodic boundary conditions, is sometimes referred to in the physics community
as the `homogeneous Rayleigh-B\'enard' or `HRB' system.  It is derived as follows:  One transforms the governing equations we have just described
into an equivalent homogenous system by subtracting off a linear temperature profile.  This
introduces an additional excitation term in the temperature equation, and makes the temperature vanish at the plates.  As a numerical simplification, one then replaces these boundary conditions with periodic ones
(see \cite{LohseToschi2003, CalzavariniLohseToschiTripiccione2005}).  This periodic setting is controversial in the physics community since
it can produce unbounded (`grow-up') solutions, as has been observed both numerically and through explicit solutions, \cite{Calzavarini2006}.
We will consider \eqref{eq:B1}--\eqref{eq:B2} in situations with \emph{no temperature differential} ($\theta_t = \theta_b$, i.e. zero Rayleigh number),
so that the additional excitation term is not present and such unbounded solutions do not exist.  Extensions to more physically realistic boundary conditions for \eqref{eq:B1}--\eqref{eq:B2} will be addressed in forthcoming works.

In the mathematical community the deterministic Boussinesq equations with various boundary conditions on bounded and unbounded domains have attracted considerable attention.  In one line of work the
2D system has been interpreted as an analogue of 3D axisymmetric flow
where `vortex stretching' terms appear in the `vorticity formulation', see e.g. \cite{CannonDiBenedetto1980, ChaeImanuvilov1999,
CordobaFeffermandelaLlave2004, HouLi2005, Chae2006, DanchinPaicu2008,
DanchinPaicu2009, HmidiKeraani2009,AdhikariCaoWu2010, LariosLunasinTiti2010,AdhikariCaoWu2010, DanchinPaicu2011,
ChaeWu2012, CaoWu2013}.  Other authors have sought to provide a rigorous mathematical framework
for various physical and numerical observations in fluid convection problems,  see e.g. \cite{ConstantinDoering1996, ConstantinDoerin1999, ConstantinDoering2001,
MaWang2004, Wang2004a, Wang2004b, Wang2005, Wang2007, MaWang2007, Wang2008a, Wang2008b, SengulWang2013}.

Let us briefly motivate the stochastic forcing appearing in \eqref{eq:B1}--\eqref{eq:B2}.  Due to sensitivity with respect to initial data and parameters, individual solutions of the basic equations of fluid mechanics are unpredictable and seemingly chaotic.  However,
some of their statistical properties of solutions are robust.  As early as the 19th century J.V. Boussinesq conjectured that turbulent flow
cannot be solely described by deterministic methods, and indicated that a
stochastic framework should be used, see \cite{Stanisic1985}.  More recently the study of the Navier-Stokes equations
with degenerate white noise forcing has been proposed a proxy for the large-scale `generic'
stirring which is assumed in the basic theories of turbulence; this setting is ubiquitous in the turbulence literature, see e.g. \cite{Novikov1965, VishikKomechFusikov1979, Eyink96} and containing references.  In this view, invariant measures of the stochastic
equations of fluid dynamics would presumably contain the statistics posited by these theories.\footnote{In our context the 2D Batchelor-Krichanan
theory \cite{Kraichnan67,Batchelor69} is probably the most relevant statistical theory.  Note that its applicability would presumably require the imposition of a large scale damping term in
the momentum equation as explained in \cite{GlattHoltzSverakVicol2013} and see also e.g. \cite{KraichnanMontgomery80,Tabeling02, FoiasJollyManleyRosa02, Kupiainen10}.
Such a damping operator would not affect any of the conclusions drawn in the main results below.  We also mention recent work on the statistics of turbulence
in axisymmetric 3D flows \cite{ThalabardDubrulleBouchet2013,NasoThalabardColletteChavanisDubrulle2010}.
The governing equations for these systems bare a strong structural resemblance to \eqref{eq:B1}--\eqref{eq:B2} as has been pointed
out in e.g. \cite{HouLi2005}.}
The closely related question of unique ergodicity and mixing provides rigorous justification for the explicit and implicit measurement assumptions invoked by
physicists and engineers when measuring statistical properties of turbulent systems.

The existence of invariant measures for forced-dissipative systems is often easy to prove with classical tools, namely by making use of
the Krylov-Bogoliubov averaging procedure with energy (compactness) estimates,
but the uniqueness of these measures is a deep and subtle issue.
To establish this uniqueness one can follow the path laid out by the Doob-Khasminskii Theorem \cite{Doob1948,Khasminskii1960,ZabczykDaPrato1996},
and more recently expanded upon in \cite{HairerMattingly06,HairerMattingly2008}.
This strategy requires the proof of certain smoothing properties of the associated Markov semigroup, and to show that a common state can be
reached by the dynamics regardless of initial conditions (irreducibility).
Without stochastic forcing, solutions of our system converge to the trivial equilibrium, so that
the proof of irreducibility is straightforward in our context.  Thus the main challenge of this work is
to establish sufficient smoothing properties for the Markov semigroup associated to \eqref{eq:B1}--\eqref{eq:B2}.

\subsection{Smoothing and Hypoellipticity in Infinite Dimensional Systems}

In order to discuss the difficulties in establishing smoothing properties for the Markov semigroup
it is useful to recall the canonical relationship between stochastic evolution equations
and their corresponding Fokker-Planck (Kolmogorov) equations.
Consider an abstract equation on a Hilbert space $H$,
\begin{align}
	dU = F(U) dt + \sigma dW(t); \quad U(0) = U_0\,,
	\label{eq:abs:SPDE:intro}
\end{align}
where $\sigma dW(t) := \sum_{k =1}^N \sigma_k dW^k(t)$,
and $\{W_k\}_{k=1}^N$ is a (finite or infinite) collection of independent 1D Brownian motions.  We denote
solutions $U$ with the initial condition $U_0$ at time $t \geq 0$ by $U(t,U_0)$, and define the Markov semigroup
associated to \eqref{eq:abs:SPDE:intro} according to $P_t \Obs(U_0) = \E \Obs(U(t,U_0))$, where $\Obs: H \to \RR$ is any `observable'.
Then $\Psi(t) := P_t \Obs$ solves the Fokker-Planck equation corresponding to \eqref{eq:abs:SPDE:intro} given by
 \begin{align}
     \partial_t \Psi = \frac{1}{2} Tr[ (\sigma \sigma^*) D^2 \Psi] + \langle F(U), D\Psi \rangle, \quad \Psi(0) := \Obs
     \label{eq:fokk:er:plank}
 \end{align}
where we view $\sigma$ as an element in $\mathcal{L}(\RR^N, H)$.  The interested reader should consult \cite{Cerrai2001,DaPratoZabczyk2002} for more on the general theory of
second order PDEs posed on a Hilbert space.

There is a wide literature devoted to proving uniqueness and associated mixing properties of invariant measures for nonlinear stochastic PDEs
when $\sigma \sigma^*$ is non-degenerate or mildly degenerate.  See e.g. \cite{VishikKomechFusikov1979, Cruzeiro1, FlandoliMaslowski1, ZabczykDaPrato1996,
Ferrario1997,Mattingly1,Mattingly2, E2001, EMattinglySinai2001, BricmontKupiainenLefevere2001,EckmannHairer2001, MasmoudiYoung2002,
KuksinShirikyan1,KuksinShirikyan2, Mattingly2,Mattingly03, LeeWu2004, GoldysMaslowski2005,DaPratoRocknerRozovskiiWang2006,  AlbeverioFlandoliSinai2008,
Debussche2011a, KuksinShirikian12, ConstantinGlattHoltzVicol2013} and references therein.  Roughly speaking,
the fewer the number of driving stochastic terms in \eqref{eq:abs:SPDE:intro}, the more degenerate the diffusion in \eqref{eq:fokk:er:plank}, and the more difficult it becomes
to establish smoothing properties for the Markov semigroup $\{P_t\}_{t \geq 0}$.\footnote{On the other hand with more driving terms in \eqref{eq:abs:SPDE:intro},
 the well-posedness  theory may become more difficult.  For example this was the primary mathematical
challenge in the pioneering work \cite{FlandoliMaslowski1}.}
Moreover, while even the non-degenerate setting poses many interesting mathematical challenges, such stochastic
forcing regimes are highly unsatisfactory from the point of view of turbulence where one typically assumes a clear
separation between the forced and dissipative scales of motion.

Going back to the seminal work of H\"{o}rmander \cite{Hormander1967} (and cf. \cite{Kolmogoroff1934}), a theory of parabolic
regularity for finite dimensional PDEs of the general form  \eqref{eq:fokk:er:plank} with degenerate diffusion terms was developed.
This theory of `hypoellipticity' can be interpreted in terms of finite-dimensional stochastic ODEs for which these degenerate parabolic PDEs are
the corresponding Kolmogorov equations.  This connection suggested the potential for a more probabilistic approach, initiating
the development of the so-called Malliavin calculus; see \cite{Malliavin1978} and subsequent authors
\cite{Shigekawa1980, Bismut1981, Bismut1981a, Stroock1981, IkedaWatanabe1989, KusuokaStroock1984,KusuokaStroock1985,Norris1986, KusuokaStroock1987}.
In any case, the work of H\"{o}rmander and Malliavin has led to an extensive theory of unique ergodicity and mixing properties for finite-dimensional
stochastic ODEs.

By comparison, for stochastic PDEs (which correspond to the situation when \eqref{eq:fokk:er:plank} is posed on an infinite dimensional space) this theory of hypoellipticity remains in its
infancy.  Recently, however, in a series of groundbreaking works \cite{MattinglyPardoux1,HairerMattingly06, HairerMattingly2008, HairerMattingly2011},
a theory of unique ergodicity for degenerately forced infinite-dimensional stochastic systems has emerged.
These works produced two fundamental contributions: Firstly the authors demonstrated that to establish the
uniqueness of the invariant measure it suffices to prove time asymptotic smoothing (asymptotic strong Feller property) instead of `instantaneous' smoothing (strong Feller property).
This is an abstract result from probability and it applies to very general settings including ours.
Secondly the authors generalize the methods of Malliavin (and subsequent authors)
in order to prove the asymptotic strong Feller property for certain infinite-dimensional stochastic systems.  These works resulted in an infinite-dimensional analogue of the H\"{o}rmander bracket condition.

For the second point the application of the methods in \cite{HairerMattingly2011} is more delicate and must be considered on a case-by-case basis; it requires a careful analysis of the
interaction of the nonlinear and stochastic terms of the system.  In our situation the bracket condition in \cite{HairerMattingly2011} is not satisfied and needs to be replaced by a weaker notion.  This required us to rework and generalize many basic elements of their approach.

\subsection{The H\"{o}rmander bracket condition in Infinite Dimensions}\label{sec:Hor:int}

To explain our contributions at a more technical level, let us recall
what is meant by a `H\"ormander bracket condition' in the context of systems
of the general form \eqref{eq:abs:SPDE:intro}.
Define
\begin{align*}
  \mathcal{V}_0 := \mbox{span}\{ \sigma_k : k = 1, \ldots d\},
\end{align*}
and for $m > 0$ take
\begin{align}
  \mathcal{V}_m := \mbox{span}\{ [E ,  F], [E , \sigma_k], E : E \in \mathcal{V}_{m-1}, k =1, \ldots, d\} \,,
  \label{eq:finite:braks:span:set}
\end{align}
where $F=F(U)$ is the drift term in \eqref{eq:abs:SPDE:intro}, and for any Fr\'echet differentiable $E_1, E_2 : H \to H$,
\begin{align}
[E_1, E_2](U) := \nabla E_2(U) E_1(U) - \nabla E_1(U)  E_2(U).
\label{eq:Lie:brak:abs}
\end{align}
This operation $[E_1, E_2]$ is referred to as the  \emph{Lie bracket} of the two `vector fields' $E_1, E_2$.
In finite dimensions, that is when $H = \RR^N$, the classical H\"ormander condition is satisfied if $ \cup_{k \geq 0} \mathcal{V}_k = H$ for all $U \in H$.

This condition is not suitable for infinite-dimensional settings since the effect of randomness from the directly forced modes seems to weaken as it propagates out in infinite dimensional space. In \cite{HairerMattingly06,HairerMattingly2011} it was shown that, due to contractive properties of the flow on high modes, it is enough to require that the Lie brackets span all the unstable directions of the system. More precisely, the following non-degeneracy assumption is required:
For every $N > 0$, there is an $M > 0$ and a finite set $\mathfrak{B} \subset \mathcal{V}_M$ such that the quadratic form
\begin{align}
	\langle \mathcal{Q}(U) \phi, \phi \rangle := \sum_{b(U) \in  \mathfrak{B}} \langle \phi, b(U) \rangle^2
	\label{eq:quad:form:def}
\end{align}
satisfies, for each $\alpha > 0$,
\begin{align}
   \langle \mathcal{Q}(U) \phi, \phi \rangle \geq \alpha C \| \phi\|^2, \quad
   \textrm{ for every } \phi \in \mathcal{S}_{\alpha, N} := \{ \phi \in H: \| P_N \phi\|^2 > \alpha \|\phi\|^2\},
   \label{eq:Hormander:inf:dim:HM}
\end{align}
where $U$ solves \eqref{eq:B1}-\eqref{eq:B2}, $C > 0$ is a constant independent of $\alpha$, and $\{P_N\}$ is a sequence of projection operators onto successively larger spaces.\footnote{In fact the condition given
in \cite{HairerMattingly2011} is slightly more general than this. They also allow for the case when $C$ might dependent on $U$ (subject to suitable moment bounds) but they require that $C$ be
almost everywhere positive which is \emph{not} sufficient for our purposes.}

It was suggested in \cite{HairerMattingly2011} that for many equations with polynomial nonlinearities
brackets of the type $[\cdots [[F(U), \sigma_{k_1}], \cdots], \sigma_{k_m}]$
(where $\sigma_{k_1}, \cdots, \sigma_{k_m}$ are previously generated constant vector fields) suffice to build quadratic forms satisfying \eqref{eq:Hormander:inf:dim:HM}.
Indeed, this approach has now been successfully employed for several important examples, including the 2D and 3D Navier-Stokes equations
and the Ginzburg-Landau equations; see \cite{EMattingly2001,Romito2004,HairerMattingly06, HairerMattingly2011}.
In these works algebraic conditions on the set of stochastically forced modes have been derived which guarantee that any finite-dimensional space $H_N$ can be generated from these types of brackets; one obtains a collection of $U$-independent
elements $\mathfrak{B}$ which form an orthonormal basis for $H_N$.
This strategy has proven effective for certain scalar equations, but its
limitations are evident in slightly more complicated situations.  We believe
that \eqref{eq:B1}--\eqref{eq:B2} provides an illuminating case study of these difficulties
which has lead us to generalize \eqref{eq:Hormander:inf:dim:HM}.

Observe that our model is distinguished by two key structural properties.
Firstly, the buoyancy term $\mathbf{g}\theta$ is the only means of spreading the effect of the stochastic forcing
from the temperature equation, \eqref{eq:B1}, to the momentum equations, \eqref{eq:B2}.
In particular note that this buoyancy term is linear, and therefore vanishes after two Lie bracket operations with constant vector fields.  Secondly,
the advective structure in \eqref{eq:B1}--\eqref{eq:B2} leads to a delicate `asymmetry' in the nonlinear terms.
For example this means that a
more refined analysis is needed to address the spread of noise in the temperature equation \eqref{eq:B2} alone.\footnote{In simple
language this `advective structure' means that $\bfU \cdot \nabla \theta$ is the only second order term in the temperature equation \eqref{eq:B2}.}
Concretely we find that, by combining these observations, one obtains $[[F(U), \sigma_{k_1}],\sigma_{k_2}] = 0$
for any constant vector fields $\sigma_{k_1},\sigma_{k_2}$ concentrated in the
temperature component of the phase space.
In view of these discussions it is worth emphasizing
that our chosen forcing - that is, noise acting only through the temperature equations -
is the most interesting situation to consider from the point of view of ergodicity.

To get around these difficulties we make careful use of the interaction between the nonlinear, buoyancy and noise terms in \eqref{eq:B1}--\eqref{eq:B2} as follows:  We see that noise activated through the temperature equation is spread to the momentum equations.
It is then advected in the temperature equations and combines again with the noise terms to produce new directions in the temperature
component of the phase space.  These loose observations find concrete expressions in a series of Lie brackets of the form
$[[[[F, \sigma_{k_1}], F], \sigma_{k_2}]$ for constant vector fields $\sigma_{k_1}$ and $\sigma_{k_2}$ which belong to the $\theta$-component.  Remarkably,
we found that this chain of admissible brackets leads to new constant vector fields exclusively in the $\theta$-component of the phase space.
This surprising observation requires a series of detailed computations but is perhaps anticipated by the advective structure of the nonlinear terms.

It is in addressing the spread of noise in the momentum components of the phase space that the condition \eqref{eq:Hormander:inf:dim:HM} breaks down.  Forced directions in the temperature component are pushed to the momentum components through the buoyancy term.  However, due to the presence of the nonlinearity, they are `mixed' with terms which have an unavoidable and complicated directional (non-frequency-localized) dependence on $U$.
Crucially, due to the advective structure in \eqref{eq:B1}--\eqref{eq:B2}, these `error terms' are concentrated only in the temperature component of the phase
space.  We are therefore able to push these error terms to small scales by using the `pure' $\theta$ directions already generated (following the procedure described in the previous paragraph).  More precisely, in the language of \eqref{eq:finite:braks:span:set}, \eqref{eq:quad:form:def}, we are able to show that for every $N, \tilde{N}$ we can find $M> 0$ and sets $\mathfrak{B} \subset \mathcal{V}_M$ consisting of elements of the form $\Psi_k + J_k^{\tilde{N}}(U)$, where the sequence $\{\Psi_k\}$ (which are essentially
vectors consisting of trigonometric functions)
forms an orthonormal basis for the phase space and $J_k^{\tilde{N}}$ are
functions taking values in $H$ with a complicated dependence on $U$ but which are supported on `high' frequencies, i.e. wavenumbers larger than $\tilde{N}$.

These structural observations for \eqref{eq:B1}--\eqref{eq:B2} lead us to formulate the following generalization of \eqref{eq:Hormander:inf:dim:HM}.
\begin{Def}\label{def:our:cond}
Let $H$ and $\tilde{H}$ be Hilbert spaces with $\tilde{H}$ compactly embedded in $H$.
We say that \eqref{eq:abs:SPDE:intro} satisfies the \emph{generalized H\"ormander condition}
if for every $N> 0$ and every $\epsilon > 0$ there exist
$M = M(\epsilon, N) > 0$, $p \geq 1$, and a finite set $\mathfrak{B} \subset \mathcal{V}_M$ (where $\mathcal{V}_M$
is defined according to \eqref{eq:finite:braks:span:set})
such that
\begin{align}
 \langle \mathcal{Q}(U) \phi, \phi \rangle \geq C \left(\alpha - \epsilon(1 + \| U \|^p_{\tilde{H}}) \right)\| \phi\|^2
  \quad \textrm{ for every } \phi \in \mathcal{S}_{\alpha, N}\,,
 \label{eq:our:new:Hormander}
\end{align}
where $C$ is independent of $\alpha$ and $\epsilon$,
and $\mathcal{Q} = \mathcal{Q}_{N, \epsilon}$, $\mathcal{S}_{\alpha,N}$ are defined in \eqref{eq:quad:form:def},   \eqref{eq:Hormander:inf:dim:HM} respectively.
\end{Def}
Below we demonstrate that the condition \eqref{eq:our:new:Hormander}
is sufficient to establish suitable time asymptotic smoothing properties for the Markov semigroup associated to \eqref{eq:abs:SPDE:intro}\footnote{For clarity and simplicity we present all the arguments in the context of the Boussinesq system but the implications for the Markov semigroup for systems satisfying Definition~\ref{def:our:cond} could be shown to hold in a more general setting with essentially the same analysis.}
This allows us to apply the abstract results from \cite{HairerMattingly06, HairerMattingly2008,KomorowskiWalczuk2012} to complete the proof of Theorem~\ref{thm:Main:res}.  The main technical challenge arising from the modified condition \eqref{eq:our:new:Hormander} is that it requires us to significantly rework the spectral analysis of the Malliavin matrix appearing in \cite{MattinglyPardoux1,HairerMattingly06, HairerMattingly2011}.  The technically oriented reader can skip immediately to Section \ref{sec:Mal:spec:bnds} for further details.

\subsection{Organization of the Manuscript}

The manuscript is organized as follows:  In Section~\ref{sec:math:setting} we restate our problem in an abstract functional setting
and introduce some general definitions and notations. Then we reduce the question of uniqueness of the invariant measure to establishing
a time asymptotic gradient estimate on the Markov semigroup.
Next, in Section~\ref{sec:grad:control}
we explain how, using the machinery of Malliavin calculus, this gradient bound reduces to a control problem for a linearization of
\eqref{eq:B1}--\eqref{eq:B2}.   In turn
we show that this control problem may be solved by establishing appropriate spectral bounds for the Malliavin covariance matrix $\MM$.  Section~\ref{sec:Mal:spec:bnds}
is devoted to proving that our new form of the H\"ormander condition, \eqref{eq:our:new:Hormander}, implies these spectral bounds, modulo some technical estimates postponed for Section~\ref{sec:Braket:Est:Mal:Mat}.
Section~\ref{sec:Hormander:brak}
provides detailed Lie bracket computations leading to the modified condition \eqref{eq:our:new:Hormander}.
Finally in Section~\ref{sec:mixing}
we establish mixing properties, a law of large numbers and a central limit theorem for the invariant measure by making careful use of
recent abstract results from \cite{Shirikyan2006,HairerMattingly2008,KomorowskiWalczuk2012}.
Appendices \ref{sec:moment:est} and \ref{subsect:malliavin}  collect respectively statistical moment bounds for
\eqref{eq:B1}--\eqref{eq:B2} (and associated linearizations)
and a brief review of some elements of the Malliavin calculus used in our analysis.

\section{Mathematical Setting and Background}
\label{sec:math:setting}

In this section we formulate
\eqref{eq:B1}--\eqref{eq:B2} as an abstract evolution equation on a Hilbert space and  define its associated Markovian framework.
Then we formulate our main result in Theorem~\ref{thm:mainresult}
and give an outline of the proof, which
sets the agenda for the work below.

In the rest of the paper, we consider \eqref{eq:B1} in the equivalent, vorticity formulation.
Namely, if we denote $\om := \nabla^\perp \cdot \bfU = \pd_{x}u_2 - \pd_{y} u_1$, then by a standard
calculation we obtain
\begin{align}
  &d \om + (\bfU \cdot \nabla \om - \nu_1 \Delta \om) dt = g \partial_x \de dt, \quad
  \label{eq:B1:Vort}\\
  &d \de + (\bfU \cdot \nabla \de - \nu_2 \Delta \de)dt = \sigma_\de d W\,.
  \label{eq:B3:Vort}
\end{align}
The system \eqref{eq:B1:Vort}--\eqref{eq:B3:Vort} is posed on $\TT \times (0, \infty)$, where
 $\TT$ is the square torus $\TT = [-\pi, \pi]^2 = \RR^2/(2\pi \ZZ^2)$.\footnote{Of course, by rescaling we obtain our
 results for any square torus $\RR^2/(L \ZZ^2)$ and at the cost of more complicated notation and expressions below,
 one can also prove our results for non-square tori.}
 To close the system \eqref{eq:B1:Vort}--\eqref{eq:B3:Vort}, we, as usual, calculate $\bfU$ from $\om$
by Biot-Savart law, that is
$\bfU = K \ast \om$, where $K$ is the Biot-Savart kernel, so that $\nabla^\perp \cdot \bfU = \omega$
and $\nabla \cdot \bfU =0$, see e.g. \cite{Temam2001, MajdaBertozzi2002} for further details.
 As mentioned in the introduction, the physically interesting case of non-periodic
 domains will be the subject of a future work.

We next introduce a functional setting for the system \eqref{eq:B1:Vort}--\eqref{eq:B3:Vort}.
The phase space is composed of mean zero, square integrable
functions
\begin{align}
  H := \left\{ U := (\om, \de)^T \in (L^{2}(\TT))^{2}: \int_{\TT} \om dx = \int_{\TT} \de dx=0 \right\}
  \label{eq:H:def}
\end{align}
equipped with the norm
\begin{align}
   \|U \|^{2} :=  \frac{\lambda_1 \nu_1 \nu_2}{ g^{2}}\|\om\|_{L^{2}}^{2} +  \| \de \|^{2}_{L^{2}} \,,
   \label{eq:norm:def}
\end{align}
where $\lambda_1=1$ (we omit $\lambda_1$ below) is the principal eigenvalue of $-\Delta$ on $H$.
Observe that  this norm is equivalent to the standard norm on the space $(L^{2}(\TT))^{2}$.\footnote{Here and below we drop
the dependence of various function spaces on $\mathbb{T}^2$.} Our choice of the norm
is natural as both terms on the right hand side of \eqref{eq:norm:def} have been suitably non-dimensionalized.
The associated inner product on $H$ is denoted by $\langle \cdot, \cdot \rangle$.
Note furthermore that the zero mean property embodied in the definition of $H$, \eqref{eq:H:def} is
to be maintained by the flow \eqref{eq:B1:Vort}--\eqref{eq:B3:Vort}. The higher order Sobolev spaces are denoted
\begin{align*}
  H^{s} := \left\{ U := (\om, \de)^T \in (W^{s, 2}(\TT))^{2}: \int_{\TT} \om dx = \int_{\TT} \de dx=0 \right\}  \qquad  \textrm{ for any } s \geq 0,
\end{align*}
where $W^{s, 2}(\TT)$ is classical Sobolev-Slobodeckii space, and $H^s$ is equipped with the  norm
\begin{align*}
   \|U \|^{2}_{H^{s}} :=  \frac{\nu_1 \nu_2}{g^{2}}\|\om\|_{W^{s,2}}^{2} +  \| \de \|^{2}_{W^{s,2}}.
\end{align*}
For $s > 0$, we also denote $H^{-s} := (H^s)^*$, the dual space to $H^s$.

Since we need to estimates for linearizations of \eqref{eq:B1:Vort}--\eqref{eq:B3:Vort}
around initial conditions and noise paths, we encounter elements in $\mathcal{L}(H)$
and $\mathcal{L}(H, \mathcal{L}(H))$ (where $\mathcal{L}(X) = \mathcal{L}(X,X)$, $\mathcal{L}(X,Y)$ is the space
of linear operators from $X$ to $Y$).
See, for example \eqref{eq:def:Mal:J}, below.   We will sometimes abuse notation and donate
\begin{align}
  \|\mathcal{J}\| := \|\mathcal{J}\|_{\mathcal{L}(H)}, \qquad \|\mathcal{J}^{(2)}\| := \|\mathcal{J}^{(2)}\|_{\mathcal{L}(H, \mathcal{L}(H))},
  \label{eq:operator:abuse:call:911}
\end{align}
for any $\mathcal{J} \in \mathcal{L}(H)$, $\mathcal{J}^{(2)} \in \mathcal{L}(H, \mathcal{L}(H))$.

We  also frequently work with finite dimensional subspaces of $H$ along with
the projection operators onto these spaces.  Fix the trigonometric basis:
\begin{align}
  \sigma_k^0(x) :=
   \left(0, \cos(k\cdot x)\right)^T, \quad
  \sigma_k^1(x) :=  \left(0, \sin(k \cdot x)\right)^T\,,
  \label{eq:def:basis:sig}
\end{align}
and
\begin{align}
  \psi_k^0(x) :=
   \left( \cos(k\cdot x), 0\right)^T, \quad
  \psi_k^1(x) :=  \left(\sin(k \cdot x), 0\right)^T.
  \label{eq:def:basis:psi}
\end{align}
We denote
\begin{align*}
 \ZZ^{2}_{+} := \left\{ j = (j_1, j_2) \in \ZZ^2_0: j_1 > 0 \textrm{ or } j_1 = 0, j_2 > 0 \right\}
\end{align*}
and for any $N \geq 1$ define
\begin{align}
H_N:= \mbox{span}\{\sigma_k^l, \psi_k^l : |k| \leq N, l \in \{0, 1\}\},
\label{eq:H:N:finite:dim}
\end{align}
along with the associated projection operators
\begin{align}
&P_N: H \to H_N \textrm{ the orthogonal projection onto $H_N$ }, \quad Q_N :=  I - P_N\,.
\label{eq:H:N:proj:ops}
\end{align}
Note that $Q_N$ maps $H$ onto $\mbox{span}\{\sigma_k^l, \psi_k^l : |k| > N, l \in \{0,1\}\}$.

In order to rewrite \eqref{eq:B1:Vort}--\eqref{eq:B3:Vort} in a functional form we
introduce the following abstract operators associated to the various terms in the equation.
For $U := (\omega, \de)$ and $\tilde{U} = (\tilde{\omega}, \tilde{\de})$,
let $A : D(A) \subset H \to H$ be the linear symmetric positive definite operator defined by
\begin{align*}
  &A U := (-\nu_1 \Delta \om, - \nu_2 \Delta \de)^T,
\end{align*}
for any  $U \in D(A)$. Note that the scale of spaces $H^s$, $s \in \RR$ coincides with the interpolation spaces between $D(A)$ and $H$ and $A$ is a bounded operator from $H^{s+2}$ to $H^{s}$.

For the inertial (non-linear) terms define $B : H^1 \times H^1 \to H$ by
\begin{align}
  &B(U,\tilde{U}) := ( (K \ast \om) \cdot \nabla \tilde{\om}, (K \ast \om) \cdot \nabla \tilde{\de})^T,
    \label{eq:convection:term}
\end{align}
for $U, \tilde{U} \in H^1$. It is well known that
$\|K \ast \om \|_{H^s} \leq C \| \om \|_{H^{s-1}}$, and since $H^2 \hookrightarrow L^\infty $, we indeed obtain that
$B(U, \tilde{U}) \in H$. Also set $B(U) := B(U,U)$. Finally, for the `buoyancy term' define $G : H^1 \to H$ by
\begin{align}
  &GU = ( g \partial_x \de, 0)^T,
  \label{eq:boy:lin:term}
\end{align}
for $U \in H^1$.

Next we focus on the stochastic forcing terms appearing in \eqref{eq:B3:Vort}. We introduce a finite set $\mathcal{Z}\subset \ZZ^{2}_{+}$
which represents the forced directions in Fourier space.
The driving noise process $W := (W^{k, l})_{k \in \ZZZ, l = 0,1}$ is a $d := 2\cdot |\ZZZ|$-dimensional Brownian motion
defined relative to a filtered probability space $(\Omega, \FF,  \{\FF_t\}_{t \geq 0}, \Prb)$ and we refer to the resulting
tuple $\mathcal{S} = (\Omega, \FF,  \{\FF_t\}_{t \geq 0}, \Prb, W)$ as a \emph{stochastic basis}.\footnote{We may take the stochastic basis to be
the standard `Wiener space'.  Here we let $\Omega = \{\omega \in C([0, \infty); \RR^{2|\ZZZ|}): \omega(0) = 0\}$ with its Borelian $\sigma$-algebra and take  $\Prb$ to be the Wiener measure.
On this space the stochastic process defined by the evaluation map $W(t,\omega) = \omega(t)$ produces the statistics of Brownian motion.  The filtration $\FF_t$ is
then defined by the (completion) of the $\sigma$-algebra generated by $W(s)$ for $s \leq t$.  See e.g. \cite{KaratzasShreve} for further details.}  Let $\{e_k^l\}_{k \in \ZZZ, l = 0,1}$ be the standard
basis of $\RR^{2|\ZZZ|}$ and let $\{\alpha_{k}^l\}_{k \in \ZZZ, l = 0,1}$ be a sequence of non-zero numbers. We define a linear map $\sigma_\de : \RR^{2|\ZZZ|} \to H$
such that
\begin{align}
\sigma_\de e_k^l := \alpha_{k}^l \sigma_k^l \qquad \textrm{ for any } k \in \ZZZ, l \in \{0,1\}\,.
\label{eq:noise:cog:def}
\end{align}
where, $\sigma_k^l$ are the basis elements defined in \eqref{eq:def:basis:sig}.
Denote the Hilbert-Schmidt norm of $\sigma_{\theta}$
by
\begin{equation*}
  \|\sigma_\de \|^{2} := \|\sigma_\de^\ast \sigma_\de\| =  \sum_{\substack{k \in \mathcal{Z}\\ l \in \{0, 1\}}} (\alpha_k^l)^{2} \,.
\end{equation*}
We consider a stochastic forcing of the form\footnote{Although we assume that $|\ZZZ|$ is finite, our results also hold true if we consider random stirring
in all of the Fourier directions ($\ZZZ =\mathbb{Z}_+^2$) provided that we posit sufficient decay in the $\alpha_k^l$'s so that the resulting solutions are
sufficiently spatially smooth.
Note that it is for small values of $|\ZZZ|$ (and in particular when $|\ZZZ|$ is independent of $\nu_1, \nu_2, g$) that makes proof of ergodicity for \eqref{eq:B1:Vort}--\eqref{eq:B3:Vort}
difficult.  As such we focus on our attention on the `smallest' possible $\ZZZ$.
}
\begin{align}
 \sigma_\theta dW :=   \sum_{\substack{k \in \mathcal{Z}\\ l \in \{0,1\} }} \alpha_{k}^l  \sigma_k^l dW^{k,l}.
 \label{eq:stochastic:forcing:exact:form}
\end{align}
The index $\de$ of $\sigma_\de$ indicates that $\sigma_\de$ attains nontrivial values only in the second component, in other words
only the $\de$ component is directly forced.

\begin{Rmk}\label{rmk:constants:conven}\textnormal{
\begin{itemize}
\item[(i)] For the rest of the manuscript we fix the physical constants $\nu_1, \nu_2 > 0$, $g \neq 0$ and non-zero noise
coefficients $(\alpha_{k}^l)_{k \in \ZZZ, l \in \{0, 1\}}$.
Below constants $C$, $C_0$, $C^\ast$ etc. may change line by line and they implicitly
depend on $\nu_1, \nu_2 > 0$, $g \neq 0$, and $\alpha_{k}^l \neq 0$.  All other parameter dependencies
are indicated explicitly.
\item[(ii)] Here and below $d$ will always denote the number of driving Brownian motions $2 \cdot |\ZZZ |$.  Since
	the use of double indices in e.g. \eqref{eq:noise:cog:def}--\eqref{eq:stochastic:forcing:exact:form} can become
	notationally involved we  sometimes simply denote the basis for $\RR^d$
	as $\{e_1 \ldots e_d\}$.
\end{itemize}
}
\end{Rmk}

With these preliminaries in hand, the equations \eqref{eq:B1:Vort}--\eqref{eq:B3:Vort}
may be written as an abstract stochastic evolution equation on $H$
\begin{align}
  d U + (A U + B(U))dt = GUdt + \sigma_\de dW,\quad U(0) = U_0 \,,
  \label{eq:BE:abs}
\end{align}
where $U_0 \in H$.  We say that $U = U(t,U_0)$ is a solution of \eqref{eq:BE:abs} if it is $\mathcal{F}_t$-adapted,
\begin{align}
  U \in C([0,\infty); H) \cap L^{2}_{loc}([0,\infty); H^{1}) \quad  a.s.,
  \label{eq:basic:weak:reg}
\end{align}
and $U$ satisfies \eqref{eq:BE:abs} in the mild sense, that is,
\begin{align}
  U(t) = e^{-At} U_0 - \int_0^t e^{-A(t-s)} (B(U(s)) - GU(s)) ds + \int_0^t e^{-A(t-s)} \sigma_\de dW(s).
  \label{eq:BE:mild:form}
\end{align}
Note that $B, G : H^1 \to H$
and the semigroup $e^{-tA}$ maps $H$ to $H^2 \hookrightarrow H^1$.

The following proposition summarizes the basic well-posedness, regularity, and smoothness with respect to data for
\eqref{eq:BE:abs}.

\begin{Prop}[Existence/Uniqueness/Continuous Dependence on Data]
\label{prop:wellposed}
Fix $\nu_1, \nu_2 > 0$, $g  \in \RR$, and  a stochastic basis $\mathcal{S}$.
Given any $U_{0} \in H$, there exists a unique solution $U : [0,\infty) \times \Omega \to H$ of \eqref{eq:BE:mild:form} which is an $\mathcal{F}_t$-adapted process on $H$ with the regularity \eqref{eq:basic:weak:reg}.

For any $t \geq 0$ and any realization of the noise $W(\cdot,\omega)$, the map $U_0 \mapsto  U(t, U_{0})$ is Fr\'echet differentiable on $H$.
On the other hand, for every fixed $U_0 \in H$ and $t \geq 0$, $W \mapsto U(t, W)$ is Fr\' echet differentiable from
$C((0, t), \RR^{2|\ZZZ|})$ to $H$. Moreover, $U$ is (spatially) smooth for all positive times, that is, for any $t_{0} > 0$
and any $m \geq 0$,
\begin{align*}
U \in  C([t_0,\infty); H^{m}) \quad a.s.
\end{align*}
Finally,  $U$ satisfy certain moment bounds as detailed below in Lemma~\ref{lem:exp:moments}.
\end{Prop}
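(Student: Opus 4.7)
The plan is to reduce to a pathwise deterministic analysis by subtracting off the stochastic convolution, and then to handle regularity in data and noise by analyzing the natural linearizations.

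First, I would introduce the Ornstein-Uhlenbeck process
\begin{align*}
  Z(t) := \int_0^t e^{-A(t-s)} \sigma_\de\, dW(s),
\end{align*}
which, since $\sigma_\de$ has finite rank with smooth range concentrated in the $\de$ component, is an a.s.\ continuous process taking values in $H^m$ for every $m \geq 0$, with all statistical moments finite. Setting $V := U - Z$ reduces \eqref{eq:BE:abs} to the random PDE
\begin{align*}
  \partial_t V + A V + B(V + Z) = G(V+Z), \qquad V(0) = U_0,
\end{align*}
which is pathwise deterministic with a $\mathcal{C}^\infty$-in-space forcing from $Z$. For this random PDE I would run a standard Galerkin truncation in the basis \eqref{eq:def:basis:sig}--\eqref{eq:def:basis:psi}, obtaining approximations $V^N \in H_N$ whose existence is immediate as a finite system of random ODEs. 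The energy identity in the norm \eqref{eq:norm:def} yields
\begin{align*}
  \tfrac{d}{dt}\|V\|^2 + 2\nu_1 \|\nabla \om_V\|^2 + 2\nu_2 \|\nabla \de_V\|^2 \leq C(\|Z\|_{H^1}^p) + C\|V\|^2,
\end{align*}
after handling $B(V+Z)$ via the 2D orthogonality $\langle B(V), V\rangle = 0$ together with standard estimates of the cross terms $\langle B(V,Z), V\rangle$, $\langle B(Z,V), V\rangle$, $\langle B(Z), V\rangle$ using the smoothness of $Z$, and after controlling the linear buoyancy $GU$ by $\|U\|^2$. Gr\"onwall then produces uniform bounds in $L^\infty_tH\cap L^2_tH^1$ and compactness/monotonicity arguments in the standard Navier--Stokes style give a pathwise global solution $V$ with the regularity \eqref{eq:basic:weak:reg}. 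Uniqueness follows by writing the equation for the difference $V_1 - V_2$ of two solutions, testing against $V_1 - V_2$, using $\langle B(V_1)-B(V_2), V_1-V_2\rangle = \langle B(V_1-V_2,V_2), V_1-V_2\rangle$ together with the 2D Ladyzhenskaya inequality, and concluding by Gr\"onwall.

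For differentiability with respect to initial data, I would introduce the formal linearization along a given trajectory $U$: given $\xi \in H$, let $J_t \xi$ solve
\begin{align*}
  \partial_t (J_t\xi) + A(J_t\xi) + B(U, J_t\xi) + B(J_t\xi, U) = G(J_t\xi), \qquad J_0\xi = \xi.
\end{align*}
This linear equation is well-posed in $C([0,T];H)\cap L^2(0,T;H^1)$ by the same energy estimates, using the a priori control on $U$ in $L^2_tH^1$. To upgrade $J_t$ to the genuine Fr\'echet derivative of $U_0 \mapsto U(t,U_0)$ I would write the equation for the Taylor remainder $R^\xi_t := U(t,U_0+\xi) - U(t,U_0) - J_t\xi$ and bound its $H$ norm by $C(\omega,T,\|U_0\|)\|\xi\|^2$ using energy methods, which gives the claimed differentiability at arbitrary $t$ and a.s.\ realization. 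The same scheme with $\sigma_\de h$ in place of the initial datum — that is, solving $\partial_t D + AD + B(U,D)+B(D,U) = GD + \sigma_\de \dot h$ — yields Fr\'echet differentiability with respect to the noise path $W$ (this is the Malliavin derivative direction in $h$), with continuity in $h \in C((0,t);\mathbb{R}^{d})$ following by linearity and the same estimates.

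Finally, for the spatial smoothness, I would bootstrap. Because $Z$ is a.s.\ in $C_tH^m$ for every $m$, it suffices to smooth $V$. Applying $(-\Delta)^{s/2}$ to the equation for $V$ and testing, using the commutator/tame estimates for $B$ in 2D, one obtains for every $s \geq 1$ an estimate of the form
\begin{align*}
  \tfrac{d}{dt}\|V\|_{H^s}^2 + c\|V\|_{H^{s+1}}^2 \leq C(\|V\|_{H^s}^2+1)(\|V\|_{H^1}^2 + \|Z\|_{H^{s+1}}^2 + 1).
\end{align*}
Since $\|V\|_{H^1}\in L^2_{loc}$ a.s.\ by the energy estimate, and since for any $t_0>0$ parabolic smoothing from the $e^{-At}$ semigroup acting on the mild formulation \eqref{eq:BE:mild:form} produces $V(t_0)\in H^s$ a.s., Gr\"onwall iteratively upgrades $V\in C([t_0,\infty);H^s)$ for every $s$. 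Combined with $Z\in C([0,\infty);H^m)$ this gives the stated $U\in C([t_0,\infty);H^m)$. The moment bounds referenced in the last line will follow directly from Lemma \ref{lem:exp:moments}. The only delicate coordination is ensuring all pathwise estimates hold with random constants with the correct polynomial moment dependence on $\|U_0\|$ and on norms of $Z$, which is what makes the linearizations $J_t$ and $D$ usable in the subsequent Malliavin analysis.
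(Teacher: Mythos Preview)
Your proposal is correct and follows precisely the standard route the paper has in mind: the paper does not give a self-contained proof of this proposition but instead defers to the classical 2D stochastic Navier--Stokes well-posedness theory (citing \cite{KuksinShirikian12, HairerMattingly2011} and related references), noting that the additive, spatially smooth noise makes the argument routine. Your subtraction of the stochastic convolution, Galerkin/energy argument, linearization for Fr\'echet differentiability (which matches the paper's $\JJ_{s,t}$ in \eqref{eq:def:J}), and parabolic bootstrap for $H^m$ regularity are exactly the ingredients those references supply; the only Boussinesq-specific wrinkle---absorbing the buoyancy term $g\partial_x\theta$ via the weighted norm \eqref{eq:norm:def}---is handled by the paper in Appendix~\ref{sec:moment:est} in the same spirit as your energy inequality.
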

The well-posedness theory for \eqref{eq:BE:abs} is standard and
follows along the line of classical proofs for the stochastic 2D Navier-Stokes equations,
particularly since we are considering
the case of a spatially smooth, additive noise.  See e.g. \cite{KuksinShirikian12} for a detailed recent account close to our
setting and e.g.  \cite{Rozovskii1990,ZabczykDaPrato1992, PrevotRockner2007, HairerMattingly2011} and \cite{ConstantinFoias88,Temam2001, MajdaBertozzi2002}
for more general background on the theory of infinite
dimensional stochastic systems and mathematical fluids respectively.  Some moment estimates are less standard, but can be found in a similar
setting to ours in \cite{HairerMattingly06,Debussche2011a, KuksinShirikian12}.  For the purpose of completeness and in order to carefully track
dependencies on parameters we include some details in Appendix~\ref{sec:moment:est}.

For simplicity we set
\begin{align}
  F(U) := -AU - B(U) + GU
  \label{eq:drift:part:BE}
\end{align}
and rewrite \eqref{eq:BE:abs} in a more compact notation
\begin{align*}
   dU = F(U) dt + \sigma dW, \quad U(0) = U_0,
\end{align*}
which is particularly useful for the H\" ormander type Lie bracket computations
in Section~\ref{sec:Hormander:brak}.

\subsection{The Markovian Framework and the Main Result}
\label{sec:Markov:Framework}
With the basic well-posedness of \eqref{eq:BE:abs} in hand we next
describe its associated Markovian semigroup.  Let us first recall some further spaces.
Denote by $M_{b}(H)$ and $C_{b}(H)$ respectively the spaces of bounded measurable and bounded continuous
real valued functions on $H$ equipped with the supremum norm. We also define
\begin{align}
  \ObsSet_\ObsGC := \{ \Obs \in C^1(H) :  \|\Obs\|_\ObsGC < \infty \},
  \quad \textrm{ where } \quad
   \| \Obs \|_{\ObsGC} := \sup_{U_0 \in H} \left( \exp(- \ObsGC \|U_0\|) ( |\Obs(U_0)| + \| \nabla\Obs(U_0) \|) \right) \,,
   \label{eq:W1infty:space:norm}
\end{align}
for any $\ObsGC > 0$. Finally take $Pr(H)$ to be the collection of Borelian probability measures on $H$.

Fix $U = U(t, U_0) = U(t, U_0, W)$
and define the \emph{Markovian transition function} associated to \eqref{eq:BE:abs} by
\begin{align}
  P_t(U_0, E) = \Prb(U(t,U_0) \in E) \qquad \textrm{ for any } U_0 \in H, E \in \mathcal{B}(H), t \geq 0\,,
  \label{eq:Markov:trans:fn}
\end{align}
where $\Prb$ is defined relative to the fixed stochastic basis (cf. Proposition~\ref{prop:wellposed}) and $\mathcal{B}(H)$ is the collection of Borel sets on $H$.
We define the Markov semigroup $\{P_t\}_{t \geq 0}$ with $P_t : M_{b}(H) \to M_{b}(H)$ associated to \eqref{eq:BE:abs}
by
\begin{align}
   P_{t} \Obs(U_{0}) := \E \Obs(U(t, U_{0})) = \int_{H} \Obs(U) P_{t}(U_{0}, dU) \qquad \textrm{ for any } \Obs \in M_{b}(H), t \geq 0.
   \label{eq:Markov:semigroup}
\end{align}
By Proposition~\ref{prop:wellposed} and the dominated convergence theorem,
$\{P_{t}\}_{t \geq 0}$ is \emph{Feller} meaning that $P_{t}: C_{b}(H)
\to C_{b}(H)$ for every $t \geq 0$.  The dual operator $P_{t}^{*}$
of $P_{t}$, which maps $Pr(H)$ to itself,
is given by
\begin{align}\label{eq:Markov:dual}
   P_{t}^{*} \mu(A) := \int_{H}  P_{t}(U_{0}, A) d\mu(U_{0}),
\end{align}
over $\mu \in \mbox{Pr}(H)$. Recall that $\mu \in Pr(H)$ is an \emph{invariant measure} if it is a fixed point of $P_{t}^{*}$ for every $t \geq 0$, that is, $P_{t}^{*} \mu = \mu$.
Such a measure $\mu$ is \emph{ergodic} if $P_{t}$ is an ergodic map relative to $\mu$ for every $t \geq 0$.
In other words $P_{t} \chi_{A} = \chi_{A}$, $\mu$ a.e. implies $\mu(A) \in \{0, 1\}$.

We now formulate our main result, which  asserts
that statistically invariant states of \eqref{eq:B1:Vort}--\eqref{eq:B3:Vort} are unique
and have strong attraction properties.

\begin{Thm}\label{thm:mainresult}
  If $\ZZZ = \{(1,0),( 0,1)\}$, then there exists a unique invariant measure $\mu_\ast$ associated to \eqref{eq:BE:abs} and
for each $t \geq 0$ the map $P_t$ is ergodic relative to $\mu_\ast$. Moreover
there exists a constant $\ObsGC^*$ such that $\mu_*$ satisfies for each $\ObsGC \in (0, \ObsGC^*)$
\begin{itemize}
\item[(i)] (Mixing)  There is $\gamma = \gamma(\ObsGC) > 0$ and $C = C(\ObsGC)$ such that
\begin{align}
     \left| \E \Obs(U(t, U_0)) - \int_H \Obs(\bar{U}) d \mu_\ast(\bar{U}) \right| \leq C \exp(- \gamma t +\ObsGC \|U_0\|) \| \Obs\|_{\eta} \,.
     \label{eq:main:thm:mixing:cond}
\end{align}
for any $\Obs \in \ObsSet_\ObsGC$, $U_0 \in H$ and any $t \geq 0$.
\item[(ii)] (Weak law of large numbers) For any $\Obs \in \ObsSet_\ObsGC$ and any $U_0 \in H$
\begin{align}
\lim_{T \to \infty} \frac{1}{T} \int_0^T \Obs(U(t, U_0))\, dt = \int_H \Obs(\bar{U}) d \mu_\ast(\bar{U}) =: m_\Obs, \quad \textrm{  in probability}
\,.
\label{eq:WLLN}
\end{align}
\item[(iii)] (Central limit theorem) For every $\Obs \in \mathcal{B}_\eta$ and every $U_0 \in H$,
\begin{align}
\lim_{T \to \infty} \Prb \left( \frac{1}{\sqrt{T}} \int_0^T (\Obs(U(t, U_0)) - m_\Obs) \, dt < \xi \right) = \mathcal{X}(\xi)
\label{eq:CLT}
\end{align}
for any $\xi \in \RR$ where $\mathcal{X}$ is the distribution function of a normal random variable with zero mean and variance equal to
\begin{align*}
\lim_{T \to \infty} \frac{1}{T} \E \left( \int_0^T (\Obs(U(t, U_0)) - m_\Obs) \, dt \right)^2 \,.
\end{align*}
\end{itemize}
\end{Thm}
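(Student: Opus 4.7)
The plan follows the Hairer--Mattingly framework for hypoelliptic SPDEs, reducing the theorem to a time-asymptotic gradient bound on the Markov semigroup $P_t$ which in turn reduces to a spectral lower bound on the Malliavin covariance matrix $\mathcal{M}$, verified via the generalized H\"ormander condition of Definition~\ref{def:our:cond}.

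First I would settle existence of an invariant measure by Krylov--Bogoliubov. The energy-type bounds for \eqref{eq:BE:abs} (the exponential moment estimates stated in Lemma~\ref{lem:exp:moments} of the appendix) yield tightness in $\mathrm{Pr}(H)$ of the time averages $\mu_T(U_0,\cdot) := \frac{1}{T}\int_0^T P_t^\ast \delta_{U_0}\, dt$, and Feller continuity of $P_t$ (Proposition~\ref{prop:wellposed}) lets us extract an invariant limit point $\mu_\ast$. Irreducibility at the origin is nearly automatic: without noise the dissipative term $-AU$ drives solutions to zero, so by continuous dependence on $W$ arbitrarily small Brownian paths produce arbitrarily small solutions, hence every open neighborhood of $0$ is charged with positive probability by $P_t(U_0,\cdot)$ for $t$ large enough.

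The core step is uniqueness, which by the asymptotic strong Feller criterion of \cite{HairerMattingly06} combined with topological irreducibility reduces to a gradient estimate of the form
\begin{align*}
 \|\nabla P_t \Phi(U_0)\| \leq C(\|U_0\|)\bigl(\|\Phi\|_\infty + \delta(t)\,\|\nabla\Phi\|_\infty\bigr),
 \qquad \delta(t)\to 0 \text{ as } t\to\infty,
\end{align*}
valid for $\Phi \in C^1_b(H)$. Following the Malliavin calculus route laid out in Section~\ref{sec:grad:control}, one writes
\begin{align*}
 \nabla P_t\Phi(U_0)\cdot\xi \;=\; \E\bigl[\nabla\Phi(U(t))\cdot (J_{0,t}\xi - \mathcal{A}_t v)\bigr] \;+\; \E\bigl[\Phi(U(t))\textstyle\int_0^t v\cdot dW\bigr],
\end{align*}
where $J_{0,t}$ is the linearization, $\mathcal{A}_t$ is the Malliavin derivative operator, and the control $v$ is chosen to make $\mathcal{A}_t v$ approximate $J_{0,t}\xi$ on low Fourier modes while the high-mode residual is absorbed by the dissipative contraction $\|Q_N J_{0,t}\|$. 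Choosing $v$ as a regularized pseudo-inverse $\mathcal{A}_t^\ast(\mathcal{M}_t + \beta)^{-1}P_N J_{0,t}\xi$, the problem reduces to proving that with large probability $\langle \mathcal{M}_t \phi,\phi\rangle \gtrsim \beta^{1/p}\|\phi\|^2$ for $\phi$ essentially supported on $P_N H$.

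I expect this spectral bound on $\mathcal{M}_t$ to be the main obstacle, and it is addressed in Section~\ref{sec:Mal:spec:bnds}. The strategy is the Norris-type lemma argument: if $\langle \mathcal{M}_t \phi,\phi\rangle$ were small with significant probability, iterated applications of Itô's formula to $\langle J_{s,t}^\ast\phi, \sigma_k\rangle$ and its Lie-bracket descendants would force each bracket vector field $b \in \mathfrak{B}\subset\mathcal{V}_M$ to satisfy $\langle\phi,b(U(s))\rangle\approx 0$. Definition~\ref{def:our:cond} is designed precisely so that this cannot hold for $\phi\in\mathcal{S}_{\alpha,N}$: choosing $\epsilon$ small relative to the moment-controlled quantity $1+\|U\|_{\tilde H}^p$ yields the quantitative lower bound on the quadratic form, which in turn translates into a polynomial lower tail bound on $\mathcal{M}_t$. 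Producing the set $\mathfrak{B}$ is the content of Section~\ref{sec:Hormander:brak}: noise in the $\theta$-equation is propagated through the buoyancy term $G$ and the advective nonlinearity by brackets of type $[[[[F,\sigma_{k_1}],F],\sigma_{k_2}]$, which span the low-frequency $\theta$-directions outright and, modulo high-frequency $U$-dependent remainders (concentrated in $\theta$), the low-frequency $\omega$-directions as well; the remainders are then killed by the error tolerance $\epsilon(1+\|U\|_{\tilde H}^p)$.

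Once the uniqueness and the gradient estimate \eqref{eq:main:thm:mixing:cond} with exponential rate are established for test functions in $\mathcal{O}_\varsigma$, the remaining conclusions are abstract. Part~(i), exponential mixing in the weighted metric based on $\exp(\varsigma\|U\|)$, follows from the spectral gap machinery of \cite{HairerMattingly2008} combined with the Lyapunov structure supplied by Lemma~\ref{lem:exp:moments}. The weak law of large numbers in part~(ii) is then a consequence of uniqueness together with Birkhoff's ergodic theorem applied along the invariant measure, extended to arbitrary $U_0$ via the mixing bound. Finally, the central limit theorem in part~(iii) follows by verifying the hypotheses of the abstract CLT of \cite{KomorowskiWalczuk2012}: the exponential mixing gives summable decay of correlations, and the exponential moment bounds give the required integrability of $\Phi - m_\Phi$ along trajectories, yielding the Gaussian limit with the stated asymptotic variance. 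These mixing, LLN, and CLT deductions will be carried out in Section~\ref{sec:mixing}.
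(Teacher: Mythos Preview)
Your proposal is correct and tracks the paper's argument closely: Krylov--Bogoliubov for existence, the Hairer--Mattingly asymptotic strong Feller criterion reduced to the gradient estimate of Proposition~\ref{prop:grad:est:MS}, the Malliavin control of Section~\ref{sec:grad:control} reduced to the spectral bound on $\mathcal{M}$ (Sections~\ref{sec:Mal:spec:bnds}--\ref{sec:Braket:Est:Mal:Mat} via the generalized H\"ormander condition), and the abstract machinery of \cite{HairerMattingly2008,KomorowskiWalczuk2012} for parts (i)--(iii). Two small deviations worth flagging: the paper's control in \eqref{eq:control:def:iterative} does \emph{not} carry the projection $P_N$ you inserted---the high/low splitting is applied afterward to the error $\rho_{n+2}$ via $P_N\mathcal{R}^\beta$ and $Q_N\mathcal{R}^\beta$---and the LLN in (ii) is obtained directly from \cite{KomorowskiWalczuk2012} rather than via Birkhoff, since Birkhoff alone yields convergence only for $\mu_\ast$-a.e.\ initial data, not for every $U_0\in H$.
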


\begin{Rmk}
\mbox{}
\begin{itemize}
\item[(i)]  We take $\ZZZ = \{ (0,1), (1,0) \}$ in the statement of Theorem~\ref{thm:mainresult}
for simplicity and clarity of exposition only. There are many other choices of finite $\ZZZ$ that imply our main results.
For example, our approach applies trivially to any $\ZZZ$ with $\{ (0,1), (1,0) \} \subset \ZZZ$.   More generally, using
combinatorial arguments as in \cite{HairerMattingly06} and  straightforward modifications
of our proofs, one can show that our results hold if the integer linear combinations of
elements in $\ZZZ$ generate $\ZZ^2$, for an easily verifiable criterion see \cite[Remark 2.2]{HairerMattingly06}.

\item[(ii)] The most interesting case for the study of ergodicity for the Boussinesq system, is to consider
stochastic forcing acting through the temperature equation only.  Indeed, if the random stirring acts exclusively
through the momentum equation (i.e. in \eqref{eq:B1:Vort}), then the temperature $\theta$, which is advected by the flow
in \eqref{eq:B3:Vort}, decays exponentially.  Thus, in this case, conditions on the configuration of the forcing
and the associated analysis is very close to \cite{HairerMattingly06}.
On the other hand if the noise acts in both equations the proof is similar to ours and in many ways easier.\footnote{Indeed one observes
that $[[F(U), \sigma_k^l], \psi_{k'}^{l'}] \propto B(\psi_{k'}^{l'}, \sigma_k^l) \neq 0,  [[F(U), \psi_k^l], \psi_{k'}^{l'}] \propto B(\psi_{k'}^{l'}, \psi_k^l) \neq 0$ which allows us to generate
new pure directions in both the temperature and vorticity components of the phase space; compare with Figure~\ref{fig:brak:1} in Section~\ref{sec:Hormander:brak} and the discussion
in Section~\ref{sec:Hor:int}.}  We remark that the case
when the forcing is non-degenerate (nontrivial on all Fourier modes) in both the momentum and temperature equations, was
addressed in \cite{LeeWu2004} via coupling methods closely following the approach in \cite{EMattinglySinai2001}.  See also
\cite{Ferrario1997}.

\item[(iii)] \label{rem:forcing} We emphasize that, when the random perturbation acts in the temperature equation only,
this leads to a different geometric
criteria for the noise structure compared to \cite{HairerMattingly06}.
To see this difference at a heuristic level
we write \eqref{eq:B1:Vort}--\eqref{eq:B3:Vort} in the Fourier representation:
\begin{align}
	&\frac{d \om_k}{dt}   + \nu |k|^2\om_{k} + \sum_{l+m=k}\langle l,m^{\perp}\rangle\Big(\frac{1}{|l|^2}-\frac{1}{|m|^2}\Big)\om_{l}\om_{m} = -ig \cdot k \de_k,
	\label{eq:om:fourier}\\
  &d \de_k + \left( \eta |k|^2\de_{k}  -\sum_{l+m=k}\frac{\langle l,m^{\perp}\rangle}{|m|^2}\de_{l}\om_{m} \right) dt = \indFn{k \in \mathcal{Z}} dW^k.	\label{eq:de:fourier}
\end{align}
Observe that at first only Fourier modes of $\de$ in $\ZZZ$ are excited.  Then, through the buoyancy term
on the right hand side of \eqref{eq:om:fourier}, the Fourier modes of $\om$ in $\ZZZ$
become excited.  This is a purely formal argument as at the same time many modes in $\theta$ become excited.
If all elements of $\mathcal{Z}$ have the same norm,
such an excitation is not sufficient for the nonlinearity in
 \eqref{eq:om:fourier}, acting on its own, to propagate the noise to higher Fourier modes.\footnote{In fact, this structure in the
 nonlinearity  is the reason for the additional condition that two modes of different length need to be stochastically forced in \cite{HairerMattingly06}}
However, excitation in
the Fourier modes of $\omega$ in $\mathcal{Z}$ propagates to higher Fourier
modes in $\theta$ via the nonlinearity of \eqref{eq:de:fourier}; here the norm restriction is clearly absent. Thus,
there is an additional mixing mechanism in the Boussinesq system compared to the Navier-Stokes equation.

\item[(iv)] The class of functions for which the mixing condition \eqref{eq:main:thm:mixing:cond} holds
is slightly restrictive.  While it does allow for observables like individual Fourier coefficients of the solution or
the total energy of solutions, a further analysis is required to extend to $\Obs$'s that involve pointwise
spatial observations of the flow, for example `structure functions'.  We leave these questions for future work.
\end{itemize}
\end{Rmk}

\subsection{Existence and Uniqueness of Invariant Measures and the Asymptotic Smoothing of the Markov Semigroup}
\label{sec:MH:ASF:to:uniqueness}

Following \cite{HairerMattingly06} (and cf.  \cite{HairerMattingly2008, KomorowskiWalczuk2012}), we
explain how the proof of  Theorem~\ref{thm:mainresult} can be
essentially reduced to establishing a time asymptotic  gradient estimate,
\eqref{eq:main:grad:est}, on the Markov semigroup.

By the Krylov-Bogoliubov averaging method \cite{KryloffBogoliouboff1937}
it is immediate to prove the existence of an invariant measure  in the present setting.
Indeed, fix any $U_{0} \in H$, $T > 0$ and define the probability measures $\mu_T \in \Pr(H)$ as
\begin{align*}
  \mu_{T}(A) = \frac{1}{T} \int_{0}^{T} \Prb(U(t,U_{0}) \in A) dt
\qquad (A \in \mathcal{B}(H))\,.
\end{align*}
From \eqref{eq:est:h11} and \eqref{eq:est:h12} it follows that
\begin{align}
    \min\{ \nu_1, \nu_2\} \frac{1}{T} \E \int_{0}^{T} \| U \|^{2}_{H^{1}} dt \leq \frac{\|U_{0}\|^{2}}{T} + \|\sigma_\theta\|^{2}.
   \label{eq:feller:bnd}
\end{align}
Thus for any $R > 0$ the set $B_{H^1}(R) := \{U \in H: \|U\|_{H^1} \leq R\}$ is compact in $H$ and by the Markov inequality
\begin{align*}
\mu_T(B_{H^1}(0, R)) = \frac{1}{T} \int_{0}^{T} \Prb(\|U(t,U_{0})\|_{H^1} \leq R) dt
&\geq 1 - \frac{1}{T R^2} \E \int_{0}^{T} \| U \|^{2}_{H^{1}} dt  \\
&\geq
1 -  \frac{1}{R^2 \cdot \min\{ \nu_1, \nu_2\}} \left( \frac{\|U_{0}\|^{2}}{T} + \|\sigma_\theta\|^{2} \right) \,,
\end{align*}
and therefore $\{ \mu_{T}\}_{T \geq 1}$
is tight, and hence weakly compact.  Making use of the Feller property it then follows that
any weak limit of this sequence is an invariant measure of \eqref{eq:BE:abs}.\footnote{
Note that the Feller property and the bound \eqref{eq:feller:bnd} also show that the set of invariant
measures $\mathcal{I}$ for \eqref{eq:BE:abs} is a compact, convex set.  Since
the extremal points of
$\mathcal{I}$ are
ergodic invariant measures for \eqref{eq:BE:abs}, we therefore infer the existence of an ergodic invariant measure
for \eqref{eq:BE:abs}.  This also shows that if the invariant measure is unique, it is necessarily ergodic.
See e.g. \cite{ZabczykDaPrato1996} for further details.}

We now turn to the question of uniqueness which in contrast to existence is highly non-trivial.
The classical theoretical foundation to our
approach is the Doob-Khasminskii theorem,  see \cite{ZabczykDaPrato1996}.
While this approach has been fruitful for a stochastic perturbations
acting on all of the Fourier modes (see e.g. \cite{ZabczykDaPrato1996}); it requires an instantaneous (or at least finite time) smoothing of $P_{t}$,
known as the \emph{strong Feller property}.\footnote{More precisely, $P_t$ is said to be strong
Feller for some $t > 0$ if $P_t: M_b(H) \to C_b(H)$.}  This property is not
expected to hold in the current hypo-elliptic setting.

In recent works \cite{HairerMattingly06, HairerMattingly2008,
HairerMattingly2011} it has been shown that the strong Feller property
can be replaced by a much weaker notion.  The following theorem from
is \cite{HairerMattingly06} is the starting point for all of the work that follows below.

\begin{Thm}[Hairer--Mattingly \cite{HairerMattingly06}]
	\label{thm:MH:ASF:WI}
   Let $\{P_{t}\}_{t \geq 0}$ be a Feller Markov semigroup
   on a Hilbert space $H$ and assume that the set of invariant measures $\mathcal{I}$ of $\{P_{t}\}_{t \geq 0}$
   is compact.   Suppose that
   \begin{itemize}
      \item[(i)] the semigroup $\{P_{t}\}_{t \geq 0}$ is \emph{weakly irreducible} namely there exists $U_0 \in H$ such that
   $\mu( B(\epsilon, U_0)) > 0$ for every $\epsilon > 0$ and every $\mu \in Pr(H)$ which is invariant under $P_t^*$.  In other words
   there exists a point common to the support of every invariant measure.
   \item[(ii)] There exists a non-decreasing sequence $\{t_n\}_{n \geq 0}$ and a sequence $\{\delta_n\}_{n \geq 0}$, with $\delta_n \to 0$ such that
     \begin{align}
	\| \nabla P_{t_n} \Obs(U_0) \| \leq C \left( \| \Obs\|_\infty  + \delta_n \| \nabla \Obs\|_\infty\right) \,,
	\label{eq:main:grad:est:abs}
  \end{align}
  for every $\Obs \in C^1_b(H)$ and where the constant $C$ may depend on $\|U_0\|$ (but not on $\Obs$).\footnote{
  Actually, the `gradient estimate' \eqref{eq:main:grad:est:abs} is a sufficient condition for a more general notion of infinite time
smoothing referred to as the \emph{asymptotically strong Feller property}.   A precise topological definition
using the Kantorovich-Wasserstein distance can be found in \cite{HairerMattingly06}.}
   \end{itemize}
   Then the collection of invariant measures $\mathcal{I}$ contains at most one element.
\end{Thm}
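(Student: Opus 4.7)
The plan is to argue by contradiction. Suppose $|\mathcal{I}| \geq 2$. Applying Krein-Milman to the compact convex set $\mathcal{I} \subset Pr(H)$ yields at least two distinct extremal points, which are necessarily \emph{ergodic} invariant measures $\mu_1 \neq \mu_2$. A classical consequence of the Birkhoff ergodic theorem then shows that $\mu_1$ and $\mu_2$ are mutually singular: there exist disjoint Borel sets $S_1, S_2$ with $\mu_i(S_i) = 1$. On the other hand, hypothesis (i) supplies a common point $U_0 \in \mathrm{supp}(\mu_1) \cap \mathrm{supp}(\mu_2)$. The goal is to show that the gradient estimate \eqref{eq:main:grad:est:abs} makes this configuration impossible by providing asymptotic smoothing of the Markov semigroup at $U_0$.

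The first step is to reinterpret \eqref{eq:main:grad:est:abs} as a Kantorovich contraction relative to a carefully chosen sequence of pseudo-metrics. Introduce the totally separating family $d_n(U,V) := 1 \wedge \delta_n^{-1}\|U-V\|$ on $H$, and let $d_n^W$ denote the associated Kantorovich-Wasserstein distance on $Pr(H)$. By Kantorovich-Rubinstein duality, $d_n^W(\nu_1,\nu_2)$ equals the supremum of $|\int \Phi \, d\nu_1 - \int \Phi \, d\nu_2|$ over $\Phi$ with oscillation at most $1$ and ordinary Lipschitz constant at most $\delta_n^{-1}$. For such $\Phi$, the bound \eqref{eq:main:grad:est:abs} gives $\|\nabla P_{t_n}\Phi(W)\| \leq C(\|W\|)(1 + \delta_n \cdot \delta_n^{-1})$, which is uniform in $n$. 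Integrating along the segment from $U_0$ to a nearby $V_0$ and taking the supremum over admissible $\Phi$ produces
\begin{align*}
   d_n^W(P_{t_n}^*\delta_{U_0}, P_{t_n}^*\delta_{V_0}) \leq C(\|U_0\|)\|U_0 - V_0\|,
\end{align*}
which vanishes as $V_0 \to U_0$ uniformly in $n$. This is precisely the \emph{asymptotic strong Feller property} at $U_0$ in the formulation of \cite{HairerMattingly06}.

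The decisive step is to deduce $\mu_1 = \mu_2$ from asymptotic smoothing at a common support point. Fix $\epsilon > 0$ and choose $\gamma > 0$ so that $d_n^W(P_{t_n}^*\delta_{U_0}, P_{t_n}^*\delta_V) < \epsilon$ whenever $\|V - U_0\| < \gamma$ and $n$ is large. Using invariance $\mu_i = P_{t_n}^*\mu_i$, disintegrate $\mu_i = \int P_{t_n}^*\delta_V \, d\mu_i(V)$ and split the integrals over the ball $B(U_0,\gamma)$, which has strictly positive $\mu_i$-mass by the support condition on $U_0$. Combining the convexity of $d_n^W$ with a conditioning argument (renormalizing $\mu_i$ onto its restriction to $B(U_0,\gamma)$ and iterating the contraction to sweep the mass outside the ball) propagates the smallness from point masses near $U_0$ to the full invariant measures, yielding $d_n^W(\mu_1,\mu_2) \to 0$. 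Because the $d_n$ totally separate points of $Pr(H)$, this forces $\mu_1 = \mu_2$, contradicting mutual singularity.

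The main technical obstacle is the final conditioning/iteration step: leveraging the point-mass contraction into a contraction of the full invariant measures. The subtlety is that asymptotic strong Feller is strictly weaker than strong Feller, so one cannot simply appeal to the classical Doob-Khasminskii zero-one law that forces overlapping invariant measures to coincide. Instead, one must exploit the precise way in which $\delta_n \to 0$ interacts with the geometry of $\mathrm{supp}(\mu_i)$ around $U_0$, using that the pseudo-metrics $d_n$ totally separate points only \emph{in the limit}. Making this argument quantitative, and ensuring the iteration does not accumulate uncontrolled errors, is the heart of the proof and is carried out in detail in \cite{HairerMattingly06}.
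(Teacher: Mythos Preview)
The paper does not contain a proof of this theorem; it is quoted verbatim as a result of Hairer--Mattingly \cite{HairerMattingly06} and used as a black box to reduce the uniqueness question to the gradient estimate in Proposition~\ref{prop:grad:est:MS}. There is therefore no proof in the present paper to compare your proposal against.

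That said, your sketch broadly follows the architecture of the original argument in \cite{HairerMattingly06}: pass to two distinct ergodic (hence mutually singular) invariant measures via Krein--Milman, translate the gradient bound into the asymptotic strong Feller property through the family of pseudo-metrics $d_n = 1 \wedge \delta_n^{-1}\|\cdot\|$, and derive a contradiction at the common support point furnished by weak irreducibility. One point of imprecision: your ``conditioning/iteration to sweep the mass outside the ball'' is not how the original argument closes. In \cite{HairerMattingly06} (Theorem~3.16) the contradiction is obtained more directly. Mutual singularity provides a lower bound on $d_n^W(\mu_1,\mu_2)$ that is uniform in $n$ (since $d_n$ increases to the discrete metric), while the asymptotic strong Feller estimate at $U_0$, combined with invariance and the fact that both $\mu_i$ charge every ball around $U_0$, forces $d_n^W(\mu_1,\mu_2)$ to be strictly below that bound for large $n$. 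No iteration is needed; the positive mass near $U_0$ already suffices to produce a nontrivial coupling with small $d_n$-cost. Your penultimate paragraph gestures at this but the ``sweeping'' language suggests a mechanism that is neither present nor required.
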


In our situation the proof of (i) is more or less standard and follows precisely as in \cite{EMattingly2001, ConstantinGlattHoltzVicol2013}.
The main difficulty is to establish the asymptotic smoothing property (ii) of Theorem~\ref{thm:MH:ASF:WI}.
We prove the following stronger version  of \eqref{eq:main:grad:est:abs}, which is also useful for other parts of Theorem \ref{thm:mainresult} the proof
of the mixing \eqref{eq:main:thm:mixing:cond} and pathwise convergence properties \eqref{eq:WLLN}, \eqref{eq:CLT}.
We recall the convention in Remark \ref{rmk:constants:conven}.

\begin{Prop}
   \label{prop:grad:est:MS}
  For every $\eta, \gamma_0 > 0$ and every $U_0 \in H$, the Markov semigroup $\{P_t\}_{t \geq 0}$  defined by \eqref{eq:Markov:semigroup} satisfies the estimate
  \begin{align}
	\| \nabla P_{t} \Obs(U_0) \| \leq C \exp( \eta \|U_0\|^2) \left(  \sqrt{P_t (| \Obs |^2)(U_0)}  +  e^{- \gamma_0 t} \sqrt{P_t (\| \nabla \Obs\|^2)(U_0)}\right),
	\label{eq:main:grad:est}
  \end{align}
  for every $t \geq 0$ and $\Obs \in C^1_b(H)$, where $C = C(\eta, \gamma_0)$ is independent of $t$ and $\Obs$.
\end{Prop}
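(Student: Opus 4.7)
The plan is to follow the Malliavin-calculus / infinite-dimensional H\"ormander strategy of \cite{HairerMattingly06, HairerMattingly2011}. Fix $\xi \in H$ with $\|\xi\|=1$. By the chain rule and Proposition~\ref{prop:wellposed},
\begin{align*}
  \nabla P_t \Obs(U_0) \cdot \xi = \E\bigl[\nabla \Obs(U(t,U_0)) \cdot J_{0,t}\xi\bigr],
\end{align*}
where $J_{0,t}$ is the Jacobian of the flow at time $t$. I would introduce an adapted control $v \in L^2(\Omega\times[0,t];\RR^d)$ and decompose $J_{0,t}\xi = \mathcal{D}^v U(t) + \rho_t$, where $\mathcal{D}^v U(t) := \int_0^t J_{s,t}\sigma_\de v(s)\,ds$ is the Malliavin derivative in direction $v$ and $\rho_t$ is a residual. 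The Cameron--Martin (Girsanov) identity gives
\begin{align*}
  \E\bigl[\nabla \Obs(U(t))\cdot \mathcal{D}^v U(t)\bigr] = \E\!\left[\Obs(U(t)) \int_0^t v(s)\cdot dW(s)\right],
\end{align*}
so Cauchy--Schwarz combined with the It\^o isometry reduces the desired bound \eqref{eq:main:grad:est} to proving
\begin{align*}
  \E\|\rho_t\|^2 \leq C\,e^{2\eta\|U_0\|^2}e^{-2\gamma_0 t}, \qquad \E\!\int_0^t|v(s)|^2\,ds \leq C\,e^{2\eta\|U_0\|^2}.
\end{align*}

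Next I would build $v$ iteratively on unit-length intervals. On each $[n,n+1]$, given the current residual $\rho_n$, I would attempt an approximate inverse of the Malliavin covariance operator
\begin{align*}
  \mathcal{M}_n := \int_n^{n+1} J_{s,n+1}\sigma_\de \sigma_\de^* J_{s,n+1}^*\,ds
\end{align*}
via a random Tikhonov-type regularization: schematically, $v_n(s) := \sigma_\de^* J_{s,n+1}^* (\mathcal{M}_n + \beta_n)^{-1} J_{n,n+1}\rho_n$ on $[n,n+1]$, with $\beta_n = \beta_n(U_n)$ chosen small. Using the cocycle property of $J$ this yields the telescoping recursion $\rho_{n+1} = \beta_n (\mathcal{M}_n + \beta_n)^{-1} J_{n,n+1}\rho_n$. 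Because the naive formula is not adapted, I would appeal to the anticipative-integration and adaptation procedure of \cite{HairerMattingly2011} to turn $v$ into an adapted process while preserving these identities modulo controllable errors.

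The crucial ingredient, supplied by Section~\ref{sec:Mal:spec:bnds}, is the spectral lower bound on $\mathcal{M}_n$ that follows from the generalized H\"ormander condition of Definition~\ref{def:our:cond}. That bound ensures that $(\mathcal{M}_n+\beta_n)^{-1}$ contracts the low Fourier modes strongly when $\beta_n$ is small, while the dissipative operator $A$ makes $J_{n,n+1}$ contract the high modes exponentially. Combining these with the exponential moment estimates for $U$ and $J$ (Lemma~\ref{lem:exp:moments} and Appendix~\ref{sec:moment:est}) would produce conditional bounds of the form
\begin{align*}
  \E\bigl[\|\rho_{n+1}\|^2 \mid \mathcal{F}_n\bigr] \leq \kappa\,\|\rho_n\|^2\,\exp(C\|U_n\|^2), \qquad \E\bigl[\|v_n\|_{L^2([n,n+1])}^2 \mid \mathcal{F}_n\bigr] \leq C\,\|\rho_n\|^2\,\exp(C\|U_n\|^2),
\end{align*}
with $\kappa = \kappa(\gamma_0) < e^{-2\gamma_0}$ obtained by tuning $N$ and $\beta_n$. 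Iterating these conditional bounds, taking unconditional expectations, absorbing the random exponential factors with the moment bounds of $\|U_n\|$, and summing the resulting geometric series would yield both estimates and close the proof.

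The hard part will be the spectral analysis of $\mathcal{M}_n$ under the weaker condition \eqref{eq:our:new:Hormander}. In the classical setting of \cite{HairerMattingly2011}, bracket-generated vectors are $U$-independent and the lower bound $\langle \mathcal{Q}(U)\phi,\phi\rangle \geq C\alpha \|\phi\|^2$ on $\mathcal{S}_{\alpha,N}$ transfers transparently into a lower bound on $\mathcal{M}_n$. Here the bracket-generated vectors take the form $\Psi_k + J_k^{\tilde N}(U)$, and \eqref{eq:our:new:Hormander} carries a $U$-dependent penalty $\epsilon(1+\|U\|_{\tilde{H}}^p)$. Balancing $\epsilon$, $N$, $\tilde N$, $\beta_n$, and the moments of $\|U\|_{\tilde{H}}$ simultaneously, and then promoting the resulting lower bound on the quadratic form $\mathcal{Q}$ to a usable lower bound on $\mathcal{M}_n$ via a quantitative Norris-type time-continuity argument, is the genuinely new technical content, postponed to Sections~\ref{sec:grad:control} and \ref{sec:Mal:spec:bnds}.
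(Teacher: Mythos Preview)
Your overall strategy matches the paper's: Malliavin integration by parts reduces \eqref{eq:main:grad:est} to the two control estimates, and the control is built from a Tikhonov-regularized pseudo-inverse of $\MM$ on successive time blocks, with the spectral input coming from Proposition~\ref{prop:mal:cor:1}.

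Two technical points in your plan diverge from what the paper actually does and would cause trouble as written. First, the control $v$ is \emph{not} made adapted, and there is no ``adaptation procedure'' in \cite{HairerMattingly2011} that does this. The operator $(\MM_{n,n+1}+\beta I)^{-1}$ depends on the noise over all of $[n,n+1]$, so $v$ is genuinely anticipating; the stochastic integral $\int_0^t v\cdot dW$ is a Skorokhod integral, and the paper bounds it via the generalized It\^o isometry \eqref{eq:gen:ito:ineq}, not the ordinary one. To make the correction term $\E\int\int \mathrm{Tr}(\MD_s v(r)\MD_r v(s))\,ds\,dr$ tractable, the paper uses \emph{two}-interval blocks: $v$ is active on $[2n,2n+1]$ and set to zero on $[2n+1,2n+2]$. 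This yields the block-adapted structure \eqref{eq:block:adapted:struct:rho:v}, which forces the cross terms to vanish off the diagonal blocks (see \eqref{eq:gen:ito:iso:block:diag}) and allows the Malliavin derivatives $\MD_s^j v$ to be computed and estimated explicitly via \eqref{eq:MD:stoc:integrand}--\eqref{eq:cost:bbd:ito:corr}. Your single-interval scheme does not give this structure. Second, the conditional decay is established for $\|\rho_n\|^8$, not $\|\rho_n\|^2$ (Lemma~\ref{lem:rho:bound}); the higher moment is needed so that, after H\"older, one can separate the Jacobian and Malliavin-derivative factors in the cost-of-control bound while still summing a geometric series in $\E\|\rho_{2n}\|^4$.
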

With Proposition~\ref{prop:grad:est:MS} the uniqueness of the invariant measure follows immediately from Theorem~\ref{thm:MH:ASF:WI}.
With slightly more work we can also use \eqref{eq:main:grad:est} to establish the attraction
properties (i)--(iii) in Theorem~\ref{thm:mainresult}.  Since this mainly requires the introduction of some further abstract machinery
from  \cite{HairerMattingly2008, KomorowskiWalczuk2012} we postpone the rest of the proof of Theorem~\ref{thm:mainresult} to
final part of Section~\ref{sec:mixing}.

\section{Gradient Estimates for the Markov Semigroup}
\label{sec:grad:control}

In this section we explain how the estimate on $\nabla P_t \Obs$ in \eqref{eq:main:grad:est} can be translated
to a control problem through the Malliavin integration by parts
formula.    This lead us to study the so called
 Malliavin covariance matrix $\MM$, which links the existence of a desirable
control to the properties of successive H\"ormander-type Lie brackets of
vector fields (on $H$) associated to \eqref{eq:BE:abs}.  Suitable spectral bounds
for $\MM$ are given in Proposition~\ref{prop:mal:cor:1}
and we conclude this section by explaining how these bounds can be used in conjunction
with a control built around $\MM$  to complete the proof of Proposition~\ref{prop:grad:est:MS}.

The involved proof of Proposition~\ref{prop:mal:cor:1} is delayed
for Sections~\ref{sec:Mal:spec:bnds}, \ref{sec:Hormander:brak}, \ref{sec:Braket:Est:Mal:Mat}
below. As we already noted in the introduction, although the statement of Proposition~\ref{prop:mal:cor:1} looks similar to
corresponding results in \cite{MattinglyPardoux1,HairerMattingly06,BakhtinMattingly2007,HairerMattingly2011}, the proof is significantly different due to the particular
nonlinear structure of \eqref{eq:B1:Vort}--\eqref{eq:B3:Vort}.  As such Proposition~\ref{prop:mal:cor:1} constitutes
the main mathematical novelty of this work.

\subsection{Deriving the Control Problem}
\label{sec:derive:control}
Let $U = U(\cdot, U_0)$ be the solution of \eqref{eq:BE:abs} and assume the convention from Remark \ref{rmk:constants:conven}
where we let $d := 2 \cdot |\ZZZ|$.
Then for any
$\Obs \in C^1_b(H)$, $\xi \in H$ we have\footnote{For differentiability of
$U_0 \mapsto U(t, U_0)$ see Proposition \ref{prop:wellposed} and \cite[Section 3.3]{HairerMattingly2011}}
\begin{align}
   \nabla P_t \Obs( U_0) \cdot \xi &= \E (\nabla \Obs(U(t,U_0)) \cdot \JJ_{0,t} \xi), \qquad t \geq 0 \,,
   \label{eq:grad:comp:msg:1}
\end{align}	
where for $0 \leq s \leq t$, $\JJ_{s,t} \xi$ denotes the unique solution of
\begin{align}
& \partial_t \rho + A \rho + \nabla B(U) \rho = G \rho, \quad \rho(s) = \xi,
\label{eq:def:J}
\end{align}
and $\nabla B(U) \rho := B(U, \rho) + B(\rho,U)$.

The crucial step in establishing \eqref{eq:main:grad:est} is to `approximately remove' the gradient from $\Obs$ in \eqref{eq:grad:comp:msg:1}.
As such we seek to (approximately)
identify $\JJ_{0,t} \xi$ with a Malliavin derivative of some suitable
random process and integrate by parts, in the Malliavin sense.  In Appendix~\ref{subsect:malliavin}
we recall some elements of this calculus which are used throughout this section.
For an extended treatment of the Malliavin theory we refer to e.g. \cite{Bell1987,Malliavin1997,Nualart2009,Nualart2006}.

Recall, that in our situation the Malliavin derivative, $\MD: L^2(\Omega, H) \to L^2(\Omega; L^2(0,t;\RR^{d})\otimes H)$
satisfies
\begin{align*}
	\langle \DM U, v\rangle_{L^2([0, T], \RR^d)} =
	\lim_{\epsilon \to 0} \frac{1}{\epsilon} \left(  U\bigl(T, U_0, W + \epsilon \smallint _0^{\cdot} v ds\bigr) - U(T, U_0, W) \right).
\end{align*}
We may infer that for $v \in L^2(\Omega ; L^2([0,T]; \RR^{d}))$ one has (cf. \cite{HairerMattingly2011})
\begin{align}\label{eq:al:def:A}
\langle \DM U, v\rangle_{L^2([0, T], \RR^d)} = \int_{0}^{T} \JJ_{s,T} \sigma_\de v(s) \, ds \,,
\end{align}
and hence, by the Riesz representation theorem,
\begin{equation}
\DM_s^j U(T) =
\JJ_{s,T} \sigma_\de e_j \qquad  \textrm{ for any } s \leq T,\, j =1, \dots d,
\label{eq:def:Mal:J}
\end{equation}
where the linearization $\JJ_{r, t} \xi$ is the solution of \eqref{eq:def:J}, $\sigma_\de$ is given by \eqref{eq:noise:cog:def}, and $\{e_j\}_{j = 1,\ldots, d}$ is the standard basis of
$\RR^{d}$.\footnote{Using the Malliavin chain rule and \eqref{eq:Fs:t:meas:zero:cond} we apply $\DM$ to \eqref{eq:BE:abs}
and observe, at least formally that
\begin{align*}
  \MD_s^j U(T) + \int_s^T \bigl( A \MD^j_s U(r) + \nabla B(U(r)) \MD^j_s U(r)  - G \MD^j_s U(r)\bigr) dr = \sigma_\theta e_j,
\end{align*}
for $s < T$ and $j = 1, \ldots ,d$.}
Here and below, we adopt the standard notation $\MD_s^jF := (\MD F)^j(s)$, that is, $\MD_s^jF$ is the $j$th component of $\MD F$ evaluated
at time $s$.

Motivated by \eqref{eq:al:def:A}, we define the random operator $\AAA_{s,t} : L^2( [s,t]; \RR^{d}) \to H$
by
\begin{align}
\AAA_{s,t} v := \int_s^t \JJ_{r,t} \sigma_\de v(r) dr.
\label{eq:A:op:def}
\end{align}
Notice that, by the Duhamel formula, for any $0\leq s < t$ the function $\rho(t) := \AAA_{s,t} v$ satisfies
\begin{align*}
 \pd_{t}\rho + A\rho + \nabla B(U)\rho = G\rho + \sigma_\theta v , \qquad  \rho(s) = 0 \,.
\end{align*}

With these preliminaries we now continue the computation started in
\eqref{eq:grad:comp:msg:1}.  Using the Malliavin chain rule and integration by parts formula,
as recalled in \eqref{eq:M:chain:rule1}, \eqref{eq:M:IBP:rule:trad:notation},
we infer that for any $t \geq 0$ and any suitable (Skorokhod integrable) $v  \in L^2(\Omega \times [0,t], \RR^{d})
$\footnote{Note that for non-adapted $v$, $\int_0^t v \cdot dW$
in \eqref{eq:control:derivation:Mal:calc} is understood
as a stochastic integral in a generalized sense; see Appendix~\ref{subsect:malliavin} below or e.g. \cite{Nualart2006} for further details.}
\begin{align}
   \nabla P_t \Obs( U_0) \cdot \xi
   &= \E (\nabla \Obs(U) \cdot (\AAA_{0,t} v + \JJ_{0,t} \xi - \AAA_{0,t} v))
   \notag\\
    &=  \E (\nabla \Obs(U) \cdot \langle \DM U,  v  \rangle) +
    \E (\nabla \Obs(U) \cdot (\JJ_{0,t} \xi - \AAA_{0,t} v))
   \notag\\
    &= \E (\langle \DM \Obs(U),  v  \rangle_{L^2([0, t], \RR^d)}) +  \E (\nabla \Obs(U) \cdot (\JJ_{0,t} \xi - \AAA_{0,t} v))
   \notag\\
   &= \E \left(\Obs(U)\int_0^t v \cdot dW  \right) +  \E (\nabla \Obs(U) \cdot (\JJ_{0,t} \xi - \AAA_{0,t} v)),
   \label{eq:control:derivation:Mal:calc}
\end{align}
where for brevity we have denoted $U = U(t, U_0)$.
Observe that $\rho(t) := \JJ_{0,t} \xi - \AAA_{0,t} v$ satisfies:
\begin{align}
& \partial_t \rho + A \rho + \nabla B(U) \rho = G \rho - \sigma_\de v, \quad \rho(0) = \xi.
\label{eq:JmA:def:control}
\end{align}
Directly from \eqref{eq:control:derivation:Mal:calc}
we obtain (recalling the notation \eqref{eq:Markov:semigroup})
\begin{align*}
 |\nabla P_t \Obs( U_0) \cdot \xi| \leq
  \left( \E \left| \int_0^t v \cdot dW \right|^2 \right)^{1/2}  \sqrt{P_t |\Obs |^2(U_0)}   + \left(\E \|\rho(t, \xi, v)\|^2\right)^{1/2} \sqrt{ P_t \|\nabla \Obs\|^2(U_0)},
\end{align*}
for any $t \geq 0$ and $\xi \in H$.
As such, \eqref{eq:main:grad:est}
has been translated to the following control problem:
For each $\gamma_0, \eta > 0$ and each unit length element $\xi \in H$, find a (locally Skorokhod integrable) $v = v(\xi) \in L^2(\Omega; L^2_{loc} ([0,\infty), \RR^{d}))$
such that
\begin{align}
  \sup_{\|\xi\| = 1} \E \|\rho(t, \xi, v)\|^2 &\leq C e^{-\gamma_0 t} \exp( \eta \|U_0\|^2)  \label{eq:decay}
\end{align}
and
\begin{align}
 \sup_{\|\xi\| = 1, t \geq 0} \E \left| \int_0^t v \cdot dW \right|^2 &\leq C \exp( \eta \|U_0\|^2) \label{eq:bound}\,,
\end{align}
where $C = C(\gamma_0, \eta)$ is independent of $t$.

\subsection{Choosing the Control}
\label{subsec:control:choice}

In the case when sufficiently many directions in Fourier space are
forced or if $\nu_1, \nu_2$ are sufficiently large, we can choose the control $v$ in \eqref{eq:JmA:def:control} from
the use of determining modes.  See \cite{FoiasProdi1967} or more recently \cite{FoiasManleyRosaTemam01}.
Specifically, we might choose $\sigma_\de v = \lambda P_N \rho$ in  \eqref{eq:JmA:def:control}, where $P_N$ is the projection onto the subspace $H_N$ defined in \eqref{eq:H:N:finite:dim}
and $\lambda > 0$ is a sufficiently large constant.
The construction of such a control
relies on the assumption that all of the modes with wave-numbers less $N$ are directly forced, that is, they are in the range of $\sigma_\de$.
We remark, moreover, that $N$ is a function of $\nu_1, \nu_2 > 0$ and it diverges to infinity as $\nu_1$ and $\nu_2$ approach zero.
The idea of using determining modes in the context of the ergodic  theory of mathematical fluids equations (and other nonlinear SPDEs)
has played a central role in a number of recent works.  See e.g. \cite{Mattingly2,Mattingly03, HairerMattingly06, KuksinShirikian12,ConstantinGlattHoltzVicol2013}.

If the range of $\sigma_\de$ is not sufficiently large,
we are hindered by the fact that we cannot `directly control' all the low (unstable) modes
that are not dissipated by the diffusion. A different approach
inspired by the finite dimensional case (cf. \cite{Hairer2011}) would be to try to find an exact control.  To achieve this
we would seek for each $\xi \in H$ a corresponding $v$ such
$\JJ_{0,t} \xi = \AAA_{0,t} v$.  With the ansatz that $v$ has the form $v = \AAA_{0,t}^*\eta$ for some $\eta \in H$ (where $\AAA^*_{0,t}$ the
adjoint of $\AAA_{0,t}$ defined below in \eqref{eq:A:star:op:def}) we could choose the control
\begin{align}\label{eq:def:v:mot}
	v := \AAA_{0,t}^* ( \AAA_{0,t} \AAA_{0,t}^*)^{-1} \JJ_{0,t} \xi.
\end{align}
The object $\MM_{0,t} :=\AAA_{0,t} \AAA_{0,t}^*$, referred to as the Malliavin
covariance matrix, plays an important role in the theory of stochastic analysis.
If one can establish the invertibility of $\MM_{0,t}$ one finds an exact control
in \eqref{eq:JmA:def:control} and with suitable bounds on $v$ one shows that
the Markov semigroup is smoothing in finite time (i.e. it is strongly Feller).

Sufficient conditions for the invertibility of the Malliavin matrix are well understood
for finite dimensional problems. However, this invertibility is much harder to deduce in infinite
dimensions and may not hold in general.\footnote{Although $\MM_{0, t}$ is a linear, non-negative definite,
self adjoint operator on a Hilbert space, which
can be shown to be non-degenerate it is difficult to quantify the range of $\MM_{0, t}$, or equivalently we are unable to characterize the domain of
$(\MM_{0, t})^{-1}$.}
Instead, following the insights in \cite{HairerMattingly06}, we now combine the strategy identified in \eqref{eq:def:v:mot} with the use of determining modes \cite{FoiasProdi1967}, and use a
Tikhonov regularization of the Malliavin matrix to construct a control $v$, and corresponding $\rho$, which satisfy \eqref{eq:decay}--\eqref{eq:bound}.

To make this more precise we first define several random operators.
For any $s < t$, let $\AAA_{s,t}^*: H \to L^2([s,t]; \RR^{d})$ be the adjoint of
$\AAA_{s,t}$ defined in \eqref{eq:A:op:def}.  We observe that
\begin{align}
  (\AAA^*_{s,t}\xi)(r) = \sigma^*_\de \JJ_{r,t}^* \xi   =: \sigma^*_\de \KK_{r,t} \xi
  \qquad  \textrm{ for any } \xi \in H, r \in [s, t],
  \label{eq:A:star:op:def}
\end{align}
where $\sigma_\de^* : H \to \RR^{d}$ is the adjoint of $\sigma_\de$ defined in \eqref{eq:noise:cog:def}.
Here, for $s < t$,  $\KK_{s,t} \xi = \JJ^*_{s,t} \xi$ is the solution of the `backward'
system (see \cite{HairerMattingly2011})
\begin{align}
  \pd_s \rho^* = A \rho^* + (\nabla B(U(s)))^* \rho^* - G^* \rho^*
  = - (\nabla F(U))^* \rho^*, \quad \rho^{*}(t) = \xi \,.
  \label{eq:def:K:backwards:eq}
\end{align}
We then define
the Malliavin Matrix
\begin{align}
\MM_{s,t} := \AAA_{s,t} \AAA_{s,t}^*: H \to H.  \label{eq:Mal:def}
\end{align}

We now build the control $v$ and derive the associated $\rho$ in \eqref{eq:JmA:def:control} using the following iterative construction.
Denote by $v_{s,t}$ the control $v$ restricted to the time interval $[s, t]$ and let $\rho_n := \rho(n)$.
Observe that $\rho_0 = \xi \in H$ by definition.  For each even non-negative integer $n \in 2\NN$,
having determined $\rho_n$ and $v_{0, n}$, we set
\begin{align}
   v_{n,n+1}(r) &= (\AAA_{n,n+1}^* (\MM_{n,n+1}  + I \beta)^{-1}  \JJ_{n,n+1}\rho_{n})(r), \qquad v_{n+1, n+2}(r) = 0 \,,
   \label{eq:control:def:iterative}
\end{align}
for $r \in [n, n+2]$. If we denote
\begin{align*}
\MTI_{n,n+1}  := \beta (\MM_{n,n+1}  + I \beta)^{-1} \,,
\end{align*}
then using \eqref{eq:JmA:def:control} we determine $\rho_{n+2}$ according to
\begin{align}
 \rho_{n+2} &:= \JJ_{n+1, n+2} \rho_{n+1} \notag\\
      &= \JJ_{n+1,n+2} ( \AAA_{n,n+1} \AAA_{n,n+1}^* (\MM_{n,n+1}  + I \beta)^{-1} \JJ_{n,n+1} + \JJ_{n, n+1}) \rho_n \notag\\
 	&=  \JJ_{n+1,n+2}  \MTI_{n,n+1}  \JJ_{n,n+1} \rho_{n}.
 \label{eq:two:tm:step:cont:err}
\end{align}

\subsection{Spectral Properties of $\MM$ and Decay}
\label{sec:approx:control:cost}

Having defined the control $v$, and the associated error $\rho$,  by
\eqref{eq:control:def:iterative} and \eqref{eq:two:tm:step:cont:err} respectively, we now state and prove (modulo a
spectral bound on $\MM_{n, n+1}$, Proposition \ref{prop:mal:cor:1})
the key decay estimate on $\rho$.
This estimate is used in Section~\ref{sec:proof:main:prop} to complete the proof of Proposition~\ref{prop:grad:est:MS}.

\begin{Lem}
\label{lem:rho:bound}
For any $\varpi, \dStep > 0$,  there  exists $\beta = \beta(\varpi, \dStep) >0$
which determines $\rho$ in \eqref{eq:two:tm:step:cont:err} so that for every even $n \geq 0$,
\begin{align}
  \E ( \|\rho_{n+2}\|^8 | \mathcal{F}_n) \leq \dStep \exp(\varpi \| U(n)\|^2) \|\rho_n\|^8.
  \label{eq:stepwise:cond:decay:est}
\end{align}
Moreover, we have the block adapted structure
\begin{align}
  \rho(t), v(t) \textrm{ are } \mathcal{F}_{\vartheta(t)} \textrm{ measurable }
  \label{eq:block:adapted:struct:rho:v}
\end{align}
where $\vartheta: \RR^+ \to \RR^+$
\begin{align}
       \vartheta(t) :=
  \begin{cases}
       \lceil t \rceil& \quad \textrm{ when } \lceil t \rceil \textrm{ is odd, }\\
       t & \quad \textrm{ when } \lceil t \rceil \textrm{ is even, }
  \end{cases}
  \label{eq:block:adapt:fn}
\end{align}
and $\lceil t \rceil$ is the smallest integer greater than or equal to $t$.
\end{Lem}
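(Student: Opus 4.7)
The measurability claim \eqref{eq:block:adapted:struct:rho:v} is a straightforward induction. By construction, $\rho_n$ is $\mathcal{F}_n$-measurable. The operators $\AAA_{n,n+1}, \AAA_{n,n+1}^*, \MM_{n,n+1}, \JJ_{n,n+1}$ all depend only on $U\big|_{[n,n+1]}$ and so are $\mathcal{F}_{n+1}$-measurable; hence the control $v_{n,n+1}$ and the endpoint $\rho_{n+1}$ are $\mathcal{F}_{n+1}$-measurable. On the `free' interval $[n+1,n+2]$ we have $v \equiv 0$ and $\rho(t)=\JJ_{n+1,t}\rho_{n+1}$, which is $\mathcal{F}_t$-measurable. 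This matches $\vartheta(t)$.

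For the main estimate \eqref{eq:stepwise:cond:decay:est}, the plan is to reduce to $n=0$ by the Markov property and the block adapted structure just established, and then to estimate $\rho_2 = \JJ_{1,2} \MTI_{0,1} \JJ_{0,1}\rho_0$ by splitting along a frequency cutoff. Fix parameters $\varpi,\dStep>0$, let $N \in \NN$ to be determined, and decompose
\begin{equation*}
\rho_2 \;=\; \JJ_{1,2}\,\MTI_{0,1}\,P_N \JJ_{0,1}\rho_0 \;+\; \JJ_{1,2}\,\MTI_{0,1}\,Q_N \JJ_{0,1}\rho_0 \;=:\; \rho_2^{\mathrm{lo}} + \rho_2^{\mathrm{hi}}.
\end{equation*}
For the high-frequency piece I use the universal bound $\|\MTI_{0,1}\|_{\mathcal{L}(H)}\le 1$, so $\|\rho_2^{\mathrm{hi}}\| \le \|\JJ_{1,2}\|\,\|Q_N\JJ_{0,1}\rho_0\|$. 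The smoothing/contractivity estimate for the linearization (from the moment bounds in Appendix \ref{sec:moment:est}) provides a bound of the form $\|Q_N \JJ_{0,1}\rho_0\| \le c_N(\omega)\|\rho_0\|$ with $c_N \to 0$ in every $L^p$, modulo an exponential weight in $\|U_0\|^2$. Combined with the analogous moment bound on $\|\JJ_{1,2}\|$ and Hölder's inequality, one obtains $\E(\|\rho_2^{\mathrm{hi}}\|^8\mid \mathcal{F}_0) \le \tfrac{1}{2}\dStep\exp(\varpi\|U(0)\|^2)\|\rho_0\|^8$ once $N = N(\varpi,\dStep)$ is chosen large enough.

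For the low-frequency piece I invoke the spectral bound for the Malliavin matrix (Proposition~\ref{prop:mal:cor:1}), which quantifies how $\MM_{0,1}$ is bounded from below on $H_N$ (modulo high-moment error terms controlled by $U$). Since $\MTI = \beta(\MM+\beta I)^{-1}$ satisfies $\|\MTI P_N \phi\|^2 \le \beta\langle (\MM+\beta I)^{-1}\phi,\phi\rangle$, this spectral estimate yields an operator bound $\|\MTI_{0,1} P_N\|_{\mathcal{L}(H)} \le R(\beta,\omega,U_0)$ with $R \to 0$ as $\beta \to 0$, again in every $L^p$ moment with an exponential weight. Using $\|\rho_2^{\mathrm{lo}}\| \le \|\JJ_{1,2}\| \, \|\MTI_{0,1} P_N\|\, \|\JJ_{0,1}\rho_0\|$, applying Hölder, and absorbing the moments of $\|\JJ_{1,2}\|$, $\|\JJ_{0,1}\|$ via the appendix estimates, one can then fix $\beta = \beta(N,\varpi,\dStep)>0$ small enough that $\E(\|\rho_2^{\mathrm{lo}}\|^8\mid\mathcal{F}_0) \le \tfrac{1}{2}\dStep\exp(\varpi\|U(0)\|^2)\|\rho_0\|^8$.

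The main obstacle is the low-frequency estimate: it is the entire point of Proposition~\ref{prop:mal:cor:1} (the $\MM$-spectral bound) that needs to give control in a sufficiently high probabilistic moment so that, after multiplying by $\|\JJ\|^{8}$-type factors (which carry their own $\exp(\eta\|U\|^2)$ moments), the resulting constant still fits under the prescribed weight $\exp(\varpi\|U(0)\|^2)$. In practice this means that when applying Hölder one must pair the small moments of $\|\MTI P_N\|$ against the higher moments of $\|\JJ\|$ and of $\exp(\cdot\|U\|^2)$ in a carefully chosen exponent configuration, with the Markov property used to replace $U(0)$ by the conditioning $U(n)$ in the general step. The high-frequency part, by contrast, is purely deterministic smoothing and poses no essential difficulty beyond the standard parabolic estimates.
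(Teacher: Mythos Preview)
Your strategy and the paper's are the same: split $\rho_{n+2}$ along a frequency cutoff, control the high-frequency part by the Foias--Prodi type estimate on $\JJ$, and control the low-frequency part by the spectral bound on $\MM$. The measurability argument and the high-frequency part are fine (you place $P_N,Q_N$ before $\MTI$ rather than after as in \eqref{eq:rho:high:n}--\eqref{eq:rho:low:n}, but Lemma~\ref{lem:parabolic:cont:high:modes:Jop} provides both $\|Q_N\JJ\|$ and $\|\JJ Q_N\|$ bounds, so this is immaterial).

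The genuine gap is in the low-frequency step. Your claimed inequality $\|\MTI P_N \phi\|^2 \le \beta\langle (\MM+\beta I)^{-1}\phi,\phi\rangle$ is false for general $\phi$, and even if you restrict to $\phi\in H_N$ (where it holds) it does \emph{not} yield smallness of $\|\MTI P_N\|$. A spectral lower bound for $\MM$ on $H_N$ alone is insufficient: if $\MM$ has a null eigenvector $e$ with nontrivial $H_N$ component, then $\MTI e = e$ and both $\|\MTI P_N\|$ and $\|P_N\MTI\|$ stay bounded away from $0$ as $\beta\to 0$. What is actually needed is Lemma~\ref{lem:good:t:reg:as}, which converts the spectral bound on the \emph{cone} $\mathcal{S}_{\alpha,N}$ (for small $\alpha$, not $\alpha=1$) into $\|P_N\MTI\|\le \alpha\vee\sqrt{\beta/\epsilon}$; by self-adjointness this equals $\|\MTI P_N\|$, so your placement of the projection is harmless once you invoke the correct lemma. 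You are also too vague about the fact that Proposition~\ref{prop:mal:cor:1} gives the spectral bound only on a set of large probability: on its complement one has only $\|\MTI\|\le 1$, and one must balance the three parameters $\alpha$, $\epsilon$, $\beta$ in that order (see \eqref{eq:partial:inversion:est}--\eqref{eq:alpha:epsilon:beta:juggle}) so that $\bigl(\alpha\vee\sqrt{\beta/\epsilon}\bigr)^{24}+r(\epsilon)$ becomes small enough.
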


\begin{Rmk}
\label{rmk:prelim:lem:rho:bound}
We choose the exponent 8 as it is sufficient for the estimates on $v$; similar estimates are valid for any power greater or equal to two.
\end{Rmk}

To prove Lemma \ref{lem:rho:bound}, we show that the control $v$, when it is active, is effective in
 pushing energy into small scales where it is
 dissipated by diffusion.
 To make this precise recall the definition of $P_N$, $Q_N$, and $H_N$ from \eqref{eq:H:N:proj:ops} and \eqref{eq:H:N:finite:dim}.
  Fix $N = N(\dStep, \varpi)$ specified below and for $n \in 2\NN$ split $\rho_{n+2} = \rho_{n+2}^L +\rho_{n+2}^H$,
defining
\begin{align}
\rho_{n+2}^H &:=  \JJ_{n+1,n+2} Q_N \MTI_{n,n+1} \JJ_{n,n+1}\rho_{n}, \label{eq:rho:high:n}\\
\rho_{n+2}^L &:=  \JJ_{n+1,n+2}  P_N \MTI_{n,n+1} \JJ_{n,n+1}\rho_{n}.
\label{eq:rho:low:n}
\end{align}
While for large $N$
estimates for $\rho_{n+2}^H$ essentially make use of the parabolic character of \eqref{eq:def:J},
establishing suitable bounds on $\rho_{n+2}^L$ requires a detailed understanding of the operator $\MTI_{n,n+1}$.  We
need to show, for sufficiently small $\beta$, that  $\MTI_{n,n+1}$ indeed pushes
energy into small scales, that is, $\|P_N \MTI_{n,n+1}\|$ is small.  The following lemma from
\cite[Lemma 5.14]{HairerMattingly2011},
shows that this in turn follows from uniform positivity of $\MM_{n,n+1}$ on a cone around $H_N$.

\begin{Lem}
\label{lem:good:t:reg:as}
Suppose that $\MM$ is a positive, self-adjoint linear operator on a separable Hilbert space
$H$.  Suppose that for some $\alpha, \epsilon >0$ and $N \in \mathbb{N}$
we have that
\begin{align}\label{eq:lb:abs}
  \inf_{\phi \in \mathcal{S}_{\alpha, N}} \frac {\langle \MM \phi, \phi \rangle}{\|\phi\|^{2}} \geq \epsilon,
\end{align}
where
\begin{align}
	\mathcal{S}_{\alpha, N} := \left\{ \phi \in H: \|P_{N} \phi \|^{2} \geq \alpha \|\phi\|^{2} \right\}.
	\label{eq:test:fn:set:N:alpha}
\end{align}
Then, for any $\beta > 0$,
\begin{align*}
   \| P_{N} \beta (\MM + I \beta)^{-1} \| \leq \alpha \vee \sqrt{\beta/\epsilon}.
\end{align*}
\end{Lem}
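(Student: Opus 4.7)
I would prove the lemma by working at the level of a single vector and showing the bound for $\|P_N \mathcal{R}^\beta \phi\|$, where $\mathcal{R}^\beta := \beta(\mathcal{M}+I\beta)^{-1}$, then taking the supremum. The plan is to set $\psi := \mathcal{R}^\beta \phi$ for an arbitrary $\phi \in H$ and split the analysis according to whether $\psi$ lies in the cone $\mathcal{S}_{\alpha,N}$ or not. In the off-cone case, the desired bound is essentially trivial: $\|P_N \psi\|^2 < \alpha \|\psi\|^2$ combined with the elementary spectral fact that $\|\mathcal{R}^\beta\| \leq \sup_{\lambda \geq 0} \beta/(\lambda+\beta) = 1$ immediately gives $\|P_N\psi\| \leq (\alpha \vee \sqrt{\alpha})\|\phi\|$, which is absorbed in the $\alpha$ term of the bound.

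The substantive case is when $\psi \in \mathcal{S}_{\alpha,N}$, where the hypothesis \eqref{eq:lb:abs} yields the coercivity estimate $\langle \mathcal{M}\psi,\psi\rangle \geq \epsilon \|\psi\|^2$. The key algebraic identity is obtained by rewriting the equation defining $\psi$: from $(\mathcal{M}+\beta I)\psi = \beta\phi$, taking the inner product with $\psi$ gives
\begin{equation*}
   \langle \mathcal{M}\psi,\psi\rangle + \beta\|\psi\|^2 = \beta \langle \phi,\psi\rangle.
\end{equation*}
Applying Cauchy--Schwarz to the right side (or Young's inequality, depending on which constant one wants to chase) and using the cone-case lower bound $\langle \mathcal{M}\psi,\psi\rangle \geq \epsilon\|\psi\|^2$, one extracts an upper bound of the form $\|\psi\|^2 \lesssim (\beta/\epsilon)\|\phi\|^2$, and hence $\|P_N\psi\| \leq \|\psi\| \leq \sqrt{\beta/\epsilon}\,\|\phi\|$.

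Combining the two cases and taking the supremum over $\phi$ with $\|\phi\|=1$ yields the operator-norm bound $\|P_N \mathcal{R}^\beta\| \leq \alpha \vee \sqrt{\beta/\epsilon}$, completing the proof. There is no serious obstacle to expect here: the whole argument is a three-line linear algebra computation once one has the correct dichotomy. The only conceptual point worth flagging is that self-adjointness of $\mathcal{M}$ is used implicitly through the existence of the functional calculus (to ensure $\mathcal{R}^\beta$ is well-defined as a bounded positive self-adjoint operator of norm at most one), and positivity of $\mathcal{M}$ guarantees that $\mathcal{M}+I\beta$ is invertible for every $\beta>0$; otherwise the identity $(\mathcal{M}+\beta I)\psi = \beta\phi$ could not be manipulated so freely.
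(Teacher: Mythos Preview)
The paper does not prove this lemma; it is quoted directly from \cite[Lemma~5.14]{HairerMattingly2011}. Your dichotomy argument (split on whether $\psi := \MTI \phi$ lies in the cone $\mathcal{S}_{\alpha,N}$, use the identity $\langle \MM\psi,\psi\rangle + \beta\|\psi\|^2 = \beta\langle\phi,\psi\rangle$ in the cone case, and the trivial bound $\|\MTI\|\le 1$ off the cone) is the standard one and is essentially correct.

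There is, however, a slip in your off-cone case. From $\|P_N\psi\|^2 < \alpha\|\psi\|^2$ and $\|\psi\|\le\|\phi\|$ you only get $\|P_N\psi\| < \sqrt{\alpha}\,\|\phi\|$, not $\alpha\|\phi\|$. Since the relevant regime is $\alpha\in(0,1]$ (indeed in the application one sends $\alpha\to 0$), $\sqrt{\alpha}>\alpha$ and this is \emph{not} ``absorbed in the $\alpha$ term of the bound'' as you claim. Your argument therefore proves
\[
   \|P_N\beta(\MM+I\beta)^{-1}\| \;\le\; \sqrt{\alpha}\,\vee\,\sqrt{\beta/\epsilon},
\]
rather than the stated $\alpha\vee\sqrt{\beta/\epsilon}$. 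This discrepancy is harmless for the way the lemma is used in the paper (cf.\ \eqref{eq:partial:inversion:est}--\eqref{eq:alpha:epsilon:beta:juggle}, where one only needs the bound to be made small by choosing $\alpha$ and then $\beta$ small), and it likely reflects a convention or transcription difference with the original source. But you should either correct the constant in the conclusion or note explicitly that the lemma as stated here requires $\sqrt{\alpha}$ in place of $\alpha$.
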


The next proposition shows that \eqref{eq:lb:abs} holds true for $\MM_{n, n+1}$ on a large subset of the probability space.

\begin{Prop}
\label{prop:mal:cor:1}
Let $\MM_{n,n+1}$ be as in \eqref{eq:Mal:def}, relative to $U$ solving \eqref{eq:BE:abs}. For any $N \geq 1$, $\alpha \in (0,1]$, and $\eta > 0$
there exists a positive constant
$\epsilon^{*} = \epsilon^{*}(\alpha, \eta, N) > 0$, such that, for any $n \geq 0$, and any
$0 < \epsilon  \leq \epsilon^{*}$
\begin{align}
  \Prb \left( \inf_{ \phi \in \mathcal{S}_{\alpha, N}} \frac{\langle \MM_{n,n+1} \phi, \phi \rangle}{\| \phi \|^2} < \epsilon | \mathcal{F}_n \right)
  \geq r(\epsilon) \exp( \eta \|U(n)\|^2),
  \label{eq:mal:matrix:inv:bnd:cond}
\end{align}
where $\mathcal{S}_{\alpha, N}$ is defined by \eqref{eq:test:fn:set:N:alpha} and
$r = r(\alpha, N, \eta) : (0, \epsilon^*] \to (0, \infty)$ is a non-negative, decreasing function
 with $r(\epsilon) \to 0$ as $\epsilon \to 0$.  We emphasize that
$r$ is independent of $n$.
\end{Prop}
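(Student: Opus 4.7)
The plan is to establish a lower bound on the probability that the Malliavin covariance matrix $\MM_{n,n+1}$ admits a small eigenvalue on the cone $\mathcal{S}_{\alpha,N}$. Since the right-hand side $r(\epsilon)\exp(\eta\|U(n)\|^2)$ is only informative when $\|U(n)\|$ is moderate (it exceeds one, and the bound becomes vacuous, once $\|U(n)\|$ is too large), the substantive content lies in the moderate-$\|U(n)\|$ regime, and the natural strategy is a support / small-ball estimate for the Brownian noise on $[n,n+1]$, combined with the bracket structure developed in Section~\ref{sec:Hormander:brak}.

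First, I would invoke the $\mathcal{F}_n$-conditional Markov property of $U$ to reduce to the case $n=0$ with deterministic initial data $U_0=U(n)$, so that the problem becomes one on the fixed time interval $[0,1]$. Next, fix any single test direction $\phi_0\in\mathcal{S}_{\alpha,N}$ with $\|\phi_0\|=1$ (for instance, a normalized element of $H_N$). Since
\[
\inf_{\phi\in\mathcal{S}_{\alpha,N}}\frac{\langle\MM_{0,1}\phi,\phi\rangle}{\|\phi\|^2}\le\langle\MM_{0,1}\phi_0,\phi_0\rangle
\]
as random variables, it suffices to produce a lower bound of the desired form on $\Prb(\langle\MM_{0,1}\phi_0,\phi_0\rangle<\epsilon)$, as this immediately transfers to the infimum.

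Using the representation $\langle\MM_{0,1}\phi_0,\phi_0\rangle=\int_0^1\|\sigma_\theta^{*}\KK_{s,1}\phi_0\|^2\,ds$, I would then identify a deterministic Cameron--Martin path $v^{*}\in L^2([0,1];\RR^d)$, depending on $U_0$ and $\phi_0$, whose corresponding shifted noise trajectory drives the adjoint flow $\KK_{s,1}\phi_0$ into the orthogonal complement of $\operatorname{range}(\sigma_\theta)$ for a sufficient fraction of $s\in[0,1]$, rendering $\langle\MM\phi_0,\phi_0\rangle<\epsilon$. A Brownian small-ball estimate around this deterministic target, transported back to the original measure by Girsanov's theorem, then produces a lower bound of the shape $r(\epsilon)\exp(-\tfrac12\|v^{*}\|_{L^2}^2)$ where $r(\epsilon)$ encodes the Wiener small-ball probability (and hence $r(\epsilon)\to 0$ as $\epsilon\to 0$). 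Moment estimates on $\KK$ and on the Boussinesq drift from Appendix~\ref{sec:moment:est} give $\|v^{*}\|_{L^2}^2\lesssim 1+\|U_0\|^2$, which absorbs the exponential Radon--Nikodym factor into $\exp(\eta\|U_0\|^2)$.

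The hard part is the construction of $v^{*}$ with quantitatively small Cameron--Martin cost: this is an approximate controllability problem for the adjoint linearization $\KK$, and in the finite-dimensional case it would follow from the classical support theorem together with H\"ormander's accessibility rank condition applied to the brackets $\mathcal{V}_M$. In the present infinite-dimensional setting one must instead work with the generalized H\"ormander condition \eqref{eq:our:new:Hormander} from Section~\ref{sec:Hor:int}: the bracket set $\mathfrak{B}\subset\mathcal{V}_M$ accessing directions in $\mathcal{S}_{\alpha,N}$ decomposes into a $U$-independent piece $\Psi_k$ and a $U$-dependent error $J_k^{\tilde N}(U)$ supported at high frequencies, and the controllability analysis must steer along both families simultaneously while absorbing the $U$-dependence into the exponential factor. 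This is precisely the new spectral difficulty advertised in Section~\ref{sec:Hor:int} and will be carried out in Sections~\ref{sec:Mal:spec:bnds}--\ref{sec:Braket:Est:Mal:Mat}.
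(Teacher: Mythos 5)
You have set out to prove the wrong statement. You read \eqref{eq:mal:matrix:inv:bnd:cond} literally as a \emph{lower} bound on the probability that $\MM_{n,n+1}$ has a small eigenvalue on $\mathcal{S}_{\alpha,N}$, and your Girsanov/small-ball construction is designed to show that such degeneracy occurs with positive probability. The inequality sign in the display is a misprint: since $\exp(\eta\|U(n)\|^2)\geq 1$, a lower bound of that form cannot hold once $\|U(n)\|$ is large (the right-hand side exceeds $1$), and, decisively, the proposition is invoked in \eqref{eq:partial:inversion:est} precisely as the bound $\Prb\bigl((\Omega^{n}_{\epsilon,\alpha,N})^{c}\,|\,\mathcal{F}_n\bigr)\leq r(\epsilon)\exp(3\varpi/2\cdot\|U(n)\|^2)$, i.e.\ as an \emph{upper} bound on the probability of a small eigenvalue --- equivalently, a high-conditional-probability uniform lower bound $\langle\MM_{n,n+1}\phi,\phi\rangle\geq\epsilon\|\phi\|^2$ on the cone, which is exactly what Lemma~\ref{lem:good:t:reg:as} requires to make $\|P_N\MTI_{n,n+1}\|$ small in the proof of Lemma~\ref{lem:rho:bound}. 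Theorem~\ref{thm:mind:blowing:conclusion}, of which this proposition is stated to be a direct consequence, confirms the intended direction: \eqref{eq:mal:matrix:inv:bnd} reads $\Prb(\inf\geq\epsilon)\geq 1-r(\epsilon)\exp(\eta\|U_0\|^2)$. Your own observation that the bound ``becomes vacuous'' for large $\|U(n)\|$ was the right warning sign; the correct resolution is that the estimate is an upper bound that harmlessly degrades as $\|U(n)\|$ grows, not a lower bound to be produced by a support-theorem argument. Note also that even on its own terms your Girsanov step would yield a factor $\exp(-C(1+\|U_0\|^2))$ from the Cameron--Martin cost, which cannot be ``absorbed'' into the growing factor $\exp(+\eta\|U_0\|^2)$ appearing on the right of the literal statement.

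Because of this, the architecture of your proposal --- fixing one direction $\phi_0$, bounding the infimum by $\langle\MM\phi_0,\phi_0\rangle$, steering the adjoint flow by a deterministic shift so that this quantity is small, and transporting a Wiener small-ball estimate back by Girsanov --- cannot deliver what is needed: showing that the quadratic form \emph{can} be small says nothing about it being bounded below off an exceptional set of small measure, and it does not combine with Lemma~\ref{lem:good:t:reg:as} or feed the decay estimate \eqref{eq:stepwise:cond:decay:est}. The paper's proof of this proposition is short: condition on $\mathcal{F}_n$, use the Markov property (e.g.\ \cite[Theorem 9.12]{ZabczykDaPrato1992}) to reduce to the time interval $[0,1]$ with deterministic initial datum $U(n)$, and apply Theorem~\ref{thm:mind:blowing:conclusion} with $T=1$. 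The genuine content lies in that theorem, proved in Sections~\ref{sec:Mal:spec:bnds}--\ref{sec:Braket:Est:Mal:Mat} by the iterative contradiction argument: on a set of probability at least $1-r(\epsilon)\exp(\eta\|U_0\|^2)$, a small eigenvalue of $\MM_{0,T}$ on $\mathcal{S}_{\alpha,N}$ forces the quadratic form $\mathcal{Q}_{N,\tilde N}(U(T))$ to be small (Proposition~\ref{prop:small:conclusion}), contradicting the lower bound of Proposition~\ref{prop:large:conclusion} once the $U$-dependent high-frequency errors are controlled through the choice $\tilde N=\tilde N(\epsilon)$. The only part of your write-up that survives is the initial reduction by the Markov property; everything after it should be replaced by (a proof of, or appeal to) Theorem~\ref{thm:mind:blowing:conclusion}.
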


Proposition~\ref{prop:mal:cor:1} is a direct consequence of the Markov property and Theorem~\ref{thm:mind:blowing:conclusion} below.
While similar results have appeared in previous works, the proof required us to develop a novel approach due to the particular nonlinear
structure in \eqref{eq:B1:Vort}--\eqref{eq:B3:Vort}.

We are now prepared to prove Lemma~\ref{lem:rho:bound}.

\begin{proof}[Proof of Lemma \ref{lem:rho:bound}]
We use the splitting $\rho_{n+2} = \rho^H_{n+2} + \rho^L_{n+2}$ from \eqref{eq:rho:high:n}, \eqref{eq:rho:low:n}.
The constant $N$ appearing in the definition of this splitting is fixed in the estimate on $\rho^H_{n+2}$ which we address first.

By the positive definiteness of $\MM_{n,n+1}$, it follows that $\|\MTI_{n,n+1} \| \leq 1$ almost surely,
for any $\beta > 0$.  Then, as $\rho_n$ is $\mathcal{F}_n$-measurable, from \eqref{eq:lin:growth:est},
\eqref{eq:lin:growth:high:mode:est}, and \eqref{eq:exp:no:time:growth} we infer
\begin{align}
    \E (\| \rho_{n +2}^H \|^8 | \mathcal{F}_n)
    \leq&   \|\rho_{n}\|^8 \cdot \E\left( \E(\|\JJ_{n+1,n+2} Q_N \|^8| \mathcal{F}_{n+1}) \cdot \|\JJ_{n,n+1} \|^8  | \mathcal{F}_n \right)
    \notag\\
    \leq& \frac{\dStep}{2^8}   \exp( \varpi  \|U(n)\|^2)  \|\rho_{n}\|^{8}
    \,,
    \label{eq:rho:H:n:est:gamma:eta}
\end{align}
for appropriate $N = N(\varpi, \dStep)$.  Fix such an $N$ in \eqref{eq:rho:high:n}, \eqref{eq:rho:low:n}.
Note that the bound \eqref{eq:rho:H:n:est:gamma:eta} holds independently of the value of $\beta$
appearing in \eqref{eq:control:def:iterative}.

Next, we estimate $\rho_{n+2}^L$.  By \eqref{eq:lin:growth:est},
we infer
\begin{align}
    \E (\| \rho_{n +2}^L \|^8 | \mathcal{F}_n)
    \leq&
       \|\rho_{n}\|^8  \cdot  \left(\E(\|\JJ_{n+1,n+2} \|^{24} | \mathcal{F}_n) \right)^{1/3}
       \left( \E (\|P_N \MTI_{n,n+1}\|^{24} | \mathcal{F}_n) \right)^{1/3}
       \left( \E (\| \JJ_{n,n+1}\|^{24}| \mathcal{F}_n) \right)^{1/3}
    \notag\\
    \leq& C^* \|\rho_{n}\|^8  \exp(\varpi/2  \|U(n)\|^2) \left( \E (\|P_N \MTI_{n,n+1}\|^{24} | \mathcal{F}_n ) \right)^{1/3}\,,
   \label{eq:rho:l:est:2}
\end{align}
where $C^*= C^*(\varpi)$.
For  $N$ fixed by \eqref{eq:rho:H:n:est:gamma:eta} and for any $\epsilon, \alpha > 0$, and $n \in 2\NN$ consider the set
\begin{align*}
  \Omega_{\epsilon, \alpha, N}^{n} := \left\{  \inf_{ \phi \in  \mathcal{S}_{\alpha, N}} \frac{\langle \MM_{n,n+1} \phi, \phi \rangle}{\| \phi\|^{2}} \geq \epsilon \right\},
\end{align*}
where $\mathcal{S}_{\alpha,N}$ is defined in \eqref{eq:test:fn:set:N:alpha}.
Using  Lemma \ref{lem:good:t:reg:as}, Proposition~\ref{prop:mal:cor:1} with $\eta$ replaced by $3\varpi/2$,
and $\|\MTI_{n,n+1}\| \leq 1$ we have
\begin{align}
 \E \left( \|P_N \MTI_{n,n+1}\|^{24}  | \mathcal{F}_n\right)
 =&  \E \left( \|P_N \MTI_{n,n+1}\|^{24} \indFn{  \Omega_{\epsilon, \alpha, N}^{n} }| \mathcal{F}_n\right)
 +  \E \left( \|P_N \MTI_{n,n+1}\|^{24}\indFn{  (\Omega_{\epsilon, \alpha, N}^{n})^{c} }  | \mathcal{F}_n\right)
 \notag\\
 \leq& \left(\alpha \vee \sqrt{\beta/\epsilon} \right)^{24} + \Prb \left( (\Omega_{\epsilon, \alpha, N}^{n})^{c}  | \mathcal{F}_n\right)
  \notag\\
 \leq& \left(\left(\alpha \vee \sqrt{\beta/\epsilon}\right)^{24} + r(\epsilon) \right)  \exp(3\varpi/2 \cdot \|U(n)\|^2)\,,
 \label{eq:partial:inversion:est}
 \end{align}
which holds for any $\alpha \in (0, 1]$, $\beta > 0$, and any $\epsilon < \epsilon^*(\alpha, \varpi, N)
= \epsilon^*(\alpha, \varpi, \dStep)$, and where the function $r$ is given by \eqref{eq:mal:matrix:inv:bnd:cond}.

Next, we choose $\alpha, \beta, \epsilon > 0$, such that $\epsilon < \epsilon^*(\alpha, \varpi, N)$, and so that
 \begin{align}
  \left(\left(\alpha \vee \sqrt{\beta/\epsilon}\right)^{24} + r(\epsilon) \right)^{1/3} \leq \frac{\dStep}{2^8 C^*},
  \label{eq:alpha:epsilon:beta:juggle}
 \end{align}
 where $C^*$ is the constant from \eqref{eq:rho:l:est:2}.
First choose a sufficiently small $\alpha  = \alpha(\varpi, \dStep) > 0$.  This choice of $\alpha$ fixes
$\epsilon^\ast = \epsilon^\ast(\alpha, \varpi, \dStep) = \epsilon^\ast(\varpi, \dStep) > 0$. We then choose a sufficiently small
$\epsilon < \epsilon^\ast$, $\epsilon = \epsilon (\varpi, \dStep)$  to control the $r(\epsilon)$ term.  Finally, based on this choice of
$\epsilon$, we determine $\beta = \beta(\varpi, \dStep) > 0$.  Thus we infer that
\begin{align*}
  \E (\| \rho_{n +2}^L \|^8 | \mathcal{F}_n) \leq \frac{\dStep}{2^8}   \exp( \varpi  \|U(n)\|^2)  \|\rho_{n}\|^{8}\,.
\end{align*}
Combining this bound with \eqref{eq:rho:H:n:est:gamma:eta} and $\|\rho_{n+2}\|^8 \leq 2^7 (\|\rho_{n+2}^H\|^8 + \|\rho_{n+2}^L\|^8 )$ establishes \eqref{eq:stepwise:cond:decay:est}.

Pursuing the definitions of $\rho$ and $v$ through \eqref{eq:control:def:iterative}, \eqref{eq:two:tm:step:cont:err} the block
adapted structure in \eqref{eq:block:adapted:struct:rho:v} clearly follows by induction and the definitions of the operators
$\JJ_{n, n+1}$, $\AAA_{n,n+1}$, $\AAA_{n,n+1}^*$.  The proof of Lemma~\ref{lem:rho:bound} is thus complete.
\end{proof}

\subsection{Proof of Proposition \ref{prop:grad:est:MS}}
\label{sec:proof:main:prop}

This final section is devoted to the proof of Proposition~\ref{prop:grad:est:MS} following the strategy identified in Sections~\ref{sec:derive:control}--\ref{subsec:control:choice}:
using the machinery of  Malliavin calculus the desired estimate on the Markov semigroup \eqref{eq:main:grad:est} has been translated to the control problem \eqref{eq:decay}, \eqref{eq:bound}, where $\rho$
is a solution of \eqref{eq:JmA:def:control} with $v$ defined by \eqref{eq:control:def:iterative}.
Then, in Section~\ref{sec:approx:control:cost}, we derived a one time step decay estimate on $\rho$ from Lemma \ref{lem:rho:bound} which
we now use as follows.

\begin{proof}[Proof of Proposition \ref{prop:grad:est:MS}]
Fix any $\eta, \gamma_0 > 0$ and any $\rho(0) = \xi \in H$ with $\|\xi\| = 1$. We successively demonstrate \eqref{eq:decay}, \eqref{eq:bound}
for $v$ and $\rho$ determined by \eqref{eq:JmA:def:control}, \eqref{eq:control:def:iterative} and \eqref{eq:two:tm:step:cont:err}.

First we prove \eqref{eq:decay}, using Lemma~\ref{lem:rho:bound}. Define $\Psi: \RR^2 \to \RR$ according to
\begin{align*}
  \Psi(x,y) =
  \begin{cases}
     x/y& \textrm{ for } y \not =0,\\
     0& \textrm{ for } y =0.\\
  \end{cases}
\end{align*}
By \eqref{eq:block:adapted:struct:rho:v}, $\rho_n$ is $\mathcal{F}_n$ measurable, and therefore with \eqref{eq:stepwise:cond:decay:est}
we infer
 \begin{align}
     \E \left( \Psi(\| \rho_{n +2} \|,\|\rho_{n}\|)^8  \exp(- \varpi  \|U(n)\|^2)  | \mathcal{F}_n \right)
     =  \indFn{\|\rho_n\| > 0}  \frac{\E \left( \| \rho_{n +2} \|  | \mathcal{F}_n \right)}{\|\rho_{n}\|^8 \exp( \varpi  \|U(n)\|^2)}   \leq \dStep,
     \label{eq:rho:est:comp}
 \end{align}
  for every $\varpi, \dStep > 0$, and each $n \in 2 \NN$.
 For any $k \geq 0$, let
 \begin{align*}
   X_{k} := \Psi(\| \rho_{2k +2} \|,\|\rho_{2k}\|)^{4} \exp(- \varpi/2 \cdot \|U(2k)\|^2)\,, \qquad Y_{k} := \exp( \varpi/2  \cdot \|U(2k)\|^2) \,.
 \end{align*}
Repeated use of conditional expectation with \eqref{eq:rho:est:comp} implies
 \begin{align*}
 \E \left( \prod_{k =0}^n X_{k}^2 \right)=&\E\left( \E\left(\prod_{k =0}^n X_{k}^2 | \mathcal{F}_{2n} \right)\right)
 	=\E\left( \prod_{k =0}^{n-1} X_{k}^2 \E\left(X_{n}^2 | \mathcal{F}_{2n} \right)\right)
	\notag\\
	\leq& \dStep \E\left( \prod_{k =0}^{n-1} X_{k}^2\right)
	\leq \cdots \leq  \dStep^{n}\,.
  \end{align*}
 On the other hand, noting that for each $n\in \NN$, $\indFn{\|\rho_n\| = 0} \rho(t)= 0$  for any $t \geq n$ and that $\|\rho_0\| = 1$
 we have
 \begin{align*}
   \prod_{k =0}^n X_{k} Y_{k} = \|\rho_{2n+2}\|^{4} \prod_{k = 1}^n \indFn{\|\rho_{2k}\| > 0} = \|\rho_{2n+2}\|^{4}.
 \end{align*}
Consequently, if $\varpi \leq \eta^\ast$ (see Lemma \ref{lem:exp:moments}), H\" older's inequality and \eqref{eq:exp:mom:sums} yield
 \begin{align*}
   \E (\|\rho_{2n+2}\|^{4}) \leq&  \left(\E \prod_{k =0}^n X_{k}^2 \right)^{1/2} \left(\E \prod_{k =0}^n  Y_{k}^2\right)^{1/2}
   \leq \dStep^{n/2}  \left(\E \exp \left(\varpi  \sum_{k =0}^{2n} \|U(k)\|^2 \right)\right)^{1/2}
   \notag\\
   \leq& \dStep^{n/2}  \exp\left(\frac{\varrho \varpi}{2}  \|U_0\|^2 \right)  \exp ( \varkappa n)  \,,
 \end{align*}
 where $\varrho$ and $\varkappa$ are the constants appearing in \eqref{eq:exp:mom:sums}.
 Setting $\dStep := \exp(-4(\varkappa + \gamma_0) )$, $\varpi := \min \{\eta^\ast, 2\eta/\rho \}$, $\beta = \beta (\eta, \gamma_0)$
 as in Lemma~\ref{lem:rho:bound}, we obtain that for each integer $n$
 \begin{align}
\E (\|\rho_{2n}\|^{4}) \leq C\exp\left( \eta  \|U_0\|^2 -2n \gamma_0\right).
    \label{eq:rho:decay:est}
  \end{align}
In particular we infer \eqref{eq:decay} for $t = 2n$.

Next, observe that, for each $n \in \mathbb{N}$,
 \begin{align*}
 \rho(t) =
 \begin{cases}
	\JJ_{2n, t} \rho_{2n} - \AAA_{2n, t}v_{2n, t}& \textrm{ for } t \in [2n, 2n+1),\\
	\JJ_{2n+1, t} \rho_{2n+1}& \textrm{ for } t \in [2n+1, 2n+2].
 \end{cases}
 \end{align*}
By \eqref{eq:control:def:iterative} and \eqref{eq:op:norm:1}--\eqref{eq:op:norm:3}, for any $t \in [2n, 2n + 2]$
\begin{align}
  \|v_{2n, t}\|_{L^2([2n,t]; \RR^{d})} \leq
  \|v_{2n, 2n+1}\|_{L^2([2n,2n+1]; \RR^{d})} \leq \beta^{-1/2} \|J_{2n, 2n+1}\| \|\rho_{2n}\| \,,
  \label{eq:v:norm:obs}
\end{align}
and consequently for any $t \in [2n, 2n+1)$
 \begin{align*}
 \| \rho(t) \| &\leq \|\JJ_{2n, t} \rho_{2n}\| +  \| \AAA_{2n, t}v_{2n, t}\|
 \leq   \|\JJ_{2n, t} \rho_{2n}\| + \|\AAA_{2n, t}\|_{\mathcal{L}(L^2([2n, t]; \RR^{d}), H)}\|v_{2n, 2n+1}\|_{L^2([2n, 2n+1]; \RR^{d})}
\notag \\
&\leq C\beta^{-1/2} \left(1 +  \sup_{s \in [2n,t]} \|\JJ_{s,t}\|^2  \right)\|\rho_{2n}\|,
\end{align*}
and for any $t \in [2n + 1, 2n+2]$
\begin{align*}
\| \rho(t) \| \leq \sup_{s \in [2n+1,t]} \|\JJ_{s,t}\| \|\rho_{2n+1}\|.
\end{align*}
Combining these observations, with \eqref{eq:lin:growth:est} and \eqref{eq:rho:decay:est} the desired estimate \eqref{eq:decay} now follows.\footnote{Note
that the constant we obtain for \eqref{eq:decay} grows as $\beta^{-1/2}$.  This is inconsequential as we obtain a $\beta^{-2}$
dependence in the constant for \eqref{eq:bound} below; see \eqref{eq:cost:bbd:ito:corr}.}

  We turn next to the proof of \eqref{eq:bound}.
  Although $v$ is not adapted, it follows from Lemma~\ref{lem:mal:est} and \eqref{eq:control:def:iterative} that
  $v_{0,N} \in \Mspc^{1,2}(L^2([0,N]; \RR^{d}))$ for any $N > 0$, and we may thus use the generalized
  It\={o} isometry, \eqref{eq:gen:ito:ineq}.  By \eqref{eq:block:adapted:struct:rho:v},
  $v(t)$ is $\mathcal{F}_{\vartheta(t) }$ measurable, where $\vartheta$ is defined in  \eqref{eq:block:adapt:fn}, and consequently  by \eqref{eq:Fs:t:meas:zero:cond},
  $\DM_s v(r) = 0$ if $s > \vartheta(r)$.  Thus
  \begin{align}
    \E \left( \int_0^{2N} v \cdot dW \right)^2 &= \E \int_0^{2N} |v(s)|^2_{\RR^d} ds + \E \int_0^{2N}\int_0^{2N} \chi_{s \leq \vartheta( r )} \chi_{r \leq \vartheta( s) }  \mbox{Tr}( \DM_s v(r) \DM_r v(s)) ds dr.
    \notag\\
    &\leq \sum_{n = 0}^{N-1} \left( \E \int_{2n}^{2n+1} |v_{2n,2n+1}(s)|^2_{\RR^d} ds + \E  \int_{2n}^{2n+1} \int_{2n}^{2n + 1} | \DM_s v_{2n, 2n+1}(r)|^2_{\RR^{d\times d}} ds dr \right).
    \label{eq:gen:ito:iso:block:diag}
\end{align}
For the first term in \eqref{eq:gen:ito:iso:block:diag}, we make use of \eqref{eq:v:norm:obs}
\begin{align}
   \sum_{n = 0}^{N-1}  \E \int_{2n}^{2n+1} |v_{2n,2n+1}(s)|^2_{\RR^d} ds \leq \beta^{-1/2} \sum_{n = 0}^{N-1} \bigl(\E \|\JJ_{2n, 2n+1}\|^4 \cdot \E \| \rho_{2n}\|^4\bigr)^{1/2}
   \leq C\exp( \eta \|U_0\|^2)
  \label{eq:trad:ito:term:bnd}
\end{align}
for a constant $C = C(\eta, \gamma_0)$ independent of $N$.

To bound the second term in \eqref{eq:gen:ito:iso:block:diag}, we compute an explicit expression for $\DM_s v$.
By Lemma~\ref{lem:mal:est}, each of $\JJ_{2n, 2n+1}$, $\AAA_{2n,2n+1}$, $\AAA_{2n,2n+1}^\ast$,
$\MM_{2n, 2n+1} + \beta I$, and $(\MM_{2n,2n+1} + \beta I)^{-1}$ are differentiable in the Malliavin sense and lie in the space
$\Mspc^{1,p}$ for any $p > 1$ (see \eqref{eq:def:D12}).  It thus follows from \eqref{eq:two:tm:step:cont:err} that $\rho_{2n} \in \Mspc^{1,p}$
for any $p > 1$ and any $n$.
Moreover, recalling that by \eqref{eq:block:adapted:struct:rho:v},  $\rho_{2n}$ is $\mathcal{F}_{2n}$ measurable, \eqref{eq:Fs:t:meas:zero:cond} implies that
$\MD_s \rho_{2n} = 0$ for any $s \geq 2n$.
Then by the Malliavin product rule (see e.g. \cite[Lemma 3.6]{PronkVeraar2013}) we compute
\begin{align}
   \DM_s^j v_{2n, 2n+1} &=  \AAA_{2n,2n+1}^* (\MM_{2n,2n+1}  + I \beta)^{-1}  (\DM_s^j \JJ_{2n,2n+1}) \rho_{2n}
   		+  \AAA_{2n,2n+1}^* (\DM_s^j (\MM_{2n,2n+1}  + I \beta)^{-1} ) \JJ_{2n,2n+1}\rho_{2n}
		\notag\\
		&\qquad + (\DM^j_s \AAA_{2n,2n+1}^*) (\MM_{2n,2n+1}  + I \beta)^{-1}  \JJ_{2n,2n+1}\rho_{2n}
		\label{eq:MD:stoc:integrand}
\end{align}
for any $j \in  \{1, \ldots, d\}$ and $s \in [2n, 2n+1]$.
Moreover, after differentiating the identity $(\MM_{2n,2n+1}  + I \beta)^{-1}(\MM_{2n,2n+1}  + I \beta) = I$ and recalling that
$\MM_{2n,2n+1} = \AAA_{2n,2n+1} \AAA_{2n,2n+1}^*$ we obtain
\begin{multline}
\DM_s^j (\MM_{2n,2n+1}  + I \beta)^{-1} = \\
 (\MM_{2n,2n+1}  + I \beta)^{-1}
((\DM_s^j \AAA_{2n,2n+1}) \AAA_{2n,2n+1}^* + \AAA_{2n,2n+1}(\DM_s^j \AAA_{2n,2n+1}^*))
(\MM_{2n,2n+1}  + I \beta)^{-1} \,.
\label{eq:mal:der:matr}
\end{multline}
By \eqref{eq:MD:stoc:integrand}, \eqref{eq:mal:der:matr}, and the bounds \eqref{eq:op:norm:1}--\eqref{eq:op:norm:3}, one has
for each $s \in [2n, 2n+1]$
\begin{align}
\| \DM_s^j v_{2n, 2n+1} &\|_{L^2([2n, 2n+1];\RR^{d})} \notag\\
    \leq& \beta^{-1/2} \|\DM_s^j \JJ_{2n,2n+1}\| \|\rho_{2n}\|
	+ \beta^{-1} \|\DM_s^j \AAA_{2n,2n+1}\|_{\mathcal{L}(L^2([2n, 2n+1],\RR^d), H)} \|\JJ_{2n,2n+1}\|\|\rho_{2n}\|
	\notag\\
&+ 2 \beta^{-1} \|\DM_s^j \AAA_{2n,2n+1}^\ast\|_{\mathcal{L}(H,L^2([2n, 2n+1],\RR^d))} \|\JJ_{2n,2n+1}\|\|\rho_{2n}\| \,.
\label{eq:mid:step}
\end{align}
Finally, we use \eqref{eq:mid:step}, \eqref{eq:mal:J:est}--\eqref{eq:mal:A:star:est}, \eqref{eq:lin:growth:est}, and \eqref{eq:rho:decay:est}
to conclude
\begin{align}
\E \sum_{n = 0}^{N-1}  \int_{2n}^{2n+1} \int_{2n}^{2n + 1} | \DM_s v_{n, n+2}(r)|^2_{\RR^{d\times d}} ds dr \notag
&= \E \sum_{n = 0}^{N-1} \sum_{j = 0}^{d} \int_{2n}^{2n + 1} \|\DM_s^j v_{2n, 2n+1}\|_{L^2([2n, 2n+1];\RR^{d})} ds \\
&\leq C \beta^{-2} \exp(\eta/2 \|U_0\|^2) \sum_{n = 0}^{\infty} [\E \|\rho_{2n}\|^4]^{1/2} \notag\\
&\leq C \beta^{-2} \exp(\eta \|U_0\|^2) \,,
\label{eq:cost:bbd:ito:corr}
\end{align}
where $C = C(\eta, \gamma_0)$.  Combining \eqref{eq:cost:bbd:ito:corr}, \eqref{eq:trad:ito:term:bnd}
with \eqref{eq:gen:ito:iso:block:diag} we now infer \eqref{eq:bound}, completing the proof of Proposition~\ref{prop:grad:est:MS}.
\end{proof}

\section{Spectral Bounds for the Malliavin Covariance Matrix}
\label{sec:Mal:spec:bnds}

In this section we present the main technical result of this work,
Theorem~\ref{thm:mind:blowing:conclusion}, which yields a
probabilistic spectral bound on the Malliavin matrix $\MM_{0,T}$ (see \eqref{eq:Mal:def}).
Recall that in Section~\ref{sec:math:setting} we established the uniqueness of the invariant measure associated to \eqref{eq:B1:Vort}--\eqref{eq:B3:Vort}
assuming a gradient estimate on the Markov semigroup, \eqref{eq:main:grad:est}.  Then, in Section \ref{sec:grad:control}, we established this gradient estimate \eqref{eq:main:grad:est} modulo
Proposition \ref{prop:mal:cor:1}, which is a corollary to Theorem~\ref{thm:mind:blowing:conclusion}.  Hence,
we have reduced the proof of the uniqueness in Theorem \ref{thm:mainresult} to the proof of Theorem~\ref{thm:mind:blowing:conclusion}.

\begin{Thm}
\label{thm:mind:blowing:conclusion}
Let $U_0 \in H$ and define $\MM_{0,T}$ according to \eqref{eq:Mal:def}, relative to $U(\cdot) = U(\cdot, U_0)$ solving \eqref{eq:BE:abs}.
Fix any $\alpha \in (0, 1]$, $N \geq 1$, and $\eta > 0$. Then, there exists $\epsilon^\ast := \epsilon^\ast(T, \alpha, \eta, N) > 0$
such that for any $0 <\epsilon < \epsilon^\ast$, there is a measurable set $\Omega_\epsilon = \Omega_\epsilon(\alpha, N) \subset \Omega$
satisfying
\begin{align*}
  \Prb(\Omega_\epsilon^c) \leq r( \epsilon) \exp(\eta \|U_0\|^2)\,,
\end{align*}
where $r = r(T, \alpha, \eta,N)$ is a nonnegative decreasing function such that $r( \epsilon) \to 0$ as $\epsilon \to 0$. On this set $\Omega_\epsilon$
\begin{align*}
 \inf_{\phi \in \mathcal{S}_{\alpha, N}}  \frac{\langle \MM_{0,T} \phi, \phi \rangle }{\|\phi\|^2} \geq \epsilon \,,
\end{align*}
where $\mathcal{S}_{\alpha, N} = \{ \phi \in H: \|P_N \phi \|^2 >  \alpha \| \phi\|^2 \}$.
In particular,
\begin{align}
\Prb \left( \inf_{ \phi \in \mathcal{S}_{\alpha, N}} \frac{\langle \MM_{0,T} \phi, \phi \rangle}{\| \phi \|^2 } \geq \epsilon \right)
   \geq 1 - r(\epsilon) \exp(\eta \|U_{0}\|^2).
  \label{eq:mal:matrix:inv:bnd}
\end{align}
\end{Thm}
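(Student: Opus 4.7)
The plan is to translate a lower bound on $\langle \MM_{0,T}\phi,\phi\rangle$ into bracket inner products via an It\^o/Norris iteration, and then close by invoking the generalized H\"ormander condition of Definition~\ref{def:our:cond}. From \eqref{eq:A:star:op:def} and \eqref{eq:Mal:def} one has
\begin{align*}
   \langle \MM_{0,T}\phi,\phi\rangle = \int_0^T |\sigma_\theta^\ast \KK_{s,T}\phi|_{\RR^d}^2 \, ds,
\end{align*}
so I argue by contrapositive: on the event $\{\langle \MM_{0,T}\phi,\phi\rangle < \epsilon\}$ I will force $\phi\notin\mathcal{S}_{\alpha,N}$ up to an exceptional set of probability $\lesssim r(\epsilon)\exp(\eta\|U_0\|^2)$. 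After normalizing $\|\phi\|=1$, write $\tilde\phi(s):=\KK_{s,T}\phi$, and note $\tilde\phi(T)=\phi$ by \eqref{eq:def:K:backwards:eq}.

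The heart of the argument is a Norris-type iteration adapted to the full backward flow. For any Fr\'echet-smooth vector field $E:H\to H$ the process $g_E(s):=\langle E(U(s)),\tilde\phi(s)\rangle$ is a continuous semimartingale whose It\^o decomposition (coupling $dU=F(U)\,dt+\sigma_\theta\,dW$ with \eqref{eq:def:K:backwards:eq}) produces a drift involving $\langle[F,E](U(s)),\tilde\phi(s)\rangle$ and a martingale whose quadratic variation is generated by the fields $\langle[\sigma_k^l,E](U(s)),\tilde\phi(s)\rangle$. A quantitative Norris lemma, in the spirit of \cite{MattinglyPardoux1,HairerMattingly2011} but with constants tracked as polynomials in $\|U\|_{H^m}$ (controlled through Appendix~\ref{sec:moment:est}), allows me to pass from $L^2([0,T])$-smallness of $g_E$ to $L^2([0,T])$-smallness of $g_{[F,E]}$ and $g_{[\sigma_k^l,E]}$, with a fixed loss in the power of $\epsilon$ and conditioning on events of probability controlled by $r(\epsilon)\exp(\eta\|U_0\|^2)$. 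Iterating $M$ times, then upgrading from $L^2$-in-time to pointwise evaluation at $s=T$ via uniform H\"older continuity of $s\mapsto g_b(s)$ (again with polynomial dependence on $\|U\|$), I obtain that for any finite $\mathfrak{B}\subset\mathcal{V}_M$ fixed in advance,
\begin{align*}
   \sum_{b\in\mathfrak{B}}\langle b(U(T)),\phi\rangle^2 \leq \epsilon^{q(M)}
\end{align*}
holds on an event of probability at least $1-r_1(\epsilon)\exp(\eta\|U_0\|^2)$.

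To close, I invoke Definition~\ref{def:our:cond}. Given $N$, $\alpha$, $\eta$, I first select $\tilde\epsilon>0$ small enough that, on an event of high probability furnished by the exponential moment bounds of Lemma~\ref{lem:exp:moments} for $\|U(T)\|_{\tilde H}$, one has $\tilde\epsilon(1+\|U(T)\|_{\tilde H}^p)\leq\alpha/2$. The generalized H\"ormander condition then yields an integer $M$ and a finite $\mathfrak{B}\subset\mathcal{V}_M$ with
\begin{align*}
   \sum_{b\in\mathfrak{B}}\langle b(U(T)),\phi\rangle^2 = \langle\mathcal{Q}(U(T))\phi,\phi\rangle \geq \tfrac{C\alpha}{2}\|\phi\|^2 = \tfrac{C\alpha}{2},
\end{align*}
contradicting the Norris-derived bound once $\epsilon<\epsilon^\ast(\alpha,N,\eta,T)$ is chosen small enough. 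Defining $\Omega_\epsilon$ as the intersection of the large-probability sets produced in the Norris and moment steps then gives \eqref{eq:mal:matrix:inv:bnd}.

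The principal obstacle, and the reason this spectral analysis departs substantially from \cite{HairerMattingly2011}, is precisely the $U$-dependent error $\tilde\epsilon(1+\|U\|_{\tilde H}^p)$ in Definition~\ref{def:our:cond}: the bracket set $\mathfrak{B}$ itself depends on $\tilde\epsilon$ and its elements $b(U)$ genuinely vary with $U$. Consequently every stage of the Norris iteration must carry polynomial-in-$\|U\|$ constants that remain controllable after absorption by Lemma~\ref{lem:exp:moments}, and the two-stage parameter choice (first $\tilde\epsilon$, then $\epsilon$) is forced by this $U$-dependence. Upgrading the $L^2([0,T])$ Norris bounds to pointwise evaluations at $s=T$ additionally requires continuity estimates on $\KK_{s,T}$ in high Sobolev norms with explicit polynomial dependence on $\|U\|$; tracking these carefully is the source of the technical estimates postponed to Section~\ref{sec:Braket:Est:Mal:Mat}.
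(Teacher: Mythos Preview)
Your outline captures the broad contrapositive strategy and the role of the generalized H\"ormander condition, but the two-stage parameter choice you propose --- ``first $\tilde\epsilon$, then $\epsilon$'' --- cannot deliver the conclusion $r(\epsilon)\to 0$. If $\tilde\epsilon$ is fixed before $\epsilon$, then the bad event $\{\tilde\epsilon(1+\|U(T)\|_{\tilde H}^p)>\alpha/2\}$ has probability bounded by a quantity depending only on $\tilde\epsilon$ and $\|U_0\|$, not on $\epsilon$; this contributes a term to $r(\epsilon)$ that does not vanish as $\epsilon\to 0$. The theorem explicitly requires $r(\epsilon)\to 0$, so this is a genuine gap, not a technicality.

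The paper's proof confronts exactly this issue. It lets the analogue of your $\tilde\epsilon$ (equivalently, the cutoff $\tilde N$ in the bracket set $\mathfrak{B}_{N,\tilde N}$) depend on $\epsilon$ through a function $h(\epsilon)=\log\log\log(\epsilon^{-1})$ and sets $\tilde N(\epsilon)\sim h(\epsilon)$. This forces a delicate balance: the moment-based probability $\Prb(\|U(T)\|_{H^2}^2>h(\epsilon))$ now tends to zero because $h(\epsilon)\to\infty$; but the number of bracket iterations $M=M(\tilde N(\epsilon))$ now also grows with $\epsilon^{-1}$, and each Norris-type step costs a fixed multiplicative loss in the exponent, so the final smallness bound is of order $\epsilon^{q^{\tilde N(\epsilon)}}$ for some $q<1$. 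One must simultaneously verify that (i) $\epsilon$ remains below the threshold $\mathcal{E}(\tilde N(\epsilon))$ required by the upper-bound proposition (Proposition~\ref{prop:small:conclusion}), which shrinks super-exponentially in $\tilde N$, and (ii) $\epsilon^{q^{\tilde N(\epsilon)}}$ still tends to zero. The triple-log growth is precisely what makes both hold; a faster growth of $\tilde N(\epsilon)$ would violate (i) or (ii). Your proposal does not engage with this circularity at all, and the phrase ``two-stage parameter choice'' suggests you have the dependence backwards.
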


\begin{Rmk}
\label{rmk:mind:blowing:remarks:for:a:mind:blowing:conclusion}
\mbox{}
\begin{itemize}
\item[(i)] An explicit form for $r(\epsilon)$ is given in \eqref{eq:small:sets:bal:1} below. While the decay rate in $r(\epsilon)$ as $\epsilon \to 0$ is much slower than in previous works, as observed in Section \ref{sec:grad:control},
it is sufficient for the proof of Proposition \ref{prop:grad:est:MS}.
\item[(ii)] Using the Markov property in the general form found in e.g. \cite[Theorem 9.12]{ZabczykDaPrato1992}, Theorem~\ref{thm:mind:blowing:conclusion} immediately implies Proposition~\ref{prop:mal:cor:1}.
\item[(iii)] Note that it is not enough to replace $S_{\alpha, N}$ by $H_N \subset S_{\alpha, N}$ in Theorem \ref{thm:mind:blowing:conclusion}, as we made use of
 \eqref{eq:mal:matrix:inv:bnd} for small $\alpha > 0$ in the
proof of Lemma~\ref{lem:rho:bound}; see e.g. \eqref{eq:partial:inversion:est}, \eqref{eq:alpha:epsilon:beta:juggle} above.
\end{itemize}
\end{Rmk}

Broadly speaking, the proof of Theorem \ref{thm:mind:blowing:conclusion} involves an `iterative proof by contradiction' following a strategy apparent even in e.g. \cite{Norris1986}.\footnote{In \cite{Norris1986} as in
\cite{Malliavin1978} the goal is to provide a probabilistic proof of H\"ormander's hypoellipticity theorem \cite{Hormander1967}.  These works link
H\"ormander's bracket
condition (associated to a hypo-elliptic evolution equation) to the invertibility of the Malliavin covariance matrix.
In addition to \cite{Norris1986} we refer the interested reader to e.g. \cite{Nualart2006, Hairer2011} for further details on the probabilistic approach to hypoellipticity.}
We show that, on sets of large probability,
if the Malliavin matrix $\MM_{0,T}$ has a small eigenvalue, then a certain quadratic form $\mathcal{Q}$ associated to \eqref{eq:BE:abs}
is small when evaluated at the corresponding
eigenfunction.    We then show that $\mathcal{Q}$
has a suitable lower bound on $S_{\alpha,N}$.
This lower bound may be seen as an
infinite-dimensional analogue of the H\"ormander bracket condition \cite{Hormander1967}.   By combining these upper and lower
bounds we conclude that, with large probability, $S_{\alpha,N}$ cannot contain eigenfunctions of $\MM_{0,T}$ corresponding to small eigenvalues.

We refer to proof of Theorem~\ref{thm:mind:blowing:conclusion} as `iterative' because the bounds on $\mathcal{Q}$ are obtained by an inductive argument which
yields a chain of quantitative bounds on certain functionals associated to H\" ormander type Lie brackets.
Although, we  make significant use of a methodology recently developed in \cite{MattinglyPardoux1, BakhtinMattingly2007, HairerMattingly2011}
to carry out this process in infinite dimensions,
new and interesting difficulties emerge in our situation which reflect the interaction between the nonlinear structure of
\eqref{eq:B1:Vort}--\eqref{eq:B3:Vort} and our choice of stochastic forcing (see Remark \ref{rem:forcing} above).  Firstly, with the
stochastic forcing in the temperature equation only, it is non-trivial to determine a sequence of suitable (H\"ormander type) Lie brackets associated to \eqref{eq:BE:abs}; a completely different
methodology must be developed for \eqref{eq:BE:abs} compared to the one used for the stochastic Navier-Stokes equation in \cite{EMattingly2001, Romito2004, HairerMattingly06}.
Secondly, the vector fields we obtain are $U$ dependent. This situation forces us to use an infinite-dimensional analogue
of the H\"ormander bracket condition, which is weaker than the condition appearing in previous works.

The rest of this section is devoted to proof of  Theorem~\ref{thm:mind:blowing:conclusion}
based on the lower and upper bounds on forms $\mathcal{Q}$
associated to H\"ormander type brackets involving \eqref{eq:BE:abs}.  The lower and upper bounds are given below as
Propositions~\ref{prop:large:conclusion} and \ref{prop:small:conclusion}, respectively.  The
detailed computations of the H\"ormander brackets are postponed for Sections~\ref{sec:Hormander:brak}.

\subsection{Quadratic Forms; Upper and Lower Bounds}

Before precisely stating the lower and upper bounds,
we briefly recall the origin of the quadratic forms found in these
propositions. As explained in the introduction, the `admissible H\"ormander
brackets' are elements in the sets
\begin{align}
   \mathcal{V}_m := \mbox{span}\left\{ [E, F], [E, \sigma_k^j], E: k \in \mathcal{Z}, j = \{0,1\}, E \in \mathcal{V}_{m-1}  \right\}
   \label{eq:Mth:iter:admissible:brak}
\end{align}
starting from $\mathcal{V}_{0} =  \mbox{span}\{\sigma_k^j: k \in \mathcal{Z}, j = \{0,1\}\}$, where we recall that the Lie brackets are given by $[E_1 , E_2 ] := \nabla E_2 \cdot E_1 - \nabla E_1 \cdot E_2 $.
We show in Section~\ref{sec:Hormander:brak},
that for each $N < \tilde{N}$ there exists $M = M(\tilde{N})$ such that the set
\begin{align}
  \mathfrak{B}_{N, \tilde{N}}(U) := \{ \sigma_{j}^{m}, \psi_{j}^{m} + J^{\tilde{N}}_{j,m}(U): m \in\{0,1\},  j \in \ZZ^{2}_{+}, |j| \leq N \}
  \label{eq:approx:basis:tails}
\end{align}
is contained in $\mathcal{V}_M$.  Here, recall that $\sigma_{j}^{m}$ and $\psi_{j}^{m}$ are basis elements for $H$ defined in \eqref{eq:def:basis:sig},
\eqref{eq:def:basis:psi} above.  The elements $J^{\tilde{N}}_{j,m}(U)$ are $U$ dependent `error' terms, which reside in $H^{\tilde{N}} := Q_{\tilde{N}} H$ (see \eqref{eq:H:N:proj:ops}) and satisfy the bound \eqref{eq:junk:est};
 the explicit
form for these terms is given in \eqref{eq:junk:express},  \eqref{eq:junk:controlled} below.

The upshot is that for any (finite) $M$ we are only able to identify $U$-dependent subsets of $\mathcal{V}_M$. Hence, we need to introduce a new form of the H\"ormander
described above in \eqref{eq:our:new:Hormander} (in more general terms) satisfied by
\begin{align}
 \langle \mathcal{Q}_{N, \tilde{N}}(U) \phi, \phi \rangle := \sum_{\aproxB \in \mathfrak{B}_{N, \tilde{N}}(U)}& |\langle \phi, \aproxB(U) \rangle|^2
  \label{eq:our:quad:form:Horm}
\end{align}
for any $\tilde{N} > N$.  We are ready to state the lower bound on $\mathcal{Q}_{N, \tilde{N}}$.

\begin{Prop}
\label{prop:large:conclusion}
Fix any any integers $N \leq \tilde{N}$ and define $\mathfrak{B}_{N, \tilde{N}}$ by \eqref{eq:approx:basis:tails}.
Then, for any $U \in H^2$ and any $\alpha \in (0,1]$ it holds that
\begin{align}
  \langle \mathcal{Q}_{N, \tilde{N}}(U) \phi, \phi \rangle
   \geq   \left( \frac{\alpha}{2} - C^\ast \frac{N^{8}}{\tilde{N}} (1 +
   \|U\|_{H^2}^2) \right)  \| \phi \|^2
   \label{eq:upper:bound:quad:form}
\end{align}
for every $\phi \in \mathcal{S}_{\alpha,N}= \{ \phi \in H: \| P_N \phi\|^2 > \alpha \|\phi\|^2\}$, where $C^\ast$
is a universal constant (see Remark \ref{rmk:constants:conven}).
\end{Prop}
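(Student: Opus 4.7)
The plan is to exploit the fact that, by construction of $\mathfrak{B}_{N,\tilde N}(U)$ in \eqref{eq:approx:basis:tails}, half of its elements are the \emph{unperturbed} temperature basis vectors $\sigma_j^m$, while the other half are the vorticity basis vectors $\psi_j^m$ perturbed by a tail $J^{\tilde N}_{j,m}(U)$ that lives in the high-frequency subspace $Q_{\tilde N} H$. Consequently the $\sigma$-contribution to $\mathcal Q_{N,\tilde N}$ delivers the temperature part of $\|P_N\phi\|^2$ with no error, and only the $\psi$-contribution incurs a correction, which is concentrated at frequencies $>\tilde N$ and hence can be made quantitatively small by choosing $\tilde N$ large.

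First I would split
\begin{equation*}
\langle \mathcal Q_{N,\tilde N}(U)\phi,\phi\rangle = \sum_{|j|\leq N,\, m}|\langle \phi,\sigma_j^m\rangle|^2 + \sum_{|j|\leq N,\, m}|\langle \phi,\psi_j^m + J^{\tilde N}_{j,m}(U)\rangle|^2
\end{equation*}
and apply the elementary Young-type bound $(a+b)^2 \geq \tfrac12 a^2 - b^2$ to each summand of the second sum. Since $\{\sigma_j^m, \psi_j^m\}_{|j|\leq N,\, m\in\{0,1\}}$ is an orthogonal basis of $H_N$ (the fixed normalization constants of the trigonometric basis being absorbed into $C^*$ as per Remark~\ref{rmk:constants:conven}), the resulting main term is bounded below by
\begin{equation*}
\sum_{|j|\leq N, m}|\langle \phi,\sigma_j^m\rangle|^2 + \tfrac12 \sum_{|j|\leq N, m}|\langle \phi,\psi_j^m\rangle|^2 \;\geq\; \tfrac12 \|P_N\phi\|^2 \;\geq\; \tfrac{\alpha}{2}\|\phi\|^2,
\end{equation*}
where the final inequality uses $\phi \in \mathcal S_{\alpha,N}$.

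Next I would dispose of the error sum $\sum |\langle \phi, J^{\tilde N}_{j,m}(U)\rangle|^2$. The key structural observation is that $J^{\tilde N}_{j,m}(U) \in Q_{\tilde N} H$, so
\begin{equation*}
\langle \phi, J^{\tilde N}_{j,m}(U)\rangle = \langle Q_{\tilde N}\phi, J^{\tilde N}_{j,m}(U)\rangle,
\end{equation*}
and Cauchy--Schwarz yields $|\langle \phi, J^{\tilde N}_{j,m}(U)\rangle|^2 \leq \|\phi\|^2 \|J^{\tilde N}_{j,m}(U)\|^2$. Feeding in the per-term tail bound \eqref{eq:junk:est} on $\|J^{\tilde N}_{j,m}(U)\|^2$ (to be established in Section~\ref{sec:Braket:Est:Mal:Mat} from the explicit bracket formulas \eqref{eq:junk:express}, \eqref{eq:junk:controlled} of Section~\ref{sec:Hormander:brak}) and summing over the $|j|\leq N$ produces the required $C^* \frac{N^8}{\tilde N}(1+\|U\|_{H^2}^2)\|\phi\|^2$, the $N^8$ arising from the polynomial $|j|$-weight in \eqref{eq:junk:est} integrated over $|j|\leq N$. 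Combining with the main-term lower bound gives \eqref{eq:upper:bound:quad:form}.

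The main obstacle is therefore not this proposition, which is a short bookkeeping argument assembling Young's inequality, Cauchy--Schwarz, and the orthogonality of the trigonometric basis, but rather the prerequisite pointwise estimate \eqref{eq:junk:est} on the bracket-induced tails $J^{\tilde N}_{j,m}(U)$; verifying that these tails decay like $1/\tilde N$ at high frequency with only polynomial growth in $|j|$ and $\|U\|_{H^2}$ is precisely where the advective structure of the Boussinesq nonlinearity and the noise-propagation chain described in Section~\ref{sec:Hor:int} are exploited.
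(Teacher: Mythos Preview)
Your proposal is correct and matches the paper's own proof essentially line for line: the paper also splits $\mathcal Q_{N,\tilde N}$ into the $\sigma$- and $\psi$-sums, uses the same Young-type inequality $(a+b)^2 \geq \tfrac12 a^2 - b^2$ (written out via the cross term) together with the fact that $\{\sigma_j^m,\psi_j^m\}_{|j|\le N}$ is an orthonormal basis for $H_N$ to extract $\tfrac12\|P_N\phi\|^2 \ge \tfrac{\alpha}{2}\|\phi\|^2$, and then bounds each $\langle\phi,J^{\tilde N}_{j,m}(U)\rangle^2$ by $\|\phi\|^2\|J^{\tilde N}_{j,m}(U)\|^2$ and invokes \eqref{eq:junk:est} with $s=1$. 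Your identification of \eqref{eq:junk:est} as the only nontrivial input is exactly right; note that this estimate is in fact stated and proved in Section~\ref{sec:Hormander:brak} (Lemma~\ref{lem:junk:est}), not Section~\ref{sec:Braket:Est:Mal:Mat}.
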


\begin{proof} Since $\{ \sigma_{j}^{m}, \psi_{j}^{m}\}_{m \in\{0,1\},  |j| \leq N}$ form an orthonormal basis of
$H_N = P_N H$, we obtain for any $\phi \in \mathcal{S}_{\alpha, N}$
\begin{align*}
   \sum_{\aproxB \in \mathfrak{B}_{N, \tilde{N}}(U)} |\langle  \phi, \aproxB(U) \rangle|^2
   &= \sum_{\substack{|j| \leq N \\ m \in\{0,1\}}} |\langle \phi, \sigma_j^m\rangle|^2 + |\langle \phi, \psi_j^m + J_{j, m}^{\tilde{N}}(U)\rangle|^2
      \notag\\
   &=   \|P_N \phi \|^2 +   \sum_{\substack{|j| \leq N \\ m \in\{0,1\}}}
   \left(2 \langle  \phi, \psi_j^m\rangle \langle  \phi, J_{j, m}^{\tilde{N}}(U)\rangle + \langle   \phi, J_{j, m}^{\tilde{N}}(U)\rangle^2\right)
   \notag\\
   &\geq   \frac{1}{2}\|P_N  \phi \|^2 -   \sum_{\substack{|j| \leq N \\ m \in\{0,1\}}} \langle  \phi, J^{\tilde{N}}_{j,m}(U)\rangle^2
   \geq   \frac{\alpha}{2}\| \phi \|^2 -    \| \phi\|^2  \sum_{\substack{|j| \leq N \\ m \in\{0,1\}}} \|J^{\tilde{N}}_{j,m}(U)\|^2 \,.
\end{align*}
Now observe that  \eqref{eq:junk:est} with $s = 1$ yields
\begin{align*}
   \| J^{\tilde{N}}_{j,m}(U) \|^2 \leq C \frac{N^{8}}{\tilde{N}}( 1 + \|U\|^2_{H^{2}})
\end{align*}
and the desired bound, \eqref{eq:upper:bound:quad:form} follows.
\end{proof}

\noindent Next we will state the `upper bound' on $\mathcal{Q}_{N, \tilde{N}}$, whose proof is long and technical and is postponed to Section~\ref{sec:Mal:spec:bnds}.  It links the chain of Lie brackets presented in Section~\ref{sec:Hormander:brak} (and summarized in Figure~\ref{fig:brak:1}) to
quantitative estimates on $\mathcal{Q}_{N, \tilde{N}}$.

\begin{Prop}
\label{prop:small:conclusion}
Fix $T > 0$. There are positive constants $q_i = q_i(T) > 0$, $i = 0, \ldots 5$ such that the following holds. Fix any $\eta > 0$, any integer $\tilde{N} > 0$ and define
\begin{align}
   \mathcal{E}(\tilde{N}) := \min \left\{ q_0, \left(\frac{q_1}{\tilde{N}}\right)^{q_2^{\tilde{N}}} \right\} \,.
\label{eq:eps:2}
\end{align}
Then for every $\epsilon \in (0,  \mathcal{E}(\tilde{N}))$
there is a set $\Omega_{\epsilon, \tilde{N}}^{*}$  and a constant $C = C(\eta, T)$ such that
\begin{align}
  \Prb((\Omega_{\epsilon,\tilde{N}}^*)^c) \leq C \tilde{N}^{q_3} \exp(\eta \|U_0\|^2)\epsilon^{q_4^{\tilde{N}}}
  \label{eq:good:conclusion:off:bad:set}
\end{align}
and on $\Omega_{\epsilon, \tilde{N}}^{*}$ one has (cf. \eqref{eq:our:quad:form:Horm})
\begin{align}
    \langle \MM_{0,T} \phi, \phi \rangle \leq \epsilon \|\phi\|^2 \quad
  \Rightarrow \quad
     \langle \mathcal{Q}_{N, \tilde{N}}(U(T)) \phi, \phi \rangle  \leq \epsilon^{q_5^{\tilde{N}}} \|\phi\|^2\,.
    \label{eq:good:implication:upper:bnd}
\end{align}
which is valid for $N < \tilde{N}$ and any $\phi \in H$.
\end{Prop}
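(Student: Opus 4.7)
The plan is to adapt the Norris--Malliavin hypoellipticity strategy in the spirit of \cite{HairerMattingly2011,MattinglyPardoux1,BakhtinMattingly2007} to the present $U$-dependent bracket setting. Using \eqref{eq:A:star:op:def}--\eqref{eq:Mal:def} and \eqref{eq:noise:cog:def}, the hypothesis $\langle \MM_{0,T}\phi,\phi\rangle \le \epsilon\|\phi\|^2$ becomes
\[
  \sum_{k\in\ZZZ,\,l\in\{0,1\}}(\alpha_k^l)^2 \int_0^T \langle \KK_{s,T}\phi, \sigma_k^l\rangle^2\,ds \le \epsilon\|\phi\|^2,
\]
so each $g_{\sigma_k^l}(s):=\langle\KK_{s,T}\phi,\sigma_k^l\rangle$ is of size $\sqrt\epsilon$ in $L^2([0,T])$. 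I would combine these $L^2$-bounds with H\"older moduli of continuity for $s\mapsto g_{\sigma_k^l}(s)$ --- obtained from \eqref{eq:def:K:backwards:eq} together with the spatial-smoothing moment bounds of Appendix~\ref{sec:moment:est} --- to upgrade them to $\|g_{\sigma_k^l}\|_{L^\infty([0,T])} \le \epsilon^{p_0}$ with a universal $p_0 \in (0,1)$, holding on an event of good probability.

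\textbf{Step 2 (It\^o decomposition and Norris iteration).} For any Fr\'echet-differentiable $E:H\to H$, applying It\^o's formula to $g_E(s):=\langle\KK_{s,T}\phi, E(U(s))\rangle$, combining the backward evolution \eqref{eq:def:K:backwards:eq} with $dU = F(U)\,dt + \sigma_\de\,dW$, and using the definition \eqref{eq:Lie:brak:abs}, yields
\[
 g_E(T) - g_E(s) = \int_s^T g_{[F,E]}(r)\,dr + \sum_{k,l}\int_s^T \alpha_k^l\, g_{[\sigma_k^l,E]}(r)\,dW^{k,l}_r + \int_s^T R_E(r)\,dr,
\]
where $R_E$ is the It\^o correction involving $D^2E$. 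A quantitative infinite-dimensional Norris-type lemma will deliver: on a good event, if $\|g_E\|_{L^\infty([0,T])}\le\epsilon^p$ then $\|g_{[F,E]}\|_{L^\infty}$ and each $\|g_{[\sigma_k^l,E]}\|_{L^\infty}$ are bounded by $\epsilon^{\kappa p}$ for some fixed $\kappa\in(0,1)$ depending only on $T$. Starting from $E=\sigma_k^l$ and iterating along the chain of admissible H\"ormander brackets constructed in Section~\ref{sec:Hormander:brak}, I would then obtain sup-norm smallness of $g_b$ for every $b\in\mathfrak{B}_{N,\tilde N}(U)$ of the form $\sigma_j^m$ or $\psi_j^m + J^{\tilde N}_{j,m}(U)$. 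Since the bracket depth $M(\tilde N)$ needed to reach all such $b$ grows linearly in $\tilde N$, the compounded loss is exactly $\kappa^{M(\tilde N)}$, producing the tower exponents in \eqref{eq:eps:2}--\eqref{eq:good:implication:upper:bnd}.

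\textbf{Step 3 (conclusion and the good set).} Evaluating $g_b$ at $s=T$ and using $\KK_{T,T}\phi=\phi$ yields $|\langle\phi,b(U(T))\rangle|^2\le\epsilon^{q_5^{\tilde N}}\|\phi\|^2$ for every $b\in\mathfrak{B}_{N,\tilde N}(U(T))$; summing over $b$ gives \eqref{eq:good:implication:upper:bnd}. I would take $\Omega^*_{\epsilon,\tilde N}$ to be the intersection of the events on which (i) the norms $\|U(s)\|_{H^m}$, $\|\JJ_{s,t}\|$, $\|\KK_{s,t}\|$ and their higher-derivative analogues used in $R_E$ remain below thresholds polylogarithmic in $1/\epsilon$; (ii) the H\"older constants of Step 1 are uniform; (iii) the quantitative Norris lemma applies at every one of the $M(\tilde N)$ iteration steps. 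Each bad event carries an exponential-in-$\|U_0\|^2$ moment tail supplied by Lemma~\ref{lem:exp:moments}, and a union bound over the $O(\tilde N^{q_3})$ brackets employed produces \eqref{eq:good:conclusion:off:bad:set}.

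\textbf{Main obstacle.} The hardest part will be that the bracket elements $\psi_j^m+J^{\tilde N}_{j,m}(U)$ are not constant vector fields: the Norris iteration must be carried out with $U$-dependent vector fields whose Fr\'echet derivatives and high-frequency remainders $J^{\tilde N}_{j,m}$ (controlled only through \eqref{eq:junk:est}) re-enter the next step. The bookkeeping required to guarantee a per-step loss $\kappa$ independent of $\tilde N$, while absorbing the $\|U\|_{H^2}$-growth into the exponential prefactor $\exp(\eta\|U_0\|^2)$, is the main technical novelty of the argument and I would defer the detailed estimates to Section~\ref{sec:Braket:Est:Mal:Mat}.
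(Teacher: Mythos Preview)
Your overall plan—upgrade the $L^2$ smallness of $\langle\KK_{s,T}\phi,\sigma_k^l\rangle$ to $L^\infty$, iterate along the bracket chain of Section~\ref{sec:Hormander:brak}, then evaluate at $s=T$—matches the paper's. But the mechanism you propose in Step~2 differs substantially and carries a real difficulty.

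You write an It\^o decomposition of $g_E(s)=\langle\KK_{s,T}\phi,E(U(s))\rangle$ and then invoke a Norris-type lemma. The problem is that $\KK_{s,T}\phi$ depends on $U(r)$ for $r\in[s,T]$ (it solves the \emph{backward} equation \eqref{eq:def:K:backwards:eq}), so it is $\mathcal F_T$-measurable but not $\mathcal F_s$-adapted; the stochastic integral you wrote is not a standard It\^o integral, and the martingale-based Norris lemma does not apply. The paper flags exactly this obstruction (footnote opening Section~\ref{sec:Braket:Est:Mal:Mat}) and circumvents it differently: introduce the shifted process $\bar U:=U-\sigma_\theta W$, which is $C^{1,\alpha}$ in time, and apply \emph{ordinary} pathwise differentiation to $\langle\KK_{t,T}\phi,E(\bar U)\rangle$, obtaining $\langle\KK_{t,T}\phi,[F(U),E(\bar U)]\rangle$ (Lemma~\ref{lem:suit:reg}). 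The step $g_E\Rightarrow g_{[F,E]}$ then uses only the deterministic interpolation Lemma~\ref{lem:aux:aux}, and the step $g_E\Rightarrow g_{[\sigma_k^l,E]}$ is obtained by expanding $E(U)=E(\bar U+\sigma_\theta W)$ as a Wiener polynomial and invoking the Hairer--Mattingly bound Theorem~\ref{thm:all:about:my:wiener}. No It\^o calculus and no Norris lemma enter.

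You also misplace the main obstacle. The iteration (Lemma~\ref{lem:small:sum}) runs only along the \emph{constant} vector fields $\sigma_j^m$ via the cycle $\sigma\to Y\to Z\to[Z,\sigma]$ of Proposition~\ref{prop:generatesigma}; the error terms $J_{j,m}^{\tilde N}$ never re-enter this loop. The $\psi_j^m+J_{j,m}^{\tilde N}$ directions are produced only once at the end (Lemma~\ref{lem:junk:e}), and the key device there is not controlling derivatives of $J$ but rather expanding $P_{\tilde N}J_{j,m}(U(T))$ in the already-small $\sigma$-basis—see \eqref{eq:second:junk}. This is precisely what allows the per-step loss $\kappa$ to be independent of $\tilde N$, which you correctly identified as essential.
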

\begin{Rmk}
  We may suppose without loss of generality that
  \begin{align*}
  	q_2 >1, \quad q_3 > 1, \quad \textrm{ and } \quad q_4 < 1.
  \end{align*}
\end{Rmk}

\subsection{Proof of Theorem~\ref{thm:mind:blowing:conclusion}}
Before turning to precise details, let us outline the proof of Theorem~\ref{thm:mind:blowing:conclusion}
using Propositions~\ref{prop:large:conclusion},~\ref{prop:small:conclusion}.    Observe that, on $\Omega_{\epsilon, \tilde{N}}^{*}$ given in Proposition~\ref{prop:small:conclusion},
we can combine \eqref{eq:upper:bound:quad:form}, \eqref{eq:good:implication:upper:bnd} to infer
\begin{align}
  \langle \MM_{0,T} \phi, \phi \rangle \leq \epsilon \|\phi\|^2 \quad
  \Rightarrow \quad \epsilon^{q_5^{\tilde{N}}} \|\phi\|^2 \geq
  \left( \frac{\alpha}{2} - C^\ast \frac{N^{8}}{\tilde{N}} (1 +
   \|U(T)\|_{H^2}^2) \right)  \| \phi \|^2,
\label{eq:thm41:1}
\end{align}
for any $\phi \in \mathcal{S}_{\alpha, N}$ and for any $\epsilon < \mathcal{E}(\tilde{N})$.
If $\|U(T)\|_{H^2}^2$ was bounded above by a deterministic constant, then we would
prove Theorem~\ref{thm:mind:blowing:conclusion} by
taking $\Omega_{\epsilon}=\Omega_{\epsilon,\tilde{N}}^{\star}$ with $\tilde{N}=\tilde{N}(\alpha, N)$ sufficiently large such that
the conclusion of \eqref{eq:thm41:1} produces a contradiction for $\epsilon$ sufficiently small.  Of course, since the $U$ appearing in
\eqref{eq:thm41:1}
is a solution to \eqref{eq:BE:abs}, such an upper bound is not be to expected.  Instead, we make
a modification of  $\Omega_{\epsilon, \tilde{N}}^{*}$ by intersecting with sets $\Omega_{\epsilon, U, h}$ that
quantify $\|U(T)\|_{H^2}$ in terms of a function $h=h(\epsilon)$, which grows unboundedly as $\epsilon \to 0$.  Returning to \eqref{eq:thm41:1},
$\tilde{N}$ now depends on $\epsilon$ through $h$.   As such, care is needed in the choice of $h$ to avoid a possibly circular argument.

\begin{proof}[Proof of Theorem~\ref{thm:mind:blowing:conclusion}]
For each $\epsilon \in (0, 1/e)$, let
\begin{align}
  h(\epsilon) := \log (\log (\log(\epsilon^{-1}))), \qquad \tilde{N}(\epsilon) :=  \left\lceil \frac{4C^\ast N^{8} h (\epsilon)}{\alpha} \right\rceil \,,
  \label{eq:h:tildeN:eps:def}
\end{align}
where $\lceil x \rceil$ denotes the smallest integer larger or equal to $x$ and $C^\ast$ is the constant from
\eqref{eq:upper:bound:quad:form}.
Then
\begin{align}
\frac{\alpha}{2} -  C^\ast \frac{N^{8}}{\tilde{N}(\epsilon)}  h (\epsilon)  \geq \frac{\alpha}{4}\,.
\label{eq:alp:low}
\end{align}
Observe that, with these definitions, there exists $\epsilon_1^\ast > 0$ such that $\epsilon < \mathcal{E}(\tilde{N}(\epsilon))$
whenever $\epsilon < \epsilon_1^\ast$, where $\mathcal{E}$ is defined by \eqref{eq:eps:2}.\footnote{Indeed, one can begin by supposing $\epsilon_1^\ast< \min\{q_0,1/e\}$
and observe that
\begin{align*}
 \limsup_{\epsilon \to 0^+} \epsilon
 \left(\frac{q_1}{\tilde{N}(\epsilon)}\right)^{-q_2^{\tilde{N}(\epsilon)}}
 &\leq
  \limsup_{\epsilon \to 0^+} \epsilon
  h(\epsilon)^{2q_2^{\tilde{N}(\epsilon)}}
  =  \limsup_{\epsilon \to 0^+} \epsilon \exp \left( 2q_2^{\tilde{N}(\epsilon)} \log( h(\epsilon))\right)\\
  &\leq  \limsup_{\epsilon \to 0^+} \epsilon \exp \left( \exp(  h(\epsilon)^2) \right) =
  \limsup_{h \to \infty } \exp(- \exp (\exp (h))) \exp \left( \exp(  h^2) \right) = 0.
\end{align*}
} For $\epsilon <  \epsilon_1^\ast$, $\Omega_{\epsilon, \tilde{N}(\epsilon)}^*$ (see Proposition \ref{prop:small:conclusion}) is well defined and we set
\begin{align*}
\Omega_{\epsilon} := \hat{\Omega}_{\epsilon, U,h} \cap \Omega_{\epsilon, \tilde{N}(\epsilon)}^*,
\end{align*}
where
\begin{align*}
\hat{\Omega}_{\epsilon, U,h} := \{1 + \|U(T)\|^2_{H^2} \leq h(\epsilon)\} \,.
\end{align*}
We now show that on the sets $\Omega_{\epsilon}$ we obtain the desired conclusion \eqref{eq:mal:matrix:inv:bnd}.
First observe that by \eqref{eq:good:conclusion:off:bad:set}, the Markov inequality, and \eqref{eq:smoothing:est}, there is $C = C(\eta, T)$ such that
\begin{align}
   \Prb(\Omega_{\epsilon}^c) \leq \Prb( \hat{\Omega}_{\epsilon, U,h}^c ) +  \Prb( (\Omega_{\epsilon, \tilde{N}(\epsilon)}^\ast)^c)
      \leq C \left(\frac{1}{h(\epsilon)} + (\tilde{N}(\epsilon))^{q_3}\epsilon^{q_4^{\tilde{N}}}  \right)\exp( \eta \|U_0\|^2)
      =: r(\epsilon)\exp( \eta \|U_0\|^2)
       \label{eq:small:sets:bal:1}
\end{align}
whenever $\epsilon < \epsilon_1^\ast$.
The later quantity $r(\epsilon)$ decays to zero as $\epsilon \to 0^+$; since
$h(\epsilon) \to \infty$ as $\epsilon \to 0^+$
\begin{align}
  \limsup_{\epsilon \to 0^+} \tilde{N}(\epsilon)^{q_3}
  \epsilon^{q_4^{\tilde{N}(\epsilon)}}
    &\leq
  \limsup_{\epsilon \to 0^+} h(\epsilon)^{2q_3}
  \epsilon^{\exp ( -h(\epsilon)^{2})} \notag\\
  &\leq
  \exp\left( \limsup_{\epsilon \to 0^+}( 2 q_3 \log h(\epsilon) +
  \exp (  -h(\epsilon)^{2}) \log \epsilon ) \right)
  \notag \\
  &=
  \exp\left( \limsup_{h \to \infty} \left( 2 q_3 \log( h) -
  \exp (  -h^{2}) \exp( \exp(h)) \right) \right) =
   0.
  \label{eq:key:est}
\end{align}
On the other hand, on $\Omega_\epsilon$,
Propositions~\ref{prop:large:conclusion}, \ref{prop:small:conclusion}, and \eqref{eq:alp:low} yield
\begin{align*}
\langle \MM_{0,T}& \phi, \phi \rangle \leq \epsilon \|\phi\|^2 \notag\\
  &\Rightarrow \quad
  \epsilon^{q_5^{\tilde{N}(\epsilon)}} \|\phi\|^2
  \geq  \left( \frac{\alpha}{2} - C^\ast \frac{N^{8}}{\tilde{N}} (1 +
   \|U(T)\|_{H^2}^2) \right) \|\phi\|^2 \geq
  \left(\frac{\alpha}{2} -  C^\ast \frac{N^{8}}{\tilde{N}}  h (\epsilon) \right)\|\phi\|^2 \geq
  \frac{\alpha}{4} \|\phi\|^2,
 \end{align*}
 for each $\epsilon < \epsilon_1^*$ and any $\phi \in \mathcal{S}_{\alpha, N}$.
 We infer that if $\epsilon < \epsilon_1^*$, then on $\Omega_\epsilon$,
 \begin{align*}
   \inf_{\phi \in \mathcal{S}_{\alpha,N}} \frac{\langle \MM_{0,T} \phi, \phi \rangle}{\|\phi\|^2} > \epsilon
   \quad \textrm{ whenever }  \epsilon^{q_5^{\tilde{N}(\epsilon)}} < \frac{\alpha}{4}.
 \end{align*}
 From the definition of $\tilde{N}(\epsilon)$ in \eqref{eq:h:tildeN:eps:def}, we have $\epsilon^{q_5^{\tilde{N}(\epsilon)}} \to 0$ as $\epsilon \to 0^+$
 (e.g. see \eqref{eq:key:est} above), and the proof of Theorem~\ref{thm:mind:blowing:conclusion} is complete.
\end{proof}

\section{Lie Bracket Computations}
\label{sec:Hormander:brak}

In this section we present, in functional setting, the chain of Lie brackets that
approximately generate  spanning sets for successively larger finite dimensional subspaces
$H_N$ of the phase space $H$.  More precisely, we show that the approximate basis
$\mathfrak{B}_{N, \tilde{N}}(U)$ lie in admissible sets $\mathcal{V}_M$ (see \eqref{eq:Mth:iter:admissible:brak}, \eqref{eq:approx:basis:tails})
for sufficiently large $M = M(\tilde{N})$, which allows us to define the quadratic forms $ \mathcal{Q}_{N, \tilde{N}}(U)$ leading to the upper
and lower bounds, Propositions~\ref{prop:large:conclusion}, ~\ref{prop:small:conclusion}  in Section~\ref{sec:Mal:spec:bnds}.

As we described above, these computations are motivated by the celebrated H\" ormander condition
for the Kolmogorov-Fokker-Planck equations associated to \eqref{eq:BE:abs}, cf. \eqref{eq:fokk:er:plank}.  Our situation is notable in comparison to previous
works in the infinite dimensional setting, \cite{EMattingly2001, Romito2004, HairerMattingly06, HairerMattingly2011} as,
to the best of our knowledge, we are the first to analyze a system, where the chain of vector fields and the associated quadratic forms
$\mathcal{Q}_{N, \tilde{N}} = \mathcal{Q}_{N, \tilde{N}}(U)$ depend on $U$, and are therefore random.

   For contrast consider the 2D stochastic Navier-Stokes equation written in the vorticity formulation
\begin{align}
  d \omega + F_{NS}(\omega) = \sigma dW = \sum_{k,l} \alpha_k \psi_k^l dW^{k,l},
  \quad \textrm{ with } F_{NS}(\omega) := -\nu \Delta \omega + (K\ast \omega)\cdot \nabla\omega,
  \label{eq:SNSE:ex}
\end{align}
where as in \eqref{eq:def:basis:psi} we set $\psi_k^0 := \cos (k \cdot x)$ and $\psi_k^1 := \sin (k \cdot x)$
for $k \in \ZZ^2_{+}$.
Let $\mathcal{V}_M^{NS}$, $M \geq 0$ be the sequences of admissible vector fields corresponding to \eqref{eq:SNSE:ex},
defined analogously to \eqref{eq:Mth:iter:admissible:brak}.
The following bracket structure for \eqref{eq:SNSE:ex} was observed in \cite{EMattingly2001}.
Suppose that $\psi_k^m, \psi_{k'}^{m'} \in \mathcal{V}_{M}^{NS}$ for some $M \geq 0$.
Then, since $(K\ast \omega)\cdot \nabla\omega$ is the only nonlinear (quadratic) term in \eqref{eq:SNSE:ex}, we obtain
\begin{align}
[[F_{NS}(\omega), \psi_k^m], \psi_{k'}^{m'}] = (K\ast \psi_k^m) \cdot \nabla \psi_{k'}^{m'}
+ (K\ast \psi_{k'}^{m'}) \cdot \nabla \psi_k^m \in \mathcal{V}_{M + 2},
\label{eq:lie:nse}
\end{align}
where the Lie brackets $[\cdot, \cdot]$ are defined as in \eqref{eq:Lie:brak:abs}.
Using elementary algebra, we obtain that $\psi_{k + k'}^{m + m'} \in \mathcal{V}_{M + 2}$ if $k$ and  $k'$ are not parallel and $|k| \not = |k'|$.
Under appropriate algebraic assumptions on the set of directly forced
modes, one therefore obtains that for any $N > 0$, $H_N \subset \mathcal{V}_M$ for large enough $M = M(N)$.
As we already noted  in the introduction, this strategy of repeated brackets with constant vector field to generate
exactly $H_N$ has been used in all of the previously known examples; see \cite{HairerMattingly2011}.

Our situation is completely different. Since the random perturbation appears only in the temperature equation in \eqref{eq:B1:Vort}--\eqref{eq:B3:Vort},
we immediately see, recalling the notation \eqref{eq:def:basis:sig}, that for any $k, k' \in \ZZ^2_+$,
$m, m' \in \{0, 1\}$
\begin{equation}\label{eq:lie:bou}
[[F(U), \sigma_k^m], \sigma_{k'}^{m'}] =  B(\sigma_k^m, \sigma_{k'}^{m'})
+ B(\sigma_{k'}^{m'}, \sigma_{k}^{m}) = 0 \,,
\end{equation}
and therefore no new modes are generated.
The observation in \eqref{eq:lie:bou} suggests that we need to make more carefully use of the interaction between the buoyancy term $G$
and the advective structure in $B$.

The strategy which we devised to generate suitable directions is summarized in Figure~\ref{fig:brak:1} below.
Strikingly, a  bracket $[[[F(U), \sigma_k^l], F(U)], \sigma_{k'}^{l'}] = c_1B(\psi_k^{l+1}, \sigma_{k'}^{l'}) +
c_2 B(\psi_{k'}^{l'+1}, \sigma_{k}^{l})$ produces the desirable cancellation, where
$c_1, c_2$ are suitable constant.\footnote{As we already observed in Remark~\ref{rem:forcing}
it is precisely at this point that we are able to avoid the condition that the forcing in \eqref{eq:B1}--\eqref{eq:B2} contain wavenumbers of
different magnitudes as is required for the 2D stochastic Navier-Stokes equation in \cite{HairerMattingly06}.}
While the computations leading to this cancellation are involved this `miracle' is perhaps
anticipated by the advective structure of $B$.

Having  devised a strategy to generate $\sigma$-modes,
we also have to generate the elements $\psi_k^l$;
that is, suitable directions in the $\omega$ variable.
In this case no additional cancellation is evident as we found for the
$\sigma$-modes.  Instead, we produce
functions of the form $\psi^m_l + J_{m,l}(U)$, where $J_{m,l}(U)$ is an `error term' with a complicated dependence on $U$.
Again as an artifact of the advective structure in $B$, these $J_{m,l}(U)$ are concentrated
in the $\theta$ component only, and we can push these errors entirely into large wave-numbers
by generating additional directions in $\sigma$.  This in turn allows us to make use of the generalized
Poincar\'e inequality to obtain bounds leading to our form of the H\"ormander condition \eqref{eq:our:new:Hormander}.

\begin{figure}[tb]
 \centering
 \vspace{-.5in}
 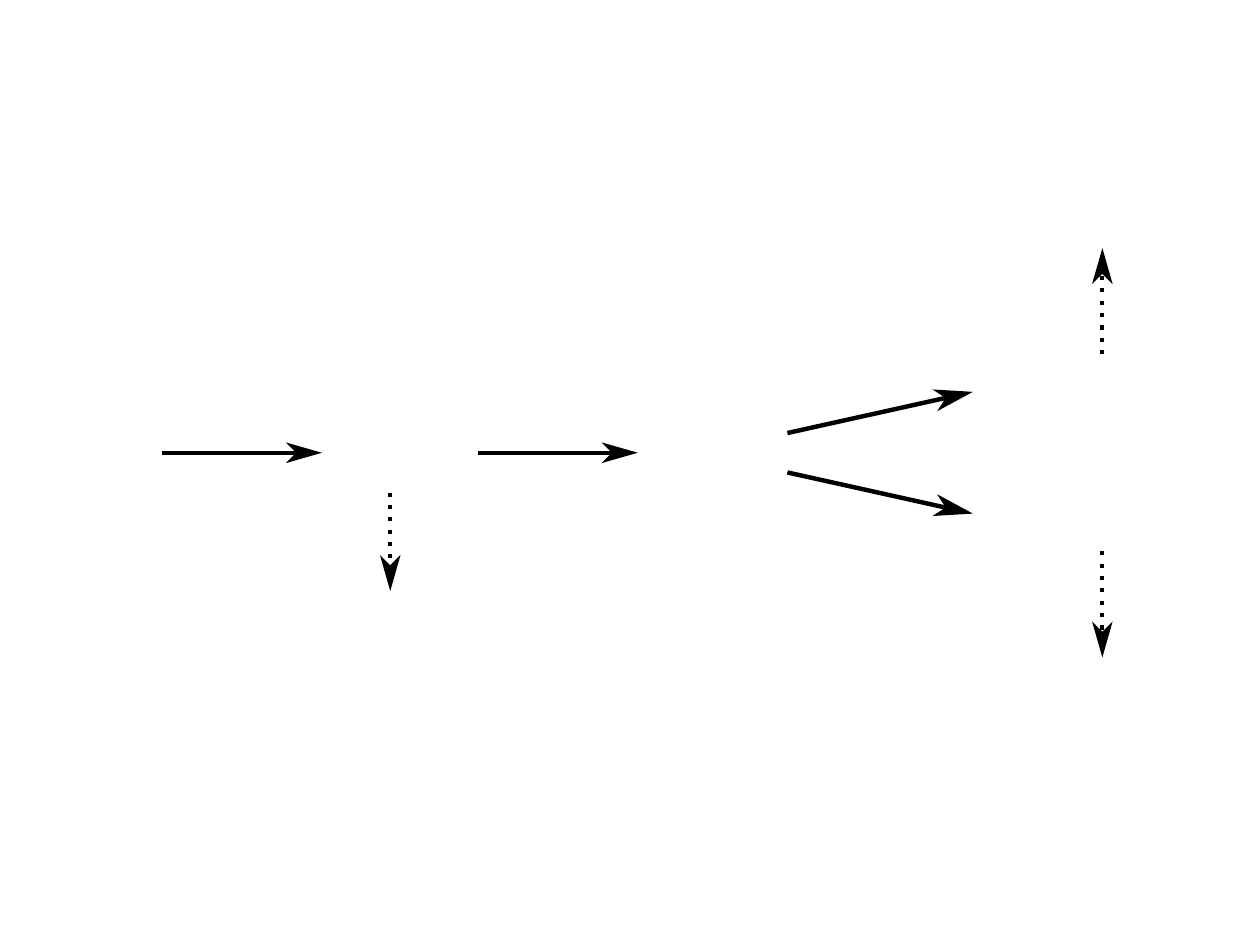
 \vspace{-.7in}
 \caption{The strategy for (approximately) generating $H_N$ from Lie brackets.
 \rd{Red} and \bl{blue} text indicate that the corresponding functions reside exclusively in the $\omega$
 and $\theta$ components, respectively.  \pp{Purple} text indicates that the element has non-trivial content
 in both components.  Solid arrows mean that the new function is generated from a Lie bracket, with the type
 of bracket indicated above the arrow.  Dotted arrows signify that the new element is
 generated as a linear combination of elements from the previous position.}
 \label{fig:brak:1}
 \end{figure}

 \subsection{Detailed Bracket Computations}

Let us provide more details. In what follows $U$ is arbitrary sufficiently smooth
function.\footnote{At this stage in the argument, $U$ is not necessarily a solution of \eqref{eq:BE:abs}.
Of course we will use that $U$ solves \eqref{eq:BE:abs} later on in Section~\ref{sec:Braket:Est:Mal:Mat}. This causes
no problems since $U$ is smooth; cf. Proposition~\ref{prop:wellposed}.}
In the forthcoming computations we make use
of the following simple observations.  Recalling that the Lie bracket between $C^1$ vector fields $E_1, E_2: H \to H$
is given by $[E_1, E_2](U) := \nabla E_2(U) E_1(U) - \nabla E_1(U)  E_2(U)$ we have the antisymmetry and
Jacobi identities
\begin{align}
  [E_1, E_2] = -[E_2, E_1], \quad [[E_1, E_2], E_3] + [[E_2, E_3 ],E_1 ] + [[E_3, E_1], E_2] = 0,
  \label{eq:jacobi:and:his:many:colored:identity}
\end{align}
valid for any $E_1, E_2, E_3$.
 From \eqref{eq:convection:term} and \eqref{eq:boy:lin:term}
we have that for any $\tilde{U} \in H^1,$
\begin{align}
  B(U, \tilde{U}) &= 0 \quad \textrm{if } U= (0, \de)\,,
   \label{eq:B:extra:canel} \\
  GU &= 0 \quad \textrm{if  } U= (\om, 0),
  \label{eq:G:extra:canel} \\
  G \sigma_j^m &= (-1)^{m+1} g j_1 \psi_j^{m+1}.
\label{eq:G:sigma:psi}
\end{align}
Note also that in what follows the superscripts $m$ appearing in the basis elements $\sigma_{k}^{m}$, $\psi_{k}^{m}$
are understood modulo 2, for example by $\sigma_{k}^{m + m'}$ we mean $\sigma_{k}^{m + m'(\textrm{mod } 2)}$.
For any $j \in \ZZ^2$ we define $j^\perp := (-j_2,  j_1)$.

We first show how the directions $\sigma^m_k, k\in \ZZ_+^2, m \in \{0,1\}$ can be obtained. Define
$Y_j^m(U) := [F(U),\sigma_j^m]$ and by
(\ref{eq:B:extra:canel}), (\ref{eq:G:sigma:psi}),
\begin{align}
Y_j^m(U)
	&=  A \sigma_j^m + B(\sigma_j^m, U) + B(U, \sigma_j^m) - G \sigma_j^m
\notag\\	
	&= \nu_2 |j|^2 \sigma_j^m + (-1)^{m}g j_1 \psi_j^{m+1} + B(U, \sigma_j^m).
\label{eq:YjmU}
\end{align}
Now set
\begin{align}
Z_j^m(U) := [F(U), Y_j^m(U)] =  \nabla Y_j^m(U) F(U) - \nabla F(U) Y_j^m(U)
\label{eq:zjm}
\end{align}
and after some computations we derive the explicit formula
\begin{align}
Z_j^m(U) &= B(F(U), \sigma_j^m) +
\nu_2^2 |j|^4 \sigma_j^m
  +(-1)^{m}  (\nu_1+\nu_2) g j_1 |j|^2  \psi_j^{m+1}  + A(B(U, \sigma_j^m))
  +(-1)^{m} g j_1 B(\psi_j^{m+1} , U) \notag\\
   &- B(U,  -\nu_2 |j|^2 \sigma_j^m + (-1)^{m+1} g j_1 \psi_j^{m+1}) +B(U,B(U, \sigma_j^m))
        - G B(U, \sigma_j^m). \label{eq:def:zujm}
\end{align}
Remarkably, as alluded to  above, the higher order bracket $[Z_{j}^m(U), \sigma_{k}^{m'}]$ is
independent of $U$.   To see this and to derive a formula for $[Z_{j}^m(U), \sigma_{k}^{m'}]$ one may proceed by explicit computations.
Instead, we argue as follows: by \eqref{eq:B:extra:canel}, \eqref{eq:YjmU} we obtain
$[Y^m_j(U), \tilde{U}] = -B(\tilde{U}, \sigma_{j}^m) = 0$ for any $\tilde{U} = (0, \tilde{U}_2)$, and consequently by the Jacobi identity,
\eqref{eq:jacobi:and:his:many:colored:identity}
\begin{align*}
[Z_{j}^m(U), \sigma_{k}^{m'}] = [[F(U), Y^m_j(U)], \sigma_{k}^{m'}] = - [Y^m_j(U), [F(U), \sigma_{k}^{m'}]] = -[Y^m_j(U), Y_{k}^{m'}(U)]\,.
\end{align*}
Moreover, since $(-1)^{m+1}g j_1 \psi_j^{m+1}$ is the only term with non-zero first component in $Y^m_j(U)$, from \eqref{eq:YjmU} we obtain
\begin{align*}
[Y^m_j(U), Y_{k}^{m'}(U)] = B((-1)^{m}g j_1 \psi_j^{m+1}, \sigma_{k}^{m'}) - B((-1)^{m'}g k_1 \psi_k^{m'+1}, \sigma_{j}^{m}),
\end{align*}
and therefore
\begin{align}
 [Z_{j}^m(U), \sigma_{k}^{m'}]
 = g \left((-1)^{m+1} j_1 B(\psi_j^{m+1}  , \sigma_k^{m'}) + (-1)^{m'} k_1B(\psi_k^{m'+1}, \sigma_j^m)\right) \,.
\label{eq:the:totalmiracle}
\end{align}
Next, note the following observation which
is a consequence of simple trigonometric identities.
\begin{Lem}
\label{Lem:B:psi:sigma}
For any $j, k \in \ZZ^2_+$ and $m, m' \in\{ 0,1\}$
\begin{align*}
B(\psi_j^{m}  , \sigma_k^{m'})
=& \frac{(-1)^{1+m m'}}{2}\frac{(j^\perp \cdot k) }{|j|^{2}}
\left[
\sigma^{m+m'}_{j+k} + (-1)^{m'+1} \sigma^{m+m'}_{j-k}
\right] \,, \\
B(\psi_j^{m}  , \psi_k^{m'})
=& \frac{(-1)^{1+m m'}}{2}\frac{(j^\perp \cdot k) }{|j|^{2}}
\left[
\psi^{m+m'}_{j+k} + (-1)^{m'+1} \psi^{m+m'}_{j-k}
\right]\,.
\end{align*}
\end{Lem}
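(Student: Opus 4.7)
The plan is to reduce both identities to a single scalar trigonometric computation by unifying notation. Introduce $\phi_j^0 := \cos(j\cdot x)$ and $\phi_j^1 := \sin(j\cdot x)$, so that $\sigma_j^m = (0,\phi_j^m)^T$ and $\psi_j^m = (\phi_j^m,0)^T$. Since the bilinear form $B$ in \eqref{eq:convection:term} acts componentwise on the advected scalar, the second component of $B(\psi_j^m,\sigma_k^{m'})$ and the first component of $B(\psi_j^m,\psi_k^{m'})$ are both equal to $(K\ast \phi_j^m)\cdot \nabla \phi_k^{m'}$, while the opposite components vanish by \eqref{eq:B:extra:canel}. The two vector identities therefore reduce to a single scalar computation, the result of which is then inserted into the $\theta$-slot or $\omega$-slot to produce $\sigma$- or $\psi$-elements.

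Two elementary ingredients supply the scalar computation. First, solving $\nabla^\perp\cdot u = \phi_j^m$, $\nabla\cdot u = 0$ via the stream function yields, up to an overall sign determined by the Biot--Savart convention,
\[
K\ast \phi_j^m \;=\; \varepsilon_m\,\frac{j^\perp}{|j|^2}\,\phi_j^{m+1},
\qquad \varepsilon_m \in \{+1,-1\},
\]
with superscripts read modulo $2$. Second, a direct computation gives $\nabla \phi_k^{m'} = (-1)^{m'+1}\,k\,\phi_k^{m'+1}$. Combining these,
\[
(K\ast \phi_j^m)\cdot \nabla \phi_k^{m'} \;=\; \varepsilon_m\,(-1)^{m'+1}\,\frac{j^\perp\cdot k}{|j|^2}\,\phi_j^{m+1}\phi_k^{m'+1}.
\]

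The final step is product-to-sum. Using the four elementary identities
\begin{align*}
\phi_j^0\phi_k^0 &= \tfrac{1}{2}\bigl(\phi_{j+k}^0 + \phi_{j-k}^0\bigr),\\
\phi_j^1\phi_k^1 &= \tfrac{1}{2}\bigl(\phi_{j-k}^0 - \phi_{j+k}^0\bigr),\\
\phi_j^0\phi_k^1 &= \tfrac{1}{2}\bigl(\phi_{j+k}^1 - \phi_{j-k}^1\bigr),\\
\phi_j^1\phi_k^0 &= \tfrac{1}{2}\bigl(\phi_{j+k}^1 + \phi_{j-k}^1\bigr),
\end{align*}
one handles the four cases $(m,m')\in\{0,1\}^2$ by direct substitution and checks that the resulting signed coefficients on the $\sigma^{m+m'}_{j+k}$ and $\sigma^{m+m'}_{j-k}$ slots match the claimed prefactor $\tfrac{(-1)^{1+mm'}}{2}$ together with the internal sign $(-1)^{m'+1}$. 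Because the scalar computation is identical for $B(\psi_j^m,\psi_k^{m'})$, the $\psi$--$\psi$ identity follows verbatim by replacing $\sigma \to \psi$ in the final reassembly.

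The only real obstacle is sign bookkeeping: three independent parities ($m$, $m'$, and $m+m'$) interact with the sine/cosine sign changes coming from Biot--Savart and from $\nabla$, which is the natural place for errors to creep in. A small notational convention to fix at the outset is the parity extension $\phi^0_{-l} = \phi^0_l$, $\phi^1_{-l} = -\phi^1_l$, so that $\sigma^{m+m'}_{j-k}$ and $\psi^{m+m'}_{j-k}$ are well defined when $j-k\notin \ZZ^2_+$; with this convention the case-by-case check recombines into the uniform statement claimed in the lemma.
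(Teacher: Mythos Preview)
Your proposal is correct and follows precisely the route the paper has in mind: the paper itself gives no detailed proof and simply states that the lemma ``is a consequence of simple trigonometric identities.'' Your reduction of both vector identities to the single scalar expression $(K\ast \phi_j^m)\cdot\nabla\phi_k^{m'}$, followed by computing the Biot--Savart inverse on Fourier modes and applying product-to-sum formulas, is exactly the intended elementary verification, and your remark about the parity convention $\phi^0_{-l}=\phi^0_l$, $\phi^1_{-l}=-\phi^1_l$ is the right way to make sense of the $j-k$ terms.
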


Using Lemma \ref{Lem:B:psi:sigma} and \eqref{eq:the:totalmiracle} we have
\begin{align}
[Z_{j}^m(U), \sigma_{k}^{m'}]
 = &
g(-1)^{(m+1)(m'+1)}
 \frac{(j^\perp \cdot k) }{2} \left[(-1)^{m'}
 b(j,k)\sigma_{j-k}^{m+m'+1} -
 a(j,k)\sigma_{j+k}^{m+m'+1}
 \right] \,,
\label{eq:Z:sig:form}
\end{align}
where
\begin{align}\label{eq:sig:coeff:a}
a(j,k) := \frac{j_1}{|j|^2} + \frac{k_1}{|k|^2}
\quad \textrm{and}\quad
b(j,k) := \frac{j_1}{|j|^2} - \frac{k_1}{|k|^2}.
\end{align}
From these relations, the following proposition follows easily.

\begin{Prop}
\label{prop:generatesigma}
Let $j, k \in \ZZ_+^2$, $a(j,k), b(j,k)$ be as in
\eqref{eq:sig:coeff:a}.
Then,  with $Z_j^m$ given by
\eqref{eq:zjm},
\begin{align}
g (j^\perp\cdot k) a(j,k) \sigma_{j+k}^0 &=  -[Z_j^0(U), \sigma_k^1]
-[Z_j^1(U), \sigma_k^0],
\label{eq:new:sig:dir:1}\\
g (j^\perp\cdot k) a(j,k) \sigma_{j+k}^1 &= [Z_j^0(U), \sigma_k^0] - [Z_j^1(U), \sigma_k^1],
\label{eq:new:sig:dir:2}\\
g (j^\perp\cdot k) b(j,k) \sigma_{j-k}^0 &= [Z_j^1(U), \sigma_k^0] - [Z_j^0(U), \sigma_k^1],
\label{eq:new:sig:dir:3}\\
g (j^\perp\cdot k) b(j,k) \sigma_{j-k}^1 &= - [Z_j^1(U), \sigma_k^1] - [Z_j^0(U), \sigma_k^0].
\label{eq:new:sig:dir:4}
\end{align}
\end{Prop}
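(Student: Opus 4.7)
The plan is to derive all four identities directly from the closed-form expression \eqref{eq:Z:sig:form}, which expresses $[Z_j^m(U),\sigma_k^{m'}]$ as an explicit linear combination of $\sigma_{j+k}^{m+m'+1}$ and $\sigma_{j-k}^{m+m'+1}$ with coefficients involving $a(j,k)$ and $b(j,k)$. Since \eqref{eq:Z:sig:form} is already the hard step (it was obtained via the Jacobi identity, the cancellation in \eqref{eq:the:totalmiracle}, and Lemma~\ref{Lem:B:psi:sigma}), what remains is purely a matter of evaluating \eqref{eq:Z:sig:form} at the four possible choices of $(m,m')\in\{0,1\}^2$ and taking the correct linear combinations to isolate each of $\sigma_{j\pm k}^0$ and $\sigma_{j\pm k}^1$.

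Concretely, I would tabulate the four instances of \eqref{eq:Z:sig:form}: for $(m,m')=(0,0)$ and $(1,1)$ the sign prefactor $(-1)^{(m+1)(m'+1)}$ is $+1$ and $-1$ respectively and the upper index satisfies $m+m'+1=1$, so the two brackets lie in the $\sigma^1$-plane; for $(m,m')=(0,1)$ and $(1,0)$ the brackets lie in the $\sigma^0$-plane. Within each plane the only difference between the two brackets is the sign of the $b(j,k)\sigma_{j-k}$ term, which is controlled by the factor $(-1)^{m'}$. Thus in each plane, adding the two brackets (with appropriate signs) cancels the $b$-term and isolates $\sigma_{j+k}$, while subtracting them cancels the $a$-term and isolates $\sigma_{j-k}$. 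This immediately yields \eqref{eq:new:sig:dir:1}--\eqref{eq:new:sig:dir:4}.

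As a sanity check I would verify one case explicitly, say \eqref{eq:new:sig:dir:1}: from \eqref{eq:Z:sig:form},
\begin{align*}
[Z_j^0(U),\sigma_k^1] &= \tfrac{g(j^\perp\cdot k)}{2}\bigl[-b(j,k)\sigma_{j-k}^0 - a(j,k)\sigma_{j+k}^0\bigr], \\
[Z_j^1(U),\sigma_k^0] &= \tfrac{g(j^\perp\cdot k)}{2}\bigl[\phantom{-}b(j,k)\sigma_{j-k}^0 - a(j,k)\sigma_{j+k}^0\bigr],
\end{align*}
so $-[Z_j^0(U),\sigma_k^1]-[Z_j^1(U),\sigma_k^0]=g(j^\perp\cdot k)a(j,k)\sigma_{j+k}^0$, as required. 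The remaining three identities follow from the same bookkeeping.

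There is no real obstacle here: all the analytic work has been absorbed into deriving \eqref{eq:Z:sig:form}. The only thing to watch carefully is the arithmetic mod~$2$ in the upper indices together with the signs generated by $(-1)^{(m+1)(m'+1)}$ and $(-1)^{m'}$, since a misplaced sign would change which linear combination isolates which mode. For that reason I would present the proof as a one-line invocation of \eqref{eq:Z:sig:form} followed by an explicit $2\times 2$ table of the four brackets (organized by the parity of $m+m'$) from which \eqref{eq:new:sig:dir:1}--\eqref{eq:new:sig:dir:4} can be read off directly.
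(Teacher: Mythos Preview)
Your approach is correct and is exactly what the paper does: the paper simply states that the proposition ``follows easily'' from \eqref{eq:Z:sig:form}, and your explicit tabulation of the four cases $(m,m')\in\{0,1\}^2$ together with the appropriate linear combinations is precisely the intended bookkeeping. Your sanity check for \eqref{eq:new:sig:dir:1} is accurate, and the remaining three identities follow by the same sign-tracking you describe.
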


\begin{figure}[tb]
  \centering
 \scalebox{.65}{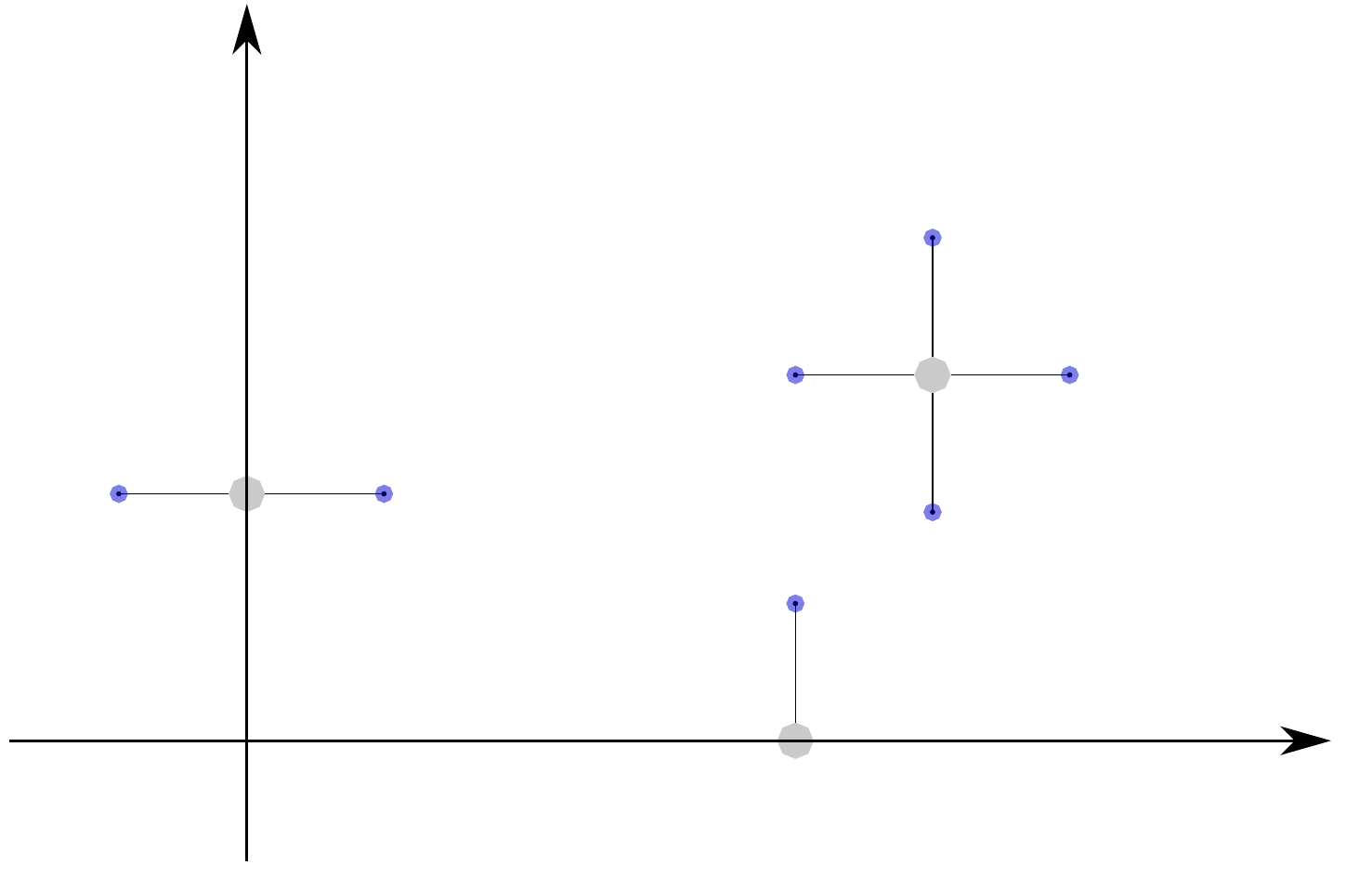}
 \vspace{-.1in}
 \caption{New basis elements $\sigma_j^m$ in the $\theta$ components of the phase space (see \eqref{eq:def:basis:sig}) that can be generated via
 the chain of brackets leading to \eqref{eq:Z:sig:form} with Proposition~\ref{prop:generatesigma} .
  Recall that we  index these basis elements by $\ZZ^2_{+}$ so that the points in figure lie on lattice points.  Note
  furthermore that elements $\sigma_{l}$ correspond to both $\sigma_{l}^0$, $\sigma_{l}^1$.
 Grey points represent existing directions and purple points represent new directions generated via one iteration
 of the chain of brackets illustrated in the upper part of Figure~\ref{fig:brak:1}.
  Perusing \eqref{eq:new:sig:dir:1}--\eqref{eq:new:sig:dir:4}
 and the definition of $a$ and $b$ in \eqref{eq:sig:coeff:a} we see why new directions are restricted along the $x$ and $y$ coordinate axises
 respectively.}
 \label{fig:new:sig:directions}
 \end{figure}

\begin{Rmk}
\label{rm:spanningsigma}
The diagram in Figure \ref{fig:new:sig:directions} and an induction argument detailed in Section~\ref{sec:it:proof:by:con} and illustrated
in Figure~\ref{fig:span:arg:1} show that starting
with the forced directions $\sigma_{(1,0)}^m, \sigma_{(0,1)}^m$ for each $m \in \{0,1\}$,
it is possible to reach $\sigma_k^m$ for any $k\in \ZZ_+^2$ and $m \in \{0,1\}$. If we replaced the
vectors $(1,0), (0, 1)$ in the definition of $\ZZZ$ by other elements in $\ZZ^2_+$, Figure~\ref{fig:new:sig:directions}
would change, namely the segments parallel to axes would be changed to segments parallel to the new directions in
$\ZZZ$.  In this case more a complicated algebraic condition as in \cite{HairerMattingly06} is needed to demonstrate that $\ZZZ$ generates a spanning
set for $\mathbb{Z}^2_+$.
\end{Rmk}

Directions $\psi_k^m$ are different
and they include an error term with a
component in the $\de$-direction. First note that  \eqref{eq:YjmU} can be rewritten as
\begin{equation}
\label{eq:psij1nonzero}
 (-1)^{m}g j_1 \psi_j^{m+1} =
Y_j^m(U) - (-1)^{m} g j_1 J^*_{j,m+1}(U) \,,
\end{equation}
where $J^*_{j,m+1}(U):= \frac{(-1)^{m} }{g j_1 }(\nu_2 |j|^2 \sigma_j^m + B(U,\sigma_j^m))$.  Note that,
by \eqref{eq:convection:term}, we see that $J_{j,m+1}^*$ is concentrated only in its $\theta$ component.
Since we can generate $Y_j^m(U)$, by \eqref{eq:psij1nonzero}, we can also generate
 $\psi_j^{m+1}$ (with an error term), whenever $j_1\neq 0$.  This constitutes the first downward branch
 in the lower portion of Figure~\ref{fig:brak:1}.

To reach the basis function $\psi_j^m$ along the $j_2$ axis ($j_1 = 0$) we can mimic the approach in \cite{EMattingly2001}
by considering brackets of the form $[[F(U), \psi_j^m], \psi_{k}^{m'}]$; cf. \eqref{eq:lie:nse}. Since we did not generate $\psi_j^m$, $\psi_{k}^{m'}$, we
instead use elements $Y_j^m(U)$, that is, $\psi_j^m$ with error terms and calculate
\begin{align}\label{eq:Z:Jac}
[[F(U), Y_j^m(U)], Y_{k}^{m'}(U)] = [Z_j^m(U), Y_{k}^{m'}(U)] =
\big[[Z_j^{m}(U),F(U)],\sigma_{k}^{m'}\big]- \big[[Z_j^{m}(U),\sigma_{k}^{m'}],F(U)\big] \,.
\end{align}
Notice, that the second identity follows from \eqref{eq:jacobi:and:his:many:colored:identity} and
shows that $[[F(U), Y_j^{m}(U)],Y_{k}^{m'}(U)]$ can be obtained from admissible bracket operations.
On the other hand, with \eqref{eq:zjm}
\begin{align*}
[[F(U), Y_j^{m}(U)],Y_{k}^{m'}(U)]
&=
\nabla^2 F(U)\{Y_j^{m}(U), Y_{k}^{m'}(U)\} + \nabla F(U) \{\nabla Y_j^{m}(U) \{Y_{k}^{m'}(U)\} \} \\
&\quad -
\nabla(\nabla Y_j^m (U) \{F(U)\}) \{Y_{k}^{m'}(U)\} + \nabla Y_{k}^{m'}(U) \{[F(U), Y_j^{m}(U)]\} \,,
\end{align*}
where $\nabla^k G(U) \{X_1, \cdots, X_k\}$ denotes the $k$th derivative of $G$ in the directions $X_1, \cdots, X_k$. With \eqref{eq:B:extra:canel}, \eqref{eq:psij1nonzero}, and \eqref{eq:drift:part:BE} we obtain
\begin{align*}
\nabla^2 F(U)\{Y_j^{m}(U), Y_{k}^{m'}(U)\} &= -B(Y_j^{m}(U), Y_{k}^{m'}(U)) - B(Y_{k}^{m'}(U), Y_j^{m}(U)) \\
&= (-1)^{m+m'+1}g^2 j_1 k_1 \Big(B(\psi_j^{m+1},\psi_k^{m'+1}) +  B(\psi_k^{m'+1},\psi_j^{m+1}) \\
&\quad + B(\psi_j^{m}, J^*_{k, m'}(U)) + B(\psi_{k}^{m'}, J^*_{j, m}(U)) \Big) \,.
\end{align*}
Also,
since $\nabla Y_j^{m}(U) \{Y_{k}^{m'}(U)\} = -B(Y_{k}^{m'}(U), \sigma_j^{m}) = (-1)^{m'}gk_1 B(\psi_{k}^{m'}, \sigma_j^{m})$ has a zero
$\omega$ component, we have
\begin{align*}
\nabla F(U) \{\nabla Y_j^{m}(U) \{Y_{k}^{m'}(U)\} \} = (-1)^{m'}gk_1 GB(\psi_{k}^{m'}, \sigma_j^{m}) +
D_{j, k}^{m, m'}(U) \,,
\end{align*}
where $U \mapsto D_{j, k}^{m, m'}(U)$ is affine and has a zero $\omega$ component. Finally, since $\nabla Y(U) \{X\}$ has a zero $\omega$ component
for any $X$ (of course the same is true for a derivative of $\nabla Y(U) \{X\}$), we obtain
\begin{align}
[Z_j^{m}(U),Y_{k}^{m'}(U)] =& (-1)^{m+m'+1}g^2 j_1 k_1 \Big( B(\psi_j^{m+1},\psi_k^{m'+1}) + B(\psi_k^{m'+1},\psi_j^{m+1})\Big) \notag\\
& +  (-1)^{m'}gk_1 GB(\psi_k^{m'+1},\sigma_{j}^{m})  +
 H^{m,m'}_{j,k}(U),
\label{eq:new:de:bad:tails}
\end{align}
where $U \mapsto H^{m,m'}_{j,k}(U)$ is affine and it is concentrated entirely in the $\theta$ component.

Using \eqref{eq:new:de:bad:tails} we are able to reach basis function that are not
accessible by the brackets leading to \eqref{eq:psij1nonzero}.  Note that this second
case $(j_1 = 0)$ is represented graphically the last lower branch of Figure~\ref{fig:brak:1}.

\begin{Prop}\label{prop:generatepsi}
Let $\ell_2>0, \ell=(0,\ell_2)$, $\ell'=(1, \ell_2)$, and ${\vec{e}_{1}}=(1,0).$ Then
\begin{align*}
g^{2}\frac{|\ell|^3}{|\ell'|^2} \psi^{0}_{\ell} =
- [Z_{\ell'}^{0}(U),Y_{\vec{e}_{1}}^0(U)]
-
[Z_{\ell'}^{1}(U),Y_{\vec{e}_{1}}^1(U)]
+H^{0,0}_{\ell',\vec{e}_{1},}(U)
+H^{1,1}_{\ell',\vec{e}_{1},}(U),
\end{align*}
and
\begin{align*}
g^{2}\frac{|\ell|^3}{|\ell'|^2}\psi^{1}_{\ell} =
[Z_{\ell'}^{1}(U),Y_{\vec{e}_{1}}^0(U)] -
[Z_{\ell'}^{0}(U),Y_{\vec{e}_{1}}^1(U)] +
H^{0,1}_{\ell',\vec{e}_{1},}(U) - H^{1,0}_{\ell',\vec{e}_{1}}(U).
\end{align*}
\end{Prop}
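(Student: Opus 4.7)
The plan is to apply the bracket formula \eqref{eq:new:de:bad:tails} specialized to $j=\ell'=(1,\ell_2)$ and $k=\vec e_1=(1,0)$, so that $j-k=\ell=(0,\ell_2)$ and $j+k=(2,\ell_2)$, and then take linear combinations over the four choices of $(m,m')\in\{0,1\}^{2}$ that kill the contribution at the high wavenumber $(2,\ell_2)$, leaving the desired $\psi^{\cdot}_{\ell}$ together with the tail terms $H^{m,m'}_{\ell',\vec e_1}(U)$. Because $\ell_1=0$, the basis element $\psi^{m}_{\ell}$ is precisely the one unreachable from the direct construction \eqref{eq:psij1nonzero}, so this is the natural object to isolate via this second branch of Figure~\ref{fig:brak:1}.

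First I would expand the quadratic piece of \eqref{eq:new:de:bad:tails}. By Lemma~\ref{Lem:B:psi:sigma}, each sum $B(\psi^{m+1}_{\ell'},\psi^{m'+1}_{\vec e_1})+B(\psi^{m'+1}_{\vec e_1},\psi^{m+1}_{\ell'})$ reduces to a linear combination of the two basis elements $\psi^{m+m'}_{\ell}$ and $\psi^{m+m'}_{(2,\ell_2)}$, with coefficients built from the ratios $\ell_2/|\ell'|^{2}$ and $\ell_2$ (one should use $\psi^{0}_{-j}=\psi^{0}_{j}$ and $\psi^{1}_{-j}=-\psi^{1}_{j}$ to interpret the $j-k=(0,\ell_2)$ mode consistently). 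Relating $|\ell'|^{2}=1+|\ell|^{2}$ and $\ell_2=|\ell|$, one finds that the coefficient of $\psi^{m+m'}_{\ell}$ in each bracket is, up to a sign depending on the parity of $m+m'$, exactly $\pm g^{2}|\ell|^{3}/(2|\ell'|^{2})$.

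Next I would handle the single $G$-term $(-1)^{m'}g\,G B(\psi^{m'+1}_{\vec e_1},\sigma^{m}_{\ell'})$. Because $\psi^{m'+1}_{\vec e_1}$ is localized at a single wavenumber one can read off $B$ explicitly via the Biot--Savart kernel and a product-to-sum identity, obtaining a combination of $\sigma^{m+m'}_{(2,\ell_2)}$ and $\sigma^{m+m'}_{\ell}$. The key observation is that $G\sigma^{\cdot}_{\ell}=0$ by \eqref{eq:G:sigma:psi} since the first coordinate of $\ell$ vanishes; therefore only a single $\psi^{m+m'}_{(2,\ell_2)}$ summand survives in this term, and in particular it contributes nothing to the $\psi_{\ell}$ direction. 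At this stage each bracket $[Z^{m}_{\ell'},Y^{m'}_{\vec e_1}]$ has the form $c_{1}(m,m')\,\psi^{m+m'}_{\ell}+c_{2}(m,m')\,\psi^{m+m'}_{(2,\ell_2)}+H^{m,m'}_{\ell',\vec e_1}(U)$ with the $c_i$ explicit.

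Finally I would form the two advertised linear combinations. For the first identity, sum the $(m,m')=(0,0)$ and $(1,1)$ brackets: the parity flip in $(-1)^{m+m'+1}$ and $(-1)^{m'}$ between the two cases forces the high-frequency coefficients $c_{2}$ to appear with opposite signs and cancel, while the $c_{1}$ contributions add coherently. For the second identity one subtracts the $(m,m')=(0,1)$ and $(1,0)$ brackets, producing the analogous cancellation of $\psi^{1}_{(2,\ell_2)}$ while doubling $\psi^{1}_{\ell}$. In both cases the residual terms on the right-hand side are precisely the indicated sums or differences of the $H$'s, and a direct simplification of the common coefficient $g^{2}\ell_2|\ell|^{2}/|\ell'|^{2}=g^{2}|\ell|^{3}/|\ell'|^{2}$ yields the two stated formulas. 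The main obstacle is purely algebraic: carefully tracking the signs from $(-1)^{m+m'+1}$, $(-1)^{m'+1}$ and the parity of $\sin$ under $k\mapsto -k$ so that the cancellation at wavenumber $(2,\ell_2)$ is exact; once this miracle is confirmed the rest is routine bookkeeping.
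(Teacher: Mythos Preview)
Your proposal is correct and follows essentially the same approach as the paper's proof: start from \eqref{eq:new:de:bad:tails} with $j=\ell'$ and $k=\vec e_1$, expand the symmetric $B$-term via Lemma~\ref{Lem:B:psi:sigma}, use $G\sigma_\ell^{m}=0$ (since $\ell_1=0$) so the $G$-term contributes only at $(2,\ell_2)$, and then combine the $(0,0),(1,1)$ and $(1,0),(0,1)$ pairs to cancel the $\psi_{(2,\ell_2)}$ contribution. The paper in fact writes out the intermediate formula explicitly (their \eqref{eq:new:de:bad:tails:2}), from which the cancellation is immediate; your sign-tracking caveat is the only place requiring care, and it matches the paper's computation.
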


\begin{proof}
From Lemma \ref{Lem:B:psi:sigma} and the fact $G\sigma_\ell^m = 0$ (see \eqref{eq:G:extra:canel}) one has
\begin{align}
GB(\psi_{e_1}^{m'+1},\sigma_{\ell'}^m)
=& (-1)^{mm' + m'+ 1}g \ell_2\psi_{e_1+\ell'}^{m+m'}.
\label{eq:new:de:bad:tails:int}
\end{align}
Combining \eqref{eq:new:de:bad:tails}, \eqref{eq:new:de:bad:tails:int}, and Lemma \ref{Lem:B:psi:sigma}, we have
\begin{align}
   [Z_{\ell'}^m(U),Y_{e_1}^{m'}(U)] &=(-1)^{mm'+1} \frac{g^2 \ell_2}{2}\Big[\frac{2 + \ell_2^2}{1 + \ell_2^2}\psi_{\ell'+e_1}^{m+m'}
   + \frac{\ell_2^2}{1 + \ell_2^2}(-1)^{m'}\psi_{\ell'-e_1}^{m+m'}\Big]
	+ H_{\ell',e_1}^{m,m'}(U).
	\label{eq:new:de:bad:tails:2}
\end{align}
The proposition follows after eliminating the term involving $\psi_{\ell'+e_1}^{m+m'}$ by considering first
the cases $(m, m') = (0, 0)$, $(m, m') = (1, 1)$ to determine $\psi_{\ell}^{0}$ and $(m, m') = (0, 1)$, $(m, m') = (1, 0)$
to determine $\psi_{\ell}^{1}$.
\end{proof}

Combining \eqref{eq:psij1nonzero} and Proposition~\ref{prop:generatepsi} we now define the error term in the $\psi$-directions
\begin{align}
  J_{j,m}(U) =
  \begin{cases}
  	(-1)^{m} \dfrac{\mu |j|^2}{g j_1} \sigma^{m+1}_j + (-1)^{m} \dfrac{1}{g j_1} B(U, \sigma_j^{m+1})&
	\textrm{ if }  j_1 \not = 0,\\
	\dfrac{1 + |j|^2}{g^2 |j|^3 }
	(-H_{j + e_1, e_1}^{0, 0}(U) - H_{j + e_1, e_1}^{1, 1}(U)) & 	\textrm{ if }  j_1  = 0, m = 0,\\
	\dfrac{1 + |j|^2}{g^2 |j|^3 }
	(-H_{j + e_1, e_1}^{0, 1}(U) + H_{j + e_1, e_1}^{1, 0}(U)) & 	\textrm{ if }  j_1  = 0, m = 1 \,.
  \end{cases}
  \label{eq:junk:express}
\end{align}
By combining \eqref{eq:YjmU}, Proposition \ref{prop:generatepsi}, and \eqref{eq:junk:express},
we have for each $j\in\ZZ_{+}^2$, $m\in\{0,1\}$,
\begin{align}
  \psi_{j}^{m}+ J_{j,m}(U) =
  \begin{cases}
  	\dfrac{(-1)^{m}}{g j_1} Y_{j}^{m+1}(U) &
	\textrm{ if }  j_1 \not = 0,\\
	\dfrac{1 + |j|^2}{g^2 |j|^3 }
	(- [Z_{j+\vec{e_1}}^{0}(U),Y_{\vec{e}_{1}}^0(U)]- [Z_{j+\vec{e_1}}^{1}(U),Y_{\vec{e}_{1}}^1(U)]) & 	\textrm{ if }  j_1  = 0, m = 0,\\
	\dfrac{1 + |j|^2}{g^2 |j|^3 }
	([Z_{j+\vec{e_1}}^{1}(U),Y_{\vec{e}_{1}}^0(U)] - [Z_{j+\vec{e_1}}^{0}(U),Y_{\vec{e}_{1}}^1(U)]) & 	\textrm{ if }  j_1  = 0, m = 1 \,.
  \end{cases}
  \label{eq:junk:express:2}
\end{align}

\subsection{Estimates and Related Properties for the Error Terms}
We next summarize some basic properties of $J_{j,m}(U)$ in the following lemmata.
\begin{Lem}
\label{lem:junk:est:basic}
Fix any $j \in \ZZ_+^2$ with $|j| \leq N$, $m \in\{0,1\}$, and  any $U \in H^1$.  Then,
$(J_{j,m}(U))_\om = 0$ (the $\om$ component of $J_{j,m}(U)$
is zero) and
\begin{align*}
   \| J_{j,m}(U)\| \leq CN^3( 1 +  \|U\|_{H^1}) \,,
\end{align*}
where the constant $C$ is independent of $N$. Moreover $U \mapsto J_{j,m}(U) - J_{j,m}(0)$ is linear, that is, $U \mapsto J_{j,m}(U)$ is affine.
\end{Lem}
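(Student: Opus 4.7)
The plan is to split the argument into the two cases of \eqref{eq:junk:express}, namely $j_1 \neq 0$ and $j_1 = 0$. For $j_1 \neq 0$ the three claims are immediate from the closed-form expression. The zero $\omega$-component holds since $\sigma_j^{m+1}$ has vanishing $\omega$-component by \eqref{eq:def:basis:sig}, and $B(U, V)_\omega = (K \ast \omega) \cdot \nabla V_\omega$ is zero whenever $V_\omega = 0$. Affineness in $U$ is clear since $U \mapsto B(U, \sigma_j^{m+1})$ is linear and the remaining summand is $U$-independent. For the norm estimate, exploit $|j_1| \geq 1$ so $|g j_1|^{-1} \leq g^{-1}$: the first summand contributes at most $C \nu_2 |j|^2 \leq C N^2$, and the bilinear bound $\|B(U, \sigma_j^{m+1})\|_{L^2} \leq C \|K \ast \omega\|_{L^2} \|\nabla \sigma_j^{m+1}\|_{L^\infty} \leq C N \|U\|$ controls the second, yielding $\|J_{j,m}(U)\| \leq C(N^2 + N \|U\|) \leq C N^3 (1 + \|U\|_{H^1})$.

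For $j_1 = 0$, set $\ell' = j + \vec{e}_{1}$. Since $j_2 \geq 1$ by the convention $j \in \mathbb{Z}^2_+$ and $|j| \leq N$, one has $|\ell'|^2 = 1 + |j|^2 \leq 2|j|^2$, while the prefactor $(1+|j|^2)/(g^2 |j|^3)$ is bounded by $2/(g^2 |j|)$. Consequently, it suffices to verify that each $H_{\ell', \vec{e}_{1}}^{m, m'}(U)$ has zero $\omega$-component, is affine in $U$, and satisfies $\|H_{\ell', \vec{e}_{1}}^{m, m'}(U)\| \leq C |\ell'|^4 (1 + \|U\|_{H^1})$; the bound on $J_{j,m}$ then follows by multiplying by the prefactor. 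From \eqref{eq:new:de:bad:tails:2} we have $H_{\ell', \vec{e}_{1}}^{m, m'}(U) = [Z_{\ell'}^m(U), Y_{\vec{e}_{1}}^{m'}(U)]$ minus an explicit $U$-independent pure-$\omega$ linear combination of basis elements $\psi_{\ell' \pm \vec{e}_{1}}^{m+m'}$. Decomposing $Z_{\ell'}^m(U) = Z^{(0)} + Z^{(1)}(U) + Z^{(2)}(U)$ and $Y_{\vec{e}_{1}}^{m'}(U) = Y^{(0)} + Y^{(1)}(U)$ by polynomial degree in $U$ via \eqref{eq:def:zujm}, \eqref{eq:YjmU}, the Lie bracket expands formally into terms of degree up to two in $U$.

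The key observation, and the main technical step, is that the formally quadratic piece $[Z^{(2)}, Y^{(1)}]$ vanishes thanks to the advective structure of $B$. Indeed, $Z^{(2)}(U) = -B(B(U), \sigma_{\ell'}^m) + B(U, B(U, \sigma_{\ell'}^m))$ has trivial $\omega$-component, since each term ends with a factor of the form $B(\cdot, \sigma_{\ell'}^m)$ or $B(\cdot, B(U, \sigma_{\ell'}^m))$ whose output has zero $\omega$-part. Consequently $B(Z^{(2)}(U), \sigma_{\vec{e}_{1}}^{m'}) = 0$, because $B(V, \cdot)$ depends only on $V_\omega$. A parallel inspection of $\nabla Z^{(2)}(U) \cdot Y^{(1)}(U)$, in which $Y^{(1)}(U) = B(U, \sigma_{\vec{e}_{1}}^{m'})$ also has vanishing $\omega$-component, shows that every resulting summand contains a factor $B(V, \cdot)$ with $V_\omega = 0$ and hence vanishes. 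Thus $[Z^{(2)}, Y^{(1)}] \equiv 0$, so $[Z_{\ell'}^m, Y_{\vec{e}_{1}}^{m'}]$ is at most affine in $U$, proving affineness of $H$. The zero $\omega$-component of $H$ follows by computing $[Z, Y]_\omega$ directly from \eqref{eq:YjmU}, \eqref{eq:def:zujm}: the $\omega$-contributions are exactly the $B(\psi, \psi)$ and $GB(\psi, \sigma)$ terms already isolated in \eqref{eq:new:de:bad:tails}, which are precisely the terms subtracted in the definition of $H$.

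Having established affineness and the component property, write $H_{\ell', \vec{e}_{1}}^{m, m'}(U) = H^{(0)} + L(U)$ with $L$ linear. Bound each piece by the bilinear estimate for $B$, operator bounds on $A$ and $G$ applied to the smooth finite-mode basis functions, and $\|\nabla \sigma_{\ell'}^{\cdot}\|_{L^\infty}, \|\nabla \psi_{\ell'}^{\cdot}\|_{L^\infty} \leq C|\ell'|$, together with the Sobolev embedding $H^{1+\epsilon} \hookrightarrow L^\infty$ in two dimensions to control $\|K \ast \omega\|_{L^\infty}$ by $C\|U\|_{H^1}$. The largest power of $|\ell'|$ arises from the $\nu_2^2 |\ell'|^4 \sigma_{\ell'}^m$ term in $Z$, yielding $\|H^{(0)}\| + \|L(U)\| \leq C|\ell'|^4 (1 + \|U\|_{H^1})$. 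Multiplying by the prefactor $\leq 2/(g^2 |j|)$ and using $|\ell'| \leq \sqrt{2}|j|$ absorbs one factor of $|\ell'| \sim N$, giving $\|J_{j,m}(U)\| \leq C N^3 (1 + \|U\|_{H^1})$. The main obstacle is the cancellation argument of paragraph three; the norm estimates are routine applications of standard two-dimensional bilinear bounds once the affine structure of $H$ is in hand.
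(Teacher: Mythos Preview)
Your proof is correct. The paper's own proof is essentially one sentence (``straightforward consequence of definitions'' together with the bound $\|H_{j+e_1,e_1}^{m,m'}(U)\|\leq C|j|^4(1+\|U\|_{H^1})$), because the two structural claims you work hardest for---that $H_{j,k}^{m,m'}$ is affine in $U$ and has vanishing $\omega$-component---were already established in the paragraph leading to \eqref{eq:new:de:bad:tails}. There the paper obtains these properties by expanding $[[F(U),Y_j^m(U)],Y_k^{m'}(U)]$ term by term, repeatedly using that $\nabla Y(U)\{X\}=B(X,\sigma)$ has zero $\omega$-component. Your route is different: you decompose $Z$ and $Y$ by polynomial degree in $U$ and show directly that the only formally quadratic contribution $[Z^{(2)},Y^{(1)}]$ vanishes, again via the advective cancellation $B(V,\cdot)=0$ when $V_\omega=0$. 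Both arguments rest on the same structural fact and are equally valid; yours is perhaps more self-contained, while the paper's is shorter because it piggybacks on computations already in hand. One minor inaccuracy: the $\nu_2^2|\ell'|^4\sigma_{\ell'}^m$ term in $Z^{(0)}$ has zero $\omega$-component and so does not itself contribute to $B(Z^{(0)},\sigma_{e_1}^{m'})$; the true source of the $|\ell'|^4$ growth in $\|H\|$ lies in terms like $A(B(\,\cdot\,,\sigma_{\ell'}^m))$ and $\nu_2|\ell'|^2 B(\,\cdot\,,\sigma_{\ell'}^m)$. This does not affect the validity of the bound, which matches the paper's.
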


\begin{proof}
The proof is a straightforward consequence of definitions. For example by careful inspection we have an estimate
\begin{align*}
\|H_{j + e_1, e_1}^{m, m'}(U)\| \leq C|j|^4( 1 +  \|U\|_{H^1})\,,
\end{align*}
of any $m, m' \in \{0, 1\}$.
\end{proof}

Lemma \ref{lem:junk:est:basic} does not provide us with sufficient estimate for $J_{j, m}$ as
it grows both in $N$ and $U$. However, we crucially use the fact  $(J_{j, m})_\om = 0$ as follows.
We can generate sufficiently many, and consequently subtract from  $J_{j, m}$, pure modes $\sigma_k^m$. Hence,
we generate all modes $\sigma_k^m$ with $|k| \leq \tilde{N}$ but for approximation of $H_N$ we use only those with
$|k| \leq N \ll \tilde{N}$, the rest we use for controlling the size of the error $J_{j, m}$
(for details see the proof of Lemma \ref{lem:junk:e}).

To this end we derive estimates for projections of $J_{j,m}(U)$ into high Fourier modes.
Recall that $Q_N$ is the orthogonal projection on complement of $H_N$ and denote
\begin{align}
J^{\tilde{N}}_{j,m}(U) := Q_{\tilde{N}} J_{j,m}(U) \,.
\label{eq:junk:controlled}
\end{align}

\begin{Lem}
\label{lem:junk:est}
For every integers $N$, $\tilde{N}$ with $\tilde{N} \geq N > 0$,
and every integer $s \geq 1$ and $U \in H^{s+1}$
\begin{align}
   \| J^{\tilde{N}}_{j,m}(U)\| \leq& C \frac{N^{s+3}}{\tilde{N}^{s/2}} ( 1 + \| U\|_{H^{s+1}}) \qquad (|j| \leq N, m \in\{0,1\})\,,
   \label{eq:junk:est}
\end{align}
where $C = C(s)$ is independent of $N, \tilde{N}$ and $U$.
\end{Lem}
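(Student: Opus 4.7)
The plan is to exploit the Fourier localization of each building block of $J_{j,m}(U)$ together with the annihilating property of $Q_{\tilde N}$ on low-wavenumber modes, splitting according to the three cases in the definition \eqref{eq:junk:express}.

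For Case~1 ($j_1\ne 0$), I observe that the ``pure mode'' contribution $\sigma_j^{m+1}$ in \eqref{eq:junk:express} is annihilated by $Q_{\tilde N}$ as soon as $\tilde N\ge N$, so only $B(U,\sigma_j^{m+1})=(K*\omega)\cdot\nabla\sigma_j^{m+1}$ needs to be estimated. The Fourier coefficient of this product at wavenumber $k$ is proportional to $j\cdot\widehat{K*\omega}(k\mp j)$, a shift by the fixed low wavenumber $j$ with $|j|\le N$. Taking $\tilde N\ge 2N$ ensures $|k\mp j|\ge |k|/2\ge \tilde N/2$ whenever $|k|>\tilde N$, and by Plancherel combined with the one-derivative smoothing of the Biot--Savart kernel,
\begin{align*}
\|Q_{\tilde N} B(U,\sigma_j^{m+1})\|^{2}
 \le C|j|^{2}\sum_{|l|>\tilde N/2}|l|^{-2}|\hat\omega(l)|^{2}
 \le CN^{2}\tilde N^{-2(s+1)}\|U\|_{H^{s}}^{2},
\end{align*}
which, after multiplication by the uniformly bounded factor $(-1)^{m}/(gj_1)$, is already much stronger than the advertised bound.

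For Cases~2 and~3 ($j_1=0$), by \eqref{eq:junk:express} it suffices to estimate $\|Q_{\tilde N}H^{m,m'}_{j+e_1,e_1}(U)\|$. Since $H^{m,m'}$ has been established to be affine in $U$ and concentrated in the $\theta$-component, I would split $H^{m,m'}(U)=H_0+L(U)$. The constant $H_0$ is a finite linear combination of basis elements $\sigma_l^\bullet,\psi_l^\bullet$ with wavenumbers $|l|\le 2(N+1)$, hence $Q_{\tilde N}H_0=0$ whenever $\tilde N\ge 2(N+1)$. The linear part $L(U)$ is a sum of a bounded number of terms each of one of the schematic forms $B(\Theta_N,B(U,\Theta_N))$, $AB(U,\Theta_N)$, $GB(U,\Theta_N)$, or $B(U,\Theta_N)$, where $\Theta_N$ denotes any sum of basis functions supported at wavenumbers $|l|\le C N$; these are obtained by inspecting the $U$-dependent contributions in $\nabla^{2}F(U)\{Y_{j+e_1}^{m},Y_{e_1}^{m'}\}$ and $\nabla F(U)\{\nabla Y_{j+e_1}^{m}(U)\{Y_{e_1}^{m'}\}\}$ in the computation leading to \eqref{eq:new:de:bad:tails}. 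Each such term has the same Fourier shift structure as in Case~1 and admits the analogous Plancherel bound; the $A$-type summand contributes an extra factor of order $|l|^{2}$ inside the high-mode Fourier sum, which is exactly absorbed by the additional Sobolev derivative $\|U\|_{H^{s+1}}$ appearing in the statement.

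The main obstacle will be the bookkeeping of prefactors. Each derivative falling on a trig building block contributes $N$, the Biot--Savart convolution removes one power of $|k|$, the prefactor $(1+|j|^{2})/(g^{2}|j|^{3})$ in \eqref{eq:junk:express} is uniformly bounded in $|j|\ge 1$, and the Moser-type product estimate combines these into a polynomial in $N$ of degree at most $s+3$. The slack between the decay rates one actually obtains ($\tilde N^{-(s+1)}$ in Case~1, $\tilde N^{-s}$ in Cases~2--3) and the advertised $\tilde N^{-s/2}$ leaves ample room to absorb universal constants and to handle the small-$\tilde N$ regime $\tilde N<2N$ trivially via Lemma~\ref{lem:junk:est:basic}. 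The crucial structural ingredient, already used in the paper to establish that $H^{m,m'}$ is affine in $U$, is that both $\sigma_k^m$ and $B(U,\sigma_k^m)$ have vanishing $\omega$-component, so that nested $B$-brackets involving these building blocks kill the would-be quadratic-in-$U$ contributions in $[Z_{j}^{m}(U),Y_{k}^{m'}(U)]$; without this cancellation the dependence on $U$ would be cubic and the claimed linear growth in $\|U\|_{H^{s+1}}$ would fail.
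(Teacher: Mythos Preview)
Your approach is correct but considerably more laborious than the paper's. The paper dispenses with the frequency-shift bookkeeping entirely and argues in two lines: first the generalized Poincar\'e inequality
\[
\|J^{\tilde N}_{j,m}(U)\| = \|Q_{\tilde N} J_{j,m}(U)\| \le C\,\tilde N^{-s/2}\,\|J_{j,m}(U)\|_{H^s},
\]
and then a direct $H^s$ estimate of the \emph{full} error term $\|J_{j,m}(U)\|_{H^s}\le C N^{s+3}(1+\|U\|_{H^{s+1}})$, obtained by inspection of \eqref{eq:junk:express} and the observation that each derivative landing on a trigonometric factor contributes at most one power of $|j|\le N$. No case splitting, no explicit Fourier shift analysis, and no separate treatment of the constant versus linear parts of $H^{m,m'}_{j,k}$ is needed.

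Your route does buy something: you obtain the sharper decay $\tilde N^{-s}$ (or even $\tilde N^{-(s+1)}$ in Case~1) rather than $\tilde N^{-s/2}$, since you exploit that $Q_{\tilde N}$ annihilates low-mode constants exactly and that the linear-in-$U$ pieces are genuine low-frequency shifts of $U$. But this extra precision is not used anywhere downstream (Proposition~\ref{prop:large:conclusion} only needs $\tilde N^{-1}$), and the price is a detailed term-by-term audit of $H^{m,m'}_{j+e_1,e_1}$ that the paper avoids. Your handling of the small-$\tilde N$ regime via Lemma~\ref{lem:junk:est:basic} is also unnecessary in the paper's argument, since the Poincar\'e step works uniformly for all $\tilde N\ge N$.
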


\begin{proof}
Since the $\tilde{N}^\textrm{th}$ eigenvalue $\lambda_{\tilde{N}} \sim \tilde{N}$, cf. \cite{ConstantinFoias88}, one has by the generalized Poincar\'e inequality that
\begin{align*}
\| J^{\tilde{N}}_{j,m}(U)\| \leq C \frac{1}{\lambda_{\tilde{N}}^{s/2}} \| J_{j,m}(U)\|_{H^s}
\leq  \frac{1}{\tilde{N}^{s/2}} \| J^{\tilde{N}}_{j,m}(U)\|_{H^s} \,.
\end{align*}
By careful inspection of \eqref{eq:junk:express}, noting that $J_{j,m}$ is affine in $U$, we obtain
\begin{align*}
\| J^{\tilde{N}}_{j,m}(U)\|_{H^s} \leq
C(1 + \|U\|_{H^{s+1}}) \,,
\end{align*}
where $C = C(N, s)$. The exact dependence of the right hand side on $N$ can be inferred from the fact that each derivative of $J_{j,m}(U)$
can produce at most one factor of $|j| \leq N$.
\end{proof}

\section{Noise Propagation in the Phase Space: Quantitative Estimates}
\label{sec:Braket:Est:Mal:Mat}

This section is devoted to the proof of Proposition \ref{prop:small:conclusion}.  To establish this
``upper bound" on the quadratic forms $\mathcal{Q}_{N,\tilde{N}}$ defined in Section~\ref{sec:Mal:spec:bnds},
recall that in Section~\ref{sec:Hormander:brak} we showed that $\mathfrak{B}_{N, \tilde{N}}(U) \subset \mathcal{V}_M$
 (see \eqref{eq:Mth:iter:admissible:brak}, \eqref{eq:approx:basis:tails}) for sufficiently large $M = M(\tilde{N})$.
We thus to translate each of the Lie bracket computations
in Section~\ref{sec:Hormander:brak} into quantitative bounds.  Roughly speaking, we would like to show that
\begin{align}
 \langle \MM_{0,T}\phi, \phi \rangle  \textrm{ is `small' implies that } \langle \phi, \sigma_k^l \rangle  \textrm{ are all `small' for all } k \in \ZZZ, l \in \{0,1\}
 \label{eq:rough:quant:bbd:stat1}
\end{align}
and that, starting from any admissible vector field $E \in \mathcal{V}_M$, cf. \eqref{eq:Mth:iter:admissible:brak}
\begin{align}
 \langle \phi, E \rangle  \textrm{ is `small' implies that } \langle \phi, [E, \sigma_k^l]\rangle, \langle \phi, [E, F]\rangle  \textrm{ are `small' for all } k \in \ZZZ, l \in \{0,1\}.
 \label{eq:rough:quant:bbd:stat2}
\end{align}

To achieve \eqref{eq:rough:quant:bbd:stat1}, \eqref{eq:rough:quant:bbd:stat2} we broadly follow an approach recently
developed in \cite{MattinglyPardoux1,BakhtinMattingly2007,HairerMattingly2011}.\footnote{As in these works, the
more classical methods using the Norris lemma do not apply, since it requires the inversion of the operators $\JJ_{0,t}$.
See \cite{Norris1986} and also e.g.
\cite{Nualart2006, Hairer2011} for further details.}
Notice that
\begin{align}
  \langle \MM_{s,t} \phi, \phi \rangle = \sum_{\substack{k \in \ZZZ\\ l \in\{0,1\}}}(\alpha_{k}^{l})^2 \int_s^t \langle \sigma_{k}^{l}, \KK_{r,t} \phi \rangle^2 dr
	\label{eq:MJ:matrix:back:form}
\end{align}
and define
$\gphi(t) := \langle \KK_{t,T} \phi, E(U(t))\rangle$ over test functions $\phi$ and admissible vector fields $E$.
To address the first case in  \eqref{eq:rough:quant:bbd:stat2} we make use of a change of variable
$\bar{U} := U - \sigma W$.  Expanding $E(U)$ in this new variable we
obtain a Wiener polynomial with coefficient similar to $[E,\sigma_k^l]$ and we infer the `smallness'
from time regularity results for Wiener polynomial derived in \cite{HairerMattingly2011} and recalled here
 as Theorem~\ref{thm:all:about:my:wiener}.  For the second case in \eqref{eq:rough:quant:bbd:stat2} we differentiate $\gphi$,
and find that $\gphi' = \langle \KK_{t,T} \phi ,[E, F] \rangle$, at least up to a change of variable.  We then make use of the
fact that we can bound the maximum of $\gphi'$ in terms of, for example, $\gphi$ and $C^{\alpha}$ norms of $\gphi'$ to
deduce the desired implication.

Observe that our quadratic forms $\mathcal{Q}_{N,\tilde{N}}$ depend on $U(T)$ and thus have a strong probabilistic dependence.
Indeed the existence of these `error' terms in \eqref{eq:our:quad:form:Horm}
means that we have to carefully track the growth of constants as a function of the number of Lie brackets we
take.  We also need to explain, at a quantitative level, how we are able to push error terms to entirely to high wavenumbers.
Neither of these concerns can be addressed from an  `obvious inspection' of the methods in \cite{HairerMattingly2011}.  In addition to these
mathematical concerns, we have developed several Lemmata~\ref{lem:suit:reg}, \ref{lem:aux:aux} which
we believe streamline the presentation of some of the arguments in comparison to previous works.

The rest of the section is organized as follows:  We begin with some generalities
introducing or recalling some general lemmata that will be used repeatedly in the
course of arguments leading to the rigorous form of \eqref{eq:rough:quant:bbd:stat1}--\eqref{eq:rough:quant:bbd:stat2}.
In Subsection~\ref{sec:implications:eigenvals} we present the series of Lemmas~\ref{lem:zeroth:step}--\ref{lem:de:to:de:creation:step}
each of which corresponds to one (or more) of the Lie
brackets computed in Section~\ref{sec:Hormander:brak}.    As we proceed we refer
to Figure~\ref{fig:brak:2} to help guide the reader through some admittedly involved computations.
In Subsection \ref{sec:it:proof:by:con} we piece together the proved implications in an inductive
argument to complete the proof of Proposition \ref{prop:small:conclusion}
\begin{figure}[tb]
  \centering
 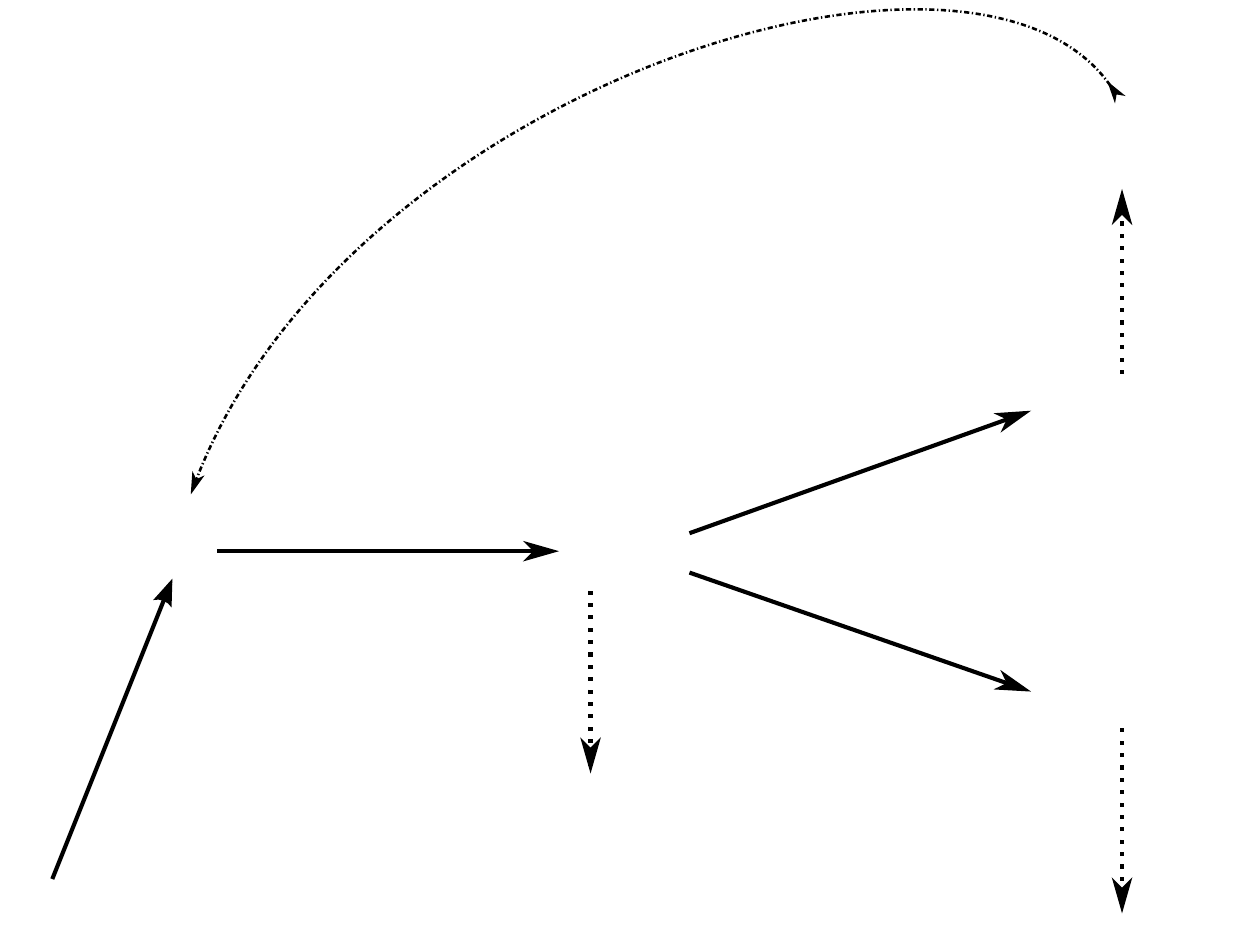
 \vspace{.1in}
 \caption{An illustration of the structure of the lemmata in Section~\ref{sec:Braket:Est:Mal:Mat} that leads to the proof of
 Proposition~\ref{prop:small:conclusion}.
 The arrows indicate that if one term is `small' then the other one is `small' on a set of large measure (displayed below the arrow),
 where the meaning of 'smallness' is made precise in each lemma.
 Above the arrow we indicate in which lemma is this implication proved.
  The arrow at the top of the
 diagram shows that the process is iterative.
 }
\label{fig:brak:2}
\end{figure}
In all that follows we maintain the convention from Remark \ref{rmk:constants:conven}, that is, all constants are implicitly dependent on given
parameters $\nu_1, \nu_2, g, \alpha, \ldots$ of the problem.
Note also that we carry out our arguments on a general time interval $[T/2, T]$ for some $T > 0$, which makes all constants $T$ dependent;
we apply the conclusion only for $T = 1$ above for the proof of Theorem~\ref{thm:mainresult}.

\subsection{Preliminaries}
We begin by introducing some further notational conventions and some general Lemmata~\ref{lem:suit:reg},~\ref{lem:aux:aux},~\ref{thm:all:about:my:wiener} which are be used frequently in the course of the analysis.

For any $a < b$, $\beta \in \RR$ and $\alpha \in (0,1]$ define the semi-norms
\begin{align*}
\|U\|_{C^{\alpha}([a,b],H^{\beta})} :=
\sup_{\substack{t_1 \neq t_2\\ t_1,t_2\in[a,b]}}
\frac{\| U(t_1)- U(t_2)\|_{H^{\beta}}}{|t_1-t_2|^{\alpha}}\,.
\end{align*}
If $a = T/2$ and $b = T$ we will write $\|\cdot \|_{C^\alpha H^\beta}$ instead of $\|\cdot\|_{C^{\alpha}([T/2,T],H^{\beta})}$
and denote $\|U\|_{C^{0} H^\beta} := \sup_{t \in [T/2,T]} \|U\|_{H^\beta}$.    Similar notations will be employed
for the H\"older spaces  $C^\alpha([a,b])$, $C^{1,\alpha}([a,b])$ etc.
Recalling the notation in \eqref{eq:Lie:brak:abs} we will define the `generalized Lie bracket'
\begin{align*}
  [E_1(U), E_2(\tilde{U})] := \nabla E_2(\tilde{U}) E_1(U) - \nabla E_1(U)  E_2(\tilde{U}),
\end{align*}
for all suitably regular $E_1, E_2 : H \to H$ and $U, \tilde{U} \in H$.
Below we often consider  $\Ushft = U - \sigma_\theta W$ which satisfies the
shifted equation (cf. \eqref{eq:BE:abs})
\begin{align}
  \pd_t \Ushft = F(U) = F(\bar{U} + \sigma_\theta W), \quad   \Ushft(0) = U_0.
\label{eq:ubar:def:2}
\end{align}
Note that, in contrast to $U$, $\bar{U}$ is $C^{1,\alpha}$ in time for any $\alpha < 1/2$.

We next prove two auxiliary lemmata which encapsulates the process of obtaining $[E,F]$ type brackets from time differentiation.

\begin{Lem}
\label{lem:suit:reg}
Suppose $E:H\rightarrow H$ is Fr\'{e}chet differentiable, $\phi\in H$, $U$ solves \eqref{eq:BE:abs},
and $\bar{U}$ is defined by \eqref{eq:ubar:def:2}.  Then for any $p\geq 1$ and any $\eta > 0$, we have that
\begin{align}
    \E& \sup_{t\in[T/2,T]}|\partial_{t}\langle \KK_{t,T}\phi,E(\bar{U})\rangle|^{p}
    \leq  C\|\phi\|^{p}\exp(\eta \|U_0\|^2)
    \Big(\E \sup_{t\in[T/2,T]}\|[E(\bar{U}),F(U)]\|^{2p} \Big)^{1/2},
    \label{eq:suit:reg1}
 \end{align}
  where $C = C(\eta, p, T)$.   Moreover, for any $\alpha \in (0,1]$,
\begin{align}
\E&\left(\|\partial_{t}\langle \KK_{t,T}\phi,E(\bar{U})\rangle\|^{p}_{C^{\alpha}}\right) \notag\\
    & \quad \quad \leq  C\|\phi\|^{p}\exp(\eta \|U_0\|^2)
    \cdot \Big[\Big(\E \sup_{t\in[T/2,T]}\|[E(\bar{U}),F(U)]\|_{H^2}^{2p}\Big)^{1/2}
    + \Big(\E \|[E(\bar{U}),F(U)]\|_{C^{\alpha} H}^{2p}\Big)^{1/2}\Big]\,,
    \label{eq:suit:reg2}
\end{align}
with $C = C(\eta, p, T)$.
\end{Lem}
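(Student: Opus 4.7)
The plan is to first derive a clean formula for the time derivative $\pd_t \langle \KK_{t,T}\phi, E(\bar U)\rangle$ as an inner product against the generalized Lie bracket $[E(\bar U), F(U)]$, and then to estimate each of \eqref{eq:suit:reg1} and \eqref{eq:suit:reg2} by combining Cauchy--Schwarz with the moment bounds on $\KK$ and $U$ from Appendix~\ref{sec:moment:est}. The backward equation \eqref{eq:def:K:backwards:eq} gives $\pd_t \KK_{t,T}\phi = -(\nabla F(U(t)))^* \KK_{t,T}\phi$, while \eqref{eq:ubar:def:2} yields $\pd_t \bar U = F(U)$. The product rule and the definition of the generalized Lie bracket then produce
\begin{equation*}
\pd_t \langle \KK_{t,T}\phi, E(\bar U)\rangle
= \langle \KK_{t,T}\phi, \nabla E(\bar U) F(U) - \nabla F(U) E(\bar U)\rangle
= -\langle \KK_{t,T}\phi, [E(\bar U), F(U)]\rangle,
\end{equation*}
so that $|\pd_t \langle \KK_{t,T}\phi, E(\bar U)\rangle|$ is exactly $|\langle \KK_{t,T}\phi, [E(\bar U), F(U)]\rangle|$.

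For \eqref{eq:suit:reg1} I will apply Cauchy--Schwarz pointwise in $t$, take the supremum over $t\in[T/2,T]$, raise to the $p$-th power, take expectation, and then use Cauchy--Schwarz in expectation to split the two factors. The bound $(\E \sup_{t\in[T/2,T]} \|\KK_{t,T}\phi\|^{2p})^{1/2} \leq C \|\phi\|^p \exp(\eta \|U_0\|^2)$ coming from Appendix~\ref{sec:moment:est} (after possibly relabeling $\eta$) absorbs the $\KK$ factor, leaving precisely the second $L^{2p}$-moment of the bracket that appears on the right of \eqref{eq:suit:reg1}.

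For the Hölder estimate \eqref{eq:suit:reg2} I will split the increment at two times $t_1 < t_2$ in $[T/2,T]$ (writing $[E,F]$ for $[E(\bar U), F(U)]$) as
\begin{equation*}
\langle \KK_{t_1,T}\phi, [E,F](t_1)\rangle - \langle \KK_{t_2,T}\phi, [E,F](t_2)\rangle
= \langle \KK_{t_1,T}\phi - \KK_{t_2,T}\phi, [E,F](t_1)\rangle + \langle \KK_{t_2,T}\phi, [E,F](t_1) - [E,F](t_2)\rangle.
\end{equation*}
The second summand is directly bounded by $\|\KK_{t_2,T}\phi\| \cdot \|[E,F]\|_{C^\alpha H} |t_1-t_2|^\alpha$, producing the $C^\alpha H$ term in \eqref{eq:suit:reg2}. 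For the first summand I exploit the duality $H^{-2}\times H^2$: integrating the backward equation from $t_1$ to $t_2$ and using the mapping property $\nabla F(U) : H^2 \to H$ with operator norm controlled by $C(1+\|U\|_{H^1})$ (the $A$ and $G$ contributions being linear and of the correct order, the convection term $B(U,\cdot)+B(\cdot,U)$ handled by standard Sobolev multiplication) yields
\begin{equation*}
\|\KK_{t_1,T}\phi - \KK_{t_2,T}\phi\|_{H^{-2}} \leq C |t_1-t_2|\bigl(1 + \sup_s \|U(s)\|_{H^1}\bigr) \sup_s \|\KK_{s,T}\phi\|.
\end{equation*}
Pairing with $\|[E,F](t_1)\|_{H^2}$ and using $|t_1-t_2| \leq (T/2)^{1-\alpha}|t_1-t_2|^\alpha$ recovers the $\sup_t \|[E,F]\|_{H^2}$ contribution in \eqref{eq:suit:reg2}. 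Taking the Hölder seminorm, expectation, Cauchy--Schwarz twice, and applying the moment bounds for $\KK$ and $U$ from Appendix~\ref{sec:moment:est}—with the exponential $\exp(\eta\|U_0\|^2)$ factor absorbed by adjusting $\eta$ and using Young's inequality—completes the argument. The main technical point is identifying the correct dual pair $H^{-2}\times H^2$; this is why the $H^2$ norm of the bracket (rather than a weaker Sobolev norm) appears on the right of \eqref{eq:suit:reg2}, and it is forced by the Laplacian $A$ inside $\nabla F(U)$, which maps $H^2$ to $H$ but no better.
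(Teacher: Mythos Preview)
Your proposal is correct and follows essentially the same approach as the paper: the key identity $\pd_t\langle \KK_{t,T}\phi,E(\bar U)\rangle = \langle \KK_{t,T}\phi,[F(U),E(\bar U)]\rangle$ is derived the same way, and the splitting of the H\"older increment is exactly the paper's inequality~\eqref{eq:holder:space:time:algebra:props}. The only cosmetic difference is that for the $H^{-2}$ time-regularity of $\KK_{t,T}\phi$ you integrate the backward equation by hand and carry the factor $(1+\sup_s\|U(s)\|_{H^1})$ explicitly, whereas the paper simply quotes the ready-made bound~\eqref{eq:lin:smooth}; these are the same computation.
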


\begin{proof}
Since $\KK_{t, T}\phi$ solves \eqref{eq:def:K:backwards:eq} and $\bar{U}$ satisfies \eqref{eq:ubar:def:2} we have
\begin{align}
    \pd_{t}\langle \KK_{t,T}\phi,E(\bar{U})\rangle &= \langle \pd_t  \KK_{t, T} \phi, E(\bar{U})\rangle
    +  \langle  \KK_{t, T} \phi,  \nabla E(\bar{U})\cdot\pd_{t}\Ushft \rangle
    \notag\\
         &= -\langle  \KK_{t, T} \phi, \nabla F(U) E(\bar{U})\rangle +  \langle  \KK_{t, T} \phi,  \nabla E(\bar{U})F(U) \rangle
    \notag\\
         &= \langle  \KK_{t, T} \phi, [F(U),E(\bar{U})]\rangle.
         \label{eq:gen:der}
\end{align}
Now, \eqref{eq:suit:reg1} immediately follows from
H\"{o}lder inequality and \eqref{eq:back:lin:est:1}. To prove \eqref{eq:suit:reg2},
 we use that for any $\alpha \in (0,1)$, $s, s' \in \RR$, and any suitably regular $A$, $B$
 one has
    \begin{align}
       \|\langle A, B \rangle\|_{C^{\alpha}} &:= \! \! \!
       \sup_{\substack{t \neq s\\ s, t \in [T/2, T]}} \left| \frac{\langle A(t), B(t) \rangle
       - \langle A(s), B(s) \rangle}{|s-t|^{\alpha}} \right|
       = \! \! \! \sup_{\substack{t \neq s\\ s, t \in [T/2, T]}} \left| \frac{\langle A(t) - A(s), B(t) \rangle
       + \langle A(s), B(t) - B(s) \rangle}{|s-t|^{\alpha}} \right|
       \notag \\
       &\leq \|A\|_{L^\infty  H^{-s}} \|B \|_{C^{\alpha}H^{s}}  + \|A\|_{C^{\alpha}H^{-s'}} \|B\|_{L^\infty H^{s'}}.
       \label{eq:holder:space:time:algebra:props}
    \end{align}
Combining \eqref{eq:holder:space:time:algebra:props} with \eqref{eq:gen:der} and using H\"{o}lder's inequality,
\begin{align}
    \E\big(\|\partial_{t}\langle \KK_{t,T}\phi,E(\bar{U})\rangle\|_{C^{\alpha}}^p\big)
    &\leq  C   \Big(\E\big(\sup_{t\in[T/2,T]}\|\KK_{t,T}\phi\|^{2p}\big)\Big)^{1/2}
    \Big(\E\big(\|[E(\bar{U}),F(U)]\|_{C^{\alpha}H}^{2p}\big)\Big)^{1/2}
    \notag \\ &\qquad  + C
 \Big(\E\big(\|\KK_{t,T}\phi\|_{C^{\alpha} H^{-2}}^{2p}\big)\Big)^{1/2}
    \Big(\E\big(\sup_{t\in[T/2,T]}\|[E(\bar{U}),F(U)]\|_{H^2}^{2p}\big)\Big)^{1/2}
    \notag
\end{align}
and \eqref{eq:suit:reg2} follows from \eqref{eq:back:lin:est:1} and  \eqref{eq:lin:smooth}.
\end{proof}

\begin{Lem}\label{lem:aux:aux}
Fix $T > 0$, $\alpha \in (0,1]$ and an index set $\mathcal{I}$.
Consider a collection of random functions $\gphi$ taking values in $C^{1, \alpha}([T/2, T])$ and indexed by
$\phi \in \mathcal{I}$. Define, for each $\epsilon > 0$,
 \begin{align}\label{eq:def:Lambda}
\Lambda_{\epsilon, \alpha}: = \bigcup_{\phi \in I} \Lambda_{\epsilon, \alpha}^\phi,
\quad \textrm{ where } \quad
\Lambda_{\epsilon, \alpha}^\phi := \left\{ \sup_{t \in [T/2, T]} |\gphi (t)| \leq \epsilon
\textrm{  and  }  \sup_{t \in [T/2, T]} |\gphi'(t)|  > \epsilon^{\frac{\alpha}{2(1 + \alpha)}} \right\}.
 \end{align}
Then, there is $\epsilon_0 = \epsilon_0(\alpha, T)$ such that for each
$\epsilon \in (0, \epsilon_0)$
\begin{align}
\Prb (\Lambda_{\epsilon, \alpha}) \leq C  \epsilon \E \left(\sup_{\phi \in I} \|\gphi\|_{C^{1, \alpha}([T/2, T])}^{2/\alpha}\right)\,.
\label{eq:HM:wrapper:thm:conclusion}
\end{align}
\end{Lem}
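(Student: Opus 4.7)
The plan is to establish a deterministic pointwise implication: on the event $\Lambda_{\epsilon,\alpha}^{\phi}$, the quantity $\|\gphi\|_{C^{1,\alpha}([T/2,T])}^{2/\alpha}$ must be bounded below by a universal multiple of $\epsilon^{-1}$, provided $\epsilon$ is small enough in terms of $\alpha$ and $T$ only. From this inclusion the desired estimate follows immediately from a Markov inequality applied to the single random variable $\sup_{\phi\in\mathcal{I}}\|\gphi\|_{C^{1,\alpha}}^{2/\alpha}$, so the index set $\mathcal{I}$ plays no active role beyond appearing inside that supremum.

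The pointwise bound rests on a short H\"older interpolation argument in the spirit of Norris's lemma. Fix a realization in $\Lambda_{\epsilon,\alpha}^{\phi}$, abbreviate $g:=\gphi$ and $K:=\|g'\|_{C^{\alpha}([T/2,T])}$, and set $\delta:=\epsilon^{\alpha/(2(1+\alpha))}$. By the definition of $\Lambda_{\epsilon,\alpha}^{\phi}$ there exists $t_{0}\in[T/2,T]$ with $|g'(t_{0})|>\delta$, and by the H\"older continuity of $g'$,
\begin{align*}
|g'(t)|\geq \tfrac{\delta}{2}\quad\text{for every } |t-t_{0}|\leq h:=\bigl(\tfrac{\delta}{2K}\bigr)^{1/\alpha},
\end{align*}
with $g'$ maintaining sign on that interval. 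Since $t_{0}\in[T/2,T]$, at least one of the half-intervals $[t_{0},t_{0}+h']$ or $[t_{0}-h',t_{0}]$ is contained in $[T/2,T]$ for $h':=\min(h,T/4)$. Integrating $g'$ across that half-interval and using $\sup_{t}|g(t)|\leq\epsilon$ gives $\tfrac{\delta}{2}h'\leq 2\epsilon$.

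A case split completes the deterministic part. If $h'=T/4$ then $\delta\leq 16\epsilon/T$, which forces $\epsilon^{\alpha/(2(1+\alpha))-1}\leq 16/T$; since the exponent $\alpha/(2(1+\alpha))-1$ is strictly negative, this case is excluded once $\epsilon<\epsilon_{0}$ for some $\epsilon_{0}=\epsilon_{0}(\alpha,T)>0$. Otherwise $h'=h$, and the inequality $(\delta/(2K))^{1/\alpha}\leq 4\epsilon/\delta$ rearranges, using the identity $\delta^{(1+\alpha)/\alpha}=\epsilon^{1/2}$, to $K^{2/\alpha}\geq c\,\epsilon^{-1}$ for a constant $c=c(\alpha)>0$. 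Since $K\leq\|g\|_{C^{1,\alpha}([T/2,T])}$, we have shown that
\begin{align*}
\Lambda_{\epsilon,\alpha}^{\phi}\subset \bigl\{ \epsilon\,\|\gphi\|_{C^{1,\alpha}([T/2,T])}^{2/\alpha}\geq c\bigr\}\quad\text{for every }\phi\in\mathcal{I}\text{ and every }\epsilon<\epsilon_{0}.
\end{align*}
Taking the union over $\phi\in\mathcal{I}$ and applying Markov's inequality to $\sup_{\phi}\|\gphi\|_{C^{1,\alpha}}^{2/\alpha}$ then yields \eqref{eq:HM:wrapper:thm:conclusion}.

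The main obstacle I anticipate is purely technical: handling the bookkeeping near the endpoints of $[T/2,T]$ when $t_{0}$ is close to the boundary, which is dispatched by the cutoff $h'=\min(h,T/4)$, and identifying the precise threshold exponent $\alpha/(2(1+\alpha))$. This exponent is not a choice but is forced on us by balancing the two competing inequalities $h\leq 4\epsilon/\delta$ (from smallness of $g$) against $h=(\delta/(2K))^{1/\alpha}$ (from H\"older regularity of $g'$), so that their combination produces exactly an $\epsilon^{-1}$ lower bound on $K^{2/\alpha}$ compatible with the Markov-type conclusion sought in \eqref{eq:HM:wrapper:thm:conclusion}.
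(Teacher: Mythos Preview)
Your proof is correct and follows essentially the same route as the paper: both establish the deterministic inclusion $\Lambda_{\epsilon,\alpha}^\phi \subset \{\|g_\phi'\|_{C^\alpha}^{2/\alpha} \geq c(\alpha)\,\epsilon^{-1}\}$ for $\epsilon$ small, then take the union over $\phi$ and apply Markov's inequality. The only difference is cosmetic: the paper quotes the interpolation bound $\|f'\|_{L^\infty} \leq 4\|f\|_{L^\infty}\max\{2/T,\,\|f\|_{L^\infty}^{-1/(1+\alpha)}\|f'\|_{C^\alpha}^{1/(1+\alpha)}\}$ from \cite[Lemma 6.14]{HairerMattingly2011} and reads off the two cases, whereas you prove that inequality from scratch via the Norris-type argument (pick $t_0$, use H\"older continuity of $g'$ to find an interval of length $h$ on which $|g'|\geq\delta/2$, integrate).
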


\begin{proof}
As observed in \cite[Lemma 6.14]{HairerMattingly2011} we have the elementary bound
 \begin{align}
    \| f '\|_{L^\infty} \leq 4  \|f\|_{L^\infty} \max \left\{\frac{2}{T},
    \| f\|_{L^\infty}^{-1/(1 + \alpha)}
    \| f' \|_{C^{\alpha}}^{1/(1 + \alpha)}  \right\} \,,
    \label{eq:holder:interp:0}
 \end{align}
 which is valid for any $f \in C^{1,\alpha}([T/2,T])$.
Fix any $\phi \in \mathcal{I}$.  On the set $\Lambda_{\epsilon, \alpha}^\phi$, if $\gphi$ attains the maximum in \eqref{eq:holder:interp:0} in the first term, then
\begin{align*}
\epsilon^{\frac{\alpha}{2(1 + \alpha)}} <   \|\gphi'\|_{L^{\infty}} \leq
4 \|\gphi\|_{L^{\infty}} \frac{2}{T}\leq \epsilon \frac{8}{T} \,.
\end{align*}
Clearly this cannot happen if $\epsilon < \epsilon_0(\alpha, T) \,(:= (T/8)^{2(1+\alpha)/(2+ \alpha)})$.  Thus for any $\epsilon < \epsilon_0$, on $\Lambda_{\epsilon, \alpha}^\phi$, one has
\begin{align*}
\epsilon^{\frac{\alpha}{2(1 + \alpha)}} < 4 \epsilon^{\alpha/(1 + \alpha)} \|\gphi'\|_{C^{\alpha}}^{1/(1 + \alpha)}  \,,  \quad \textrm{ i.e. }
\quad \|\gphi'\|_{C^{\alpha}}^{2/\alpha} \geq 4^{-2/\alpha(1 + \alpha)} \epsilon^{-1} := C(\alpha) \epsilon^{-1}\,.
\end{align*}
Since this lower bound is independent of $\phi \in \mathcal{I}$ we infer
\begin{align*}
  \Lambda_{\epsilon, \alpha} \subset \left\{  \sup_{\phi \in \mathcal{I}}  \|\gphi'\|_{C^{\alpha}}^{2/\alpha} \geq C(\alpha) \epsilon^{-1}  \right\}.
\end{align*}
With this observation and the Markov inequality we infer \eqref{eq:HM:wrapper:thm:conclusion}, completing the proof.
\end{proof}

\begin{Rmk}\label{rmk:sets:give:monster:a:headache}
  Observe that
  \begin{align*}
  \Lambda_{\epsilon, \alpha}^c = \bigcap_{\phi \in I} \left\{ \sup_{t \in [T/2, T]} |\gphi (t)| > \epsilon
\textrm{  or  }  \sup_{t \in [T/2, T]} |\gphi'(t)|  \leq \epsilon^{\frac{\alpha}{2(1 + \alpha)}} \right\}.
  \end{align*}
  Thus, on $\Lambda_{\epsilon, \alpha}^c$,
  \begin{align}
     \sup_{t \in [T/2, T]} |\gphi (t)| < \epsilon \quad \Rightarrow  \quad \sup_{t \in [T/2, T]} |\gphi'(t)|  \leq \epsilon^{\frac{\alpha}{2(1 + \alpha)}}
     \label{eq:trivial:obs:237}
  \end{align}
  for every $\phi \in \mathcal{I}$.
\end{Rmk}

Finally, we recall in our notations, a crucial quantitative
bound on Wiener polynomials established in \cite{HairerMattingly2011}.
In particular this restatement avoids the language of `almost implication' introduced in \cite{HairerMattingly2011, Hairer2011}.

Given any multi-index $\alpha := (\alpha_1, \ldots, \alpha_d) \in \mathbb{N}^d$
recall the standard notation $W^\alpha := W^{\alpha_1}_1\cdots W^{\alpha_d}_d$.
\begin{Thm}[Hairer-Mattingly, \cite{HairerMattingly2011}]
\label{thm:all:about:my:wiener}
Fix $M, T > 0$. Consider the collection $\mathfrak{P}_M$ of $M^{th}$ degree  of  `Wiener polynomials' of the form
\begin{align*}
  F = A_{0} + \sum_{|\alpha| \leq M} A_{\alpha} W^\alpha,
\end{align*}
where for each multi-index $\alpha$, with $|\alpha| \leq M$,
$A_{\alpha}: \Omega \times [0,T] \to \RR$ is an arbitrary stochastic process.  Then, for all $\epsilon \in (0,1)$
and $\beta > 0$, there exists a measurable set $\Omega_{\epsilon,M, \beta}$ with
\begin{align*}
  \Prb(\Omega_{\epsilon,M,\beta}^c) \leq C \epsilon,
\end{align*}
such that on $\Omega_{\epsilon,M,\beta}$ and for every $F \in \mathfrak{P}_M$
\begin{align*}
  \sup_{t \in [0,T]} |F(t)| < \epsilon^{\beta}
  \quad \Rightarrow  \quad
  \begin{cases}
   \textrm{ either } &  \sup\limits_{|\alpha| \leq M} \sup\limits_{ t \in [0,T]} |A_{\alpha}(t)| \leq \epsilon^{\beta 3^{-M}},\\
   \textrm{ or } & \sup\limits_{|\alpha| \leq M} \sup\limits_{s \not = t \in [0,T]} \frac{ |A_{\alpha}(t) - A_{\alpha}(s)|}{|t -s|}\geq \epsilon^{-\beta 3^{-(M+1)}}.
  \end{cases}
\end{align*}
\end{Thm}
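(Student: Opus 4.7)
The plan is to proceed by induction on the degree $M$ of the Wiener polynomial, following the strategy developed in \cite{HairerMattingly2011}. The base case $M=0$ is immediate: $F = A_{0}$, so $\sup_{t}|F(t)| < \epsilon^{\beta}$ forces $\sup_{t}|A_{0}(t)| \leq \epsilon^{\beta} = \epsilon^{\beta\cdot 3^{0}}$, and we may take $\Omega_{\epsilon,0,\beta} = \Omega$. Suppose the conclusion holds at level $M-1$ with an exceptional set of measure $\leq C'\epsilon$; we establish it at level $M$ while paying a fixed multiplicative price in the exponents.

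For the inductive step, fix a deterministic time grid $\{t_{i}\}_{i=0}^{\lceil T/\delta\rceil}$ with spacing $\delta := \epsilon^{\gamma_{M}}$, the exponent $\gamma_{M}$ to be chosen. On each subinterval $[t_{i}, t_{i+1}]$ we split the Brownian motion as $W(t) = W(t_{i}) + \widetilde{W}(t)$ with $\widetilde{W}$ a Brownian increment on a time interval of length $\delta$, and expand
\begin{align*}
  F(t) = A_{0}(t) + \sum_{|\alpha|\leq M} A_{\alpha}(t) \sum_{\beta \leq \alpha} \binom{\alpha}{\beta} W(t_{i})^{\alpha-\beta}\,\widetilde{W}(t)^{\beta}.
\end{align*}
If any coefficient satisfies $\sup_{s \neq t}|A_{\alpha}(t) - A_{\alpha}(s)|/|t-s| \geq \epsilon^{-\beta 3^{-(M+1)}}$, we are in the second alternative and done. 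Otherwise we may replace $A_{\alpha}(t)$ by $A_{\alpha}(t_{i})$ on $[t_{i}, t_{i+1}]$ up to an error of order $\delta \cdot \epsilon^{-\beta 3^{-(M+1)}}$, which for appropriate $\gamma_{M}$ is negligible compared to $\epsilon^{\beta}$. The bound $\sup_{t}|F(t)| < \epsilon^{\beta}$ then transfers to a bound on a Wiener polynomial of degree $M$ in the frozen increment $\widetilde{W}$, whose coefficients are polynomials in the constants $\{A_{\alpha}(t_{i}), W(t_{i})\}$.

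The heart of the proof is a small-ball estimate for such a polynomial. After rescaling $\widetilde{W}(\delta s) \overset{d}{=} \sqrt{\delta}\,W'(s)$ with $W'$ a standard Brownian motion on $[0,1]$, one obtains a polynomial in $W'$ whose coefficients are rescaled versions of the $A_{\alpha}(t_{i})$. A direct analysis of the non-degeneracy of polynomials in a Brownian path on $[0,1]$ (established via induction on $M$, Itô's formula, and Gaussian tail bounds; see \cite{HairerMattingly2011}) shows that, outside of an exceptional event of probability $\lesssim \epsilon$, a uniform lower bound of the form $\sup_{t}|F(t)| \geq c_{M}\,\delta^{M/2}\sup_{\alpha}|A_{\alpha}(t_{i})|$ must hold on each subinterval. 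Combining this with $\sup_{t}|F(t)| < \epsilon^{\beta}$ yields $\sup_{\alpha}|A_{\alpha}(t_{i})| \leq \epsilon^{\beta}\delta^{-M/2}$; a union bound over the $\delta^{-1}$ grid points, together with the Lipschitz control across each subinterval, propagates this estimate from the grid to all of $[0,T]$. The set $\Omega_{\epsilon,M,\beta}$ is defined as the intersection of the (high-probability) complements of all the small-ball failure events, together with the inductive set at level $M-1$ applied to the derivatives/antiderivatives appearing in the argument; a union bound yields $\Prb(\Omega_{\epsilon,M,\beta}^{c}) \leq C\epsilon$.

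The principal obstacle is the bookkeeping of exponents: the exponent $\beta$ on the $F$ side must be converted, via the scale $\delta = \epsilon^{\gamma_{M}}$ and the factor $\delta^{-M/2}$, into $\epsilon^{\beta\cdot 3^{-M}}$ on the coefficient side, while the Lipschitz alternative must land at exponent $-\beta\cdot 3^{-(M+1)}$; this forces a rigid relationship between $\gamma_{M}$, $\gamma_{M-1}$, and the loss factors in the inductive step. The triadic shrinkage $3^{-M}$ in the statement reflects precisely the optimal choice of these scales (one factor absorbed by freezing error, one by the small-ball rescaling, one by interpolation between grid points). One must verify that for all $\epsilon$ small enough, the chosen $\gamma_{M}$ keeps every auxiliary error strictly smaller than the driving scale $\epsilon^{\beta}$, while simultaneously ensuring $\delta^{-1}\cdot(\text{small ball probability}) \leq C\epsilon$; this balance is the crux of the argument.
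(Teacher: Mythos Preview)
The paper does not prove this theorem; it is recalled verbatim as a result of \cite{HairerMattingly2011} (see the sentence immediately preceding the statement: ``we recall in our notations, a crucial quantitative bound on Wiener polynomials established in \cite{HairerMattingly2011}''). There is therefore no proof in the present paper against which to compare your proposal.

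For completeness: your sketch does capture the broad architecture of the argument in \cite{HairerMattingly2011} (their Theorem~7.1), namely induction on the degree $M$, localization on a fine time grid of width $\delta = \epsilon^{\gamma_M}$, the dichotomy between Lipschitz blow-up of the coefficients and a frozen-coefficient reduction, and a non-degeneracy statement for polynomials in Brownian increments. However, the line ``a direct analysis of the non-degeneracy of polynomials in a Brownian path \ldots\ shows that, outside of an exceptional event of probability $\lesssim\epsilon$, $\sup_t|F(t)|\geq c_M\delta^{M/2}\sup_\alpha|A_\alpha(t_i)|$'' is where essentially all of the work lies, and your parenthetical ``via induction on $M$, It\^o's formula, and Gaussian tail bounds'' does not constitute a proof. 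In \cite{HairerMattingly2011} this step is not obtained as a single small-ball estimate but via an interpolation/inversion argument: one evaluates the frozen polynomial at several (random) times chosen so that the resulting Vandermonde-type system in the Brownian values is well-conditioned with high probability, extracts the top-degree coefficients, subtracts them off, and applies the inductive hypothesis to the remainder. The triadic loss $3^{-M}$ indeed arises from balancing the three error sources you name, so that part of your bookkeeping is correct, but as written the proposal is an outline rather than a proof.
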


\subsection{Implications Starting from Small Eigenvalues}
\label{sec:implications:eigenvals}

We now start proving the implications depicted in Figure~\ref{fig:brak:2}.  Note that throughout what
follows we fix a small constant $\epsilon_0 = \epsilon_0(T)$ which gives the range of $\epsilon$ values
for which Lemmas~\ref{lem:zeroth:step}--\ref{lem:de:to:de:creation:step} hold.  The first lemma
explains how a lower bounds bound on the eigenvalues of $\MM_{0,T}$
initiates the iteration.

\begin{Lem}
\label{lem:zeroth:step}
For every $0<\epsilon < \epsilon_0(T)$ and every $\eta > 0$ there exist a set
$\Omega_{\epsilon, \mathcal{M}}$ and $C = C(\eta, T)$ with
\begin{align*}
  \Prb({\Omega_{\epsilon,\mathcal{M}}^{c}})
  \leq C \exp (\eta \|U_0\|^2)\epsilon
\end{align*}
such that on the set $\Omega_{\epsilon, \mathcal{M}}$
\begin{align*}
  \langle \MM_{0,T} \phi, \phi \rangle \leq \epsilon \|\phi\|^{2}
  \quad \Rightarrow \quad
  \sup_{t \in [T/2, T]} | \langle \KK_{t, T} \phi, \sigma_{k}^{l}\rangle |
  \leq \epsilon^{1/8} \|\phi\|,
\end{align*}
for each $k \in \ZZZ$, $l \in\{0,1\}$, and every $\phi \in H$. We recall that $\ZZZ \subset \ZZ^2_{+}$ is the set of directly forced
modes as in \eqref{eq:stochastic:forcing:exact:form} and the elements $\sigma_k^l$
are given by \eqref{eq:def:basis:sig}.
\end{Lem}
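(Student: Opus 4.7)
The plan is to convert the hypothesis --- an $L^2_t$-type bound on $\langle \KK_{t,T}\phi,\sigma_k^l\rangle$ inherited from $\MM_{0,T}$ --- into the desired pointwise-in-time bound via a standard interpolation inequality, using moment estimates on $U$ and on the backward linearization $\KK$ to control the time derivative. The set $\Omega_{\epsilon,\mathcal{M}}$ will encode a polynomial upper bound on both $\|\KK\|$ and $\|U\|$ in terms of a negative power of $\epsilon$.

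First, by \eqref{eq:MJ:matrix:back:form}, the hypothesis $\langle \MM_{0,T}\phi,\phi\rangle\leq\epsilon\|\phi\|^2$ gives
\begin{align*}
\int_{T/2}^T\langle \KK_{r,T}\phi,\sigma_k^l\rangle^2\,dr\leq\frac{\epsilon\,\|\phi\|^2}{(\alpha_k^l)^2}
\end{align*}
for every $k\in\ZZZ$ and $l\in\{0,1\}$, which one may promote to a sup bound via the elementary interpolation $\|f\|_{L^\infty([T/2,T])}^3\leq C\|f\|_{L^2([T/2,T])}^2\|f'\|_{L^\infty([T/2,T])}$ (obtained by noting that $|f|\geq\|f\|_{L^\infty}/2$ on an interval of length $\min(T/2,\|f\|_{L^\infty}/(2\|f'\|_{L^\infty}))$). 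To estimate the time derivative of $f(t):=\langle \KK_{t,T}\phi,\sigma_k^l\rangle$ I specialize \eqref{eq:gen:der} to the constant vector field $E\equiv\sigma_k^l$, which yields $f'(t)=\langle \KK_{t,T}\phi,[F(U(t)),\sigma_k^l]\rangle=\langle \KK_{t,T}\phi,Y_k^l(U(t))\rangle$; from \eqref{eq:YjmU} together with $\|K\ast\omega\|_{L^2}\leq C\|\omega\|$, one obtains the explicit bound $\|Y_k^l(U)\|\leq C(|k|)(1+\|U\|)$.

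Second, I define the good set
\begin{align*}
\Omega_{\epsilon,\mathcal{M}}:=\left\{\sup_{t\in[T/2,T]}\|\KK_{t,T}\|_{\mathcal{L}(H)}\leq\epsilon^{-1/4}\right\}\cap\left\{1+\sup_{t\in[T/2,T]}\|U(t)\|\leq\epsilon^{-1/4}\right\}.
\end{align*}
The fourth moment bounds on $\KK_{t,T}$ coming from \eqref{eq:back:lin:est:1} and on $\|U(t)\|$ coming from Lemma~\ref{lem:exp:moments}, both of which carry an $\exp(\eta\|U_0\|^2)$-type prefactor, combined with Chebyshev's inequality, yield $\Prb(\Omega_{\epsilon,\mathcal{M}}^c)\leq C(\eta,T)\exp(\eta\|U_0\|^2)\epsilon$.

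Third, on $\Omega_{\epsilon,\mathcal{M}}$ one has $\|f'\|_{L^\infty}\leq C(k)\epsilon^{-1/2}\|\phi\|$, and combining this with $\|f\|_{L^2}\leq C\epsilon^{1/2}\|\phi\|$ from the first step, the interpolation inequality above yields $\|f\|_{L^\infty}\leq C(k)\epsilon^{1/6}\|\phi\|$. Since $\ZZZ$ is finite and the exponent $1/6$ strictly exceeds the required $1/8$, the multiplicative constant can be absorbed by further restricting to $\epsilon<\epsilon_0(T)$ small enough that $\max_{k\in\ZZZ}C(k)\,\epsilon^{1/6-1/8}\leq 1$. The proof contains no substantive obstacle; its content is purely a matter of balancing the Chebyshev loss in powers of $\epsilon$ against the interpolation gain, with the slack $1/6>1/8$ providing the room to absorb multiplicative constants into the small-$\epsilon$ regime.
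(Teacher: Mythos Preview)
Your proof is correct and reaches the same conclusion, but the route differs from the paper's.

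The paper packages the interpolation step into its general Lemma~\ref{lem:aux:aux}: it sets $g_\phi(t):=\sum_{k,l}(\alpha_k^l)^2\int_0^t\langle\sigma_k^l,\KK_{r,T}\phi\rangle^2\,dr$, so that $g_\phi(T)=\langle\MM_{0,T}\phi,\phi\rangle$ and $g_\phi'(t)=\sum_{k,l}(\alpha_k^l)^2\langle\sigma_k^l,\KK_{t,T}\phi\rangle^2$, and then applies Lemma~\ref{lem:aux:aux} with $\alpha=1$ to pass from $\|g_\phi\|_{L^\infty}\leq\epsilon$ to $\|g_\phi'\|_{L^\infty}\leq\epsilon^{1/4}$; taking square roots gives the $\epsilon^{1/8}$. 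Their good set is $\Lambda_{\epsilon,1}^c$, defined implicitly via a moment bound on $\sup_{\|\phi\|=1}\|g_\phi''\|_{C^0}$, which in turn is controlled by $\|\KK_{t,T}\|$ and $\|\partial_t\KK_{t,T}\|_{H^{-2}}$ through \eqref{eq:back:lin:est:1} and \eqref{eq:lin:smooth}. You instead work directly with $f(t)=\langle\KK_{t,T}\phi,\sigma_k^l\rangle$, interpolate its $L^2_t$-bound against a pointwise bound on $f'=\langle\KK_{t,T}\phi,Y_k^l(U)\rangle$, and choose an explicit good set via operator-norm thresholds on $\KK$ and $\|U\|$. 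Your argument is more self-contained for this particular lemma; the paper's has the advantage of reusing the same Lemma~\ref{lem:aux:aux} template throughout Lemmas~\ref{lem:de:to:om:creation:step}--\ref{lem:de:to:de:creation:step}.

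One small point: the inequality $\|f\|_{L^\infty}^3\leq C\|f\|_{L^2}^2\|f'\|_{L^\infty}$ is false as stated (take $f$ constant). What your parenthetical reasoning actually yields is the dichotomy: either $\|f\|_{L^\infty}\leq C(T)\|f\|_{L^2}$ (when the $\min$ equals $T/2$) or $\|f\|_{L^\infty}^3\leq C\|f\|_{L^2}^2\|f'\|_{L^\infty}$. Both branches give the desired bound, and you should phrase it as such.
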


\begin{proof}
For any $\phi \in H$ with $\|\phi\| = 1$, define
 \begin{align*}
     \gphi (t) := \sum_{\substack{k \in \ZZZ\\ l \in\{0,1\}}} (\alpha_{k}^l)^2 \int_0^t \langle \sigma_{k}^{l}, \KK_{r,T} \phi \rangle^2 dr
     \leq \sum_{\substack{k \in \ZZZ\\ l \in\{0,1\}}} (\alpha_{k}^l)^2 \int_0^T \langle \sigma_{k}^{l}, \KK_{r,T} \phi \rangle^2 dr =
     \langle \MM_{0,T} \phi, \phi \rangle,
 \end{align*}
see \eqref{eq:MJ:matrix:back:form}.  Note that
  \begin{align*}
      \gphi'(t) = \sum_{\substack{k \in \ZZZ\\ l \in\{0,1\}}}  (\alpha_{k}^l)^2 \langle \sigma_{k}^{l}, \KK_{t,T} \phi \rangle^2, \quad
      \gphi''(t) = 2 \sum_{\substack{k \in \ZZZ\\ l \in\{0,1\}}} (\alpha_{k}^l)^2 \langle \sigma_{k}^{l}, \KK_{t,T} \phi \rangle \langle \sigma_{k}^{l}, \pd_{t}\KK_{t,T} \phi \rangle.
 \end{align*}
Let $\Omega_{\epsilon, \MM} := \Lambda_{\epsilon, 1}^c$, where $\Lambda_{\epsilon, \alpha}$ is as in \eqref{eq:def:Lambda}
with $\mathcal{I} := \{ \phi \in H: \| \phi\| =1\}$.
 Then by Lemma \ref{lem:aux:aux} with $\alpha = 1$, \eqref{eq:back:lin:est:1}, and \eqref{eq:lin:smooth}  one has
\begin{align*}
\Prb (\Omega_{\epsilon, \MM}^c) &\leq
C \epsilon \sum_{\substack{k \in \ZZZ \\ l \in\{0,1\}}} (\alpha_k^l)^4
\E \left(\sup_{\substack{t \in [T/2, T]\\ \|\phi\| = 1} } \left| \langle \sigma_{k}^{l}, \KK_{t,T} \phi \rangle
\langle \sigma_{k}^{l}, \pd_{t}\KK_{t,T} \phi \rangle \right|^{2} \right)
 \leq C \exp (\eta \|U_0\|^2) \epsilon
\end{align*}
for any $\epsilon < \epsilon_0 = \epsilon_0 (T)$, where $C = C(\eta, T)$.
Finally, on $\Omega_{\epsilon, \MM}$ we have, cf. \eqref{eq:trivial:obs:237}, that
\begin{align*}
\langle \MM_{0,T} \phi, \phi \rangle \leq \epsilon \|\phi\|^{2}
  \quad \Rightarrow \quad
  \sup_{t \in [T/2, T]} |\alpha_{k, l}| | \langle \KK_{t, T} \phi, \sigma_{j}^{l}\rangle |
  \leq \epsilon^{1/2} \|\phi\|,
\end{align*}
for each $k\in\ZZZ$, $l\in\{0,1\}$ and any $\phi \in H$. Since $\alpha_{k}^l \neq 0$, the assertion of the lemma
follows for $\epsilon \leq \epsilon_0(T)$.
\end{proof}

We next turn to implications of the form $\sigma \to [F, \sigma] =Y$; see Figure~\ref{fig:brak:2}.

\begin{Lem}
\label{lem:de:to:om:creation:step}
Fix any $j\in\ZZ_{+}^{2}$.  For each $0<\epsilon < \epsilon_0(T)$ and $\eta > 0$ there exist a set
${\Omega_{\epsilon,j,Y}}$  and $C = C(\eta, T)$ with
\begin{align*}
  \Prb({\Omega_{\epsilon,j,Y}^{c}}) \leq C|j|^{8}\exp(\eta \|U_0\|^2)\epsilon,
\end{align*}
such that on the set $\Omega_{\epsilon,j, Y}$, for each $m\in\{0,1\}$, it holds that
\begin{align}\label{eq:si:imp:Y}
 \sup_{t \in [T/2, T]} | \langle \KK_{t, T} \phi, \sigma_{j}^{m}\rangle | \leq \epsilon \|\phi\|
  \quad \Rightarrow \quad
  \sup_{t \in [T/2, T]} | \langle \KK_{t, T} \phi,Y_j^m(U)\rangle | \leq \epsilon^{1/4} \|\phi\|.
\end{align}
\end{Lem}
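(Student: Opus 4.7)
The plan is to follow the template of the proof of Lemma~\ref{lem:zeroth:step}. For each fixed $m\in\{0,1\}$, set $\mathcal{I}:=\{\phi\in H:\|\phi\|=1\}$ and
\begin{align*}
g_\phi(t):=\langle \KK_{t,T}\phi,\sigma_j^m\rangle,
\end{align*}
and apply Lemma~\ref{lem:aux:aux} with $\alpha=1$ to convert the smallness hypothesis of \eqref{eq:si:imp:Y} into the corresponding smallness of $g_\phi'$. Since $\sigma_j^m$ is a constant vector field, \eqref{eq:gen:der} gives
\begin{align*}
g_\phi'(t)=\langle \KK_{t,T}\phi,[F(U),\sigma_j^m]\rangle=\langle \KK_{t,T}\phi,Y_j^m(U(t))\rangle,
\end{align*}
so setting $\Omega_{\epsilon,j,Y}$ equal to the intersection over $m\in\{0,1\}$ of $\Lambda_{\epsilon,1}^c$ and rescaling a general $\phi\in H$ to $\phi/\|\phi\|$ gives \eqref{eq:si:imp:Y} by Remark~\ref{rmk:sets:give:monster:a:headache}. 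Since the choice $\alpha=1$ yields the required exponent $\alpha/(2(1+\alpha))=1/4$, the task reduces to establishing uniform $C^{1,1}([T/2,T])$-regularity of $g_\phi$ together with a moment bound of the form $\E\sup_{\phi\in\mathcal{I}}\|g_\phi\|_{C^{1,1}}^{2}\leq C|j|^{8}\exp(\eta\|U_0\|^2)$.

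The crucial structural observation that makes the choice $\alpha=1$ viable is that $\sigma_\theta W$ has trivial $\omega$-component, so by \eqref{eq:B:extra:canel} one has $B(\sigma_\theta W,\sigma_j^m)=0$ pathwise. Hence \eqref{eq:YjmU} simplifies to
\begin{align*}
Y_j^m(U)=\nu_2|j|^2\sigma_j^m+(-1)^m g j_1\psi_j^{m+1}+B(\bar{U},\sigma_j^m),
\end{align*}
where $\bar{U}=U-\sigma_\theta W$ solves \eqref{eq:ubar:def:2} and is thus classically differentiable in $t$ with $\partial_t\bar{U}=F(U)$. Since $\KK_{t,T}\phi$ is likewise $C^1$ in $t$, this upgrade from H\"older to Lipschitz regularity gives $g_\phi\in C^{1,1}$ a.s.\ with
\begin{align*}
g_\phi''(t)=\langle \partial_t\KK_{t,T}\phi,Y_j^m(U)\rangle+\langle \KK_{t,T}\phi,B(F(U),\sigma_j^m)\rangle.
\end{align*}
Without the cancellation $B(\sigma_\theta W,\sigma_j^m)=0$, $g_\phi'$ would inherit only the H\"older $1/2-$ regularity of $U$, and Lemma~\ref{lem:aux:aux} with $\alpha<1/2$ would yield only $\epsilon^{1/6}$ rather than the required $\epsilon^{1/4}$.

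It then remains to quantify $\|g_\phi\|_{C^{1,1}}$ with the correct $|j|$-dependence. The Biot--Savart inequality $\|B(V,\sigma_j^m)\|\leq C|j|\|V\|$, together with its $H^2$-counterpart, yields $\|Y_j^m(U)\|_{H^2}\leq C|j|^4(1+\|U\|_{H^2})$ and $\|B(F(U),\sigma_j^m)\|\leq C|j|(1+\|U\|_{H^3})$. Pairing $\partial_t\KK_{t,T}\phi\in H^{-2}$ with $Y_j^m\in H^2$, and combining with the backward semigroup bounds \eqref{eq:back:lin:est:1}, \eqref{eq:lin:smooth} and the exponential moment estimates of Appendix~\ref{sec:moment:est}, yields the desired bound on $\E\sup_{\phi\in\mathcal{I}}\|g_\phi\|_{C^{1,1}}^2$. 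Inserting this into Lemma~\ref{lem:aux:aux} produces $\Prb(\Omega_{\epsilon,j,Y}^c)\leq C|j|^8\exp(\eta\|U_0\|^2)\epsilon$ for all $\epsilon<\epsilon_0(T)$. The main obstacle I anticipate is the careful bookkeeping of $|j|$-powers so that the polynomial factor in the moment estimate matches the claimed $|j|^8$; this requires a judicious allocation of Sobolev dual-pairing exponents between $\partial_t\KK_{t,T}\phi$ and the various terms of $Y_j^m$ and $\partial_t Y_j^m$.
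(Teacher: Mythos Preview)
Your proposal is correct and follows essentially the same route as the paper: the same cancellation $Y_j^m(U)=Y_j^m(\bar U)$, the same choice $\alpha=1$ in Lemma~\ref{lem:aux:aux}, and the same moment bound leading to the $|j|^8$ factor. The only cosmetic difference is that the paper recognizes $g_\phi''=\langle\KK_{t,T}\phi,[F(U),Y_j^m(\bar U)]\rangle=-\langle\KK_{t,T}\phi,Z_j^m(U)\rangle$ via Lemma~\ref{lem:suit:reg} and then invokes the single estimate $\|Z_j^m(U)\|\leq C|j|^4(1+\|U\|_{H^2}^2)$, whereas you split $g_\phi''$ by the product rule and handle the $\partial_t\KK_{t,T}\phi$ piece through the $H^{-2}/H^2$ pairing; both computations are equivalent and yield the same power of $|j|$.
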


\begin{proof}
By expanding $U = \Ushft + \sigma W$, and using \eqref{eq:B:extra:canel}, we observe that
\begin{align}
Y_j^m(U) = Y_j^m(\Ushft).
\label{eq:noise:can:eq:1}
\end{align}
Then for fixed $m\in\{0,1\}$ and any $\phi \in \mathcal{I} := \{ \phi \in H : \|\phi\| = 1\}$ define
$\gphi (t) :=  \langle \KK_{t, T} \phi, \sigma_{j}^{m}\rangle$
and observe by \eqref{eq:gen:der} and \eqref{eq:noise:can:eq:1} that
$\gphi'(t) =  \langle \KK_{t, T} \phi, [F(U),\sigma_{j}^{m}]\rangle
 =  \langle \KK_{t, T} \phi, Y_{j}^{m}(U)\rangle
 = \langle \KK_{t, T} \phi, Y_{j}^{m}(\bar{U})\rangle$.
Let $\Omega_{\epsilon,j, Y} := \Lambda_{\epsilon, 1}^c$ with $\Lambda_{\epsilon, 1}$ as in \eqref{eq:def:Lambda}.
Once again, with \eqref{eq:trivial:obs:237}, we see that \eqref{eq:si:imp:Y} holds on $\Omega_{\epsilon,j, Y}$.
On the other hand, by \eqref{eq:HM:wrapper:thm:conclusion}, \eqref{eq:suit:reg1}, \eqref{eq:noise:can:eq:1}, \eqref{eq:zjm}, and
\eqref{eq:smoothing:est}, we have
  \begin{align*}
      \Prb( \Omega_{\epsilon,j, Y}^c) &\leq  C \epsilon
      \E \left( \sup_{\phi \in \mathcal{I}} \sup_{t\in[T/2,T]} |\partial_t \langle \KK_{t, T} \phi, Y_j^m (\bar{U}) \rangle|^2 \right)
	 \leq  C\epsilon \exp\left( \frac{\eta}{2}  \|U_0\|^2\right)
    \E \sup_{t\in [T/2,T]}\|Z_{j}^{m}(U)\|^{4}
    \notag\\
    &\leq  C\epsilon |j|^{8}\exp\left( \frac{\eta}{2} \|U_0\|^2\right)
    \E \left(1+\sup_{t\in[T/2,T]}\|U\|_{H^{2}}^8\right)
   \leq  C\epsilon |j|^{8}\exp\left( \eta \|U_0\|^2\right)
   \end{align*}
for any $\epsilon < \epsilon^\ast (T)$,   where $C = C(\eta, T)$.
For the third inequality above we have also used the estimate
 \begin{align}
 \sup_{t\in[T/2,T]}\|Z_{j}^{m}(U)\|_{H^s}\leq C|j|^{4+s} \left(1+\sup_{t\in[T/2,T]}\|U\|_{H^{s+2}}^2\right),
 \label{eq:Z:newbound:1}
 \end{align}
which follows from \eqref{eq:def:zujm} by counting derivatives and applying the H\"{o}lder and Poincar\'{e} inequalities.
\end{proof}

\begin{Rmk}
The constants in the exponents of $|j|$ and $\epsilon$ in the forthcoming Lemmas~\ref{lem:om:to:de:creation:step}, \ref{lem:de:to:de:creation:step}
rapidly become; however, there is nothing special about these numbers.  We simply need to track that
in the bounds $|j|$ and $\epsilon$ grow like $|j|^\tau$ and $\epsilon^\kappa$ respectively for some $\kappa, \tau > 0$.
\end{Rmk}

We next establish implications corresponding the chain of brackets $Y \to Z \to [Z, \sigma]$. We refer again to
the  Figure~\ref{fig:brak:2}.

\begin{Lem}
\label{lem:om:to:de:creation:step}
Fix $j\in\ZZ_{+}^{2}$.  For each
$0<\epsilon < \epsilon_0(T)$, and $\eta > 0$
there exist a set ${\Omega_{\epsilon,j,\sigma}}$ and $C = C(\eta, T)$ with
\begin{align}
  \Prb(\Omega_{\epsilon,j,\sigma}^{c}) \leq C |j|^{90\times 6}\exp( \eta \|U_0\|^2) \epsilon,
    \label{eq:good:set:3}
\end{align}
such that on the set $\Omega_{\epsilon,j, \sigma}$, for each $m\in\{0,1\}$, it holds that
\begin{equation*}
 \sup_{t \in [T/2,T]} | \langle \KK_{t, T} \phi, Y_j^m(U)\rangle | \leq \epsilon \|\phi\|
 \quad \Rightarrow \quad
 \begin{cases}
  \sup \limits_{t \in [T/2,T]} |  \langle \KK_{t, T} \phi, Z_j^m(\bar{U})\rangle | \leq \epsilon^{1/30}\|\phi\|,  \notag \\
  \sup\limits_{\substack{k \in \ZZZ\\l \in\{0,1\}}} \sup_{t \in [T/2,T]} |  \langle \KK_{t, T} \phi, [Z_j^m(U),\sigma_{k}^{l}] \rangle |
    \leq \epsilon^{1/60} \|\phi\|.
 \end{cases}
\end{equation*}
\end{Lem}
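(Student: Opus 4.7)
\medskip

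\noindent\textbf{Proof proposal.}  The plan is to combine three ingredients: the time-derivative identity \eqref{eq:gen:der} from Lemma~\ref{lem:suit:reg}, the ``smallness implies derivative smallness'' mechanism of Lemma~\ref{lem:aux:aux}, and the Wiener polynomial decoupling of Theorem~\ref{thm:all:about:my:wiener}.  The two implications of Lemma~\ref{lem:om:to:de:creation:step} will be proved simultaneously: once the derivative of $\langle \KK_{t,T}\phi, Y_j^m(\bar U)\rangle$ is identified as a Wiener polynomial with prescribed coefficients, the vectors $Z_j^m(\bar U)$ and $[Z_j^m(U),\sigma_k^l]$ appear as the constant and linear coefficients of this polynomial.

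First, using the cancellation \eqref{eq:noise:can:eq:1} the hypothesis becomes $\sup_t|\langle \KK_{t,T}\phi,Y_j^m(\bar U)\rangle|\leq \epsilon\|\phi\|$.  Since $Y_j^m$ is affine in $U$, applying \eqref{eq:gen:der} to $E=Y_j^m$ yields
\begin{equation*}
\pd_t\langle \KK_{t,T}\phi,Y_j^m(\bar U)\rangle
= \langle \KK_{t,T}\phi,[F(U),Y_j^m(\bar U)]\rangle
= \langle \KK_{t,T}\phi,Z_j^m(U)\rangle.
\end{equation*}
Next I apply Lemma~\ref{lem:aux:aux} with $\mathcal I=\{\phi: \|\phi\|=1\}$ and a fixed $\alpha\in(0,1]$ to the functions $\gphi(t)=\langle \KK_{t,T}\phi, Y_j^m(\bar U)\rangle$: the relevant $C^{1,\alpha}$ bound is supplied by \eqref{eq:suit:reg2} of Lemma~\ref{lem:suit:reg} together with the deterministic bound \eqref{eq:Z:newbound:1} on $Z_j^m=[F(U),Y_j^m(U)]$ in $H^2$, which introduces a factor $|j|^{4+s}$.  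This yields a set of complement probability $\leq C|j|^{\tau}\exp(\eta\|U_0\|^2)\epsilon$ on which smallness of $\gphi$ propagates to
\begin{equation*}
\sup_{t\in[T/2,T]}\bigl|\langle \KK_{t,T}\phi,Z_j^m(U)\rangle\bigr|\leq \epsilon^{\alpha/(2(1+\alpha))}\|\phi\|.
\end{equation*}

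The second step is the Wiener expansion of $Z_j^m(U)$ around $\bar U$.  Because $Z_j^m$ is polynomial of degree two in $U$ and $B(\sigma_\theta W,\,\cdot\,)\equiv 0$ by \eqref{eq:B:extra:canel}, every quadratic-in-$U$ term of $Z_j^m$ (namely $-B(B(U),\sigma_j^m)$ and $B(U,B(U,\sigma_j^m))$) satisfies $Z^{\mathrm{quad}}_j(U)=Z^{\mathrm{quad}}_j(\bar U)$.  The only $W$-contribution comes from the linear-in-$U$ terms and its coefficient is exactly the constant vector field provided by Section~\ref{sec:Hormander:brak}:
\begin{equation*}
Z_j^m(U)
=Z_j^m(\bar U)-\sum_{k\in\ZZZ,\,l\in\{0,1\}}\alpha_k^l\,[Z_j^m,\sigma_k^l]\,W^{k,l}(t),
\end{equation*}
with $[Z_j^m,\sigma_k^l]$ given by \eqref{eq:Z:sig:form} and \emph{independent of $U$}.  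Thus $\langle \KK_{t,T}\phi,Z_j^m(U)\rangle$ is a Wiener polynomial in $\mathfrak P_M$ with $M=1$, constant coefficient $A_0=\langle \KK_{t,T}\phi,Z_j^m(\bar U)\rangle$, and linear coefficients $A_{k,l}=-\alpha_k^l\langle\KK_{t,T}\phi,[Z_j^m,\sigma_k^l]\rangle$.

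I then apply Theorem~\ref{thm:all:about:my:wiener} with $M=1$ and $\beta=\alpha/(2(1+\alpha))$.  The conclusion is that, on a set of complement probability $\leq C\epsilon$ and \emph{outside} an exceptional event where the Lipschitz norms of the $A_{\cdot}$'s blow up, smallness of the Wiener polynomial at rate $\epsilon^\beta$ forces smallness of every coefficient at rate $\epsilon^{\beta/3}$.  Choosing $\alpha$ appropriately (e.g.\ $\alpha=1$ gives $\beta/3=1/12$; a slightly sharper choice yields the claimed $1/30$ and $1/60$) gives the two stated bounds.  The Lipschitz bounds on the $A_{\cdot}$'s are controlled through \eqref{eq:back:lin:est:1}--\eqref{eq:lin:smooth} applied to $\KK_{t,T}$ together with the $H^2$ estimate \eqref{eq:Z:newbound:1} and smoothing bound \eqref{eq:smoothing:est}; each such factor contributes a polynomial in $|j|$, and careful multiplication of these bounds together with the moment bounds of Appendix~\ref{sec:moment:est} yields the overall growth $|j|^{90\cdot 6}$ in the probability estimate \eqref{eq:good:set:3}.

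The main obstacle is the bookkeeping in the last paragraph: verifying the $C^1$ (Lipschitz) regularity assumption of Theorem~\ref{thm:all:about:my:wiener} uniformly in $\phi\in\mathcal I$ requires quantifying $C^\alpha$ time regularity of $\KK_{t,T}\phi$, $\bar U(t)$, and of the bracket expressions $Z_j^m(\bar U)$ and $[Z_j^m,\sigma_k^l]$ in spaces such as $H^{-2}$ and $H^2$, and then tracking how each of these estimates degrades both in $|j|$ and in the Markov exponent $\eta$.  Once these moment bounds are collected, the lemma follows by intersecting the ``good'' sets produced by Lemma~\ref{lem:aux:aux} and Theorem~\ref{thm:all:about:my:wiener}.
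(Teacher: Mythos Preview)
Your proposal follows essentially the same route as the paper: apply Lemma~\ref{lem:aux:aux} to $\gphi(t)=\langle\KK_{t,T}\phi,Y_j^m(\bar U)\rangle$ to pass to smallness of $\langle\KK_{t,T}\phi,Z_j^m(U)\rangle$, expand $Z_j^m(U)$ as a degree-one Wiener polynomial in $W$, invoke Theorem~\ref{thm:all:about:my:wiener} to isolate the coefficients $Z_j^m(\bar U)$ and $[Z_j^m,\sigma_k^l]$, and control the exceptional Lipschitz event by Markov's inequality and the moment bounds of Appendix~\ref{sec:moment:est}.

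One point deserves correction. You suggest $\alpha=1$ in Lemma~\ref{lem:aux:aux} and say ``a slightly sharper choice yields the claimed $1/30$.'' In fact $\alpha=1$ is \emph{not} available here: the derivative $\gphi'(t)=\langle\KK_{t,T}\phi,Z_j^m(U(t))\rangle$ depends on $U(t)$ (not only $\bar U(t)$), and $U$ is merely H\"older continuous in time because of the Brownian increments. The bound \eqref{eq:suit:reg2} that you correctly cite requires $\|Z_j^m(U)\|_{C^\alpha H}$, which via \eqref{eq:smoothing:est:2} is only controlled for $\alpha\leq 1/4$. The paper therefore takes $\alpha=1/4$, giving $\beta=\alpha/(2(1+\alpha))=1/10$ and hence coefficient smallness at rate $\epsilon^{\beta/3}=\epsilon^{1/30}$; the weaker exponent $1/60$ in the second conclusion simply absorbs the fixed nonzero constants $|\alpha_k^l|^{-1}$ when passing from $|\alpha_k^l|\cdot|\langle\KK_{t,T}\phi,[Z_j^m,\sigma_k^l]\rangle|\leq\epsilon^{1/30}$ to the stated bound, for $\epsilon<\epsilon_0(T)$. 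With this correction your argument is complete and matches the paper's proof.
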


\begin{proof}
In the course of the proof we suppress, for the sake of brevity,
the subscript $\sigma$ in the definition of
various sets leading to $\Omega_{\epsilon,j, \sigma}$.
For fixed $m\in\{0,1\}$ and $\phi \in H$ let $\gphi(t):=  \langle \KK_{t, T} \phi, Y_j^m({U})\rangle = \langle \KK_{t, T} \phi, Y_j^m({\bar{U}})\rangle$ (cf. \eqref{eq:noise:can:eq:1})
so that $\gphi'(t) = \langle \KK_{t,T}\phi, [Y_j^m({\bar{U}}), F(U)]\rangle = -\langle \KK_{t,T}\phi, Z_j^m(U)\rangle$ (see \eqref{eq:gen:der}, \eqref{eq:zjm}).
Let
$\Omega^{1}_{\epsilon,j} = \Lambda_{\epsilon, 1/4}^c$, where $\Lambda_{\epsilon, \alpha}^c$ is as in \eqref{eq:def:Lambda} over
with $\mathcal{I} := \{ \phi \in H: \|\phi\| =1\}$. Then, on
$\Omega^{1}_{\epsilon,j}$  one has, in view of \eqref{eq:trivial:obs:237},
\begin{equation} \label{eq:name2}
 \sup_{t \in [T/2,T]} | \langle \KK_{t, T} \phi, Y_j^m(U)\rangle | \leq \epsilon \|\phi\| \quad
 \Rightarrow \quad \sup_{t \in [T/2,T]} |  \langle \KK_{t, T} \phi, Z_j^m(U)\rangle |
  \leq \epsilon^{1/10}\|\phi\| \,.
\end{equation}
By Lemma \ref{lem:aux:aux} with $\alpha = 1/4$ and \eqref{eq:suit:reg2}, \eqref{eq:smoothing:est},
\eqref{eq:smoothing:est:2} we have
\begin{align}
\Prb ((\Omega^{1}_{\epsilon,j})^c) &\leq C \epsilon \E \left(\sup_{\phi \in \mathcal{I}}\|g'\|_{C^{1/4}}^8 \right)\notag \\
&\leq C\epsilon\exp(\frac{\eta}{2} \|U_0\|^2)
\Big[ \Big(\E \sup_{t\in[T/2,T]}\|Z_j^m(U)\|_{H^2}^{16} \Big)^{1/2}
     + \Big(\E \|Z^m_j(U)\|_{C^{1/4}H}^{16}\Big)^{1/2}\Big] \notag \\
        & \leq C\epsilon|j|^{48}\exp(\frac{\eta}{2} \|U_0\|^2)
    \bigg[\Big(\E\big(1 + \sup_{t\in[T/2,T]}\|U\|_{H^{4}}^{32} \big)\Big)^{1/2}
    \notag \\ &\quad\quad\quad\quad\quad\quad\quad\quad\quad \quad
    + \Big(\E\Big(\|U\|_{C^{1/4} H^{2}}^{16}
    \big(1 + \sup_{t\in[T/2,T]}\|U\|_{H^{2}}^{16} \big)
    \Big)\Big)^{1/2}\bigg]
    \notag \\
    & \leq C\epsilon|j|^{48}\exp(\eta \|U_0\|^2) \,,
    \label{eq:est:Om:1}
\end{align}
where $C = C(\eta, T)$ and we used
 the bilinearity of $Z$ with estimates like those leading to \eqref{eq:Z:newbound:1}.
Next, by expanding $U=\bar{U} + \sigma W$ we find
\begin{align}
  Z_j^{m}(U) = Z_j^{m}(\bar{U}) -
  \sum_{\substack{k\in \ZZZ\\ l \in\{0,1\}}}  {\alpha_{k}^l} [Z_j^m(U), \sigma_k^l] W^{k,l}.
  \label{eq:ZtoZbar}
\end{align}
In view of \eqref{eq:the:totalmiracle},  all of the second order terms in \eqref{eq:ZtoZbar}
of the form $[[Z(U),\sigma_k^l], \sigma_{k'}^{l'}]W^{k,l}W^{k',l'}$ are zero and
$[Z_j^m(U), \sigma_k^l] = [Z_j^m(\bar{U}), \sigma_k^l]$.

To estimate each of the terms in \eqref{eq:ZtoZbar}, we introduce for $s \in \{0,1\}$, $\phi \in H$
\begin{equation*}
{\mathcal{N}}_s(\phi) :=
\max_{k\in\ZZZ,l\in\{0,1\}} \left\{
\left\|\langle \KK_{t, T} \phi, Z_j^m(\bar{U})\rangle
\right\|_{C^s},
|\alpha_{k,l}|\left\|\langle \KK_{t, T} \phi, [Z_j^m(U),\sigma_{k}^{l}] \rangle \right\|_{C^s}
 \right\}.
\end{equation*}
By Theorem~\ref{thm:all:about:my:wiener}, there exists a set $\Omega_{\epsilon}^\sharp$ such that
$\Prb((\Omega_{\epsilon}^\sharp)^c) < C \epsilon,$
and on $\Omega_{\epsilon}^\sharp$
\begin{align}\label{eq:dich}
\sup_{t \in [T/2,T]} |  \langle \KK_{t, T} \phi, Z_j^m(U)\rangle| \leq \epsilon^{1/10}
\quad
\implies
\quad
\left\{
\begin{array}{rl}
\it{either} &
{\mathcal{N}}_0(\phi) \leq \epsilon^{1/30}, \\
 \it{or} &
{\mathcal{N}}_1(\phi) \geq \epsilon^{-1/90}.
\end{array}
\right.
\end{align}
Recalling that $\mathcal{I} = \{ \phi \in H: \| \phi\| =1\}$, let
\begin{align*}
\Omega_{\epsilon,j}^2 :=
 \bigcap_{\phi \in \mathcal{I}}\{ {\mathcal{N}}_0(\phi) < \epsilon^{1/30}\}\cap \Omega_{\epsilon, j}^\sharp.
\end{align*}
By \eqref{eq:name2}
on the set $\Omega_{\epsilon,j,\sigma} := \Omega_{\epsilon,j}^1 \cap \Omega_{\epsilon,j}^2$ we obtain
the desired conclusion for each $\epsilon < \epsilon_0(T)$.
Thus it remains to estimate the size of $\Omega_{\epsilon,j,\sigma}^c$. By \eqref{eq:dich}, \eqref{eq:est:Om:1},
and the Markov inequality we have
\begin{align}
\Prb (\Omega_{\epsilon,j,\sigma}^c) &\leq \Prb((\Omega_{\epsilon,j}^1)^c) + \Prb((\Omega_{\epsilon, j}^\sharp)^c)
+ \Prb\left( \sup_{\phi \in \mathcal{I}} \mathcal{N}_1(\phi) \geq \epsilon^{-1/90}\right) \notag \\
&\leq C|j|^{48}\exp(\eta \|U_0\|^2) \epsilon + C \epsilon \E\left(\sup_{\phi \in \mathcal{I}} (\mathcal{N}_1(\phi))^{90}\right) \,.
\label{eq:ana:est:2}
\end{align}
However, by \eqref{eq:suit:reg1} and \eqref{eq:smoothing:est} along with further estimates along the lines leading to  \eqref{eq:Z:newbound:1} we have
\begin{align}
\E \left\|\langle \KK_{t, T} \phi, Z_j^m(\bar{U})\rangle
\right\|_{C^1([T/2,T];\mathbb{R})}^{90} &\leq C \exp(\eta/2 \|U_0\|^2)
\left(\E \sup_{t \in[T/2, T]} \|[Z_j^m(\bar{U}), F(U)]\|^{180} \right)^{1/2} \notag \\
&\leq C \exp(\eta/2 \|U_0\|^2) |j|^{90\times 6} \left(\E(1 + \|U\|_{H^{4}}^{3\times 180})\right)^{1/2} \notag \\
&\leq C \exp(\eta \|U_0\|^2) |j|^{90\times 6}\,,
\label{eq:ana:est}
\end{align}
where $C = C(\eta, T)$.
Finally, due to \eqref{eq:the:totalmiracle} and similar applications of  \eqref{eq:suit:reg1} and \eqref{eq:smoothing:est} the estimate
\begin{equation}
\E \left\|\langle \KK_{t, T} \phi, [Z_j^m(U),\sigma_{k}^{l}] \rangle \right\|_{C^1([T/2,T];\mathbb{R})}^{90}
\leq C \exp(\eta \|U_0\|^2) |j|^{90\times 2}
\label{eq:ana:est:3}
\end{equation}
follows. By combining \eqref{eq:ana:est:2}-\eqref{eq:ana:est:3} we obtain \eqref{eq:good:set:3}, and the proof is complete.
\end{proof}

The final lemma of this section corresponds to brackets of the form $Y \to Z \to [Z, Y]$.
For fixed $j\in\ZZ_{+}^{2}$, define $\ZZZ_{j}$ as the union of $j$ with the set of points in $\ZZ_{+}^{2}$ adjacent to $j$,  that is,
$$\ZZZ_{j}:= \{k\in \ZZ_{+}^{2}:k=j\pm m \ \text{for some}
\ m\in\{0\}\cup \ZZZ\}.$$

\begin{Lem}
\label{lem:de:to:de:creation:step}
Fix $j\in\ZZ_{+}^{2}$.  For each $0<\epsilon < \epsilon_0(T)|j|^{-2}$ and $\eta > 0$
there exists $C = C(\eta, T)$ and a measurable set $\Omega_{\epsilon,j,Q}$ with
\begin{align}
  \Prb((\Omega_{\epsilon,j,Q})^{c}) \leq C |j|^{14\times 5400}\exp(\eta \|U_0\|^2)
  \epsilon,
    \label{eq:good:set:4}
\end{align}
such that on the set $\Omega_{\epsilon,j,Q}$ it holds that, for every $\phi \in H$,
\begin{align}
&\sum_{\substack{i\in\ZZZ_{j}\\m\in\{0, 1\}}}\sup_{t \in [T/2,T]} | \langle \KK_{t, T} \phi, Y_i^m(U)\rangle | \leq \epsilon \|\phi\| \label{eq:assu} \\
  &\quad \quad \quad \Rightarrow
  \sum_{\substack{k\in\ZZZ\\ m,l\in\{0,1\}}}\sup_{t \in [T/2,T]} | \langle \KK_{t, T} \phi,[Z_j^m(U),Y_{k}^{l}(U)]\rangle | \leq \epsilon^{1/3600} \|\phi\|.
  \notag
\end{align}
\end{Lem}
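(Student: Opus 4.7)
The plan is to promote the Wiener-polynomial strategy of Lemma~\ref{lem:om:to:de:creation:step} one step further along the bracket chain depicted in Figure~\ref{fig:brak:1}. First, I would apply Lemma~\ref{lem:om:to:de:creation:step} to every $i\in \ZZZ_j$ (in particular to $i=j$), thereby converting the hypothesis $\sup_t|\langle \KK_{t,T}\phi, Y_i^m(U)\rangle|\le \epsilon\|\phi\|$ into the intermediate bound $\sup_t|\langle \KK_{t,T}\phi, Z_j^m(\bar U)\rangle|\le \epsilon^{1/30}\|\phi\|$, valid on the intersection of finitely many ``good'' sets whose joint complement has probability at most $C|j|^{\text{poly}}\exp(\eta\|U_0\|^2)\epsilon$.

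With $q_\phi(t) := \langle \KK_{t,T}\phi, Z_j^m(\bar U(t))\rangle$ now small, I would differentiate in time via Lemma~\ref{lem:suit:reg} to obtain $q_\phi'(t) = \langle \KK_{t,T}\phi, [F(U), Z_j^m(\bar U)]\rangle$. The central algebraic identity, proved by expanding $F(U) = F(\bar U) - A(\sigma W) - B(\bar U, \sigma W) + G(\sigma W)$ (using $B(\sigma W,\cdot)\equiv 0$), recognizing $-A\sigma_k^l - B(\bar U, \sigma_k^l) + G\sigma_k^l = -Y_k^l(\bar U)$, and using that $[Z_j^m, \sigma_{k'}^{l'}]$ is $U$-independent and purely in the $\theta$-component (so that $\nabla Z_j^m(U)$ and $\nabla Z_j^m(\bar U)$ coincide as linear operators), is that
\begin{equation*}
  [F(U), Z_j^m(\bar U)] \;=\; [F, Z_j^m](\bar U) \;+\; \sum_{k \in \ZZZ,\, l \in \{0,1\}} \alpha_k^l\, W^{k,l}\, [Z_j^m, Y_k^l](\bar U).
\end{equation*}
A parallel short computation then yields the cancellation identity $[Z_j^m(U), Y_k^l(U)] = [Z_j^m, Y_k^l](\bar U)$ (again because the $W$-correction $[Z_j^m,\sigma_{k'}^{l'}]\,W^{k',l'}$ is in the $\theta$-component and vanishes against $B(\cdot,\sigma_k^l)$), so the Wiener coefficients of the polynomial above are exactly the vectors whose smallness (when tested against $\KK_{t,T}\phi$) yields the conclusion of the lemma.

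The remaining steps are structurally identical to the corresponding parts of Lemma~\ref{lem:om:to:de:creation:step}. Lemma~\ref{lem:aux:aux} promotes $\epsilon^{1/30}$-smallness of $q_\phi$ to $\epsilon^c$-smallness of $q_\phi'$, after which Theorem~\ref{thm:all:about:my:wiener} applied to the degree-one Wiener polynomial displayed above forces either every coefficient $\langle \KK_{t,T}\phi, [F,Z_j^m](\bar U)\rangle$ and $\alpha_k^l\langle \KK_{t,T}\phi, [Z_j^m,Y_k^l](\bar U)\rangle$ to be small, or the Lipschitz-in-$t$ norm of some coefficient to blow up; the latter alternative is excluded on a large-probability set via moment bounds from Lemma~\ref{lem:exp:moments} together with derivative estimates in the spirit of \eqref{eq:Z:newbound:1} and \eqref{eq:ana:est}--\eqref{eq:ana:est:3}. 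The main obstacle, and the source of both the power $|j|^{14\times 5400}$ and the exponent $1/3600$, is the sheer bookkeeping: one must compound the rate loss from Lemma~\ref{lem:aux:aux} with the $3^{-M}$ degradation of Theorem~\ref{thm:all:about:my:wiener}, all while tracking the growth in $|j|$ carried through the coefficients of $Z_j^m$ (cubic in $U$, with derivatives producing $|j|^{O(1)}$ factors from $\sigma_j^m$, $\psi_j^m$, and repeated applications of $\nabla F$ in the H\"older estimates for $[F,Z_j^m](\bar U)$ and $[Z_j^m,Y_k^l](\bar U)$).
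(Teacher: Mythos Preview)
Your approach is correct and in fact a bit more direct than the paper's. The paper organizes the argument around the Jacobi identity
\[
[Z_j^m(U),Y_k^l(U)] = [[Z_j^m(U),F(U)],\sigma_k^l] - [[Z_j^m(U),\sigma_k^l],F(U)],
\]
handling the second summand deterministically via the hypothesis on $Y_{j\pm k}^{m'}$ (since $[Z_j^m,\sigma_k^l]$ is a linear combination of $\sigma_{j\pm k}$'s), and for the first summand proceeding as you do: differentiate $\langle \KK_{t,T}\phi, Z_j^m(\bar U)\rangle$, then convert $[Z_j^m(\bar U),F(U)]$ to $[Z_j^m(U),F(U)]$ (again using the $Y_{j\pm k}$ hypothesis to control the correction terms), and finally Wiener-expand $[Z_j^m(U),F(U)]$ to extract $[[Z_j^m,F],\sigma_k^l]$.

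You bypass both the Jacobi split and the intermediate $\bar U \to U$ conversion by observing that the generalized bracket $[F(U),Z_j^m(\bar U)]$ is \emph{already} a degree-one Wiener polynomial with exactly $[Z_j^m,Y_k^l](\bar U)$ as its linear coefficients; your identity $F(U)=F(\bar U)-\sum_{k,l}\alpha_k^l W^{k,l}\,Y_k^l(\bar U)$ together with $\nabla^2 Z_j^m(\sigma_{k'}^{l'},\cdot)=0$ (which is what ``$\nabla Z_j^m(U)=\nabla Z_j^m(\bar U)$'' encodes, and which follows by symmetry from the $U$-independence of $[Z_j^m,\sigma_{k'}^{l'}]$) makes this precise. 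One pleasant consequence is that your argument uses the hypothesis \eqref{eq:assu} only at $i=j$; the neighboring indices $i=j\pm k$ are not needed, so you actually prove a slightly stronger statement. The exponent and $|j|$-power bookkeeping is the same in spirit, and the Lipschitz control of the Wiener coefficients $\langle \KK_{t,T}\phi,[F,Z_j^m](\bar U)\rangle$ and $\langle \KK_{t,T}\phi,[Z_j^m,Y_k^l](\bar U)\rangle$ goes through via Lemma~\ref{lem:suit:reg} and \eqref{eq:smoothing:est} exactly as in \eqref{eq:ana:est}--\eqref{eq:ana:est:3}.
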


\begin{proof}
By \eqref{eq:Z:Jac} it suffices to find $\Omega_{\epsilon,j,Q}$
satisfying \eqref{eq:good:set:4} such that, on $\Omega_{\epsilon,j,Q}$, assuming \eqref{eq:assu}, it follows that
\begin{align} \label{eq:Z:bound:31}
\sum_{\substack{k\in\ZZZ\\ m,l\in\{0,1\}}}\sup_{t \in [T/2,T]} | \langle \KK_{t, T} \phi,[[Z_j^m(U),\sigma_{k}^l],F(U)]\rangle |
   & \leq \epsilon^{1/2}  \,, \\
  \sum_{\substack{k\in\ZZZ\\ m,l\in\{0,1\}}}\sup_{t \in [T/2,T]} | \langle \KK_{t, T} \phi,[[Z_j^m(U),F(U)],\sigma_{k}^l]\rangle |
  & \leq \epsilon^{1/1800}  \,.
   \label{eq:Z:bound:3}
\end{align}
To obtain \eqref{eq:Z:bound:31}, we have by \eqref{eq:Z:sig:form} and the definition of $Y_j^m(U)$, that if \eqref{eq:assu} holds
(note $j \pm k \in \ZZZ_j$ for $k\in\ZZZ$), then
\begin{align}
| \langle \KK_{t, T} \phi,[[Z_j^m(U),\sigma_{k}^l],F(U)]\rangle |
&= |g(j^\perp \cdot k)|| \langle \KK_{t, T} \phi, [a(j, k)\sigma_{j + k}^{m + l + 1} +
(-1)^{l + 1}b(j, k)\sigma_{j - k}^{m + l + 1}, F(U)]\rangle | \notag \\
&\leq C|j| (| \langle \KK_{t, T} \phi, Y_{j + k}^{m + l + 1}(U) \rangle | +
| \langle \KK_{t, T} \phi, Y_{j - k}^{m + l + 1}(U)\rangle| ) \notag \\
&\leq C|j| \epsilon \|\phi\|
\label{eq:Z:bound:5}
\end{align}
and \eqref{eq:Z:bound:31} follows for any $\epsilon < (C|j|)^{-2} $.

It remains to prove that \eqref{eq:assu} implies \eqref{eq:Z:bound:3} on an appropriate set.  By Lemma \ref{lem:om:to:de:creation:step},
 there exists a set $\Omega^{1}_{\epsilon,j}$ satisfying \eqref{eq:good:set:3}
such that on $\Omega^{1}_{\epsilon,j}$, for each $m\in\{0,1\}$, and each $\phi \in \mathcal{I} = \{ \phi \in H : \| \phi\| =1 \}$
\begin{align}
\sup_{t \in [T/2,T]} | \langle \KK_{t, T} \phi, Y_j^m(U)\rangle | \leq \epsilon
  \quad  \Rightarrow \quad
  \sup_{t \in [T/2,T]} |  \langle \KK_{t, T} \phi, Z_j^m(\bar{U})\rangle |  \leq \epsilon^{1/30}\,.
\label{eq:ZbarF:set1}
\end{align}
For fixed $m \in \{0,1\}$, and each $\phi \in \mathcal{I}$, let $\gphi(t):=  \langle \KK_{t, T} \phi, Z_j^m({\bar{U}})\rangle$
so that $\gphi'(t) = \langle \KK_{t,T}\phi , [Z_j^m({\bar{U}}), F(U)]\rangle$ (see \eqref{eq:gen:der}).
Let
$\Omega^{2}_{\epsilon,j} := \Lambda_{\epsilon^{1/30}, 1/4}^c$, where $\Lambda_{\epsilon, \alpha}$ is defined in \eqref{eq:def:Lambda}.
Thus on $\Omega^{3}_{\epsilon,j}:=\Omega^{1}_{\epsilon,j}\cap \Omega^{2}_{\epsilon,j}$ we have (invoking \eqref{eq:ZbarF:set1})
for each $m\in\{0,1\}$, and $\phi \in \mathcal{I}$
\begin{align}
\sup_{t \in [T/2,T]} |  \langle \KK_{t, T} \phi, Y_j^m(U)\rangle |  \leq \epsilon
 \quad
 \Rightarrow
 \quad
 \sup_{t \in [T/2,T]} |  \langle \KK_{t, T} \phi, [Z_j^m(\bar{U}),F(U)]\rangle | \leq \epsilon^{1/300}.
 \label{eq:ZbarU:F}
\end{align}
Similarly as in the proof of Lemma \ref{lem:om:to:de:creation:step},
\begin{align*}
\Prb ((\Omega^{2}_{\epsilon,j})^c) \leq C \epsilon \E \left( \sup_{\phi \in \mathcal{I}} \|g'\|_{C^{1/4}([T/2, T])}^{8\times 30}\right)
\leq C \epsilon |j|^{240 \times 6} \exp (\eta \|U_0\|^2)\,,
\end{align*}
where the last inequality is analogous to estimates in \eqref{eq:ana:est}.
Next, we establish \eqref{eq:ZbarU:F} with $Z_{j}^{m}(\bar{U})$ replaced by $Z_{j}^{m}(U)$, for which we use the expansion \eqref{eq:ZtoZbar}.  Specifically,
as in \eqref{eq:Z:bound:5}
\begin{align}\label{eq:ZbarU:F:2}
\sup_{t \in [T/2, T]}|\langle \KK_{t, T} \phi, [[Z_j^m(\bar{U}), \sigma_k^l],F(U)]\rangle| |W^{k, l}(t)|\leq
C |j| \epsilon \|\phi\| \sup_{t \in [T/2, T]} |W^{k, l}(t)| \,.
\end{align}
Since $\E \|W^{k, l}\|_{L^\infty} < \infty$,
 Markov inequality yields $\Prb ((\Omega^{4}_{\epsilon,k,l})^c) \leq C \epsilon^{1/2}$, with
 $\Omega^{4}_{\epsilon,k,l} := \{\sup_{t \in [T/2, T]} |W^{k, l}(t)| \leq \epsilon^{-1/2}\}$.
By combining \eqref{eq:ZtoZbar}, \eqref{eq:ZbarU:F}, and \eqref{eq:ZbarU:F:2} on
$\Omega^{5}_{\epsilon,j}:=\Omega^{3}_{\epsilon,j}\cap\Omega^{4}_{\epsilon,j}$ for any $\epsilon < \epsilon_0(T)|j|^{-2}$ it holds that
\begin{align*}
\sum_{\substack{i\in\ZZZ_{j}\\m\in\{0,1\}}}&\sup_{t \in [T/2,T]} | \langle \KK_{t, T} \phi, Y_i^m(U)\rangle | \leq \epsilon
 \notag\\
  &\Rightarrow
\sum_{m\in\{0,1\}}\sup_{t \in [T/2,T]} | \langle \KK_{t, T} \phi, [Z_j^m(U),F(U)]\rangle | \leq \epsilon^{1/600}.
\end{align*}
Similarly as in Lemma \ref{lem:de:to:de:creation:step}, we expand $[Z_j^m(U),F(U)]$
with respect to $U=\bar{U}+\sigma W$ and again we use Theorem 7.1 of \cite{HairerMattingly2011}
to establish
\begin{align*}
&\sup_{t \in [T/2,T]} | \langle \KK_{t, T} \phi, [Z_j^m(U),F(U)]\rangle | \leq \epsilon^{1/600} \|\phi\| \notag \\
&\Rightarrow \sup_{l\in\{0,1\}, k\in\ZZZ}\sup_{t\in[T/2,T]}
| \langle \KK_{t, T} \phi, [[Z_j^m(\bar{U}),F(\bar{U})],\sigma_k^l]\rangle | \leq \epsilon^{1/1800} \|\phi\|
\end{align*}
on a set $\Omega_{\epsilon}^6$ satisfying
\begin{align}
   \Prb( (\Omega_\epsilon^6)^{c}) \leq
   C|j|^{14\times 5400}\exp(\eta \|U_0\|^2)\epsilon.
\label{eq:MH:lem:2}
\end{align}
The proof is finished if we set $\Omega_{\epsilon,j, Q}:= \Omega_{\epsilon,j}^{5}\cap \Omega_{\epsilon,j}^{6}$, and note
by a \eqref{eq:lie:bou} and \eqref{eq:Z:sig:form} that
\begin{align*}
[[Z_j^m(\bar{U}),F(\bar{U})],\sigma_{k}^{l}]= [[Z_j^m(U),F(U)],\sigma_{k}^{l}].
\end{align*}
\end{proof}

\subsection{Spanning Sets for $H^N$ from Brackets and Associated Tails}
\label{sec:it:proof:by:con}

With all elements in Figure~\ref{fig:brak:2} now established, we explain how the lemmata
are pieced together to conclude the proof of Proposition~\ref{prop:small:conclusion}.
To simplify the forthcoming calculations we denote by $\kappa$ the power of $\epsilon$, and $\tau$ the power of $|j|$, appearing in the statement of Lemma
\ref{lem:de:to:de:creation:step}, that is, $\kappa = 1/3600$ and $\tau = 14\times 5400$.
Then assertions of Lemmata \ref{lem:zeroth:step} -- \ref{lem:de:to:de:creation:step} are of the form: for each $\phi \in H$ with $\|\phi\| \leq 1$
and for any sufficiently small $\epsilon$ one has
\begin{align*}
A_j(\phi) \leq \epsilon \quad \Rightarrow \quad B_j(\phi) \leq \epsilon^{\kappa}
\end{align*}
on a set $\Omega_\epsilon$ with $\Prb (\Omega_\epsilon^c) \leq C \epsilon |j|^\tau \exp(\eta \|U_0\|^2)$, where $C =C(T)$ and $A_j$, $B_j$ are appropriate functionals.

Denote
\begin{align*}
\mathcal{I}_N := \{j \in \ZZ^{2}_+ : |j_1| + |j_2| \leq N+1\} \setminus \{(0, N+1), (0, N), (N+1, 0), (N,0)\}
\end{align*}
see Figure~\ref{fig:span:arg:1}.  Note that the choice to `delete' the
corners of the triangular set  $\mathcal{I}_{N}$ is to assure that points in $\mathcal{I}_N \setminus \mathcal{I}_{N-1}$
can be reached from points in $\mathcal{I}_{N-1}$ using only moves depicted in Figure~\ref{fig:new:sig:directions}.
\begin{figure}
  \centering
\vspace{-.55in}
 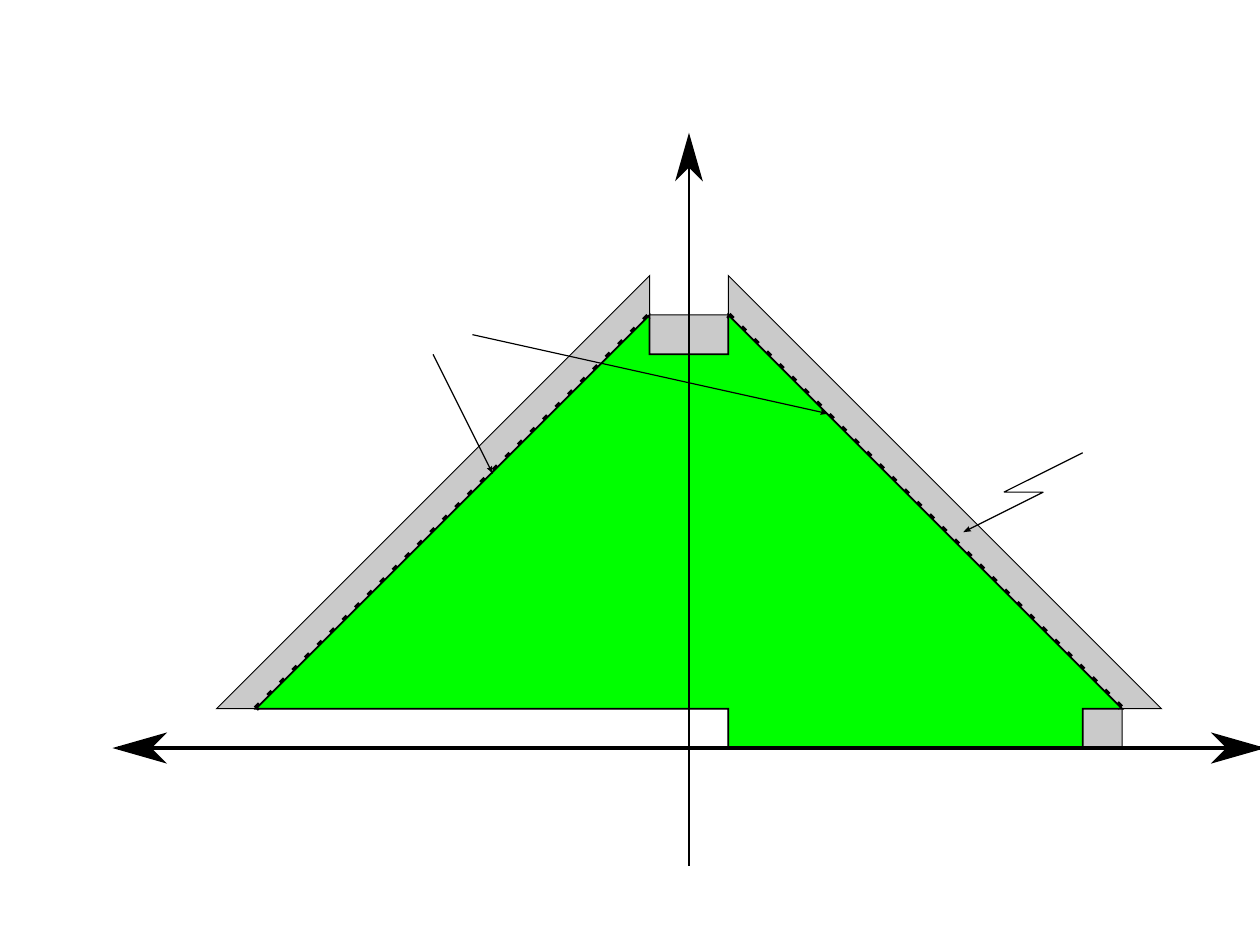
 \vspace{-.35in}
 \caption{An illustration of the sets $I_{N}$ and the associated induction procedure in Lemma~\eqref{lem:small:sum}.}
\label{fig:span:arg:1}
\end{figure}

\begin{Lem}
\label{lem:small:sum}
Let $\kappa$ be as above, and for every $N \geq 0$ denote
\begin{align*}
p_N  := \kappa^{2N + 2} \,.
\end{align*}
Then there exists
\begin{align*}
\epsilon_0 = \epsilon_0(N, T) := C(T) \min \left\{1, \left(\frac{g}{2N^2} \right)^{2/(\kappa p_{N-1})} \right\} \,,
\end{align*}
such that for each
$\epsilon \in (0, \epsilon_0)$, there is a set $\Omega_{\epsilon,N}$ and $C = C(T)$ with
\begin{align*}
  \Prb(\Omega_{\epsilon,N}^c) \leq C N^{\tau + 2} \exp(\eta \|U_0\|^2)\epsilon^{p_N} \,,
\end{align*}
such that on $\Omega_{\epsilon, N}$ for any $\phi \in H$, $j \in \mathcal{I}_{N}$, $m \in\{0,1\}$, one has
\begin{align}
    \langle \MM_{0,T} \phi, \phi \rangle \leq \epsilon \|\phi\|^2& \quad
  \notag\\
 \Rightarrow \quad \sup_{t\in[T/2,T]} &| \langle \KK_{t, T} \phi, \sigma_j^m\rangle| \leq \epsilon^{p_{N} \kappa} \|\phi\|
   \quad \textrm{and}
  \quad
  \sup_{t\in[T/2,T]} | \langle \KK_{t, T} \phi,   Y_j^m(U)\rangle| \leq \epsilon^{p_{N} \kappa} \|\phi\| \,.
  \label{eq:ind}
\end{align}
\end{Lem}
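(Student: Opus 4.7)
The plan is to prove \eqref{eq:ind} by induction on $N$, iterating the upper branch of Figure~\ref{fig:brak:1} (and summarized in Figure~\ref{fig:brak:2}) in the order $Y \to Z \to [Z,\sigma] \to \sigma_{\mathrm{new}} \to Y_{\mathrm{new}}$, and using the geometric/arithmetic content of Proposition~\ref{prop:generatesigma} (illustrated in Figure~\ref{fig:new:sig:directions} and Figure~\ref{fig:span:arg:1}) to pass from $\mathcal{I}_{N-1}$ to $\mathcal{I}_N$. For the base case $N=1$ the set $\mathcal{I}_1$ agrees (up to the excluded corners) with the directly forced modes $\mathcal{Z} = \{(1,0),(0,1)\}$, so it suffices to apply Lemma~\ref{lem:zeroth:step} followed by Lemma~\ref{lem:de:to:om:creation:step}; the resulting exponent $1/32$ is well above $p_1 \kappa = \kappa^{5}$, so the claimed bound holds on the intersection of the two good sets for all $\epsilon < \epsilon_0(1,T)$.

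For the inductive step, assume \eqref{eq:ind} holds on a set $\Omega_{\epsilon, N-1}$ of the claimed measure. For every $j\in \mathcal{I}_{N-1}$ and $m\in\{0,1\}$ the hypothesis of Lemma~\ref{lem:om:to:de:creation:step} is satisfied with $\epsilon$ replaced by $\epsilon^{p_{N-1}\kappa}$, giving on a further event $\Omega_{\epsilon,j,\sigma}$
\begin{align*}
	\sup_{k\in\mathcal{Z},\, l\in\{0,1\}} \sup_{t\in[T/2,T]} \bigl|\langle \mathcal{K}_{t,T}\phi, [Z_j^m(U),\sigma_k^l]\rangle\bigr|
	\leq \epsilon^{p_{N-1}\kappa/60} \|\phi\|.
\end{align*}
For every new index $j' \in \mathcal{I}_N\setminus \mathcal{I}_{N-1}$, write $j' = j\pm k$ with $j\in\mathcal{I}_{N-1}$ and $k\in\mathcal{Z}$ (this is precisely the combinatorial content of the exclusion of the ``corners'' $(N+1,0),(N,0),(0,N+1),(0,N)$ from $\mathcal{I}_N$; see Remark~\ref{rm:spanningsigma}). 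Proposition~\ref{prop:generatesigma} then expresses $\sigma_{j'}^{m'}$ as an explicit linear combination of the brackets $[Z_j^m(U),\sigma_k^l]$ divided by a nonzero scalar of the form $g(j^\perp\cdot k)\, a(j,k)$ or $g(j^\perp\cdot k)\, b(j,k)$. A straightforward check using $|j|\leq N$ shows that, upon the right choice of $k\in\mathcal{Z}$ relative to $j'$, this scalar is bounded below in absolute value by $g/(c_0 N^{2})$ for a universal $c_0$. Consequently
\begin{align*}
	\sup_{t\in[T/2,T]} \bigl|\langle \mathcal{K}_{t,T}\phi, \sigma_{j'}^{m'}\rangle\bigr|
	\leq \frac{c_0 N^2}{g}\, \epsilon^{p_{N-1}\kappa/60}\|\phi\|,
\end{align*}
and the constraint $\epsilon < \epsilon_0(N,T) = C(T)\min\{1,(g/(2N^2))^{2/(\kappa p_{N-1})}\}$ is precisely what is needed so that the right hand side is bounded by $\epsilon^{p_N\kappa}\|\phi\|$ (recalling $p_N\kappa = p_{N-1}\kappa^{3}$ and $\kappa\leq 1/60$). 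Finally, a second application of Lemma~\ref{lem:de:to:om:creation:step}, to each of the newly controlled $\sigma_{j'}^{m'}$, upgrades these bounds to the matching bounds on $Y_{j'}^{m'}(U)$ with an additional exponential loss of at most $1/4$, still comfortably above $\kappa$.

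The probability bound is obtained by a union bound: $\mathcal{I}_N\setminus \mathcal{I}_{N-1}$ contains at most $O(N)$ elements, and for each such $j'$ we intersect $\Omega_{\epsilon,N-1}$ with finitely many events supplied by Lemmas~\ref{lem:om:to:de:creation:step} and \ref{lem:de:to:om:creation:step}; each of these has complement of probability at most $C N^{\tau}\exp(\eta\|U_0\|^2)\epsilon^{p_{N-1}}$. Summing the finite geometric-type contributions over the $N$ stages of the induction produces the claimed bound $CN^{\tau+2}\exp(\eta\|U_0\|^2)\epsilon^{p_N}$, since the worst exponent is always the most recent one $p_N\leq p_{N-1}$.

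I expect the main obstacle to be the bookkeeping of the three interacting quantities: the power $p_N$ (which dictates how weak the inductive conclusion becomes), the polynomial factor $|j|^{\tau}$ picked up in the probability estimates (which must be summed over $\mathcal{I}_N$), and the threshold $\epsilon_0(N,T)$ needed to absorb the $g^{-1}N^2$ coefficient arising when inverting Proposition~\ref{prop:generatesigma}. A uniform choice $\kappa$ (taken as the worst of the powers $1/4$, $1/60$, $1/3600$ appearing in Lemmas~\ref{lem:de:to:om:creation:step}--\ref{lem:de:to:de:creation:step}) makes all three quantities simultaneously trackable and is precisely what produces the doubling exponent $2N+2$ in $p_N = \kappa^{2N+2}$: two applications of the Lemmas per inductive step.
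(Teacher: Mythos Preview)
Your overall strategy matches the paper's: induct on $N$, at each stage run the chain $\sigma \to Y \to [Z,\sigma] \to \sigma_{\text{new}} \to Y_{\text{new}}$ via Lemmas~\ref{lem:zeroth:step}--\ref{lem:om:to:de:creation:step} and Proposition~\ref{prop:generatesigma}, and track the exponent loss by the worst constant $\kappa$. The inductive step you describe is essentially the paper's, though the paper is more explicit about restricting the ``old'' index to the boundary set $\mathcal{B}_{N-1}$ (off the coordinate axes) so that $j'^{\perp}\cdot k$, $a(j',k)$, $b(j',k)$ are all guaranteed nonzero; your phrase ``upon the right choice of $k\in\mathcal{Z}$'' hides exactly this point.

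There is, however, a genuine gap in your base case. You assert that $\mathcal{I}_1$ ``agrees (up to the excluded corners) with the directly forced modes $\mathcal{Z}=\{(1,0),(0,1)\}$'', and so Lemmas~\ref{lem:zeroth:step} and \ref{lem:de:to:om:creation:step} alone suffice. This is false: by definition
\[
\mathcal{I}_1=\{j\in\ZZ^2_+:|j_1|+|j_2|\le 2\}\setminus\{(0,2),(0,1),(2,0),(1,0)\},
\]
so the excluded corners are precisely the forced modes (and their doubles), and what remains is $\mathcal{I}_1=\{(1,1),(1,-1)\}$, disjoint from $\mathcal{Z}$. Lemmas~\ref{lem:zeroth:step} and \ref{lem:de:to:om:creation:step} give control only of $\sigma_k^m$ and $Y_k^m$ for $k\in\mathcal{Z}$; to reach $\sigma_{(1,\pm1)}^m$ and $Y_{(1,\pm1)}^m$ you must already perform one full cycle through Lemma~\ref{lem:om:to:de:creation:step} and Proposition~\ref{prop:generatesigma} (with $j'=(0,1)$, $k=(1,0)$, say), exactly as in the paper's base case. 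In other words, the base case is not a degenerate step but a genuine first iteration of the bracket chain, and this is why $p_1=\kappa^4$ rather than $\kappa^2$. Once you correct this, your argument coincides with the paper's.
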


\begin{Rmk}
 Lemma~\ref{lem:small:sum} establishes the smallness of $| \langle \KK_{t, T} \phi, \sigma_j^m\rangle|$
 for all $j \in \mathcal{I}_{N}$.  The smallness of $| \langle \KK_{t, T} \phi, \psi_j^m\rangle|$ requires more work and it is discussed below in Lemma \ref{lem:junk:e}.
\end{Rmk}

\begin{proof}
We first remark that the function $n \mapsto \left(\frac{g}{n^2} \right)^{2/(\kappa p_{n-1})}$ decreases, and therefore
$\epsilon_0 \leq \left(\frac{g}{n^2} \right)^{2/(\kappa p_{n-1})}$ for each $n \leq N$.

We proceed by induction in $N \geq 1$.
For the first step, $N =1$, we show that the result holds on the set $\mathcal{I}_1 = \{(1, 1), (-1, 1)\}$.
To this end we first establish \eqref{eq:ind} for $j \in \ZZZ = \{(0, 1), (1, 0)\}$, which
are the directly forced modes in \eqref{eq:BE:abs}.
Indeed, Lemma \ref{lem:zeroth:step} and Lemma \ref{lem:de:to:om:creation:step} imply that \eqref{eq:ind} holds for each $j \in \ZZZ$, $m \in\{0,1\}$,
with $p_N$ replaced by
$\kappa$, on the set
$$
\Omega^{1}_{\epsilon,1} = \Omega_{\epsilon, \MM} \cap \Omega_{\epsilon^{\kappa},(0,1), Y} \cap \Omega_{\epsilon^{\kappa},(1,0), Y},
$$
with $\Prb((\Omega^{1}_{\epsilon,1})^c) \leq C \exp(\eta \|U_0\|^2) \epsilon^{\kappa}$.

We now establish \eqref{eq:ind} for $j \in \mathcal{I}_1$. By \eqref{eq:ind} with $j' \in \ZZZ$, and Lemma \ref{lem:om:to:de:creation:step}, one has
\begin{align*}
  \langle \MM_{0,T} \phi, \phi \rangle \leq \epsilon \|\phi\|^2 \quad \Rightarrow   \quad
\sup_{\substack{m,l \in\{0,1\} \\ j',k \in \ZZZ}} \sup_{t \in [T/2,T]} |  \langle \KK_{t, T} \phi, [Z_{j'}^m(U),\sigma_{k}^{l}] \rangle | \leq \epsilon^{\kappa^3} \|\phi\|,
\end{align*}
on a set
$$
   \Omega_{\epsilon, 1}^{2} := \Omega^{1}_{\epsilon,1} \cap \Omega_{\epsilon^{\kappa^2},(0,1), \sigma} \cap \Omega_{\epsilon^{\kappa^2},(1,0), \sigma}
$$
with
$$
\Prb((\Omega^{2}_{\epsilon, 1})^c) \leq C \exp(\eta \|U_0\|^2) \epsilon^{\kappa^2}.
$$
Using Proposition \ref{prop:generatesigma} with $j' =  (0,1)$, $k = (1, 0)$, and all combinations of $m, l \in\{0,1\}$, we obtain (since $({j'}^{\perp}\cdot k) ,a(j',k),b(j',k)\neq 0$) that on the set $\Omega_{\epsilon, 1}^{2}$, for each $j \in \mathcal{I}_1$, $m\in\{0,1\}$,
\begin{align*}
  \langle \MM_{0,T} \phi, \phi \rangle \leq \epsilon \|\phi\|^2 \quad \Rightarrow   \quad
\sup_{t\in[T/2,T]} | \langle \KK_{t, T} \phi, \sigma_{j}^m\rangle| \leq \frac{1}{2g}\epsilon^{\kappa^3} \|\phi\|
\leq \epsilon^{\frac{1}{2}\kappa^3}
\|\phi\| \leq \epsilon^{\kappa^4}\|\phi\|.
\end{align*}
Notice we have used the inequality $\epsilon \leq (2g)^{2/\kappa^3}$.
The first part of \eqref{eq:ind} follows with $p_1 = \kappa^3$. The second part of \eqref{eq:ind} with $p_1 = \kappa^4$ follows from the first part and
Lemma \ref{lem:de:to:om:creation:step} on
$$
\Omega_{\epsilon, 1} :=  \Omega_{\epsilon, 1}^{2} \cap \Omega_{\epsilon^{\kappa^4},(-1,1), Y} \cap \Omega_{\epsilon^{\kappa^4},(1,1), Y},
$$
so that $\Prb(\Omega_{\epsilon, 1}^{c}) \leq C \exp(\eta \|U_0\|^2) \epsilon^{\kappa^4}$.
This completes the proof of the base case $N = 1$.

Next we establish the inductive step.  That is, assuming \eqref{eq:ind} holds for each $j\in \mathcal{I}_{N-1}$ (with $N - 1 \geq 1$) on a set $\Omega_{\epsilon,N-1}$, we will show that
\eqref{eq:ind} holds true for $j\in \mathcal{I}_{N}$ on a set $\Omega_{\epsilon,N}$.
We introduce the set
\begin{align*}
  \mathcal{B}_{N-1} := \{ j' = (j_{1}', j_{2}') \in \mathcal{I}_{N-1} : |j_{1}|' + |j_{2}|' = N\},
\end{align*}
which is the `boundary of $\mathcal{I}_{N-1}$ excluding the $x$ and $y$ axes' as illustrated by the broken line segments in Figure~\ref{fig:span:arg:1}.
Denote
\begin{align*}
\Omega_{\epsilon, N}^{1} := \Omega_{\epsilon, N-1} \cap \bigcap_{j' \in \mathcal{B}_{N-1} } \Omega_{\epsilon^{p_{N-1}}, j', \sigma},
\end{align*}
then the  inductive hypothesis and Lemma \ref{lem:om:to:de:creation:step} imply
\begin{align}
\Prb((\Omega_{\epsilon, N}^{1})^c) \leq C\exp(\eta \|U_0\|^2) ((N - 1)^{\tau + 2}  \epsilon^{p_{N - 1}} + |\mathcal{B}_{N-1}|N^{\tau} \epsilon^{p_{N - 1}})
   \leq C N^{\tau+2} \exp(\eta \|U_0\|^2) \epsilon^{p_{N - 1}}.
   \label{eq:prelim:ind:step:large:set}
\end{align}
Then on the set $\Omega_{\epsilon, N}^{1}$ we have
\begin{align}
 \langle \MM_{0,T} \phi, \phi \rangle \leq \epsilon \|\phi\|^{2}  \quad \Rightarrow \quad
\sup_{\substack{l,m \in\{0,1\}\\ j' \in \mathcal{B}_{N-1}, k\in\ZZZ}} \sup_{t \in [T/2,T]} |
\langle \KK_{t, T} \phi, [Z_{j'}^m(U),\sigma_{k}^{l}] \rangle | \leq \epsilon^{p_{N-1}\kappa} \|\phi\|.
\label{eq:name1}
\end{align}

To complete the inductive step it is enough to establish \eqref{eq:ind} for any fixed $j \in \mathcal{I}_N \setminus \mathcal{I}_{N-1}$ and $m \in\{0,1\}$.
We observe that for each $j \in \mathcal{I}_N \setminus \mathcal{I}_{N-1}$, there exists $j' \in \mathcal{B}_{N-1} $ such that $k := j - j' \in \ZZZ \cup (- \ZZZ)$.
In other words, any point in
$\mathcal{I}_{N} \setminus \mathcal{I}_{N-1}$ can be reached from $\mathcal{B}_{N-1}$ via `allowable directions' as shown in Figure~\ref{fig:new:sig:directions}.
Since $k$ is parallel to one of the axes and $j'$ is not, we have $k^\perp \cdot j' \neq 0$ and $a(j', k) \neq 0$, $b(j', k) \neq 0$, where $a$, $b$ are defined by \eqref{eq:sig:coeff:a}.
Using \eqref{eq:name1} and Proposition \ref{prop:generatesigma}, we infer that on the set $\Omega_{\epsilon, N}^{1}$, for each fixed $j \in \mathcal{I}_N \setminus \mathcal{I}_{N-1}$, $m \in\{0,1\}$
(for our choice of $\epsilon \leq \left(\frac{g}{2 N^2} \right)^{2/(\kappa p_{N-1})}$)
\begin{equation}
 \langle \MM_{0,T} \phi, \phi \rangle \leq \epsilon \|\phi\|^{2} \quad \Rightarrow \quad
\sup_{t\in[T/2,T]} | \langle \KK_{t, T} \phi, \sigma_j^m\rangle| \leq \frac{2N^{2}}{g}\epsilon^{p_{N-1}\kappa} \|\phi\| \leq \epsilon^{p_{N-1}\kappa/2} \|\phi\| \,,
\label{eq:prelim:ind:step:imp}
\end{equation}
where we used that $|(k\perp j')| \geq 1$, $|a(k, j')| \geq N^{-2}$, and $|b(k, j')| \geq N^{-2}$.
\footnote{
To see the estimate for $b$ (estimates for $a$ are analogous) we observe that
\begin{align*}
  |b((1, 0), j')| = 1 - \frac{j_{1}^{2}}{j_{1}(j_{1}^{2} + j_{2}^{2})}\geq 1 - \frac{1}{|j_{1}|}, \qquad
  |b((0, 1), j')| =  \frac{|j_{1}|}{(j_{1}^{2} + j_{2}^{2})} \geq \frac{1}{1 + (N-1)^2} \geq \frac{1}{N^2} \,,
\end{align*}
and the desired bound follows when $|j_{1}| \geq 2$.  If $|j_1| = 1$, then $b((1, 0), (\pm 1, N-1)) = 1 - \frac{1}{1 + (N-1)^2} \geq \frac{1}{2}$.
}

To complete the induction it remains to establish the second part of
\eqref{eq:ind}. Define
\begin{align*}
	\Omega_{\epsilon, N} = \Omega_{\epsilon, N}^{1} \cap  \bigcap_{j' \in \mathcal{B}_{N-1} } \Omega_{\epsilon^{p_{N-1}\kappa/2}, j', Y} \,.
\end{align*}
We then obtain (analogously to \eqref{eq:prelim:ind:step:large:set})
that
\begin{align*}
\Prb(\Omega_{\epsilon, N}^c) \leq  C\exp(\eta \|U_0\|^2)(N^{\tau+2} \epsilon^{p_{N - 1}\kappa} +  N^{\tau+1} \epsilon^{p_{N-1}\kappa/2})
\leq CN^{\tau+2} \exp(\eta \|U_0\|^2)\epsilon^{p_{N}} \,.
\end{align*}
On $\Omega_{\epsilon, N}$, \eqref{eq:prelim:ind:step:imp} and Lemma~\ref{lem:de:to:om:creation:step} yield
that \eqref{eq:ind} holds with the desired $p_{N}$.
\end{proof}

\begin{Lem}
\label{lem:junk:e}
Fix $N \geq 2$ and let $p_N$ and $\epsilon_0$ be as in Lemma \ref{lem:small:sum}.
There is
\begin{align*}
\epsilon_1 = \epsilon_1(N, T) := C(T) \min \left\{ 1, \left(\frac{C(T)}{1 + N^4} \right)^{2/(p_{2N}\kappa)} \right\} \,.
\end{align*}
Then
for every $\epsilon \in (0, \epsilon_1)$ and $\eta > 0$ there exists a set $\Omega_{\epsilon,N}^*$ with
\begin{align*}
  \Prb((\Omega_{\epsilon,N}^*)^c) \leq C N^{\tau+3} \exp(\eta \|U_0\|^2)\epsilon^{p_{2N}\kappa}\,,
\end{align*}
where $C = C(\eta, T)$,
such that on the set $\Omega_{\epsilon, N}^*$ for all $\phi \in H$, $j$ with $|j| \leq N$, and $m \in\{0,1\}$, one has
\begin{numcases}{\langle \MM_{0,T} \phi, \phi \rangle \leq \epsilon \|\phi\|^2 \quad \Rightarrow \quad}
       | \langle \phi, \sigma_j^m\rangle| \leq \epsilon^{p_{2N}} \|\phi\| \,, \label{eq:ind:3}  \\
    | \langle \phi,   \psi_j^m + J^N_{j, m}(U(T)) \rangle| \leq \epsilon^{p_{2N}\kappa/2} \|\phi\| \,. \label{eq:ind:4}
\end{numcases}
For the definition of $J^N_{j, m}$ see \eqref{eq:junk:controlled} and \eqref{eq:junk:express}.
\end{Lem}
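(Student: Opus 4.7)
My plan is to combine three ingredients: Lemma~\ref{lem:small:sum} (which controls $\langle \KK_{t,T}\phi,\sigma_j^m\rangle$ and $\langle \KK_{t,T}\phi,Y_j^m(U)\rangle$ for $j$ in a suitable $\mathcal{I}_M$), the explicit representations \eqref{eq:junk:express:2} for $\psi_j^m+J_{j,m}(U)$ in terms of $Y$- and $[Z,Y]$-type brackets, and the key structural fact from Lemma~\ref{lem:junk:est:basic} that $(J_{j,m})_\omega=0$.

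First I would apply Lemma~\ref{lem:small:sum} at level $M=2N-1$; for $N\ge 2$ this is large enough that $\{j\in\ZZ_+^2 : |j|\le N+1\}\subset\mathcal{I}_{2N-1}$. Specializing the conclusion to $t=T$ and using $\KK_{T,T}=I$, I obtain on a set of the required size the bound
\begin{align*}
|\langle \phi,\sigma_j^m\rangle|+\sup_{t\in[T/2,T]}|\langle\KK_{t,T}\phi,Y_j^m(U)\rangle|\le 2\epsilon^{p_{2N-1}\kappa}\|\phi\|
\end{align*}
for every $|j|\le N+1$ and $m\in\{0,1\}$. Since $p_{2N-1}\kappa=\kappa^{4N}\ge\kappa^{4N+2}=p_{2N}$, this already yields \eqref{eq:ind:3}; the auxiliary $Y$-bound will drive everything below.

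For \eqref{eq:ind:4} I would exploit \eqref{eq:junk:express:2}. When $j_1\ne 0$ it reads $\psi_j^m+J_{j,m}(U(T))=\frac{(-1)^m}{gj_1}Y_j^{m+1}(U(T))$, so the $Y$-bound from the previous step immediately controls $|\langle \phi,\psi_j^m+J_{j,m}(U(T))\rangle|$. When $j_1=0$ the same formula expresses $\psi_j^m+J_{j,m}(U)$ as $\frac{1+|j|^2}{g^2|j|^3}$ times a linear combination of brackets $[Z^{m_1}_{j+(1,0)}(U),Y^{m_2}_{(1,0)}(U)]$, and the hypothesis \eqref{eq:assu} of Lemma~\ref{lem:de:to:de:creation:step} (smallness of $Y_i^{m'}(U)$ for $i\in\ZZZ_{j+(1,0)}$) is precisely what the previous step provides, so I would invoke that lemma to bound the brackets, paying one factor of $\kappa$ in the exponent. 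To pass finally from $J_{j,m}(U(T))$ to its tail $J^N_{j,m}(U(T))=Q_N J_{j,m}(U(T))$, I would decompose $J_{j,m}(U(T))=P_N J_{j,m}(U(T))+J^N_{j,m}(U(T))$; since $(J_{j,m})_\omega=0$, the projection $P_N J_{j,m}(U(T))$ lies in $\mbox{span}\{\sigma_k^l:|k|\le N,\,l\in\{0,1\}\}$, and Lemma~\ref{lem:junk:est:basic} combined with \eqref{eq:ind:3} yields
\begin{align*}
|\langle \phi,P_N J_{j,m}(U(T))\rangle|\le CN^5(1+\|U(T)\|_{H^1})\max_{\substack{|k|\le N\\ l\in\{0,1\}}}|\langle \phi,\sigma_k^l\rangle|.
\end{align*}
To neutralize the random factor $1+\|U(T)\|_{H^1}$, I would intersect the good set with $\{1+\|U(T)\|_{H^1}\le\epsilon^{-\delta}\}$ for $\delta$ a small fraction of $p_{2N}\kappa$; the complement has negligible probability by the smoothing bound \eqref{eq:smoothing:est} and Markov's inequality, and combining all the estimates then yields \eqref{eq:ind:4}.

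The main obstacle I anticipate is the exponent bookkeeping: each invocation of Lemma~\ref{lem:de:to:de:creation:step} costs a factor $\kappa=1/3600$ in the exponent of $\epsilon$, and the polynomial-in-$N$ prefactors (from $\frac{1+|j|^2}{|j|^3}$, from Lemma~\ref{lem:junk:est:basic}, and from the $N^5$ above) must be absorbed by choosing $\epsilon<\epsilon_1(N,T)$ of the stated form $\epsilon_1=C(T)\min\{1,(C(T)/(1+N^4))^{2/(p_{2N}\kappa)}\}$; once this is arranged, the resulting measure estimates combine cleanly into the bound on $\Prb((\Omega^*_{\epsilon,N})^c)$.
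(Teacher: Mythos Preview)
Your approach is essentially the paper's: invoke Lemma~\ref{lem:small:sum} at a level large enough to cover all indices $|j|\le N$ together with the neighbors needed for Lemma~\ref{lem:de:to:de:creation:step}, use the representation \eqref{eq:junk:express:2} to handle the two cases $j_1\ne 0$ and $j_1=0$, then strip off $P_N J_{j,m}$ using $(J_{j,m})_\omega=0$ and Lemma~\ref{lem:junk:est:basic}, and finally intersect with a set controlling $\|U(T)\|_{H^1}$. The paper carries this out at level $2N$ with the cutoff set $\{1+\|U(T)\|_{H^1}\le \epsilon^{-p_{2N}/2}\}$, but your choice of level $2N-1$ would work equally well.

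There is one genuine slip in your index bookkeeping. The inclusion $\{j\in\ZZ_+^2:|j|\le N+1\}\subset\mathcal{I}_{2N-1}$ is false: for $N=2$ the point $(3,0)$ has $|j|=3=N+1$ but $(3,0)=(2N-1,0)$ is one of the deleted corners of $\mathcal{I}_{2N-1}$. More importantly, even if the inclusion held it would not cover all of $\ZZZ_{j+(1,0)}$: when $j=(0,N)$ the neighbor $(1,N+1)$ has Euclidean norm $\sqrt{1+(N+1)^2}>N+1$. The correct statement (and what the paper does) is to check \emph{directly} that the specific points $j'=(1,j_2)$ and $j'\pm e_1,\,j'\pm e_2$ lie in $\mathcal{I}_M$ by computing their $\ell^1$-norms; at level $M=2N-1$ this amounts to $|j_2|+2\le 2N$, which holds for $|j_2|\le N$ and $N\ge 2$. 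Once you replace the Euclidean-ball claim by this direct verification, your sketch goes through. (Minor arithmetic: $p_{2N-1}=\kappa^{4N}$, so $p_{2N-1}\kappa=\kappa^{4N+1}$, not $\kappa^{4N}$; the inequality $p_{2N-1}\kappa\ge p_{2N}=\kappa^{4N+2}$ you need is still correct.)
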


\begin{proof}
Below, without further notice, we use that $\epsilon_0(2N, T) \geq \epsilon_1(N, T)$ for appropriate $C(T)$, where $\epsilon_0$ is as in Lemma \ref{lem:small:sum}.
Observe that $|j| \leq N$ implies $j \in  \mathcal{I}_{2N}$, because
\begin{align*}
N^2 \geq j_1^2 + j_2^2 \geq \frac{1}{2}(|j_1| + |j_2|)^2 \,.
\end{align*}
Then by Lemma \ref{lem:small:sum}, for each $j$ such that $|j| \leq N$, on the set $\Omega_{\epsilon, 2N}$, \eqref{eq:ind} with $t = T$ implies
\begin{equation}
   | \langle \phi, \sigma_j^m\rangle| \leq \epsilon^{p_{2N}} \|\phi\|  \label{eq:ind:2}
\end{equation}
and \eqref{eq:ind:3} follows.

To establish \eqref{eq:ind:4} we first fix
$|j| \leq N$ with $j_1 \neq 0$ and $m \in\{0, 1\}$. Then, by Lemma \ref{lem:small:sum}, \eqref{eq:junk:express:2},
and \eqref{eq:junk:controlled}, on the set $\Omega_{\epsilon, 2N}$,
\begin{align}
\epsilon^{p_{2N}\kappa} \|\phi\| &\geq |\langle \phi,   Y_j^{m+1}(U (T))\rangle| =
g |j_1||\langle \phi, \psi_j^m + J^N_{j, m}(U(T)) + P_N J_{j, m}(U(T)) \rangle| \notag \\
&\geq g|j_1||\langle \phi, \psi_j^m + J^N_{j, m}(U(T)) \rangle| - g |j_1||\langle \phi, P_N J_{j, m}(U(T)) \rangle|\,.
\label{eq:calc:part:1}
\end{align}
Next, fix $|j| \leq N$ with $j_1 = 0$ and set $j' := \vec{e}_{1} + j$.  It is easy to check that $j'$, $j' \pm  \vec{e}_{2} $, $j' \pm  \vec{e}_{1} $ belong to
$\mathcal{I}_{2N}$ whenever $N \geq 2$, so that, by the second part of \eqref{eq:ind}, \eqref{eq:assu} is satisfied (with $j$ replaced by $j'$) on the set $\Omega_{\epsilon, 2N}$.
Then by Lemma \ref{lem:de:to:de:creation:step} (the smallness conditions on $\epsilon$ required
by Lemma \ref{lem:de:to:de:creation:step} are satisfied if $\epsilon < \epsilon_1$ for appropriate $C(T)$),
\begin{equation}
  \sum_{\substack{k\in\ZZZ\\ m,l\in\{0,1\}}}\sup_{t \in [T/2,T]} | \langle \KK_{t, T} \phi,[Z_{j'}^m(U),Y_{k}^{l}(U)]\rangle | \leq \epsilon^{p_{2N}\kappa} \|\phi\|,
  \label{eq:calc:part:3}
\end{equation}
on the set
\begin{align*}
\Omega_{\epsilon, N}^{*,1} := \Omega_{\epsilon, 2N} \cap \bigcap_{|j'|\leq N, j_1 = 1} \Omega_{\epsilon^{p_{2N}}, j', Q},
\end{align*}
with $\Prb ((\Omega_{\epsilon, N}^{*,1})^c) \leq C (2N)^{\tau +3} \epsilon^{p_{2N}} \exp(\eta \|U_0\|^2)$.
  Then by \eqref{eq:junk:express:2} and \eqref{eq:calc:part:3} (with $t=T$), on $\Omega_{\epsilon, N}^{*,1}$ one has
\begin{equation}
2\epsilon^{p_{2N}} \|\phi\| \geq \frac{g^2 |j|^3}{1 + |j|^2} \left(|\langle \phi, \psi_{j}^m + J^N_{j, m}(U(T))  \rangle|
- |\langle \phi, P_N J_{j, m}(U(T)) \rangle|\right) \,.
\label{eq:calc:part:2}
\end{equation}
Combining both cases, \eqref{eq:calc:part:1} and \eqref{eq:calc:part:2}, one has
for any $|j| \leq N$, on the set $\Omega_{\epsilon, N}^{*,1}$, that
\begin{equation}
|\langle \phi, \psi_{j}^m + J^N_{j, m}(U(T))  \rangle| \leq C (\epsilon^{p_{2N}} \|\phi\| + |\langle \phi, P_N J_{j, m}(U(T)) \rangle|) \,.
\label{eq:first:junk}
\end{equation}
Since by Lemma \ref{lem:junk:est:basic} the first component of $J_{j, m}(U(T))$ vanishes, there exists $(\beta_k^l)_{|k| \leq N, l \in\{0,1 \}}$
such that
\begin{align*}
P_N J_{j, m}(U(T)) = \sum_{\substack{|k| \leq N \\ l \in\{0,1\}}} \beta_k^l \sigma^l_k \,.
\end{align*}
Consequently, by \eqref{eq:ind:2} and Lemma \ref{lem:junk:est:basic}, on the set $\Omega_{\epsilon,2N}$,
\begin{align}
| \langle \phi, P_N J_{j, m}(U(T)) \rangle|
&= | \sum_{\substack{|k| \leq N \\ l \in\{0,1\}}} \beta_k^l \langle \phi,  \sigma^l_k \rangle|
 \leq C \sup_{\substack{|k| \leq N \\ l \in\{0,1\}}}|\langle \phi, \sigma^l_k\rangle| \sum_{\substack{|k| \leq N \\ l \in\{0,1\}}} |\beta_k^l|
  \notag  \\
&\leq C N \|J_{j, m} (U(T))\| \epsilon^{p_{2N}} \|\phi\|
\leq C N^{4} (1 + \|U(T)\|_{H^1}) \epsilon^{p_{2N}} \|\phi\|
 \,.
  \label{eq:second:junk}
\end{align}
If we set $\hat{\Omega}_{\epsilon, N} := \{ 1 + \|U(T)\|_{H^1} \leq \epsilon^{-p_{2N}/2}\}$, then
\eqref{eq:smoothing:est} and the Markov inequality imply
\begin{align*}
\Prb (\hat{\Omega}_{\epsilon, N}^c)  \leq C \epsilon^{p_{2N}/2} \exp(\eta \|U_0\|^2) \,,
\end{align*}
where $C = C(T)$.
Combining \eqref{eq:first:junk} and \eqref{eq:second:junk}, on
$\Omega_{\epsilon, N}^\ast := \hat{\Omega}_{\epsilon, N} \cap \Omega_{\epsilon, N}^{\ast,1}$, it holds that
\begin{align*}
|\langle \phi, \psi_j^m + J^N_{j, m}(U(T))  \rangle| \leq
C  \|\phi\| (\epsilon^{p_{2N}\kappa}  + N^{4} \epsilon^{p_{2N}/2})
\leq
C  \|\phi\| (1  + N^{3}) \epsilon^{p_{2N}\kappa}
\leq \epsilon^{p_{2N} \kappa/2} \|\phi\| \,,
\end{align*}
provided $\epsilon \leq \epsilon_1$. Finally, notice that
\begin{align*}
\Prb ((\Omega_{\epsilon, N}^*)^c) \leq \Prb (\hat{\Omega}_{\epsilon, N}^c) + \Prb(\Omega_{\epsilon, N}^c)
\leq C N^{\tau+3} \exp(\eta \|U_0\|^2) \epsilon^{p_{2N}\kappa} \,.
\end{align*}
\end{proof}

\begin{proof}[Proof of Proposition \ref{prop:small:conclusion}]
By Lemma \ref{lem:junk:e} on a set $\Omega_{\epsilon, \tilde{N}}$  one has
\begin{align*}
\sum_{\tilde{b}(U) \in \mathfrak{B}_{N,\tilde{N}}(U)} |\langle \phi, \tilde{b}(U(T))\rangle|^2
\leq \sum_{\substack{|j| \leq \tilde{N} \\ m \in\{0,1\}}} |\langle \phi, \sigma_j^m\rangle|^2 + |\langle \phi, \psi_j^m + J_{j, m}^{\tilde{N}}(U(T))\rangle|^2
\leq  \tilde{N}^2 \epsilon^{p_{2\tilde{N}}\kappa} \|\phi\|^2
\leq \epsilon^{p_{2\tilde{N}}\kappa /2} \|\phi\|^2 \,,
\end{align*}
whenever $\epsilon \in (0, \epsilon_2)$.
\end{proof}

\section{Mixing and Other Convergence Properties}
\label{sec:mixing}

In this final section we show how the abstract results developed in \cite{HairerMattingly2008, KomorowskiWalczuk2012}
(and cf. \cite{Shirikyan2006,HairerMattinglyScheutzow2011, KuksinShirikian12})
can be applied in our setting to establish mixing and pathwise attraction properties for the unique invariant measure associated to
\eqref{eq:B1:Vort}--\eqref{eq:B3:Vort} and to thus complete the proof of the main result Theorem~\ref{thm:mainresult}.

We begin by introducing some notations.   For any $r \in (0,1]$ and any $\ObsGC> 0$ define
\begin{align}
   \rho_r(U_1, U_2)
   	:= \inf_{\substack{\gamma \in C^1([0,1], H),\\ \gamma(0) = U_1, \gamma(1) = U_2}}
    		\int_0^1 \exp(\ObsGC r \|\gamma(\varrho)\|^2) \| \gamma'(\varrho)\| d\varrho.	
	\label{eq:rho:eta:dist}			
\end{align}
The fixed value of $\ObsGC$ is determined in the course of the proof of Theorem~\ref{thm:mainresult}, (i).
As shown in \cite{HairerMattingly2008},  $\rho_r$ is metric on $H$ for any $r > 0$ and
for any $U_1, U_2 \in H$ one has
\begin{align}\label{eq:rho:metr:est}
  \|U_1 -U_2 \| \leq
   \rho_r(U_1, U_2) \leq
      	\exp(\ObsGC r \max\{\|U_1\|, \|U_2\|\}) (\|U_1- U_2\|).
\end{align}
For brevity of notation we set $\rho := \rho_1$ and for any $\Obs : H \to \RR$ define
\begin{align*}
\|\Obs\|_{Lip} := \sup_{U_1 \not= U_2} \frac{|\Obs(U_1) -\Obs(U_2)|}{\rho(U_1, U_2)}.
\end{align*}
On the other hand, for any $\mu_1, \mu_2 \in Pr(H)$, the set of Borealian probability measures on $H$, denote
\begin{align}
  \mathcal{C}(\mu_1, \mu_2) := \{ \Gamma \in Pr(H\times H): \Gamma(A \times H) = \mu_1(A), \Gamma(H \times A) = \mu_2(A), \textrm{ for any } A \in \mathcal{B}(H) \}\,,
  \label{eq:coupling:def}
\end{align}
where $\mathcal{B}(H)$ is a collection of all Borel subsets of $H$.  Elements $\Gamma \in \mathcal{C}(\mu_1, \mu_2)$
are typically referred to as a \emph{coupling} of $\mu_1$, $\mu_2$.
The distance $\rho$ defined by \eqref{eq:rho:eta:dist} can be used to induce a Wasserstein-Kantorovich distance
(see e.g. \cite{GibbsSu2007, HairerMattingly2008, KuksinShirikian12} for further details) on the set
\begin{align*}
   \mbox{Pr}_1(H) := \left\{ \mu \in \mbox{Pr}(H): \int_H \rho(0, u) d\mu(u) < \infty \right\}.
\end{align*}
This distance is defined by two equivalent formulas
\begin{align}
  \rho( \mu_1, \mu_2) := \sup_{\|\Obs\|_{Lip} \leq 1} \left| \int_H \Obs(U) d \mu_1(U) - \int_H  \Obs(U) d \mu_2(U)\right|
  = \inf_{\Gamma \in \mathcal{C}(\mu_1,\mu_2)}  \int_H \rho(U_1, U_2) d \Gamma(U_1, U_2).
  \label{eq:rho:eta:dist:measures}
\end{align}
While the metric defined in \eqref{eq:rho:eta:dist:measures} is a useful for proving contraction properties of $\{P_t^*\}_{t \geq 0}$, it is less transparent for applications involving
observables on $H$.   As such we consider the class of observables $\ObsSet_\ObsGC$ defined in \eqref{eq:W1infty:space:norm}.
By \cite[Proposition 4.1]{HairerMattingly2008},
$\|\Obs\|_{Lip} \leq C \|\Obs\|_{\ObsGC}$, and therefore $\|\Obs\|_{Lip} < \infty$ for $\Obs \in \ObsSet_\ObsGC$.

Next, we  recall in our setting abstract results from \cite[Theorem 3.4, Theorem 4.5]{HairerMattingly2008}
and from \cite{KomorowskiWalczuk2012} (and cf. \cite{Shirikyan2006,KuksinShirikian12}).

\begin{Thm}[Hairer-Mattingly, \cite{HairerMattingly2008}]
\label{thm:HM:mixing:thm}
Suppose $U = U(t,U_0)$ is a stochastic (semi)flow on a Hilbert space $H$ with a $C^1$
dependence on $U_0 \in  H$.  Define the Markov semigroups $\{P_t\}_{t \geq 0}$, $\{P_t^*\}_{t \geq 0}$
associated to $U(t,U_0)$ as in \eqref{eq:Markov:semigroup}, \eqref{eq:Markov:dual} and  assume that there exists $\ObsGC > 0$
such that
  \begin{itemize}
  \item[(a)] there exists $C > 0$ and a decreasing function $\xi: [0,1] \to [0,1]$ with $\xi(1) < 1$
  such that\footnote{The statement of this result in \cite{HairerMattingly2008} is slightly more general and
  involves the use of Lyapunov functions $V$.  Here we simply set $V(x) = \exp(\ObsGC |x|^2)$.}
  \begin{align*}
    \E  \left( \exp( r \ObsGC \|U(t, U_0)\|^2) ( 1 + \|\nabla_{U_0} U(t, U_0) \|)\right) \leq C \exp( r \ObsGC \xi(t) \|U_0\|^2)
  \end{align*}
  for every $U_0 \in H$, $r \in [1/4, 3]$ and $t \in [0,1]$.
  \item[(b)] a gradient estimate on the Markov semigroup, \eqref{eq:main:grad:est}, holds for $\eta = \ObsGC/2$.
  \item[(c)] given any $\gimel >0$, $r \in (0,1)$, and $\epsilon > 0$, there exists $T^* = T^*(\gimel, r, \epsilon, \ObsGC)$ such that
  for any $T > T*$,
  \begin{align}
     \inf_{ \|U_1\|, \|U_2\|  \leq \gimel} \;
     \sup_{\Gamma \in \mathcal{C}( P_T^* \delta_{U_1},P_T^* \delta_{U_2})}
     \Gamma\{ (U', U'') \in H\times H: \rho_{ r}(U',U'') < \epsilon \} > 0.
     \label{eq:weak:irr:mixing:thm:HM2008}
  \end{align}
  Here, $\delta_{U}$ is the Dirac measure concentrated at $U$ and $\mathcal{C}(\delta_{U_1}, \delta_{U_2})$
  is defined in \eqref{eq:coupling:def}.
  \end{itemize}
  Then there exist positive constants $C, \gamma > 0$  such that
    \begin{align} \label{eq:contr:msr}
     \rho( P_t^* \mu_1, P_t^* \mu_2) \leq C \exp(- \gamma t)  \rho(\mu_1, \mu_2),
  \end{align}
  for every $\mu_1, \mu_2 \in \mbox{Pr}_1(H)$ and every $t > 0$. Moreover, there exists a unique invariant measure $\mu_*$
  (that is $P_t^* \mu_* = \mu_*$ for every $t \geq 0$) and
  \begin{align}
       \left| \E \Obs(U(t,U_0)) - \int_H \Obs(\bar{U}) d \mu_\ast(\bar{U}) \right|
       \leq C \exp(- \gamma t + \ObsGC \|U_0\|^2) \| \Obs - \smallint \Obs d \mu_\ast \|_\eta
       \label{eq:mixing:V:abs:obs}
  \end{align}
  which holds for every $U_0 \in H$ and every $\Obs \in \ObsSet_\ObsGC$ (cf. \eqref{eq:W1infty:space:norm}).
\end{Thm}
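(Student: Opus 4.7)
The plan is to establish the contraction \eqref{eq:contr:msr} in the Wasserstein--Kantorovich distance $\rho$; from this, uniqueness of the invariant measure $\mu_\ast$ follows immediately, and the quantitative mixing \eqref{eq:mixing:V:abs:obs} follows by applying \eqref{eq:contr:msr} with $\mu_1 = \delta_{U_0}$, $\mu_2 = \mu_\ast$, using the bound $\|\Obs\|_{Lip} \leq C \|\Obs\|_\ObsGC$ and \eqref{eq:rho:metr:est} to convert the Lipschitz norm to the weaker $\ObsSet_\ObsGC$ norm (which incurs the $\exp(\ObsGC\|U_0\|^2)$ prefactor). The strategy combines the three ingredients (a)--(c) via a Harris-type mechanism: the moment bound (a) provides a Lyapunov structure that traps trajectories in bounded sets; the gradient estimate (b) yields local contraction of $P_t^\ast$ in the $\rho_r$ distance; and the weak irreducibility (c) couples any two measures on such bounded sets.

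First I would derive from (b) a quantitative smoothing estimate of the form
\[
|P_t\Obs(U_1) - P_t\Obs(U_2)| \leq \int_0^1 \|\nabla P_t\Obs(\gamma(\varrho))\|\,\|\gamma'(\varrho)\|\,d\varrho
\]
for any smooth $\gamma$ joining $U_1$ to $U_2$, then insert \eqref{eq:main:grad:est} and optimize over $\gamma$ to obtain
\[
|P_t\Obs(U_1) - P_t\Obs(U_2)| \leq C\bigl(\|\Obs\|_\infty + e^{-\gamma_0 t}\|\nabla\Obs\|_\infty\bigr)\rho_r(U_1,U_2),
\]
provided $r$ is small enough that $\ObsGC r \|U\|^2 \leq \eta\|U\|^2$ matches the exponent in \eqref{eq:main:grad:est}. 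Dualizing through \eqref{eq:rho:eta:dist:measures} converts this into a bound of the shape
\[
\rho_r(P_t^\ast\mu_1, P_t^\ast\mu_2) \leq \alpha(t)\,\rho_r(\mu_1,\mu_2) + C e^{-\gamma_0 t}\,d_{TV}(\mu_1,\mu_2),
\]
which, for $t$ large, provides a genuine contraction in $\rho_r$ for measures concentrated on a common bounded set.

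Second I would use (c) to promote this local contraction to a global one. Given the coupling property \eqref{eq:weak:irr:mixing:thm:HM2008}, for any $\gimel$ and sufficiently small $\epsilon, r$ and sufficiently large $T$, one obtains a coupling $\Gamma$ of $P_T^\ast\delta_{U_1}$ and $P_T^\ast\delta_{U_2}$ (whenever $\|U_i\|\leq\gimel$) that places positive mass $\delta>0$ on $\{\rho_r<\epsilon\}$. Decomposing $\Gamma = \delta\Gamma_{\mathrm{near}} + (1-\delta)\Gamma_{\mathrm{far}}$, the near component is already within $\epsilon$ in $\rho_r$ and will be shrunk further by the local contraction of Step 1 on the next time increment; the far component is controlled via the Lyapunov bound (a), which ensures that the expected values of $\exp(\ObsGC\|U(t)\|^2)$ remain uniformly bounded. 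By combining a Lyapunov-drift inequality derived from (a) (which forces $P_t^\ast \mu$ to concentrate mass in $\{\|U\|\leq\gimel\}$ up to a geometrically decaying error) with the coupling bound on such level sets and the local $\rho_r$-contraction, one obtains a one-step estimate
\[
\rho(P_{t+T}^\ast\mu_1, P_{t+T}^\ast\mu_2) \leq \kappa\,\rho(P_t^\ast\mu_1, P_t^\ast\mu_2)
\]
for some $\kappa<1$ and $T$ sufficiently large; iteration yields the exponential rate in \eqref{eq:contr:msr}.

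The main obstacle will be the simultaneous calibration of the several free parameters in Step 3: the level-set size $\gimel$ (fixed by the Lyapunov estimate in (a)), the time $T$ and coupling-smallness parameters $r,\epsilon$ (provided by (c)), and the contraction rate $\alpha(t)$ from Step 1 (which depends on $r$ through the exponential weight in $\rho_r$). The weight $\exp(\ObsGC r\|U\|^2)$ in $\rho_r$ must be small enough to be integrable against the bound in (a) on a sufficiently large ball, yet the distance must remain strong enough that the smoothing bound from (b) produces strict contraction; threading this compatibility is what makes the combination in Step 3 nontrivial, but once achieved the Harris recursion proceeds in a standard fashion.
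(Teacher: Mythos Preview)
The paper does not prove this theorem: it is quoted as an abstract result from \cite{HairerMattingly2008} (specifically Theorems~3.4 and~4.5 there), and the paper only \emph{applies} it by verifying conditions (a)--(c) for the Boussinesq system in the proof of Theorem~\ref{thm:mainresult}. There is therefore no ``paper's own proof'' to compare your attempt against.

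That said, your sketch is a reasonable high-level outline of the Harris-type argument carried out in \cite{HairerMattingly2008}: the Lyapunov bound (a) controls excursions, the gradient estimate (b) yields local contraction in the weighted metric $\rho_r$, and the coupling condition (c) allows one to bring arbitrary pairs of initial measures into the regime where the local contraction applies. The actual execution in \cite{HairerMattingly2008} is more delicate than your sketch suggests---in particular, the passage from the gradient bound \eqref{eq:main:grad:est} to a genuine contraction in $\rho_r$ requires introducing an auxiliary family of ``cut-off'' metrics $\tilde\rho_r$ (equal to $1 \wedge \rho_r/\delta$ for a small $\delta$) and showing contraction first in those, and the interplay between the parameters $r$, $\delta$, $\gimel$, $T$ is handled through a careful two-step argument rather than a single recursion. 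Your Step~1 display involving $d_{TV}$ is not quite the right intermediate estimate; the actual mechanism works entirely within the Wasserstein framework for the truncated metrics. But since the paper treats the theorem as a black box, a full reconstruction is not expected here.
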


\begin{Thm}[Komorowski-Walczuk  \cite{KomorowskiWalczuk2012}]
\label{thm:Pathwise:conv:abs}
  Let $\{P_t\}_{t \geq 0}$ be a Feller Markov semigroup
  on a metric space $(H, \rho)$ with the continuity property:
  $\lim_{t \to 0} P_t \Obs(U_0) = \Obs(U_0)$ for all
  $\Obs \in C_b(H)$, $U_0 \in H$.  Let $P_t(U_0, A)$ be the associated transition
  functions (cf. \eqref{eq:Markov:trans:fn}).  Suppose that for some $C, \gamma > 0$ the contraction property
  \eqref{eq:contr:msr} holds for every $\mu_1, \mu_2 \in \mbox{Pr}_1(H)$.
  Assume moreover that, for every $R > 0$\footnote{The condition \eqref{eq:Polish:style:moment:bounds} given here is
  slightly stronger than the conditions (which appear as $H2$, $H3$) given for the results appearing in \cite{KomorowskiWalczuk2012}.}
    \begin{align}
     \sup_{t \geq 0} \sup_{U_0 \in B_R} \int_H [\rho(0, U)]^3 P_t(U_0, dU)   < \infty\,,
     \label{eq:Polish:style:moment:bounds}
  \end{align}
  where $B_R := \{U_0 \in H : \rho(0, U_0) < R\}$.
  Then, there exists a unique invariant probability measure $\mu_\ast \in \mbox{Pr}_1(H)$ such that, for any $\Obs \in C^1(H)$ and any $U_0 \in H$
 \begin{align*}
 \frac{1}{T}\int_0^T \Obs(U(t,U_0)) \to \int_H \Obs(U) d \mu_*(U) \quad \textrm{ in probability}.
 \end{align*}
 Moreover, the limit $\sigma^2 = \lim_{T \to \infty} \frac{1}{T}\E \left(\int_0^T \left(\Obs(U(t,U_0))\,dt  - \int_H \Obs(U) d \mu_*(U)\right)dt\right)^2$ exists and
 \begin{align*}
   \lim_{T \to \infty} \Prb\left( \frac{1}{\sqrt{T}} \int_0^T \left(\Obs(U(t,U_0))  - \int_H \Obs(U) d \mu_*(U)\right)dt < \xi \right) = \mathcal{X}_\sigma (\xi)\,,
 \end{align*}
 where $\mathcal{X}_\sigma$ is the distribution function of a normal random variable with mean zero and variance $\sigma^2$.
\end{Thm}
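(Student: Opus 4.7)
The plan is to derive the three conclusions—existence/uniqueness of $\mu_\ast \in \mbox{Pr}_1(H)$, the weak law of large numbers, and the central limit theorem—in order, using the contraction \eqref{eq:contr:msr} as the sole mixing input and the cubic moment bound \eqref{eq:Polish:style:moment:bounds} to upgrade Lipschitz control into the integrability needed for a martingale CLT. Uniqueness is immediate: any two invariant $\mu_1,\mu_2 \in \mbox{Pr}_1(H)$ satisfy $\rho(\mu_1,\mu_2) = \rho(P_t^\ast \mu_1, P_t^\ast \mu_2) \leq C e^{-\gamma t}\rho(\mu_1,\mu_2) \to 0$. For existence, fix $U_0 \in H$ and observe that for $s_1 < s_2$
\begin{align*}
\rho(P_{s_1}^\ast \delta_{U_0}, P_{s_2}^\ast \delta_{U_0}) \leq Ce^{-\gamma s_1}\rho(\delta_{U_0}, P_{s_2-s_1}^\ast \delta_{U_0}) \leq Ce^{-\gamma s_1}\Big(\rho(0,U_0) + \sup_{t\geq 0}\int \rho(0,U)\, P_t(U_0,dU)\Big),
\end{align*}
which is summable by \eqref{eq:Polish:style:moment:bounds}; hence $\{P_t^\ast \delta_{U_0}\}$ is Cauchy in the Wasserstein metric $\rho$ on $\mbox{Pr}_1(H)$, and the Feller property together with the continuity hypothesis at $t=0$ yields an invariant limit $\mu_\ast$, with $\mu_\ast \in \mbox{Pr}_1(H)$ following from Fatou and \eqref{eq:Polish:style:moment:bounds}.

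For the weak law of large numbers, fix $\Obs \in C^1(H)$ and set $\bar{\Obs} := \Obs - \int \Obs\, d\mu_\ast$; since $\Obs$ is $C^1$ and $\rho$ dominates $\|\cdot\|$ by \eqref{eq:rho:metr:est}, $\bar\Obs$ is Lipschitz against $\rho$ on bounded sets. The basic bound
\begin{align*}
|P_\tau \bar{\Obs}(U_0)| = \Big|\int \Obs\, d(P_\tau^\ast \delta_{U_0} - \mu_\ast)\Big| \leq \|\Obs\|_{\mathrm{Lip}}\, \rho(P_\tau^\ast \delta_{U_0}, \mu_\ast) \leq C\|\Obs\|_{\mathrm{Lip}} e^{-\gamma \tau}\, \rho(\delta_{U_0}, \mu_\ast),
\end{align*}
combined with \eqref{eq:Polish:style:moment:bounds} and the Markov property, yields $\E \big(\int_0^T \bar{\Obs}(U(t,U_0))\, dt\big)^2 \leq CT$ after writing the double integral and using $|\E[\bar\Obs(U(s))\bar\Obs(U(t))]| = |\E[\bar\Obs(U(s))P_{t-s}\bar\Obs(U(s))]| \leq C e^{-\gamma(t-s)}$; Chebyshev then gives $T^{-1}\int_0^T \bar{\Obs}(U(t))\,dt \to 0$ in probability.

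For the central limit theorem I would use a martingale approximation via the Poisson equation. Define $\chi(U_0) := \int_0^\infty P_t \bar{\Obs}(U_0)\, dt$; this converges absolutely with $\|\chi\|_{\mathrm{Lip}} \leq C\gamma^{-1}\|\Obs\|_{\mathrm{Lip}}$ by the pointwise contraction bound above. Passing to the discrete skeleton $t_n = n$, the identity $\bar{\Obs} = \chi - P_1 \chi$ holds, so
\begin{align*}
\sum_{n=0}^{N-1}\bar{\Obs}(U(n)) = M_N + \chi(U(0)) - \chi(U(N)),
\end{align*}
where $M_N := \sum_{n=0}^{N-1}\big(\chi(U(n+1)) - P_1\chi(U(n))\big)$ is a square-integrable martingale with respect to the natural filtration $\{\mathcal{F}_n\}$. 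The Lipschitz control on $\chi$ combined with \eqref{eq:Polish:style:moment:bounds} places the martingale increments in $L^{2+\delta}$ uniformly in $n$, verifying Lindeberg's condition; a conditional-variance convergence $N^{-1}\sum_{n<N}\E[(\Delta M_n)^2 \mid \mathcal{F}_n] \to \sigma^2$ in probability is obtained via a time-averaged argument in which exponential convergence of $P_n^\ast \delta_{U_0}$ to $\mu_\ast$ in $\rho$ is used to replace the non-stationary conditional second moment by its stationary value up to summable errors. The Brown/Billingsley martingale CLT then gives $M_N/\sqrt{N}\Rightarrow \mathcal{X}_\sigma$, while $\chi(U(0))-\chi(U(N)) = O_P(1)$ vanishes after normalization by $N^{-1/2}$; the restriction to integer $T$ is removed by bounding $|\int_{\lfloor T\rfloor}^T \bar{\Obs}(U)\,dt|$ via \eqref{eq:Polish:style:moment:bounds}, and $\sigma^2$ is identified with the stated long-time variance by expanding the square and using the contraction to dominate cross terms.

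The main obstacle is the construction and $L^{2+\delta}$ control of $\chi$ together with the martingale increments. This is the step that genuinely needs the cubic bound \eqref{eq:Polish:style:moment:bounds}—rather than merely a second-moment hypothesis—to secure Lindeberg's condition uniformly under the non-stationary initial distribution $\delta_{U_0}$ and to transfer the conditional-variance convergence from $\mu_\ast$ to arbitrary starting points. It is also the one place where the \emph{Wasserstein-type} character of $\rho$ in \eqref{eq:contr:msr}, as opposed to a weaker total-variation or strong-mixing bound, is essential: only the Wasserstein contraction turns the decay of $P_t\bar{\Obs}$ into a genuine Lipschitz bound on $\chi$, which is what allows \eqref{eq:Polish:style:moment:bounds} to be applied to $\chi(U(n+1)) - P_1\chi(U(n))$.
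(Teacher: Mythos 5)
This theorem is imported verbatim from Komorowski--Walczuk \cite{KomorowskiWalczuk2012}; the paper supplies no proof of it (it only remarks in a footnote that \eqref{eq:Polish:style:moment:bounds} is slightly stronger than the hypotheses H2, H3 of that reference), so there is no internal argument to compare yours against. Your overall strategy --- uniqueness and existence of $\mu_\ast$ from the contraction \eqref{eq:contr:msr} together with the moment bound, a WLLN from an $L^2$ estimate with exponentially decaying correlations, and a CLT via a martingale approximation built from the corrector $\chi = \int_0^\infty P_t \bar{\Obs}\, dt$ --- is precisely the route of the cited source, and your uniqueness/existence and WLLN arguments are essentially sound. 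One caveat: the correlation estimate requires $\bar{\Obs}$ to be globally Lipschitz with respect to $\rho$ (which is how the theorem is actually applied, via $\|\Obs\|_{Lip} \leq C \|\Obs\|_{\ObsGC}$ and \eqref{eq:rho:metr:est}); a bare $C^1$ hypothesis gives only local Lipschitz control, so your phrase ``Lipschitz against $\rho$ on bounded sets'' does not by itself justify the bound $|P_\tau \bar{\Obs}(U_0)| \leq C\|\Obs\|_{\mathrm{Lip}} e^{-\gamma\tau}\rho(\delta_{U_0},\mu_\ast)$.

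The CLT step, however, contains a concrete error. With $\chi := \int_0^\infty P_t \bar{\Obs}\, dt$ one has $\chi - P_1\chi = \int_0^1 P_s \bar{\Obs}\, ds$, \emph{not} $\bar{\Obs}$, so the identity you invoke is false and the decomposition $\sum_{n<N}\bar{\Obs}(U(n)) = M_N + \chi(U(0)) - \chi(U(N))$ does not hold. Moreover, replacing $\int_0^T \bar{\Obs}(U(t))\,dt$ by the skeleton sum $\sum_{n<N}\bar{\Obs}(U(n))$ is not harmless at the $\sqrt{T}$ scale: the block discrepancies $\int_n^{n+1}\bar{\Obs}(U(t))\,dt - \bar{\Obs}(U(n))$ are order-one random variables whose accumulated fluctuations are themselves of order $\sqrt{N}$ and change the limiting variance. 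The correct martingale approximation keeps the block integrals: write $\int_0^N \bar{\Obs}(U(t))\,dt = \sum_{n=0}^{N-1}\bigl(\int_n^{n+1}\bar{\Obs}(U(t))\,dt + \chi(U(n+1)) - \chi(U(n))\bigr) + \chi(U(0)) - \chi(U(N))$, and observe that each summand has vanishing conditional expectation given $\mathcal{F}_n$ precisely because $\E\bigl[\int_n^{n+1}\bar{\Obs}(U(t))\,dt \,\big|\, \mathcal{F}_n\bigr] = (\chi - P_1\chi)(U(n))$. With that repair, your Lipschitz bound on $\chi$ (which does correctly exploit the Wasserstein character of \eqref{eq:contr:msr}), the moment hypothesis \eqref{eq:Polish:style:moment:bounds} for the $L^{2+\delta}$/Lindeberg control, and the martingale CLT can be carried out as you indicate; the remaining substantive work --- convergence of the conditional variances and identification of $\sigma^2$, which you only sketch --- is where the cited paper invests most of its effort.
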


Using Theorem~\ref{thm:HM:mixing:thm},\ref{thm:Pathwise:conv:abs}
we now establish the attraction properties (i)--(iii) to complete the proof of Theorem~\ref{thm:mainresult}.

\begin{proof}[Proof of Theorem~\ref{thm:mainresult}]
We begin by establishing the conditions for (a)--(c) of Theorem~\ref{thm:HM:mixing:thm}.
To prove (a), note that for any $\ObsGC, r > 0$, \eqref{eq:lin:path:est} with $\eta = r \ObsGC \frac{\kappa}{4}e^{-\kappa /4}$ implies
we have for any $t \in [0, 1]$
\begin{equation*}
\|\nabla_{U_0} U(t, U_0)\| = \|\JJ_{0, t}\| \leq C\exp \left(r \ObsGC \frac{\kappa}{4}e^{-\kappa /4} \int_0^t \|U(s)\|_{H^1}^2 \, ds \right)
\leq C\exp \left(r \ObsGC \frac{\kappa}{4}e^{-\kappa t /4} \int_0^t \|U(s)\|_{H^1}^2 \, ds \right) \,,
\end{equation*}
where $C = C(r, \ObsGC)$ and $\kappa = \min\{\nu_1, \nu_2 \}$. As such we have established (a) with $\xi(t) := \exp(- t \kappa/2)$ follows  from \eqref{eq:exp:no:time:growth}
and for any  $\ObsGC < \eta^\ast /3$. Set $\ObsGC := \eta^\ast/6$.
Since Proposition~\ref{prop:grad:est:MS} holds for any $\eta > 0$ we
infer the second condition (b) for the given $\ObsGC$.

To establish (c), \eqref{eq:weak:irr:mixing:thm:HM2008},
observe that, for any $\gimel >0$ and $\epsilon > 0$ there exists
$T_* = T_*(\gimel, \epsilon)  \geq 0$ such that
\begin{align}
   \inf_{\|U_{0}\| \leq \gimel}  P_{T}(U_{0}, \{ U \in H; \|U \| \leq \epsilon\}) > 0,
   \label{eq:non:zero:return}
\end{align}
for every $T > T_*$. A detailed proof of \eqref{eq:non:zero:return} which applies in our setting
can be found in \cite{EMattingly2001,ConstantinGlattHoltzVicol2013}, here we just briefly sketch the essential ideas.
If there is no forcing, that is, if there is no Brownian motion, then by the dissipativity of \eqref{eq:B1:Vort}--\eqref{eq:B3:Vort},
there is a $T^\ast = T^\ast(\|U_0\|, \epsilon)$ such that $\|U(T, U_0)\|< \epsilon/2$. However, for any open
ball $B(\delta) := \{x \in \RR^{2|\ZZZ|}: |x| < \delta\}$ there is a non-zero probability that the Brownian motion remains
in $B(\delta)$ over the whole interval $[0,T]$. Then by the continuous dependence of solutions with respect to external forcing we
conclude \eqref{eq:non:zero:return} for sufficiently small $\delta > 0$. More precisely
we can use the change of variable $\bar{U} = U - \sigma W$ and standard estimates to show that $\|U(T, U_0)\|< \epsilon$.

We now establish \eqref{eq:weak:irr:mixing:thm:HM2008} from \eqref{eq:non:zero:return}
as follows.  For $U_1, U_2 \in H$ and $T>0$ define $\tilde{\Gamma}_{U_1, U_2} \in Pr(H \times H)$ by
\begin{align*}
   \tilde{\Gamma}_{U_1, U_2}(A _1\times A_2) := P_T(U_1, A_1)P_T(U_2, A_2) \textrm{ for any } A_1, A_2 \in \mathcal{B}(H).
\end{align*}
From \eqref{eq:coupling:def} it
follows that $\tilde{\Gamma}_{U_1, U_2} \in \mathcal{C}(P_T^*\delta_{U_1}, P_T^*\delta_{U_2})$, that is,  $\tilde{\Gamma}_{U_1, U_2}$
couples $P_T^*\delta_{U_1}$ and $P_T^*\delta_{U_2}$.
As such, using \eqref{eq:rho:metr:est} and \eqref{eq:non:zero:return},  we infer that, for any $\gimel, \epsilon > 0$, $r \in (0,1)$, and any
$T > T^\ast(\gimel, \min \{\epsilon\exp(- 3\ObsGC)/2, 1 \})$
\begin{align}
     \inf_{ \|U_1\|, \|U_2\|  \leq \gimel} \;&
     \sup_{\Gamma \in \mathcal{C}( P_T^* \delta_{U_1},P_T^* \delta_{U_2})}
     \Gamma\{ (U', U'') \in H\times H: \rho_{ r}(U',U'') < \epsilon \}  \notag\\
     &\geq     \inf_{ \|U_1\|, \|U_2\|  \leq \gimel} \;
       \tilde{\Gamma}_{U_1, U_2} \left\{ (U', U'') \in B_1\times B_1: \| U'\| + \|U''\| < \epsilon \exp(-\ObsGC r)  \right\}  \notag\\
      &\geq
    \left(\inf_{ \|U_1\|  \leq \gimel}  P_T(U_1,  \left\{ U' \in H:  \|U'\|  < \min(\epsilon/2 \cdot \exp(-3\eta), 1 ) \right\}) \right)^2
    > 0 \,,
    \notag
\end{align}
where $B_1 := \{U \in H: \|U\| < 1\}$ and \eqref{eq:weak:irr:mixing:thm:HM2008} follows.

Having established the conditions (a)--(c) in Theorem~\ref{thm:HM:mixing:thm}  we infer
\eqref{eq:contr:msr} and \eqref{eq:mixing:V:abs:obs} for \eqref{eq:B1:Vort}--\eqref{eq:B3:Vort}.
To prove \eqref{eq:main:thm:mixing:cond}
it suffices to show that $| \smallint \Obs d\mu_\ast | \leq C \|\Obs \|_{\ObsGC}$, where $\mu_\ast$
is the unique invariant measure of \eqref{eq:B1:Vort}--\eqref{eq:B3:Vort}.
However,
\begin{align*}
   \left|  \int \Obs(U) d\mu_\ast(U) \right|
      \leq&   \int \exp(\ObsGC \|U\|^2) \exp(- \ObsGC \| U\|^2) |\Obs(U)| d\mu_\ast(U) \\
   \leq&  \|\Obs\|_{\ObsGC} \int_{H} \exp(\ObsGC \|U\|^{2} )d \mu_\ast(U)\,,
\end{align*}
and therefore it suffices to show
\begin{align}
   \int_{H} \exp(\ObsGC \|U\|^{2} )d \mu_\ast(U) \leq C< \infty,
   \label{eq:exp:mom:bnd:invar:measure}
\end{align}
where $\ObsGC = \eta^*/6$. For any $R >0$, define
\begin{align*}
  \Obs_R(U) =
  \begin{cases}
  \exp( \ObsGC \|U\|^2)& \textrm{ for } \|U\| < R, \\
  \exp( \ObsGC R^2)&  \textrm{ for } \|U\| \geq R, \\
  \end{cases}
\end{align*}
and note that $\Obs_R \in C_b(H)$.  Now using that $P_t^* \mu_\ast = \mu_\ast$ and \eqref{eq:exp:no:time:growth}
we have for any $T > 0$, $\gimel > 0$
\begin{align*}
   \left| \int \Obs_R (U) d \mu_\ast(U) \right|
	\leq&
	\left| \int_{B_\gimel(H)} P_T\Obs_R (U) d \mu_\ast(U)\right|
	+ \left| \int_{B_\gimel(H)^c} P_T \Obs_R (U) d \mu_\ast(U)\right|\\
	\leq& \left| \int_{B_\gimel (H)} \E \exp(\ObsGC \|U(t, U_0)\|^2) d \mu_\ast(U_0)\right|
	+\exp( \ObsGC R^2) \mu_\ast(H \setminus B_\gimel(H))\\
	\leq& C\left| \int_{B_\gimel(H)} \exp\left( {\ObsGC e^{-\frac{T}{2}\min \{\nu_1, \nu_2\}}}  \|U_0\|^2 \right) d \mu_\ast(U_0)\right|
	+\exp( \ObsGC R^2) \mu_\ast(H \setminus B_\gimel(H))\\
		\leq& C \exp\left( {\ObsGC e^{-\frac{T}{2}\min \{\nu_1, \nu_2\}}}  \gimel^2 \right)
	+\exp( \ObsGC R^2) \mu_\ast(H \setminus B_\gimel(H))\,.
\end{align*}
Now since $T \geq 0$ is arbitrary we infer that for $\gimel, R > 0$
that
\begin{align*}
    \left| \int \Obs_R (U) d \mu_\ast(U) \right| \leq C  + \exp( \ObsGC R^2) \mu_\ast(H \setminus B_\gimel(H)) \,,
\end{align*}
where $C$ is independent of $\gimel$.  Passing $\gimel \to \infty$ and then $R \to \infty$ and using the monotone convergence theorem we conclude
\eqref{eq:exp:mom:bnd:invar:measure} and  \eqref{eq:main:thm:mixing:cond} follows.

The remaining convergence properties \eqref{eq:WLLN}, \eqref{eq:CLT} follow once we show that the conditions for
Theorem~\ref{thm:Pathwise:conv:abs} are met.  The Feller property and stochastic continuity of $P_t$
follow immediately from the well-posedness properties of \eqref{eq:B1:Vort}--\eqref{eq:B3:Vort} as recalled above in Proposition~\ref{prop:wellposed}.
It remains to verify the bound in \eqref{eq:Polish:style:moment:bounds}.

By \eqref{eq:rho:metr:est} and \eqref{eq:exp:no:time:growth} we have for already fixed $\ObsGC = \eta^\ast/6$ and
for any $U_0 \in H$ and any $t \geq 0$ that
\begin{align*}
\int [\rho (0,U)]^3 P_t(U_0, dU)
&\leq \int \exp(3 \ObsGC  \|U\|^2) \|U\|^3 P_t(U_0, dU) \leq
C \int \exp(\eta^\ast  \|U\|^2) P_t(U_0, dU) \\
&= C \E (\exp (\eta^\ast \|U(t, U_0)\|^2)) \leq C \exp (\eta^* \|U_0\|^2)\,,
\end{align*}
where the constant $C$ is independent of $U_0$ and $t \geq 0$.
Thus, in view of \eqref{eq:rho:metr:est}, we infer the bound \eqref{eq:Polish:style:moment:bounds} and hence the convergences \eqref{eq:WLLN}, \eqref{eq:CLT}.  This
completes the proof of Theorem~\ref{thm:mainresult}.
\end{proof}

\appendix

\section{Appendix}

Section~\ref{sec:moment:est} collects various moment estimates for \eqref{eq:B1:Vort}--\eqref{eq:B3:Vort}.
In Section~\ref{subsect:malliavin} we briefly review of some aspects of the  Malliavin calculus relevant to our
analysis above.

\subsection{Moment Estimates}	
\label{sec:moment:est}
In this section we provide details for the moments bound used throughout the manuscript.
As above the dependence on physical parameter in constants is suppressed in what follows, see Remark~\ref{rmk:constants:conven}.
Denote
\begin{align*}
\zeta^\ast := \frac{\nu_1 \nu_2}{g^2} \,.
\end{align*}
 Then, cf. \eqref{eq:norm:def},
\begin{equation}\label{eq:def:U:2}
\|U\|^2 = \zeta^\ast \|\omega\|_{L^2}^2 + \|\de\|_{L^2}^2, \qquad
\|U\|^2_{H^1} = \zeta^\ast \|\nabla \omega\|_{L^2}^2 + \|\nabla \de\|_{L^2}^2,
\,.
\end{equation}
Also recall, that our domain is $\TT = \RR^2/(2\pi \ZZ^2)$, and therefore
the Poincar\` e inequality takes the form
$\|U\|_{H} \leq \|U\|_{H^1}$.

Most of the forthcoming bounds have previously been obtained in the context of the stochastic
Navier-Stokes equations and some other nonlinear SPDEs with a dissipative (parabolic) structure.
In order to modify them for the Boussinesq system, we need to
compensate for the `buoyancy' term $g \pd_x\de$ when carrying out energy estimates.
This is accomplished by differently weighting the temperature and momentum equations.\footnote{As noted in the introduction,
we are considering the so called `HRB approximation' in which
the Boussinesq equation is considered
with periodic boundary conditions after subtracting off the temperature differential profile, see \cite{Calzavarini2006}.
In our setup the temperature differential is zero, and therefore the
dissipativity properties we derived here do not contradict the situation illuminated in \cite{Calzavarini2006};
we can exclude the possibility of `grow-up' solutions.}
We illustrate this strategy in the proof of \eqref{eq:exp:no:time:growth}; proofs of other estimates
use the same approach in combination with a straightforward modification of existing methods for the (stochastic)
Navier-Stokes equation (see e.g. \cite{HairerMattingly06, Debussche2011a, KuksinShirikian12})
and they are omitted.

In the first lemma we state a priori bounds on $U$.  These estimates reflect
parabolic type regularization properties of \eqref{eq:B1:Vort}--\eqref{eq:B3:Vort}, and
are particularly useful for obtaining spectral bounds
on the Malliavin matrix carried out in Section~\ref{sec:Mal:spec:bnds}--\ref{sec:Braket:Est:Mal:Mat}.

\begin{Lem}
\label{lem:exp:moments}
Fix any $U_{0} \in H$ and let $U(\cdot) = U(\cdot, U_{0})$ be the unique
solution of \eqref{eq:B1:Vort}--\eqref{eq:B3:Vort} with $U(0) = U_{0}$.
Denote $\kappa := \min \{\nu_1, \nu_2 \}$. There exists $\eta^\ast > 0$ such that:
\begin{itemize}
\item[(i)] For any $T>0$ and $\eta \in (0, \eta^\ast]$,
  \begin{align}
     \E  \exp\left(  \eta \|U(T)\|^2 + \eta \frac{\kappa}{4} e^{-\kappa T/4} \int_0^T \|U(t)\|^2_{H^1} dt \right) \leq
     C \exp\left( {\eta e^{-\kappa T/2}}  \|U_0\|^2 \right) \,,
     \label{eq:exp:no:time:growth} \\
     \E  \exp\left( \sup_{\tau \in [0, T]} \eta \|U(\tau)\|^2 + \eta\frac{\kappa}{2} \int_0^T  \|U(t)\|^2_{H^1 }dt  \right)
     \leq  C\exp\left( \eta \|U_0\|^2 + \eta \|\sigma_\de\|^2 T\right)\,,
     \label{eq:exp:time:int}
\end{align}
for a constant $C$ independent of $T$.
\item[(ii)]  For any $N > 0$ and  $\eta \in (0, \eta^*]$,
\begin{align}
\E \exp \left( \eta \sum_{k = 0}^N  \|U(k)\|^2 \right) \leq
\exp \left(\varrho \eta \|U_0\|^2 + \varkappa N \right) \,,
\label{eq:exp:mom:sums}
\end{align}
where $\varrho, \varkappa > 0$ are positive constants independent of $N$ and $U_0$.
\item[(iii)] For any $s \geq 0$, $p \geq 2$, and $\eta \in (0, \eta^*]$ there exists $C =
C(\eta, s, T, p)$ such that
\begin{align}
  \E  \left(\sup_{t \in [T/2, T]}  \|U(t)\|_{H^s}^p  \right)  \leq C
  \exp( \eta \|U_0\|^2)\,.
  \label{eq:smoothing:est}
\end{align}
\item[(iv)]For
any $p \geq 2$, $s \geq 0$, $\eta > 0$, and $T > 0$ there is
$C = C(s, T, p, \eta)$ such that
\begin{align}
  \E  \left(\|U\|_{C^{\frac{1}{4}}([T/2,T], H^s)}^p  \right)  \leq C \exp( \eta \|U_0\|^2)\,.
  \label{eq:smoothing:est:2}
\end{align}
\end{itemize}

\end{Lem}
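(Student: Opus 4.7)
The starting point is a weighted energy identity tailored to compensate the coupling between the momentum and temperature equations via the buoyancy term. Applying It\^o's formula to $\|U\|^2$ as defined in \eqref{eq:def:U:2}, one tests the vorticity equation \eqref{eq:B1:Vort} against $\zeta^\ast \om$ and the temperature equation \eqref{eq:B3:Vort} against $\de$. The transport terms vanish by incompressibility together with the zero mean condition, the viscous terms produce $-\nu_1 \zeta^\ast \|\nabla \om\|_{L^2}^2 - \nu_2\|\nabla\de\|_{L^2}^2$, and the buoyancy contribution is exactly $g \zeta^\ast (\pd_x \de,\om)$. The precise choice $\zeta^\ast = \nu_1\nu_2/g^2$ together with Cauchy-Schwarz, Young and Poincar\'e allows one to absorb this cross term in at most half of the dissipation, yielding
\begin{align*}
    d\|U\|^2 + \frac{\kappa}{2}\|U\|_{H^1}^2\, dt \leq 2 \langle \de, \sigma_\de dW\rangle + \|\sigma_\de\|^2\, dt,
\end{align*}
where $\kappa = \min\{\nu_1,\nu_2\}$. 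This inequality is the backbone of the whole lemma.

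To obtain (i), I would apply It\^o's formula to a time-weighted exponential $\Phi(t,U) = \eta e^{-\kappa(T-t)/2}\|U\|^2$. The deterministic correction from the time weight produces exactly the $-\frac{\kappa}{2}$-contraction needed to dominate the stochastic quadratic variation $\tfrac{1}{2}\eta^2 e^{-\kappa(T-t)}\cdot 4|\sigma_\de^\ast \de|^2$ provided $\eta$ is chosen below a threshold $\eta^\ast$ depending only on $\nu_1,\nu_2,g,\|\sigma_\de\|$. Modulo the exponential martingale this shows that $\exp(\Phi(t,U(t)) + \eta\frac{\kappa}{4}e^{-\kappa T/4}\int_0^t\|U\|_{H^1}^2 ds)$ is a supermartingale, which gives \eqref{eq:exp:no:time:growth} directly. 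The pathwise bound \eqref{eq:exp:time:int} then follows from the usual exponential martingale inequality combined with the Burkholder-Davis-Gundy estimate applied to the stochastic integral, again using the $\kappa/2$-dissipation to absorb the quadratic variation.

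Part (ii) is an immediate consequence of (i): applying the Markov property at integer times and using \eqref{eq:exp:no:time:growth} with $T=1$ gives a one-step estimate $\E(\exp(\eta\|U(k+1)\|^2)|\mathcal{F}_k) \leq C \exp(\eta e^{-\kappa/2}\|U(k)\|^2)$, after which a telescoping inductive argument absorbs the tower of exponentials into $\varrho\eta\|U_0\|^2 + \varkappa N$. For (iii) and (iv) the strategy is a parabolic bootstrap: the mild formulation \eqref{eq:BE:mild:form} splits $U$ into the heat semigroup acting on the data, the stochastic convolution $\int_0^\cdot e^{-A(t-s)}\sigma_\de dW(s)$ (which is spatially smooth and $C^{1/4}$ in time by a factorization argument, thanks to additive noise along finitely many smooth modes), and the nonlinear and buoyancy terms. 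Standard higher-order energy estimates against $(-\Delta)^s U$, combined with the almost-sure bound $\int_0^T\|U\|_{H^1}^2 ds<\infty$ inherited from (i), let one iterate from $H^1$ into $H^s$ for any $s$ after an arbitrarily short initial time, and each step preserves exponential moments through \eqref{eq:exp:time:int} and the Gronwall lemma.

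The principal technical hurdle is Part (i), specifically producing the exact form of \eqref{eq:exp:no:time:growth} with the delayed factor $\eta\frac{\kappa}{4}e^{-\kappa T/4}$ on the time integral and the attractor-like prefactor $e^{-\kappa T/2}$ on the initial datum. Everything rests on balancing three quantities simultaneously: the dissipation produced by the weighted energy identity (which is only \emph{half} of the naive dissipation, due to the buoyancy absorption), the deterministic correction from the time-dependent weight, and the $\eta^2$ quadratic variation contribution generated by the exponential It\^o correction. The constant $\eta^\ast$ is determined by requiring all three to be compatible; once this threshold is pinned down, the rest of the lemma is an organized bookkeeping around this single estimate.
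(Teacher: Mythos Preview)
Your proposal is correct and follows essentially the same approach as the paper. The paper derives the identical weighted energy inequality (with the choice $\zeta^\ast=\nu_1\nu_2/g^2$ to absorb the buoyancy cross term into half of the dissipation) and then, rather than carrying out the time-weighted exponential supermartingale argument you sketch for \eqref{eq:exp:no:time:growth}, simply invokes \cite[Lemma~5.1]{HairerMattingly2008} as a black box after verifying its hypotheses on $V=\eta\|U\|^2$ and $Z=\tfrac{\eta}{\kappa}\|U\|^2_{D(A^{1/2})}$; your direct argument is exactly what that lemma encodes. For parts (ii)--(iv) the paper likewise defers to \cite{HairerMattingly06} and \cite{KuksinShirikian12} rather than spelling out the Markov telescoping and parabolic bootstrap you describe, but the underlying mechanisms are the same.
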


\begin{proof}
For any $f \in H^1$ denote
\begin{align*}
   \|f\|^{2}_{D(A^{1/2})} :=  \zeta^\ast \nu \|\nabla f_1\|^2_{L^2} +
\mu \|\nabla f_2\|^{2}_{L^2} \,.
\end{align*}
From \eqref{eq:B1:Vort}--\eqref{eq:B3:Vort} we have
\begin{align}\label{eq:est:11}
  & d \|\om\|^{2}_{L^2} + 2 \nu_1 \| \nabla \om \|^{2}_{L^2}dt = 2 g\langle \pd_{x} \de, \om \rangle dt ,
  \\\label{eq:est:12}
  &d \|\de\|^{2}_{L^2} + 2 \nu_2 \| \nabla \de \|^{2}_{L^2} dt= \|\sigma_{\theta}\|^{2}dt
  + 2 \langle \sigma_{\theta}, \de \rangle dW \,.
\end{align}
Now we weight differently the equations (note the difference to the Navier-Stokes equation).
Multiplying \eqref{eq:est:11} by $\zeta^\ast$ and adding to \eqref{eq:est:12} we obtain (recall \eqref{eq:def:U:2})
for any $\eta > 0$
\begin{align}\label{eq:est:h11}
d (\eta \|U\|^2) = \eta(2 g\zeta^\ast\langle \pd_{x} \de, \om \rangle +  \|\sigma_{\theta}\|^{2} -
2 \|U\|^{2}_{D(A^{1/2})})dt + 2\eta \langle \sigma_{\theta}, \de \rangle dW \,.
\end{align}
Since by Poincar\` e inequality
\begin{align}\label{eq:est:h12}
2 \zeta^\ast g|\langle \pd_{x} \de, \om \rangle|
&\leq  \nu_1 \zeta^\ast \|\omega\|^2_{L^2} + \nu_2 \|\nabla \de \|^2_{L^2}\leq \nu_1 \zeta^\ast \|\nabla \omega\|^2_{L^2} +
\nu_2 \|\nabla \de \|^2_{L^2}
= \|U\|_{D(A^{1/2})}^2\,, \\
\kappa \|U\|^2 &\leq \|U\|^{2}_{D(A^{1/2})}\,,\notag
\end{align}
we have for $Z(t) := \frac{\eta}{\kappa} \|U(t)\|^2_{D(A^{1/2})}$ and $V(t) := \eta \|U(t)\|^2$ that $V \leq Z$ and
\begin{align*}
 \eta(2 g\zeta^\ast\langle \pd_{x} \de, \om \rangle +  \|\sigma_{\theta}\|^{2} -
2 \|U\|^{2}_{D(A^{1/2})}) &\leq \eta \|\sigma_{\theta}\|^{2} - \kappa Z \,, \\
4 \eta^2 |\langle \sigma_{\theta}, \de \rangle|^2\leq 4 \eta^2 \|\sigma_\de\|^2 \|\de\|^2_{L^2}
\leq 4 \eta^2 \|\sigma_\de\|^2 \|U\|^2
&\leq 4\eta \|\sigma_\de\|^2 Z \,.
\end{align*}
Thus, by \cite[Lemma 5.1]{HairerMattingly2008}, for any $\eta \in (0, \kappa/(4\|\sigma_\de\|^2))$ one has
\begin{equation*}
\E \exp \left(\eta \|U(T)\|^2 + \frac{\kappa e^{-\kappa T/4}}{4} \int_0^T Z(s)\, ds\right) \leq C \exp \left(\eta e^{-\kappa T/2}\|U_0\|^2 \right)\,.
\end{equation*}
Now, \eqref{eq:exp:no:time:growth} follows from
$\kappa Z(s) = \eta \|U(s)\|^2_{D(A^{1/2})} \geq \eta \kappa \|U(s)\|^2_{H^1} $.

The estimates \eqref{eq:exp:time:int}, \eqref{eq:exp:mom:sums} follow similarly as
in \cite[proof of Lemma 4.10]{HairerMattingly06} and \eqref{eq:smoothing:est} follows
as in \cite[Proposition 2.4.12]{KuksinShirikian12}. Finally, \eqref{eq:smoothing:est:2} follows from
\eqref{eq:smoothing:est} and the fact that $\|W_{k}^l\|_{C^{1/4}_{[T/2,T]}}$
has finite $p$-th moment for any $p\geq 1$, see \cite{BaldiBenArousKerkyacharian1992}.
\end{proof}

\begin{Rmk}\label{rmk:stupid:rho}
In the following estimates $\eta$ appears only on the right hand side, and therefore they remain valid if $\eta$
is increased, thus we do not assume any upper bound on $\eta$.
\end{Rmk}

Next lemmata collect estimates on linearizations of \eqref{eq:B1:Vort}--\eqref{eq:B3:Vort}.
Recall
the definitions of the operators $\JJ_{0,t}$ and its adjoint $\KK_{0,t}$ given
in \eqref{eq:def:J} and \eqref{eq:def:K:backwards:eq} respectively. Moreover, for any $t \geq s \ge 0$
let $\JJ^{(2)}_{s,t} : H \to \mathcal{L}(H, \mathcal{L}(H))$ be the second derivative of $U$ with respect to an initial condition $U_0$.
Observe that for fixed $U_0$ and any $\xi, \xi' \in H$ the function
$\rho(t) := \JJ^{(2)}_{s,t}(\xi, \xi')$ is the solution of
\begin{align}
\partial_t \rho + A \rho + \nabla B(U) \rho + \nabla B(\JJ_{s, t}\xi) \JJ_{s, t}\xi'
= G \rho, \qquad \rho(s) &= 0. \label{eq:def:j1}
\end{align}

\begin{Lem}\label{lem:lin:moments}
For each $\eta > 0$ and $0 < s < t$, we have the pathwise estimate
\begin{align}
   \| \JJ_{s,t} \| \leq  \exp\left( \eta \int_s^t
   \|U(s)\|_{H^1}^2 ds  + C (t-s)\right),
   \label{eq:lin:path:est}
\end{align}
where $C = C(\eta)$ is independent of $s, t$.
Moreover, for each $\tau \leq T$, $p \geq 1$, and $\eta > 0$
there is $C = C(\eta, T - \tau, p)$ such that
\begin{align}
     \E \sup_{s < t \in [\tau,T]} \| \JJ_{s,t} \|^p  &\leq
     C \exp \left( \eta \|U_0\|^2\right),
     \label{eq:lin:growth:est}   \\
          \E \sup_{s< t \in [\tau, T]} \|\KK_{s, t} \|^p  &\leq C
      \exp( \eta \|U_0\|^2),
   \label{eq:back:lin:est:1}\\
        \E \sup_{s<t \in [\tau, T]} \| \JJ^{(2)}_{s ,t} \|^p &\leq  C \exp \left( \eta \|U_0\|^2 \right)\,.
     \label{eq:second:esti}
\end{align}
\end{Lem}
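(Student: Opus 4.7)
The plan is to first establish the pathwise bound \eqref{eq:lin:path:est} via an energy estimate on \eqref{eq:def:J}, and then obtain the moment bounds \eqref{eq:lin:growth:est}--\eqref{eq:second:esti} by combining this pathwise inequality with the exponential moment estimates of Lemma~\ref{lem:exp:moments}.

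First, set $\rho(t) := \JJ_{s,t}\xi$ and take the inner product of \eqref{eq:def:J} with $\rho$ in the weighted norm \eqref{eq:norm:def}. Two cancellations are central. The divergence-free structure of $K \ast \om$ gives $\langle B(U, \rho), \rho \rangle = 0$, since each component $(K\ast \om)\cdot \nabla \rho_\bullet$ paired against $\rho_\bullet$ vanishes after integration by parts. The buoyancy term $\langle G \rho, \rho \rangle = \zeta^\ast g \langle \partial_x \rho_\de, \rho_\om \rangle$ is absorbed into the dissipation precisely as in the proof of \eqref{eq:est:h12}, yielding at most $\tfrac{1}{2}\|\rho\|_{D(A^{1/2})}^2$. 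The only remaining term is
\begin{align*}
|\langle B(\rho, U), \rho \rangle| \leq C \|K\ast \rho_\om\|_{L^4}\|\nabla U\|_{L^2}\|\rho\|_{L^4} \leq C \|\rho\|\|\nabla U\|_{L^2}\|\rho\|_{H^1},
\end{align*}
using the Ladyzhenskaya inequality $\|f\|_{L^4}^2\leq C\|f\|\|f\|_{H^1}$ together with $\|K\ast \rho_\om\|_{H^s}\leq C\|\rho_\om\|_{H^{s-1}}$. Applying Young's inequality to split off $\tfrac{1}{4}\|\rho\|_{D(A^{1/2})}^2$, and using Poincar\'e, produces
\begin{align*}
\frac{d}{dt}\|\rho\|^2 + \tfrac{1}{2}\|\rho\|_{D(A^{1/2})}^2 \leq C\|U\|_{H^1}^2\|\rho\|^2.
\end{align*}
Gronwall's inequality immediately yields \eqref{eq:lin:path:est}: indeed, given $\eta > 0$, one uses Young's inequality one more time to write $C\|U\|_{H^1}^2 \leq \eta\|U\|_{H^1}^2 \cdot \text{(cte)} + C(\eta)$, absorbing the deterministic excess into the $C(t-s)$ term.

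Given \eqref{eq:lin:path:est}, the bound \eqref{eq:lin:growth:est} is an immediate consequence of \eqref{eq:exp:time:int} with $\eta$ chosen sufficiently small relative to $\eta^\ast$ (and using that $\|\cdot\|$ exponentials can be split via H\"older into the given power $p$). For \eqref{eq:back:lin:est:1}, one performs the analogous energy estimate on the backward system \eqref{eq:def:K:backwards:eq}; the advective and buoyancy cancellations are structurally identical (the transpose of the advection preserves the divergence-free cancellation up to a sign), so the same pathwise-plus-Gronwall scheme applies. For \eqref{eq:second:esti}, we view \eqref{eq:def:j1} as an inhomogeneous version of \eqref{eq:def:J} with forcing $-\nabla B(\JJ_{s,t}\xi)\JJ_{s,t}\xi'$. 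Duhamel's formula, the pathwise bound \eqref{eq:lin:path:est}, and the estimate
\begin{align*}
\|\nabla B(\JJ_{s,r}\xi)\JJ_{s,r}\xi'\|_{H^{-1}} \leq C \|\JJ_{s,r}\|^2 \|\xi\|\|\xi'\|
\end{align*}
(together with standard parabolic smoothing to transfer the $H^{-1}$ norm to $H$) give a pathwise bound of the form $\|\JJ^{(2)}_{s,t}\| \leq C(t-s)\sup_r \|\JJ_{s,r}\|^3$. H\"older and \eqref{eq:lin:growth:est} (at a larger power) then yield \eqref{eq:second:esti}.

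The main obstacle is the bookkeeping for the constants in the pathwise step: the splitting $C\|U\|_{H^1}^2 \leq \eta\|U\|_{H^1}^2 + C(\eta)$ must be tuned so that after exponentiation and H\"older the resulting $\eta$ inside the exponential on the right-hand side of \eqref{eq:exp:time:int} lies below $\eta^\ast/p$. This is just a calibration exercise, but it is what allows an arbitrary $\eta > 0$ to appear in \eqref{eq:lin:growth:est}--\eqref{eq:second:esti} despite the fixed threshold $\eta^\ast$ in Lemma~\ref{lem:exp:moments}.
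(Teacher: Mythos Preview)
Your overall architecture matches the paper's (energy estimate for the pathwise bound, then exponential moments from Lemma~\ref{lem:exp:moments}; duality for $\KK$; Duhamel for $\JJ^{(2)}$). But the pathwise step contains a genuine error that breaks the ``for every $\eta>0$'' claim in \eqref{eq:lin:path:est}.

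Your interpolation $|\langle B(\rho,U),\rho\rangle|\le C\|\rho\|\,\|U\|_{H^1}\|\rho\|_{H^1}$ is valid but too crude: after Young it produces $\tfrac{d}{dt}\|\rho\|^2\le C_0\|U\|_{H^1}^2\|\rho\|^2$ with a \emph{fixed} constant $C_0$ determined by the equation. The subsequent claim that ``Young's inequality'' gives $C_0\|U\|_{H^1}^2\le \eta\|U\|_{H^1}^2+C(\eta)$ is simply false for $\eta<C_0$, since $\|U\|_{H^1}^2$ is unbounded and there is no second quantity to trade against. This is not a bookkeeping issue; the inequality you wrote does not hold.

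The fix (which is the content of \cite[Lemma~4.10]{HairerMattingly06} cited in the paper's proof) is to keep the extra smoothing from the Biot--Savart kernel in the Ladyzhenskaya step: $\|K\!*\!\rho_\om\|_{L^4}\le C\|\rho\|$ while $\|\rho\|_{L^4}\le C\|\rho\|^{1/2}\|\rho\|_{H^1}^{1/2}$, so that
\[
|\langle B(\rho,U),\rho\rangle|\le C\|\rho\|^{3/2}\|U\|_{H^1}\|\rho\|_{H^1}^{1/2}.
\]
Young with exponents $(4,\tfrac{4}{3})$ then absorbs $\|\rho\|_{H^1}^2$ into the dissipation and leaves $C\|U\|_{H^1}^{4/3}\|\rho\|^2$. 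Now the splitting you wanted is legitimate: $C\|U\|_{H^1}^{4/3}\le \eta\|U\|_{H^1}^2+C(\eta)$ by Young with exponents $(\tfrac{3}{2},3)$, and Gronwall gives \eqref{eq:lin:path:est} for arbitrary $\eta>0$. The sub-quadratic power of $\|U\|_{H^1}$ is exactly what buys the freedom in $\eta$.

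One smaller point: for \eqref{eq:back:lin:est:1} the paper simply uses $\KK_{s,t}=\JJ_{s,t}^*$, so $\|\KK_{s,t}\|=\|\JJ_{s,t}\|$ and \eqref{eq:back:lin:est:1} is immediate from \eqref{eq:lin:growth:est}; there is no need to redo an energy estimate on the backward system.
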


\begin{proof}
Proof of \eqref{eq:lin:path:est} follows along the same lines as \cite[Lemma 4.10.3]{HairerMattingly06}.
By taking expectation, \eqref{eq:lin:growth:est} follows from \eqref{eq:lin:path:est},
\eqref{eq:exp:no:time:growth}, and \eqref{eq:exp:time:int}.
Finally, \eqref{eq:back:lin:est:1} follows from \eqref{eq:lin:growth:est} by duality and \eqref{eq:second:esti}
is similar to \cite[Lemma 4.10.4]{HairerMattingly06}.
\end{proof}

The next lemma provides us with estimates to initial time in a weak norm, which allows us to avoid
some
technical arguments in Section \ref{sec:implications:eigenvals} (cf.  \cite{HairerMattingly2011}).

\begin{Lem}
For any $p \geq 2$, $T > 0$, and $\eta > 0$ there is
$C = C(p, T, \eta)$ such that
\begin{align}
   \E \sup_{t \in [T/2,T]} \|\pd_{t} \KK_{t, T} \xi \|^p_{H^{-2}}
   \leq C \exp \left(  \eta  \|U_0\|^2 \right) \|\xi\|^{p} \,.
   \label{eq:lin:smooth}
\end{align}
\end{Lem}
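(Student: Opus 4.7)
The plan is to start from the backward equation \eqref{eq:def:K:backwards:eq}, which says that, pathwise,
\begin{align*}
\pd_t \KK_{t,T}\xi = A\KK_{t,T}\xi + (\nabla B(U(t)))^\ast \KK_{t,T}\xi - G^\ast \KK_{t,T}\xi,
\end{align*}
and estimate each term on the right in the $H^{-2}$ norm separately, pulling the $U$-dependence out into a factor controlled by the smoothing estimate \eqref{eq:smoothing:est} and the $\KK$-dependence into a factor controlled by \eqref{eq:back:lin:est:1}.

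First I would handle the linear terms. Since $A:H^s\to H^{s-2}$ is bounded and $G$ is a first-order operator mapping $H^s \to H^{s-1}$, their adjoints give the bounds
\begin{align*}
\|A\KK_{t,T}\xi\|_{H^{-2}}\leq C\|\KK_{t,T}\xi\|,\qquad
\|G^\ast\KK_{t,T}\xi\|_{H^{-2}}\leq C\|\KK_{t,T}\xi\|_{H^{-1}}\leq C\|\KK_{t,T}\xi\|.
\end{align*}
For the nonlinear term, I would use duality: for every $\phi\in H^2$,
\begin{align*}
|\langle (\nabla B(U))^\ast \KK_{t,T}\xi,\phi\rangle|
  = |\langle \KK_{t,T}\xi, B(U,\phi)+B(\phi,U)\rangle|
  \leq \|\KK_{t,T}\xi\|\cdot \|B(U,\phi)+B(\phi,U)\|.
\end{align*}
Using the explicit form of $B$ in \eqref{eq:convection:term}, the bound $\|K\ast\om\|_{H^{s}}\leq C\|\om\|_{H^{s-1}}$, and the 2D Sobolev embedding $H^{2}\hookrightarrow L^\infty$, I obtain (after inserting $U\in H^2$ via \eqref{eq:smoothing:est})
\begin{align*}
\|B(U,\phi)\|+\|B(\phi,U)\|\leq C\|U\|_{H^{2}}\|\phi\|_{H^{2}},
\end{align*}
so that taking the supremum over unit $\phi\in H^2$ gives
\begin{align*}
\|(\nabla B(U))^\ast \KK_{t,T}\xi\|_{H^{-2}}\leq C\|U\|_{H^{2}}\|\KK_{t,T}\xi\|.
\end{align*}

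Combining the three bounds yields the pathwise estimate
\begin{align*}
\sup_{t\in[T/2,T]}\|\pd_t \KK_{t,T}\xi\|_{H^{-2}}\leq C\Bigl(1+\sup_{t\in[T/2,T]}\|U(t)\|_{H^{2}}\Bigr)\sup_{t\in[T/2,T]}\|\KK_{t,T}\|\cdot\|\xi\|.
\end{align*}
Raising to the $p$th power, applying Cauchy--Schwarz in $\omega$, and invoking \eqref{eq:back:lin:est:1} with exponent $2p$ together with \eqref{eq:smoothing:est} at regularity $s=2$ with exponent $2p$ (each applied with $\eta/2$ in place of $\eta$), one gets the claimed bound $C\exp(\eta\|U_0\|^2)\|\xi\|^p$. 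No single step looks like a genuine obstacle: the only slightly delicate point is keeping the test function in $H^2$ strong enough that $K\ast\om$ is bounded in $L^\infty$ via $H^{2}\hookrightarrow L^\infty$, but this is exactly the reason the target space is $H^{-2}$ rather than something smaller.
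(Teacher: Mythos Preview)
Your proposal is correct and follows essentially the same approach as the paper: start from the backward equation \eqref{eq:def:K:backwards:eq}, bound each of the three terms in $H^{-2}$ by duality, then combine \eqref{eq:back:lin:est:1} and \eqref{eq:smoothing:est} via Cauchy--Schwarz. The only minor difference is that the paper uses the sharper bilinear estimate $\|B(U',U'')\|\leq\|U'\|_{H^1}\|U''\|_{H^1}$, so only $\|U\|_{H^1}$ (rather than $\|U\|_{H^2}$) appears in the intermediate bound; this is inconsequential for the final estimate.
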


\begin{proof}
Recall that $\rho^* = \KK_{t,T}\xi$ solves \eqref{eq:def:K:backwards:eq} and notice that
$\| B(U', U'') \|  \leq \| U'\|_{H^1} \|U''\|_{H^1}$
for any $U', U'' \in H^1$.
Since $\|A \rho^\ast\|_{H^{-2}} \leq \|\rho^\ast\|$ and
\begin{align*}
   \| (\nabla B(U))^* \rho^* - (\nabla G(U))^* \rho^*\|_{H^{-2}} \leq& \sup_{\|\psi\|_{H^2} \leq 1}(|\langle(\nabla B(U))^* \rho^*, \psi \rangle|
   + |\langle(\nabla G(U))^* \rho^*, \psi\rangle|)
   \notag\\
	\leq& \sup_{\|\psi\|_{H^2} \leq 1}\left( |\langle \rho^*, B(U,\psi) \rangle|  + | \langle \rho^*,B(\psi, U) \rangle| +
	|\langle \rho^*, \nabla G(U)\psi\rangle|\right)
	\notag\\
	\leq& \| \rho^*\|\sup_{\|\psi\|_{H^2} \leq 1}\left(2 \|U\|_{H^1} \| \psi \|_{H^1} + |g| \|\psi\|_{H^1}\right)\notag \\
	\leq& C \| \rho^*\| ( \|U \|_{H^1} + 1) \,,
\end{align*}
then \eqref{eq:lin:smooth} follows from \eqref{eq:back:lin:est:1} and  \eqref{eq:smoothing:est}.
\end{proof}

The next lemma is a version of the Foias-Prodi estimate, \cite{FoiasProdi1967},  used in this work.  Specifically
the estimate \eqref{eq:lin:growth:high:mode:est} is employed in the decay estimate
\eqref{eq:stepwise:cond:decay:est}.

\begin{Lem}
\label{lem:parabolic:cont:high:modes:Jop}
For every $p \geq 1$, $T > 0$, $\delta, \gamma > 0$ there exists
$N_* = N_*(p, T, \delta, \gamma)$, such that for any $N \geq N_*$ one has
(recall that $Q_N$ was defined in \eqref{eq:H:N:proj:ops})
\begin{align}
 	 \E \| Q_N\JJ_{0,T} \|^p  &\leq \gamma
 	 \exp \left(  \delta \|U_0\|^2 \right) \,,
\notag
 	 \\
     \E \| \JJ_{0,T} Q_N\|^p  &\leq \gamma \exp \left(  \delta \|U_0\|^2 \right) \,.
     \label{eq:lin:growth:high:mode:est}
\end{align}
\end{Lem}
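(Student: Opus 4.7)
The plan is to derive a pathwise energy estimate for the high-mode component of the linearization, exploiting that the spectral gap $\lambda_{N+1}\sim N^2$ of $A$ on the range of $Q_N$ can be made arbitrarily large, and then take moments using the bounds collected above. The two assertions in \eqref{eq:lin:growth:high:mode:est} in fact reduce to a single type of estimate: since $\JJ_{0,T}^\ast = \KK_{0,T}$ by the very definition of the backward adjoint system \eqref{eq:def:K:backwards:eq} and $Q_N$ is self-adjoint, one has $\|\JJ_{0,T}Q_N\| = \|Q_N \KK_{0,T}\|$. A time reversal $\tau := T-s$ recasts \eqref{eq:def:K:backwards:eq} as a forward parabolic equation with the same dissipation $A$ and lower-order terms of the same type as in \eqref{eq:def:J}, so the argument given below for the forward propagator applies verbatim. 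I therefore treat only $\|Q_N \JJ_{0,T}\|$.

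Fix a unit $\xi\in H$ and set $\rho(t):=\JJ_{0,t}\xi$, $\rho^H := Q_N \rho$. Since $Q_N$ commutes with $A$, testing \eqref{eq:def:J} against $\rho^H$ yields
\begin{align*}
\frac{1}{2}\frac{d}{dt}\|\rho^H\|^2 + \|\rho^H\|_{D(A^{1/2})}^2 = -\langle \rho^H, Q_N\nabla B(U)\rho\rangle + \langle \rho^H, Q_N G\rho\rangle.
\end{align*}
The generalized Poincar\'e inequality on the range of $Q_N$ gives $\|\rho^H\|_{D(A^{1/2})}^2 \geq \kappa\lambda_{N+1}\|\rho^H\|^2$ with $\kappa:=\min\{\nu_1,\nu_2\}$, while standard bilinear estimates for $B$ in the 2D vorticity setting, together with $\|G\rho\|\leq C\|\rho\|_{H^1}$, bound the right-hand side by $C\|\rho^H\|_{D(A^{1/2})}(1+\|U\|_{H^1})\|\rho\|$. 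Absorbing half of the dissipation via Young's inequality produces
\begin{align*}
\frac{d}{dt}\|\rho^H\|^2 + \kappa\lambda_{N+1}\|\rho^H\|^2 \leq C(1+\|U\|_{H^1}^2)\|\rho\|^2.
\end{align*}
Duhamel, combined with $\|\rho(s)\| \leq \|\JJ_{0,s}\|$ and $\|\rho^H(0)\|\leq 1$, and a supremum over $\|\xi\|=1$, yields the pathwise bound
\begin{align*}
\|Q_N\JJ_{0,T}\|^2 \leq e^{-\kappa\lambda_{N+1}T} + C\int_0^T e^{-\kappa\lambda_{N+1}(T-s)}(1+\|U(s)\|_{H^1}^2)\|\JJ_{0,s}\|^2\, ds.
\end{align*}

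To close the estimate, I split the integral at $T/2$. On $[0,T/2]$ I pull out $e^{-\kappa\lambda_{N+1}T/2}$ and $\sup_{s\in[0,T/2]}\|\JJ_{0,s}\|^2$, reducing the remaining factor to $\int_0^{T/2}(1+\|U\|_{H^1}^2)\,ds$, whose exponential moments are provided by \eqref{eq:exp:time:int}. On $[T/2,T]$ I factor out $\sup_{s\in[T/2,T]}(1+\|U(s)\|_{H^1}^2)\|\JJ_{0,s}\|^2$ (with finite moments by \eqref{eq:smoothing:est} and \eqref{eq:lin:growth:est}) and bound $\int_{T/2}^T e^{-\kappa\lambda_{N+1}(T-s)}\,ds \leq 1/(\kappa\lambda_{N+1})$. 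Raising to the $p$-th power, applying Cauchy-Schwarz, and taking expectation, I expect a bound of the form
\begin{align*}
\E \|Q_N\JJ_{0,T}\|^p \leq C_p\bigl(e^{-p\kappa\lambda_{N+1}T/2} + \lambda_{N+1}^{-p/2}\bigr)\exp(\delta\|U_0\|^2),
\end{align*}
for some $C_p = C_p(T,\delta)$. Since $\lambda_{N+1}\to\infty$ as $N\to\infty$, choosing $N_*=N_*(p,T,\delta,\gamma)$ sufficiently large makes the right-hand side at most $\gamma\exp(\delta\|U_0\|^2)$ for every $N\geq N_*$.

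The chief delicacy is that, for $U_0\in H$ only, the quantity $\|U(s)\|_{H^1}$ is merely square-integrable in $s$ near $0$ rather than pointwise bounded; the split at $T/2$ is precisely what allows me to use the sup-based smoothing estimate \eqref{eq:smoothing:est} on the right half-interval and the time-integrated exponential moment \eqref{eq:exp:time:int} on the left, avoiding any circular dependence on higher-regularity control of $U$ at the initial time.
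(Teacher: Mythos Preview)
Your argument is correct and is precisely the Foias--Prodi-type energy estimate on high modes that the paper itself defers to \cite{HairerMattingly06, HairerMattingly2011} without spelling out. The spectral-gap/Duhamel structure, the bilinear estimate placing the $H^1$-norm on $\rho^H$, the split at $T/2$ to separate time-integrated from sup-based moment bounds, and the duality reduction of the second inequality to the first are all standard ingredients of that referenced approach.
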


\begin{proof}
The proof is analogous to \cite[Lemma 4.17]{HairerMattingly06} and it essentially follows from the fact that $A$
has compact resolvent (see \cite[proof of Theorem 8.1]{HairerMattingly2011}).
\end{proof}

We next present estimates on the operators $\AAA_{s,t}$ and the
inverse of the regularized Malliavin matrix that are primarily used in Section~\ref{sec:proof:main:prop}.

\begin{Lem}\label{lem:claim}
For $0 < s < t$ define $\AAA_{s,t}$, $\AAA_{s,t}^*$, and $\MM_{s,t}$ by to \eqref{eq:A:op:def}, \eqref{eq:A:star:op:def},
and \eqref{eq:Mal:def} respectively.  Then
\begin{align}
   \|\AAA_{s,t}\|_{\mathcal{L}(L^2([s, t]; \RR^d), H)}  \leq C \left( \int_s^t \| \JJ_{r,t} \|^2 dr\right)^{1/2}
   \label{eq:gen:AAA:bnd}
\end{align}
for a constant $C$ independent of $s, t$.
Moreover,
\begin{align}
\|\AAA_{s,t}^\ast (\MM_{s,t}  + I \beta)^{-1/2}\|_{\mathcal{L}(H, L^2([s, t]; \RR^d))} &\leq 1 \,, \label{eq:op:norm:1} \\
\|(\MM_{s,t}  + I \beta)^{-1/2} \AAA_{s,t} \|_{\mathcal{L}(L^2([s, t]; \RR^d), H)}
&\leq 1 \,, \label{eq:op:norm:2}\\
\|(\MM_{s,t}  + I \beta)^{-1/2} \|_{\mathcal{L}(H, H)} &\leq \beta^{-1/2} \,.
\label{eq:op:norm:3}
\end{align}
Here, $\mathcal{L}(X, Y)$ denotes the operator norm of the linear map between the given Hilbert spaces
$X$ and $Y$.
\end{Lem}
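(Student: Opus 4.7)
The plan is to establish the four bounds by essentially elementary functional-analytic arguments, treating $(\mathrm{i})$ as a direct Cauchy--Schwarz estimate and $(\mathrm{ii})$--$(\mathrm{iv})$ via spectral calculus applied to the non-negative self-adjoint operator $\MM_{s,t}$.

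First I would prove \eqref{eq:gen:AAA:bnd}. Starting from the defining formula $\AAA_{s,t} v = \int_s^t \JJ_{r,t}\sigma_\de v(r)\,dr$ (see \eqref{eq:A:op:def}), for any $v \in L^2([s,t];\RR^d)$ I estimate
\begin{align*}
    \| \AAA_{s,t} v\| \leq \int_s^t \|\JJ_{r,t}\sigma_\de v(r)\|\,dr \leq \|\sigma_\de\|_{\mathcal{L}(\RR^d,H)} \int_s^t \|\JJ_{r,t}\||v(r)|_{\RR^d}\,dr,
\end{align*}
and then by Cauchy--Schwarz in time I obtain the desired bound with $C = \|\sigma_\de\|_{\mathcal{L}(\RR^d,H)}$, which is finite since $\sigma_\de$ is a bounded linear map (cf. \eqref{eq:noise:cog:def}).

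For \eqref{eq:op:norm:3} I use that $\MM_{s,t}$ is a non-negative, self-adjoint, bounded linear operator on $H$ (it is the composition $\AAA_{s,t}\AAA_{s,t}^\ast$, see \eqref{eq:Mal:def}), so by spectral calculus $\MM_{s,t} + \beta I \geq \beta I$, yielding $(\MM_{s,t} + \beta I)^{-1/2} \leq \beta^{-1/2} I$ in the operator sense.

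For \eqref{eq:op:norm:1}, the key identity is that for any $\xi \in H$, using self-adjointness of $(\MM_{s,t}+\beta I)^{-1/2}$ and $\MM_{s,t} = \AAA_{s,t}\AAA_{s,t}^\ast$,
\begin{align*}
  \|\AAA_{s,t}^\ast (\MM_{s,t}+\beta I)^{-1/2}\xi\|^2
  &= \langle (\MM_{s,t}+\beta I)^{-1/2}\xi, \MM_{s,t}(\MM_{s,t}+\beta I)^{-1/2}\xi \rangle\\
  &= \langle \xi, \MM_{s,t}(\MM_{s,t}+\beta I)^{-1} \xi\rangle \leq \|\xi\|^2,
\end{align*}
where in the last step I used that $\MM_{s,t}$ and $(\MM_{s,t}+\beta I)^{-1}$ commute (both are functions of $\MM_{s,t}$ in the spectral sense) and that $\MM_{s,t}(\MM_{s,t}+\beta I)^{-1} \leq I$ as a self-adjoint operator, since the spectral function $\lambda \mapsto \lambda/(\lambda+\beta)$ is bounded by $1$ on $[0,\infty)$. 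Bound \eqref{eq:op:norm:2} then follows immediately from \eqref{eq:op:norm:1} by taking adjoints, since $(\MM_{s,t}+\beta I)^{-1/2}\AAA_{s,t}$ is precisely the adjoint of $\AAA_{s,t}^\ast(\MM_{s,t}+\beta I)^{-1/2}$ and adjoints preserve operator norms. None of these steps presents a real obstacle; the only point requiring mild care is the commutativity assertion and verifying that the spectral inequality $\lambda/(\lambda+\beta) \leq 1$ translates into the desired operator inequality, which is a routine application of the continuous functional calculus for non-negative self-adjoint operators.
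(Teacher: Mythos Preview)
Your proposal is correct and is essentially the same argument as the paper's. The paper proves \eqref{eq:gen:AAA:bnd} by H\"older's inequality (your Cauchy--Schwarz), and for the remaining bounds it computes the identity $\|(\MM_{s,t}+\beta I)^{1/2}U\|^2 = \beta\|U\|^2 + \|\AAA_{s,t}^\ast U\|^2$ and substitutes $U = (\MM_{s,t}+\beta I)^{-1/2}V$, from which \eqref{eq:op:norm:1} and \eqref{eq:op:norm:3} drop out simultaneously; \eqref{eq:op:norm:2} then follows by duality, exactly as you argue. Your spectral-calculus formulation $\MM_{s,t}(\MM_{s,t}+\beta I)^{-1}\le I$ is just a repackaging of the same identity, so there is no substantive difference.
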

\begin{proof}
The first bound \eqref{eq:gen:AAA:bnd} follows from the definition of $\AAA_{s,t}$ and H\"older's inequality.
Next, since $\MM_{s, t}$ is self-adjoint
\begin{align*}
\|(\MM_{s,t}  + I \beta)^{1/2}U\|_{H}^2 &=
\langle (\MM_{s,t}  + I \beta)^{1/2}U, (\MM_{s,t}  + I \beta)^{1/2}U \rangle
=\langle (\MM_{s,t}  + I \beta)U, U \rangle \\
&=
\beta \|U\|_H^2 + \langle \AAA_{s,t} \AAA_{s,t}^\ast U, U \rangle
=
\beta \|U\|_H^2 + \|\AAA_{s,t}^\ast U \|^2_{L^2([s, t]; \RR^d)}
\end{align*}
for any $U \in H$. Setting $U = (\MM_{s,t}  + I \beta)^{-1/2} V$ ($(\MM_{s,t}  + I \beta)^{-1/2}$ is invertible)
we immediately obtain \eqref{eq:op:norm:1} and \eqref{eq:op:norm:3}.
Finally,  \eqref{eq:op:norm:2} follows from \eqref{eq:op:norm:1} by duality.
\end{proof}

For the `cost of control' bounds \eqref{eq:bound} on $v$ is Section~\ref{sec:proof:main:prop} we also made
use of bounds on the Malliavin derivative of the random operators $\JJ_{s, t}$, $\AAA_{s, t}$, and $\AAA^*_{s, t}$
for $0 < s < t$.  Observe that for $\tau \leq t$ (see \cite{HairerMattingly2011})
\begin{align}
  \MD_\tau^j \JJ_{s,t} \xi =
  \begin{cases}
    \JJ^{(2)}_{\tau, t}( \sigma_\theta e_j, \JJ_{s,\tau} \xi)&   \textrm{ if } s \leq \tau \,,\\
    \JJ^{(2)}_{s, t}( \JJ_{\tau, s} \sigma_\theta e_j,  \xi) & \textrm{ if } s > \tau \,.
  \end{cases}
  \label{eq:Mal:Div:Snd}
\end{align}
We refer the reader to Appendix~\ref{subsect:malliavin} for further details on the Malliavin derivative operator
$\MD$ and the associated spaces $\Mspc^{1,p}$ on which it acts.
\begin{Lem}\label{lem:mal:est}
For any $0 \leq s < t$ the random operators $\JJ_{s,t}$, $\AAA_{s,t}$, $\AAA_{s,t}^\ast$ are differentiable in the
Malliavin sense.   Moreover, for any $\eta > 0$ and $p \geq 1$ we have the bounds
\begin{align}
	\E \|\MD_{\tau}^j \JJ_{s, t}\|^p &\leq C \exp (\eta \|U_0\|^2) \,,
		\label{eq:mal:J:est}\\
	\E \|\MD_{\tau}^j \AAA_{s, t}\|^p_{\mathcal{L}(L^2([s, t], \RR^d),H)} &\leq C \exp (\eta \|U_0\|^2) \,,
		\label{eq:mal:A:est}  \\
	\E \|\MD_{\tau}^j \AAA_{s, t}^\ast\|^p_{\mathcal{L}(H,L^2([s, t], \RR^d))} &\leq C \exp (\eta \|U_0\|^2)\,,
		\label{eq:mal:A:star:est}
\end{align}
where $C = C(\eta, p)$.
\end{Lem}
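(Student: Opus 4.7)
The plan is to obtain all three estimates from the single pointwise formula \eqref{eq:Mal:Div:Snd} for $\MD_\tau^j \JJ_{s,t}$, which reduces everything to the previously established moment bounds \eqref{eq:lin:growth:est} on $\JJ_{\cdot,\cdot}$ and \eqref{eq:second:esti} on $\JJ^{(2)}_{\cdot,\cdot}$. First I would justify Malliavin differentiability of $\JJ_{s,t}$ itself. Since the drift $F$ in \eqref{eq:BE:abs} has polynomially bounded Fr\'echet derivatives of all orders (as $A$ is linear, $B$ is bilinear, and $G$ is linear), and the noise is constant additive, the standard chain rule for Malliavin calculus applies to the solution operator $U_0 \mapsto U(t,U_0)$ as in \cite{HairerMattingly2011,Nualart2006}, and differentiating \eqref{eq:def:J} in $\omega$ produces exactly the equation whose Duhamel representation yields \eqref{eq:Mal:Div:Snd}. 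A rigorous derivation proceeds by Galerkin truncation, using the smoothness of $F$ and $\sigma_\theta$ together with the uniform-in-$N$ moment bounds already at our disposal, and passing to the limit.

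Next I would prove \eqref{eq:mal:J:est} by taking operator norms in \eqref{eq:Mal:Div:Snd}: for $\tau \in [s,t]$,
\begin{equation*}
\|\MD_\tau^j \JJ_{s,t}\|_{\mathcal{L}(H)} \le \|\JJ^{(2)}_{\tau,t}\| \, \|\sigma_\theta e_j\| \, \|\JJ_{s,\tau}\|,
\end{equation*}
and the symmetric bound for $\tau<s$. Raising to the $p$-th power, applying the Cauchy--Schwarz inequality and \eqref{eq:lin:growth:est}, \eqref{eq:second:esti} with $\eta$ replaced by $\eta/3$ (and $p$ by $3p$), then estimates the expectation by $C\exp(\eta \|U_0\|^2)$, with $C$ depending only on $\eta,p$ and $\|\sigma_\theta\|$. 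Note the bound is uniform in $\tau,s,t$ in any bounded time window.

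For \eqref{eq:mal:A:est} I would use the integral representation $\AAA_{s,t} v = \int_s^t \JJ_{r,t} \sigma_\theta v(r)\,dr$ and differentiate under the integral (which is legitimate since $v$ can be chosen deterministic when computing the operator-valued Malliavin derivative) to obtain $(\MD_\tau^j \AAA_{s,t}) v = \int_s^t (\MD_\tau^j \JJ_{r,t})\sigma_\theta v(r)\,dr$. A Cauchy--Schwarz estimate in the time integral yields
\begin{equation*}
\|\MD_\tau^j \AAA_{s,t}\|_{\mathcal{L}(L^2([s,t];\RR^d),H)} \le \|\sigma_\theta\| \left(\int_s^t \|\MD_\tau^j \JJ_{r,t}\|^2_{\mathcal{L}(H)}\,dr\right)^{1/2},
\end{equation*}
after which \eqref{eq:mal:J:est} (with $p$ replaced by $2p$ and $\eta$ by $\eta/2$) and Jensen's inequality deliver \eqref{eq:mal:A:est}. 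For \eqref{eq:mal:A:star:est}, the same strategy applies via the representation $(\AAA_{s,t}^* \xi)(r) = \sigma_\theta^* \KK_{r,t}\xi$ and the identity $\MD_\tau^j \KK_{r,t} = (\MD_\tau^j \JJ_{r,t})^*$, which follows by taking adjoints of \eqref{eq:Mal:Div:Snd}; operator norms under adjoint are preserved, so the bound is identical up to constants.

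The main technical obstacle is the rigorous verification that formal Malliavin differentiation commutes with the linearization procedure producing $\JJ_{s,t}$ and $\JJ^{(2)}_{s,t}$; once that is in place the estimates \eqref{eq:mal:J:est}--\eqref{eq:mal:A:star:est} are essentially algebraic consequences of Cauchy--Schwarz applied to Lemma~\ref{lem:lin:moments}. Fortunately, this technical point has been established in precisely analogous settings in \cite{HairerMattingly2011}, so I would invoke those arguments and focus the exposition on the norm estimates.
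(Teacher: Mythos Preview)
Your proposal is correct and follows essentially the same approach as the paper: the paper's proof is a one-line sketch pointing to the identity \eqref{eq:Mal:Div:Snd} together with the bound \eqref{eq:second:esti}, deferring further details to \cite{HairerMattingly06}. You have simply written out those details explicitly, including the reductions for $\AAA_{s,t}$ and $\AAA_{s,t}^\ast$ via the integral representation and duality.
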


\begin{proof}
The proof of \eqref{eq:mal:J:est}--\eqref{eq:mal:A:star:est} is based on the observation \eqref{eq:Mal:Div:Snd} and the bound \eqref{eq:second:esti}.
Further details can be found in \cite{HairerMattingly06}.
\end{proof}

\subsection{Some Elements of the Malliavin Calculus}
\label{subsect:malliavin}

In this section, we recall in our context and notations some elements of the Malliavin
calculus used in above. For further general background on this vast subject see,
for example, \cite{Bell1987,Malliavin1997,Nualart2006, Nualart2009}.

Fix a stochastic basis $(\Omega, \mathcal{F}, \{\mathcal{F}_t\}_{t \geq 0}, \Prb, W)$, where
$W = (W_1, \dots, W_d)$ is a $d$-dimensional standard Brownian motion, $\{\mathcal{F}_t\}_{t \geq 0}$ is a
filtration to which this process $W$ is adapted.  In application to \eqref{eq:B1:Vort}--\eqref{eq:B3:Vort},
$d =2|\mathcal{Z}|$ represents the number of independent noise processes driving the system.
Fix any $T \in (0, \infty)$.

We first recall the definition of the \emph{Malliavin derivative} $\MD$ which is defined on a
subset of $L^p(\Omega)$ for $p > 1$ (see \cite{Nualart2009,Nualart2006} or \cite{Malliavin1997}).  We begin by explaining how
this operator $\MD$ acts on `smooth random variables'.
For any given $n \geq 1$, consider a Schwartz function $f : \RR^n \to \RR$, that is, $f$ that satisfies
\begin{align*}
 \sup_{z \in \RR^n} |z^\alpha D^\beta f(z)| < \infty
\end{align*}
for any multi-indices $\alpha, \beta$.
For such functions define  $F \in L^p(\Omega)$, $p >1$ by
\begin{align*}
  F = f \left(\int_0^{T} g_1 \cdot dW, \ldots,  \int_0^{T} g_n \cdot dW \right) \,,
\end{align*}
where  $g_1, \cdots, g_n$ are deterministic elements in $L^2([0,T], \RR^d)$.
For such $F$ the Malliavin derivative is defined as
\begin{align}
 \MD F :=
 \sum_{k =1}^n \frac{\pd f}{\pd x_k} \left(\int_0^{T} g_1 \cdot dW, \ldots,  \int_0^{T} g_n \cdot dW \right) g_k.
 \label{eq:MD:ele:def}
\end{align}
Notice that $\MD F \in L^p(\Omega;L^2( [0,T], \RR^d))$.
To extend $\MD$ to a broader class of elements we adopt the norm
\begin{align*}
   \| F  \|_{\Mspc^{1,p}}^p := \E |F|^p + \E \|\DM F\|_{L^2([0,T]; \RR^d)}^p,
\end{align*}
and denote $\mbox{Dom}(\MD) = \Mspc^{1,p}$ be the closure of the above defined functions $F$
under this norm $\|\cdot\|_{\Mspc^{1,p}}$.

We can repeat the above construction for random variables taking values in a
separable Hilbert space $\mathcal{H}$.
In this case start by considering `elementary' functions of the form
\begin{align}
  F := \sum_{i \in \mathcal{I}} f_i \left(\int_0^{T} g_1^i \cdot dW, \ldots,  \int_0^{T} g_{n_i}^i \cdot dW \right) h_i
  =: \sum_{i \in \mathcal{I}} F_i h_i
  \,,\label{eq:gen:F:def}
\end{align}
where $i \in \mathcal{I}$ is a finite index set, $n_i \in \mathbb{N}$, $f_i : \RR^{n_i} \to \RR$ are
Schwartz functions, $h_i$ elements in $\mathcal{H}$ and, as above, $g_1^i, \cdots, g_{n_i}^i$, $i \in \mathcal{I}$ are deterministic elements in $L^2([0,T], \RR^d)$.
Define
\begin{align}\label{eq:def:mal:fun}
 \MD F :=
 \sum_{i \in \mathcal{I}} \MD (F_i) h_i \,.
\end{align}
Then $\MD$ is a closeable operator from $L^p(\Omega, \mathcal{H})$ to $L^p(\Omega; L^2( [0,T], \RR^d) \otimes \mathcal{H})$. With a slight
abuse of notation we denote
\begin{align}\label{eq:def:D12}
   \| F \|_{\Mspc^{1,p}}^p := \E \|F\|_{\mathcal{H}}^p + \E \|\DM F\|_{L^2([0,T], \RR^d) \otimes \mathcal{H}}^p.
\end{align}

As above,
we take $\Mspc^{1,p} = \Mspc^{1,p}(\mathcal{H})$ to be the closure of the  functions $F$ of the form \eqref{eq:def:mal:fun}
under the norm $\|\cdot\|_{\Mspc^{1,p}}$.

For $F \in \Mspc^{1,2}$, we adopt the notations
\begin{align*}
  \MD_s F := (\MD F)(s), \; s \in [0,T], \quad \MD^j F := (\MD F)^j, j = 1, \ldots d,
\end{align*}
i.e. $(\MD F)^j$ is the $j$th component of $\MD F$ as an element in $\RR^d$ (or $\RR^d \otimes \mathcal{H}$), for fixed $\omega, t \in \Omega \times [0,T]$.
Furthermore, in view of \eqref{eq:MD:ele:def}, we have that if $F \in \Mspc^{1,2}$ is $\mathcal{F}_s$ measurable for some $s \in [0,T]$, then
\begin{align}
  \MD_t F =0, \quad \textrm{ for every } t > s.
  \label{eq:Fs:t:meas:zero:cond}
\end{align}

With these basic definitions in place, we now introduce two important elements of Malliavin's
theory, the chain rule and the integration by part formula. The Malliavin chain rule states that
for any $\phi \in C_b^1(\RR^m)$ (continuously differentiable functions with bounded derivatives) and
$F = (F_1, \cdots, F_m)$ with $F_i \in \Mspc^{1,2}$ one has that $\phi (F) \in \Mspc^{1,2}$ and
\begin{align}
   \MD^j \phi(F)  = \nabla \phi(F) \cdot \MD^j F = \sum_{i = 1}^m \partial_{x_i} \phi (F) \MD^j F_i
   \qquad (j \in \{1, \cdots, d\})\,,
\label{eq:M:chain:rule1}
\end{align}
see e.g. \cite[Proposition 1.2.3]{Nualart2006}.  Note also that this chain rule extends  to the
Hilbert space setting; if $\phi \in C_b^1(\mathcal{H})$ and $F \in \Mspc^{1,2}(\mathcal{H})$ then
$\phi(F) \in \Mspc^{1,2}$ and   $\MD^j \phi(F)  = \nabla \phi(F) \cdot \MD^j F$.\footnote{For some of the
estimates  in Section~\ref{sec:proof:main:prop} (cf. \eqref{eq:bound}) we used a more general form of
the product rule that can be found in e.g. \cite{PronkVeraar2013}.}

Next, we introduce the Malliavin integration by part formula which can be understood in terms of the adjoint operator to $\MD$.
For $\MD : \Mspc^{1,2} \subset L^2(\Omega) \to L^2(\Omega; L^2 [0, T], \RR^d)$ define its
adjoint $\MD^\ast : \textrm{Dom}(\MD^\ast) \subset L^2(\Omega ;L^2[0, T], \RR^d) \to L^2(\Omega)$
by
\begin{align}
     \E \langle \MD F,  v \rangle_{L^2([0,T], \RR^d)} =
     \E (F \MD^\ast v)  \,,
\label{eq:M:IBP:rule}
\end{align}
for any $F \in \Mspc^{1,2}$ and any $v \in \textrm{Dom}(\MD^\ast)$. If $F$ has the form
\eqref{eq:gen:F:def} we define
\begin{align*}
     \E \langle \MD F,  v \rangle_{L^2([0,T], \RR^d)} := \sum_{i \in \mathcal{I}} \E \langle \MD F_i ,  v \rangle_{L^2([0,T], \RR^d)} h_i =
     \sum_{i \in \mathcal{I}} \E (F_i \MD^\ast v) h_i = \E \left( \sum_{i \in \mathcal{I}}  F_i h_i \MD^\ast v\right)
     =  \E (F \MD^\ast v) \,,
\end{align*}
and therefore, after passing to the limit, we see that \eqref{eq:M:IBP:rule} holds true for $\mathcal{H}$ valued elements
$F \in \Mspc^{1,2}(\mathcal{H})$ and $v \in \textrm{Dom}(\MD^\ast) \subset L^2(\Omega; L^2 ([0, T], \RR^d))$.
Note in particular that even in this infinite dimensional setting the duality in \eqref{eq:M:IBP:rule} remains in $L^2([0,T], \RR^d)$.

The map $\MD^\ast$ is called the \emph{Skorokhod integral} (see \cite{Nualart2006}) and is often written as
\begin{align}\label{eq:def:D:ast}
\MD^\ast v =: \int_0^T v \cdot dW,
\end{align}
so that \eqref{eq:M:IBP:rule} reads as
\begin{align}
\E \langle \MD F,  v \rangle_{L^2([0,T], \RR^d)} =
     \E \left(F \int_0^T v \cdot dW\right).
     \label{eq:M:IBP:rule:trad:notation}
\end{align}
The reason behind this notation is that if
$v \in L^2(\Omega ;L^2 [0, T], \RR^d)$ and is adapted to $\mathcal{F}_t$, then
$v \in \textrm{Dom}(\MD^\ast)$ and
$\int_0^T v \cdot dW$ is the classical Doeblin-It\={o} integral.  In general, interpreting $v$ as an $\mathcal{H} = L^2([0, T] ; \RR^d)$
valued random variable, we have that $\Mspc^{1,2}(L^2([0,T]; \RR^d)) \subset \textrm{Dom}(\MD^\ast)$.

In order to make quantitative estimates for \eqref{eq:def:D:ast} we finally recall a generalized
form of the classical It\={o} isometry. If $v \in \Mspc^{1,2}(L^2([0,T]; \RR^d))$, then
 $\MD v \in L^2(\Omega; L^2([0,T];\RR^d) \otimes L^2([0,T];\RR^d)) = L^2(\Omega;L^2([0,T]^2; \RR^{d \times d}))$ and the generalized
It\={o} isometry takes the form:
\begin{align}
  \E \left( \int_0^T v \cdot dW \right)^2 =
  \E \|v\|^2_{L^2([0, T], \RR^d)} + \E \int_0^T \int_0^T  \mbox{Tr}( \DM_s v(r) \DM_r v(s)) ds dr\,,
  \label{eq:gen:ito:ineq}
\end{align}
see e.g. \cite[Chapter 1, (1.54)]{Nualart2009}. In view of \eqref{eq:Fs:t:meas:zero:cond}, the classical It\={o} isometry is
recovered from \eqref{eq:gen:ito:ineq} when $v$ is $\mathcal{F}_t$-adapted.  More generally
such observations concerning the $\mathcal{F}_t$ measurability of $v$ in conjunction with  \eqref{eq:Fs:t:meas:zero:cond}, \eqref{eq:gen:ito:ineq}
are used in a crucial fashion to obtain the bound
 \eqref{eq:bound}.

\section*{Acknowledgements}
The authors gratefully acknowledge the support of the Institute for Mathematics and its Applications (IMA) where this work was conceived.
JF, NGH, GR were postdoctoral fellows and ET was a New Directions Professor during the academic year 2012-2013.
We have also benefited from the hospitality of the Department of Mathematics Virginia Tech
and from the Newton Institute for Mathematical Sciences, University of Cambridge
where the final stage of the writing was completed. NGH's work has been partially supported
under the grant NSF-DMS-1313272.

We would like to thank T. Beale, C. Doering, D. Faranda, J. Mattingly, V. \v Sver\' ak,
and V. Vicol for numerous inspiring discussions along with many helpful references.
We would also like to express our appreciation to L. Capogna and H. Bessaih for helping
us to initiate the reading group at the IMA that eventually lead to this work.

{\footnotesize

\bibliographystyle{amsalpha}

\newcommand{\etalchar}[1]{$^{#1}$}
\providecommand{\bysame}{\leavevmode\hbox to3em{\hrulefill}\thinspace}
\providecommand{\MR}{\relax\ifhmode\unskip\space\fi MR }
\providecommand{\MRhref}[2]{%
  \href{http://www.ams.org/mathscinet-getitem?mr=#1}{#2}
}
\providecommand{\href}[2]{#2}

}

\vspace{.3in}
\begin{multicols}{2}
\noindent
Juraj F\"oldes\\
{\footnotesize Institute for Mathematics and its Applications (IMA)\\
University of Minnesota\\
Web: \url{ima.umn.edu/~foldes/}\\
 Email: \url{foldes@ima.umn.edu}} \\[.25cm]
Nathan Glatt-Holtz\\ {\footnotesize
Department of Mathematics\\
Virginia Polytechnic Institute and State University\\
Web: \url{www.math.vt.edu/people/negh/}\\
 Email: \url{negh@vt.edu}} \\[.2cm]
 
\columnbreak 
 
 \noindent Geordie Richards\\ 
{\footnotesize
Department of Mathematics\\
University of Rochester\\
Web: \url{www.math.rochester.edu/grichar5/}\\
 Email: \url{grichar5@z.rochester.edu}}\\[.2cm]
Enrique Thomann\\ {\footnotesize
Department of Mathematics\\
Oregon State University\\
Web: \url{www.math.oregonstate.edu/people/view/thomann}\\
 Email: \url{thomann@math.orst.edu}}
\end{multicols}

\end{document}